\newcommand{\RemoveAlgoNumber}{\renewcommand{\fnum@algocf}{\AlCapSty{\AlCapFnt\algorithmcfname}}}
\newcommand{\RevertAlgoNumber}{\algocf@resetfnum}
\theoremstyle{plain}
\newtheorem*{theorem*}{Theorem}
\newtheorem*{question*}{Question}
\newtheorem{theorem}{Theorem}[section]
\newtheorem{lemma}[theorem]{Lemma}
\newtheorem{proposition}[theorem]{Proposition}
\newtheorem{corollary}[theorem]{Corollary}
\theoremstyle{definition}
\newtheorem{definition}[theorem]{Definition}
\newtheorem{example}[theorem]{Example}
\theoremstyle{remark}
\renewcommand{\1}{\mathbbm{1}}
\def\adj{{\mathrm{adj}}}
\newcommand{\bH}{\mathbb{H}}
\newcommand{\build}[3]{\mathrel{\mathop{\kern 0pt#1}\limits_{#2}^{#3}}}
\renewcommand{\C}{\mathbb{C}}
\def\codet{\text{codet}}
\renewcommand{\d}{{\; {\rm d}}}
\newcommand{\dd}{{\rm d}}
\newcommand{\DLP}{{\rm DLP}}
\newcommand{\E}{\mathbb{E}}
\newcommand{\End}{\mathrm{End}}
\def\epsilon{\varepsilon}
\newcommand{\ext}{
  \mathchoice{\raisebox{1pt}{$\displaystyle\medop\bigwedge$}}
             {\raisebox{1pt}{$\bigwedge$}}
             {\raisebox{0.5pt}{$\scriptstyle\bigwedge$}}
             {\raisebox{0.2pt}{$\scriptscriptstyle\bigwedge$}}}
\def\g{{\mathfrak g}}
\newcommand{\GL}{{\mathrm{GL}}}
\newcommand{\Gr}{\mathsf{Gr}}
\newcommand{\hodge}{{\,\star\,}}
\newcommand{\Hom}{\mathrm{Hom}}
\newcommand{\id}{\mathrm{id}}
\newcommand{\ii}{\mathfrak{i}}
\newcommand{\II}{\mathfrak{I}}
\newcommand{\Inc}[1]{{\mathsf{Z}#1}}
\newcommand{\jj}{\mathfrak{j}}
\newcommand{\JJ}{\mathfrak{J}}
\newcommand{\kk}{\mathfrak{k}}
\newcommand{\KK}{\mathfrak{K}}
\newcommand{\N}{\mathbb{N}}
\newcommand{\old}[1]{}
\newcommand\op[1]{\mathsf{#1}}
\renewcommand{\P}{\mathbb{P}}
\newcommand{\proj}[1]{{\sf\Pi}^{#1}}
\newcommand{\Q}{{\mathsf Q}}
\newcommand{\qdet}{\mathrm{Qdet}}
\newcommand{\R}{\mathbb{R}}
\newcommand{\rC}{\mathsf{C}}
\def\Res{\mathsf{Res}}
\newcommand{\rR}{\mathsf{R}}
\renewcommand{\S}{\mathfrak{S}}
\newcommand{\sdim}{\mathop{\underline{\dim}}}
\newcommand{\SL}{{\mathrm{SL}}}
\newcommand{\Tr}{\mathrm{Tr}}
\renewcommand{\U}{\mathrm{U}}
\newcommand{\ul}{\underline}
\newcommand{\Vect}{\mathop{\rm Vect}}
\newcommand\X{{\mathsf X}}
\newcommand{\Z}{\mathbb{Z}}
\renewcommand\leq{\leqslant}
\renewcommand\geq{\geqslant}
\title{Determinantal probability measures on Grassmannians}
\author{Adrien Kassel}
\address{Adrien Kassel -- CNRS, \'Ecole Normale Sup\'erieure de Lyon}
\email{adrien.kassel@ens-lyon.fr}
\author{Thierry L\'evy}
\address{Thierry L\'evy -- LPSM, Sorbonne Universit\'e, Paris}
\email{thierry.levy@sorbonne-universite.fr}
\thanks{\textit{Acknowledgments of support.} We thank, for their hospitality and support, the Mathematisches Forschungsinstitut Oberwolfach (MFO) who granted us a \emph{Research in Pairs} fellowship, as well as the Forschungsinstitut f\"{u}r Mathematik (FIM) in Zurich, the Centre Interfacultaire Bernoulli (CIB) in Lausanne, the ETH Zurich, and the Newton Institute in Cambridge where parts of this work were completed. T.L. would like to thank the Isaac Newton Institute for Mathematical Sciences for support and hospitality during the programme {\em Scaling limits, rough paths, quantum field theory} when work on this paper was pursued. This work was supported by EPSRC grant numbers EP/K032208/1 and EP/R014604/1, by a CNRS PEPS JCJC grant, and by the ANR grant number ANR-18-CE40-0033.}
\date{\today}
\keywords{Determinantal measures, geometric probability, integral geometry, random geometry, enumerative geometry, Grassmannians, Pl\"{u}cker coordinates, graded vector spaces, matroid stratification, uniform spanning tree, quantum spanning forest} 
\subjclass[2010]{60G55, 60D05, 14M15, 81P15, 82B20}
\begin{document}

\begin{abstract}
We introduce and study a class of determinantal probability measures generalising the class of discrete determinantal point processes. These measures live on the Grassmannian of a real, complex, or quaternionic inner product space that is split into pairwise orthogonal  finite-dimensional subspaces. They are determined by a positive self-adjoint contraction of the inner product space, 
in a way that is equivariant under the action of the group of isometries that preserve the splitting.
\end{abstract}

\maketitle

\setcounter{tocdepth}{1}
{\small \tableofcontents}

%%%%%%%%%%%%%%%%%%%%%%%%%%%%%%%%%%%%
%%%%%%%%%%%%%%%%%%%%%%%%%%%%%%%%%%%%
%%%%%%%%%%%%%%%%%%%%%%%%%%%%%%%%%%%%

\section*{Introduction}

Determinantal point processes (DPP) are an extensively studied class of random locally finite subsets of  a nice measured topological space, for example of a Polish space endowed with a Borel measure. 
There is a discrete theory and a continuous theory of DPP, corresponding to the cases where this Borel measure is atomic or diffuse; these two cases are conceptually identical, but differ slightly in their presentation and methods. 

The goal of this paper is to introduce a generalisation of the discrete theory of DPP, which we call \emph{determinantal linear processes} (DLP), where instead of a random collection of points drawn from a ground set, we consider a random collection of linear subspaces drawn from blocks of a vector space orthogonally split into finite-dimensional summands. Conceptually, this amounts to studying probability distributions on the Grassmannian of that vector space instead of studying probability distributions on the power set of the ground set, and to consider the co-incidence of inclusion events in the lattice of linear subspaces of that vector space rather than in the lattice of subsets of that set. Although the family of measures we construct is indeed a generalisation of DPP, we emphasise the introduction of a geometric framework (closer to the language of geometric probability) which, already apparent in the exterior algebra description of DPP, here plays a genuine role and better reveals the geometric structure of these processes. Indeed, some of the general results we provide using this approach are new, at least to us, even in the classical case of DPP.

Before discussing the motivation, let us say a short word on the background. DPP as we know them were initially introduced by Macchi~\cite{Macchi} in order to model probabilistically the statistical observables of fermions in certain quantum optics experiments (see also~\cite{Macchi-slides}). This was a groundbreaking work connecting experimental physics with the theory of point processes. Before her work, and in parallel to it, continuous DPP had been studied, among others, by Wigner, Dyson, Mehta, Gaudin, who studied spectra of random matrices in the context of nuclear physics.  Since then, the probability community has extensively studied these processes which arise in various stochastic models (whether discrete: e.g. uniform spanning trees, or continuous: e.g. eigenvalues of certain random matrices) and constitute a class of tractable point processes exhibiting repulsive behaviour a.k.a. negative association (a notion of negative dependence has yet to be fully understood~\cite{Pemantle-negative}, although recent major progress has been made~\cite{Borcea-Branden-Liggett}). Recently, DPP have found many applications in statistics, starting with the foundational work of~\cite{Kulesza-Taskar}; see~\cite{Bardenet} for a review. Along with authoritative papers of Borodin~\cite{Borodin} and Soshnikov~\cite{Soshnikov} (see also~\cite{Shirai-Takahashi} and~\cite{Johansson}), a good entry point in the theory of discrete, or even finite, DPP is the paper of Lyons~\cite{Lyons-DPP}; see also the survey~\cite{Lyons-ICM} and references therein for a more complete overview of the numerous works on the subject. Our work draws closer to the one of Lyons, first by emphasising the point of view of the exterior algebra (which is further justified here, by our discussion of Grassmannians) and second by being motivated by the theory of random spanning trees and their variants (on graphs, simplicial complexes, and vector bundles over these).

To put it in a nutshell, our motivation came from the following modelling question: How to construct a discrete probabilistic model which could be a toy-model for a system of fermionic matter interacting with a gauge field (a similarly motivated question was addressed for bosonic fields in \cite{KL}). Building on the two classical ideas that uniform spanning trees on graphs are a toy-model for fermions, and that a collection of coupled random matrices is a toy-model for a gauge field, our goal was to build a model where a random spanning tree would interact with a collection of coupled random matrices on the edges of the graph. Pushed by considerations of symmetry under the natural linear group acting on this situation, we realized that spanning trees could fruitfully be replaced by a collection of random linear subspaces, one for each edge (see Section~\ref{example:UST-QSF} for an illustration and~\cite{KL4} for a detailed presentation of this model, which we call \emph{quantum spanning forests}). In order to formalise this definition, we had to revisit our point of view on DPP and define DLP. The goal of this paper is to present this theory of DLP. Let us stress that, since no prerequisites are assumed from the reader, this paper may also serve as a self-contained introduction to discrete DPP, from a slightly more geometrical point of view than the usual.

The paper is organised as follows. Section~\ref{sec:dpp-geom} consists in an overview of the path going from DPP to DLP. In that section, we review the definition of DPP and highlight the conceptual change of point of view consisting in replacing subsets by subspaces; this allows us to describe broadly what DLP are, to highlight their main properties, and to illustrate the theory from the point of view of uniform spanning trees and their generalisation, quantum spanning forests. Section~\ref{sec:invariant-measures} lays the groundwork about measures on real and complex Grassmannians, their incidence measures, as well as useful formulas on determinants we will use throughout. Section~\ref{sec:DLP} proceeds to construct DLP (Theorem~\ref{thm:existuniqueDSP}) both as a measure with an explicit density (Proposition \ref{prop:exDLP}) and as the result of a concrete sampling algorithm (Proposition~\ref{prop:sampling}). Section~\ref{sec:geometry-DLP} contains a presentation of the main geometrical properties of DLP; some of these properties, such as Theorem~\ref{prop:meanprojection}, seem to be new even for DPP (we also note the nice, but maybe not so surprising fact, that the support of any of these determinantal measures is determined by a matroid polytope, see Proposition~\ref{prop:lafforgue}). In that section, we also extend the construction of DLP to the infinite-dimensional setting. Section~\ref{sec:extalg} gives a presentation of DLP from the point of view of the exterior algebra. This is somewhat closer to the initial physics motivation and language, and we in particular give a quantum information theoretic interpretation of DLP in Corollary~\ref{cor:loiDLPext} (based on an algebraic reformulation in Proposition~\ref{prop:lienDLPQM} of the explicit density given in Proposition~\ref{prop:exDLP}), and a very concise rewriting of Theorem~\ref{prop:meanprojection} in Theorem~\ref{thm:wedgeA} (this formalism also allows to see the so-called $\op{l}$-ensembles appear naturally, see Section~\ref{sec:Qmeasure}). Finally, Section~\ref{sec:quat} is devoted to explaining how the theory of DLP extends to the case of quaternionic vector spaces and discusses how DLP behave upon restriction of scalars from quaternions, to complex numbers, to real numbers.\footnote{Our motivation for treating the quaternionic case is threefold; mathematical : ``As a matter of principle, one should always consider the three cases $\R$, $\C$, and $\bH$, and these are the only three finite-dimensional real division algebras''~\cite[page 9]{Brocker}; physical: the orthogonal, unitary, and symplectic groups are natural from the point of view of lattice gauge theory \cite{Levy-spin}; and concrete: we give an example of such a quaternionic process in Example \ref{ex:QQSF}.} The paper closes on a few directions of research we find interesting.

%%%%%%%%%%%%%%%%%%%%%%%%%%%%%%%%%%%%
%%%%%%%%%%%%%%%%%%%%%%%%%%%%%%%%%%%%
%%%%%%%%%%%%%%%%%%%%%%%%%%%%%%%%%%%%

\section{Overview}\label{sec:dpp-geom}

In this section, which precedes the formal introduction of definitions and statements, we present an overview of the path which leads from the classical notion of DPP to that of DLP and provide an example. 

\subsection{Finite determinantal point processes}

Let us fix an integer $d\geq 1$ and consider the set $S=\{1,\ldots,d\}$. Let $K\in M_{d}(\C)$ be a matrix. For each subset $J$ of $S$, let us denote by $K^{J}_{J}$ the sub-matrix of $K$ obtained by erasing all rows and  columns whose indices do not belong to $J$. 

We say that a random subset $\X$ of $S$ is {\em determinantal with kernel $K$} if for all subsets $J$ of $S$, the equality
\begin{equation}\label{eq:DPPJX}
\P(J\subseteq \X)=\det K^{J}_{J}
\end{equation}
holds. Probabilities of the form $\P(J\subseteq \X)$ are called {\em incidence probabilities}. 

One interesting aspect of this definition is that there is no reason why there should exist a random subset $\X$ satisfying~\eqref{eq:DPPJX}, and in general there does not. On the other hand, if there does, then it is an elementary fact that~\eqref{eq:DPPJX} characterises the distribution of $\X$ completely (see~\eqref{eq:mobiusfini}).

One of the first results of the theory is that a random subset satisfying \eqref{eq:DPPJX} exists whenever $K$ is a self-adjoint {\em contraction}, that is, a Hermitian matrix such that $0\leq K \leq 1$. The case where~$K$ is a self-adjoint projection is of particular and fundamental interest. If $K$ is a projection of rank $n$, it can be shown that the subset $\X$ has $n$ elements with probability $1$. In particular, its distribution can be described as follows: for all $n$-subset $I$ of $S$,
\begin{equation}\label{eq:DPPIX}
\P(\X=I)=\det K^{I}_{I}.
\end{equation}
The equality
\begin{equation}\label{eq:DPPPyth}
\sum_{I\subseteq S, |I|=n}\det K^{I}_{I}=1
\end{equation}
can be understood as the statement that the $n$-th elementary symmetric function of the eigenvalues of $K$ is equal to $1$, or as an instance, in the appropriate inner product space, of Pythagoras' theorem (see Proposition~\ref{prop:supportextalg} for a precise, but more general statement, and the remark right after that proposition). 

A determinantal point process with arbitrary contraction kernel can be realised as a mixture of determinantal point processes with projection kernels. In this more general case, the formula~\eqref{eq:DPPIX} is replaced by the following:
\begin{equation}\label{eq:DPPCIX}
\P(\X=I)=\det (KP^{I}+(I_{d}-K)P^{I^{c}}),
\end{equation}
where $I^{c}=S\setminus I$ is the complement of $I$, and $P^{I}$ denotes the diagonal matrix with entries equal to $1$ in the columns indexed by elements of $I$, and $0$ elsewhere.
The fact that these probabilities add up to $1$ is a consequence of the equality
\begin{equation}\label{eq:DPPCPyth}
\sum_{I\subseteq S}\det (AP^{I}+BP^{I^{c}})=\det(A+B),
\end{equation}
which is valid for any two $d\times d$ square matrices $A$ and $B$, and an expression of the multilinearity of the determinant with respect to columns (see also Proposition \ref{prop:multilin} for a proof of a more general formula). The number of points of a determinantal point process with kernel $K$ is random, with the distribution of the sum of independent Bernoulli random variables with parameters equal to the eigenvalues of $K$. 

\subsection{A geometric point of view}

The point of view which we adopt in this paper is that the elements of the set $S$ label the vectors of the canonical basis of the vector space $\C^{d}$, or more generally of an orthonormal basis $(e_{1},\ldots,e_{d})$ of an arbitrary Hermitian space $E$ of dimension~$d$. The matrix $K$ is the matrix in this basis of a self-adjoint endomorphism $\op{k}$ of $E$. Then, we interpret the random subset $\X$ as a {\em random linear subspace} of $E$, namely the random subspace
\begin{equation}\label{eq:subsetsubspace}
\Q=\Vect(e_{i} : i \in \X).
\end{equation}
This way of thinking of a subset as a linear subspace was already used, for instance, in \cite[Section 1.3]{Okounkov}.

It is thus fair to say that a determinantal point process produces, from an orthonormal basis $(e_{1},\ldots,e_{d})$ of an inner product space $E$ and a self-adjoint operator $\op{k}$ on $E$ such that $0\leq \op{k}\leq 1$, a random linear subspace of $E$ that is adapted to the basis $(e_{1},\ldots,e_{d})$ in the sense that it is a sum of lines generated by these vectors: 
\begin{equation}\label{eq:diagrammeDPP}
 \begin{minipage}{0.5\textwidth}
\begin{enumerate}[\raisebox{0.5pt}{$\scriptstyle\triangleright$}]
\item orthonormal basis $(e_{1},\ldots,e_{d})$ of $E$
\item self-adjoint operator $\op{k}$ on $E$ with $0\leq \op{k}\leq 1$
\end{enumerate}
\end{minipage}\Big\rangle \hspace{-5.2pt} \rightsquigarrow 
\begin{minipage}{0.27\textwidth}
\begin{center}
random subspace of $E$\\  adapted to $(e_{1},\ldots,e_{d})$
\end{center}
\end{minipage}
\end{equation}
The dimension of the random subspace of $E$ is itself random, with the distribution of the sum of $d$ independent Bernoulli random variables with parameters given by the eigenvalues of $\op{k}$. In the special case where $\op{k}$ is a projection, this dimension is almost surely equal to the rank of $\op{k}$.

In this construction, the orthonormal basis of $E$ is only used through the {\em splitting} of $E$ that it induces, that is, the orthogonal decomposition
\[E=\C e_{1}\oplus \ldots \oplus \C e_{d}.\]

The goal of this paper is to extend the correspondence \eqref{eq:diagrammeDPP} to the situation where the orthonormal basis, or the splitting that it induces, is replaced by an arbitrary orthonormal splitting 
\[E=E_{1}\oplus \ldots \oplus E_{s}\]
of $E$ as an orthogonal direct sum of linear subspaces. A subspace $Q$ of $E$ is said to be {\em adapted} to this splitting if it is a sum of subspaces of $E_{1},\ldots,E_{s}$. This can be written in several equivalent ways, such as
\[Q=(Q\cap E_{1})\oplus \ldots \oplus (Q\cap E_{s}) \ \text{ or } \ \dim Q = \sum_{i=1}^{s} \dim(Q\cap E_{i}).\]
The correspondence replacing \eqref{eq:diagrammeDPP} is then
\begin{equation}\label{eq:diagrammeDSP}
 \begin{minipage}{0.5\textwidth}
\begin{enumerate}[\raisebox{0.5pt}{$\scriptstyle\triangleright$}]
\item splitting $E=E_{1}\oplus \ldots \oplus E_{s}$
\item self-adjoint operator $\op{k}$ on $E$ with $0\leq \op{k}\leq 1$
\end{enumerate}
\end{minipage}\Big\rangle \hspace{-5.2pt} \rightsquigarrow 
\begin{minipage}{0.3\textwidth}
\begin{center}
random subspace of $E$\\ adapted to the splitting
\end{center}
\end{minipage}
\end{equation}
As before, the case where $\op{k}$ is a projection is special, in that the dimension of the random subspace of $E$ is almost surely equal to the rank of $\op{k}$.  

\subsection{Determinantal measures on Grassmannians}\label{sec:detmeasGr}

The basic data of our construction is thus an inner product space $E$ endowed with a splitting $E=E_{1}\oplus \ldots \oplus E_{s}$ and a kernel $\op{k}$, that is a self-adjoint endomorphism of $E$ with spectrum contained in the interval $[0,1]$. We use the letter $\sigma$ to denote the splitting, and to label objects which depend on it. 

From this data, we construct a probability measure $\mu_{\sigma,\op{k}}$ on the subset $\Gr(E,\sigma)$ of the Grassmannian of $E$ consisting of all linear subspaces of $E$ adapted to the splitting. 

A random linear subspace $\Q$ of $E$ distributed according to this measure $\mu_{\sigma,\op{k}}$ can be sampled according to the following very simple procedure:

\vspace{7mm}
\begin{equation}\label{eq:sampling}
\end{equation}
\vspace{-1.75cm}
\begin{itemize}
\item pick uniformly at random an orthonormal basis of each of the spaces $E_{1},\ldots,E_{s}$ and agregate them to form an orthonormal basis of $E$,
\item sample the determinantal point process with kernel $\op{k}$ associated with this basis, seen as a linear subspace of $E$ according to \eqref{eq:subsetsubspace}.
\end{itemize}

It is not clear from this description of the measure $\mu_{\sigma,\op{k}}$ to what extent it deserves to be called a determinantal probability measure. It turns out that it satisfies a property that is, in this new context, the closest possible analogue of \eqref{eq:DPPJX}. To formulate this property, we define on $\Gr(E,\sigma)$ the {\em incidence measure} $\Inc{\mu}_{\sigma,\op{k}}$ of $\mu_{\sigma,\op{k}}$ as a dimension-biased version of a uniform random subspace of $\Q$ (see Definition \ref{def:defZ}). In the case of a finite determinantal point process $\X$, this incidence measure is simply the counting measure
\[\E\Big[\sum_{J\subseteq \X} \delta_{J}\Big]\]
which to each singleton $\{J\}$ assigns the mass $\P(J\subseteq \X)$. As this special case suggests, $\Inc{\mu}_{\sigma,\op{k}}$ is in general a finite measure rather than a probability measure. We will prove that, just as the incidence measure of $\X$ characterises its distribution, the measure $\mu_{\sigma,\op{k}}$ can be recovered from~$\Inc\mu_{\sigma,\op{k}}$, 
 by a continuous version of the M\"obius inversion in the lattice of linear subspaces of~$E$ adapted to the splitting $\sigma$ (see Proposition \ref{prop:incmeaschar}). Moreover, the density of $\Inc{\mu}_{\sigma,\op{k}}$ with respect to a natural reference measure $\nu^{E,\sigma}$ on $\Gr(E,\sigma)$ is given by minors of $\op{k}$ (see Section \ref{sec:minors} for the notation):
\begin{equation}\label{eq:DSR}
\d (\Inc{\mu}_{\sigma,\op{k}})(R)=\det \op{k}^{R}_{R} \d\nu^{E,\sigma}(R),
\end{equation}
in close analogy to \eqref{eq:DPPJX}. 

\subsection{Properties of the determinantal measures}

In this paper, we choose to define the measure $\mu_{\sigma,\op{k}}$ by~\eqref{eq:DSR} rather than by the algorithm \eqref{eq:sampling}, which will be seen as one of its properties.

Our first task will be to prove that there exists, for every splitting $\sigma$ of $E$ and every contraction~$\op{k}$, a unique probability measure $\mu_{\sigma,\op{k}}$ on $\Gr(E,\sigma)$ satisfying \eqref{eq:DSR}. Then we will prove that this measure satisfies the following properties.

\begin{description}
\item[Sampling] The outcome of the procedure \eqref{eq:sampling} is a random subspace of $E$ distributed according to $\mu_{\sigma,\op{k}}$.
\item[Dimension] Let $\Q$ be distributed according to $\mu_{\sigma,\op{k}}$. The dimension of $\Q$ has the distribution of the sum of $d$ independent Bernoulli random variables with parameters given by the eigenvalues of $\op{k}$. More precisely, consider the random vector of integers $(D_{1},\ldots,D_{s})=(\dim(\Q\cap E_{1}),\ldots,\dim(\Q\cap E_{s}))$ and fix $s$ real numbers $t_{1},\ldots,t_{s}$. Let $T$ be the endomorphism of $E$ that acts on $E_{i}$ by multiplication by $t_{i}$ for each $i\in \{1,\ldots,s\}$. Let finally $\proj{\Q}$ denote the orthogonal projection on $\Q$. Then
\[\E\big[e^{\Tr(\proj{\Q}T)}\big]=\E\big[e^{t_{1}D_{1}+\ldots+t_{d}D_{s}}\big]=\det(\id_{E}+\op{k}(e^{T}-1)).\]
\item[Orthocomplement] The random subspace $\Q^{\perp}$ is also determinantal, distributed according to $\mu_{\sigma,1-\op{k}}$.
\item[Scaling] Pick $p\in[0,1]$. Sample $\Q$ according to $\mu_{\sigma,\op{k}}$. For each $i\in \{1,\ldots,s\}$, sample a binomial random variable $d_{i}$ with parameters $\dim(\Q\cap E_{i})$ and $p$, and choose uniformly at random a subspace ${\sf R}_{i}$ of $\Q\cap E_{i}$ of dimension $d_{i}$. The direct sum ${\sf R}_{1}\oplus \ldots \oplus {\sf R}_{s}$ is distributed according to $\mu_{\sigma,p\op{k}}$.
\item[Restriction] Choose $t\in \{1,\ldots,s\}$ and consider the subspace $F=E_{1}\oplus \ldots \oplus E_{t}$ of $E$ endowed with the obvious splitting. Then $\Q\cap F$ is a random subspace of $F$ associated to the kernel $\op{k}_{F}^{F}$, obtained from $\op{k}$ by compression on $F$ (see Section \ref{sec:minors}).
\item[Equivariance] Let $u$ be an isometry of $E$ preserving the splitting $\sigma$. Then $u(\Q)$ is distributed according to $\mu_{\sigma,u\op{k}u^{*}}$. More generally, if $u$ is an isometry of $E$, without any special relation to $\sigma$, then $u(\Q)$ is distributed according to $\mu_{u(\sigma),u\op{k}u^{*}}$. 
\item[Extension to infinite dimensional spaces] Let $E=\bigoplus_{i\geq 1} E_{i}$ be an  infinite-dimensional inner product space written as the orthogonal direct sum of finite-dimensional subspaces. Let $\op{k}$ be a self-adjoint operator on $E$ such that $0\leq \op{k}\leq 1$. There exists a unique random linear subspace $\Q$ of $E$ adapted to its (infinite) splitting such that for all $s\geq 1$, setting $E_{\leq s}=E_{1}\oplus \ldots \oplus E_{s}$, the random subspace $\Q\cap E_{\leq s}$ of $E_{\leq s}$ is determinantal with kernel~$\op{k}^{E_{\leq s}}_{E_{\leq s}}$. 
\item[Stochastic domination] If $\op{k}_{1}$ and $\op{k}_{2}$ are two kernels on $E$ such that $0\leq \op{k}_{1} \leq \op{k}_{2} \leq 1$, then the measure $\mu_{\sigma,\op{k}_{1}}$ is stochastically dominated by $\mu_{\sigma,\op{k}_{2}}$. This means that if $\Q_{1}$ and~$\Q_{2}$ are respectively distributed according to $\mu_{\sigma,\op{k}_{1}}$ and $\mu_{\sigma,\op{k}_{2}}$, then for all continuous non-decreasing functions $f$ on the partially ordered space $(\Gr(E,\sigma),\subseteq)$, one has the inequality $\E[f(\Q_{1})]\leq \E[f(\Q_{2})]$.
\item[Negative association] Let $R$ be a subspace of $E$ equal to the direct sum of some of the elements of $\sigma$. Let $f,g$ be continuous non-decreasing functions on $\Gr(E,\sigma)$. Then \[\E[f(\Q\cap R)g(\Q\cap R^{\perp})]\leq \E[f(\Q\cap R)] \E[g(\Q\cap R^{\perp})]\,.\]
\item[Uniqueness] If $\op{k}$ is the orthogonal projection on a subspace $H$ of $E$, then almost surely, \[\Q\oplus H^{\perp}=\Q^{\perp}\oplus H=E\,.\] In particular, the map $h\mapsto \langle h,\cdot\rangle$ from $H$ to the dual $\Q^*$ of $\Q$, 
%\begin{align*}
%H & \longrightarrow \Q^{*} \\
%h & \longmapsto \langle h, \cdot \rangle
%\end{align*}
is almost surely injective (hence bijective). This parallels results of Lyons and Bufetov--Qiu--Shamov stating that almost every realisation of a determinantal point process associated with a kernel that is a projection on a finite-dimensional space of functions is a uniqueness set for this space of functions (see \cite[Thm. 7.11]{Lyons-DPP} and \cite{BufetovQiuShamov} for an extension to Polish spaces).
\item[Mean projection] Let us assume again that $\op{k}$ is the orthogonal projection onto a subspace~$H$ of $E$. Then the orthogonal projection onto $H$ is equal to the expectation of the projection onto $\Q$ parallel to $H^{\perp}$, i.e. 
\[\E\big[\op{P}^{\Q} \big]=\proj{H}\,,\]
where $\proj{H}$ denotes the orthogonal projection on $H$ and $\op{P}^{\Q}$ the projection onto $\Q$ parallel to $H^{\perp}$. In fact, much more is true, and we have the following equality of endomorphisms of the exterior algebra of $E$:
\begin{equation}\label{eq:wedgeAintro}
\E\big[\ext \op{P}^{\Q} \big]=\ext\proj{H}.
\end{equation}
In other words, if we choose a basis of $E$, it is not only every entry, but every minor of the matrix of $\proj{H}$ that is the expectation of the corresponding minor of the random matrix $\op{P}^{\Q}$.
Note that, thanks to the self-adjointness of $\proj{H}$, one can replace, in \eqref{eq:wedgeAintro}, the operator $\op{P}^{\Q}$ by its adjoint, which is the projection on $H$ parallel to $\Q^{\perp}$.
\end{description}
 
\subsection{Example}\label{example:UST-QSF}

We now illustrate the above framework with a concrete example motivated by statistical physics.
In order to keep this section short, we voluntarily go briefly over the following definitions. The reader is invited to consult the given references for more details.

Let $X$ be a finite weighted simplicial complex of dimension $m\ge 1$. For each $0\le k \le m$, let $\Omega^k(X,\R)$ be the space of real $k$-forms, that is, antisymmetric real functions over oriented $k$-cells. There is a structure of chain complex induced by a collection of maps $d:\Omega^{k-1}(X,\R)\to\Omega^k(X,\R)$ such that $d\circ d=0$ (one for each $1\le k\le m$, although, following tradition, we omit the dependency of $d$ on $k$). By duality with respect to natural scalar products on these spaces, there is a collection of associated dual maps $d^*:\Omega^{k}(X,\R)\to\Omega^{k-1}(X,\R)$.

Coming from the structure of chain complex, there are some natural subspaces of $\Omega^k(X,\R)$ to consider: $\bigstar^k=\mathrm{im}(d)$, the space of exact forms, $\lozenge^k=\mathrm{ker}(d^*)$, the space of cycles; and likewise $\bigstar^{*k}=\mathrm{im}(d^*)$ and $\lozenge^{*k}=\ker(d)$. Since the complex is finite, we immediately have the orthogonal decompositions
$\Omega^k(X,\R)=\bigstar^k \oplus \lozenge^k=\lozenge^{*k}\oplus \bigstar^{*k}$.

By further considering the possibly trivial subspace of harmonic forms $H^k=\ker(d\circ d ^* + d^* \circ d)$, and noting that $\lozenge^k=H^k \oplus \bigstar^{*k}$, we arrive at the refined decomposition
\[\Omega^k(X,\R)=\bigstar^k \oplus H^k \oplus \bigstar^{*k}\,,\]
which is a discrete analog of the Hodge decomposition for differential forms in geometry, as first considered by Eckmann \cite{Eckmann}.

For any unoriented $k$-cell $\tau$, let $\omega_\tau$ be an associated $k$-form, defined up to sign, if $\{\tau_1,\tau_2\}$ denote the two orientations of that cell, by $\op{1}_{\omega_1}-\op{1}_{\omega_2}$. The line $\mathsf{L}_\tau=\R \omega_\tau$ is independent of that choice of sign, and there is a natural splitting $\sigma$ given by 
\[\Omega^k(X,\R)=\bigoplus_{\tau\in C^k(X)} \mathsf{L}_\tau \,,\]
where $C^k(X)$ denotes the collection of unoriented $k$-cells of $X$. 

The DLP associated to $H=\bigstar^k$ (respectively $H=\bigstar^{*k}$) and the above splitting $\sigma$, is a random subspace $\Q$ of $\Omega^k(X,\R)$ which is a supplement to  $H^\perp=H^k\oplus \bigstar^{*k}=\lozenge^k$ (respectively $H^\perp=\bigstar^k \oplus H^k=\lozenge^{*k}$).

Given a subset $T$ of $k$-cells, we let $Q_T=\bigoplus_{\tau\in T} \mathsf{L}_\tau$. This establishes a correspondence between subsets of $C^k(X)$ and subspaces of $\Omega^k(X,\R)$. The above DLP correspond under this mapping to the DPP introduced by Lyons \cite{Lyons-Betti} (the so-called upper and lower matroidal measures, dual of one another, defined for any chain complex) and the above properties about the geometric position of the random subspace $\Q$ with respect to $H$ are equivalent to \cite[Theorem A]{CCK-complexes}. As we have already mentioned, this last statement can be further strenghtened to a statement in the exterior algebra, and moreover it holds true for all DLP, not just the example described here, see Theorems \ref{prop:meanprojection} and \ref{thm:wedgeA} below.

In the case when $m=1$, $X$ is simply a finite graph, and $\bigstar^1=\mathrm{im}(d)$, $H^1=0$, and $\lozenge^1=\mathrm{ker}(d^*)$, where $d$ is sometimes called the discrete derivative map, and $d^*$ its adjoint, the discrete divergence map. The DLP just described, a random supplement to the space of cycles whose average is the space of stars, is simply the \emph{uniform spanning tree measure}, initially shown to be a DPP by Burton and Pemantle \cite{Burton-Pemantle}.

It is natural to generalise the above setup by twisting the chain complex with the data of linear isomorphisms for pairs of adjacent cells (i.e. one isomorphism for each edge of the barycentric subdivision of the simplicial complex, see e.g. \cite{ReVe}). In the case of a graph ($m=1$), this amounts to putting orthogonal matrices of size $N\times N$ over half-edges, for a fixed integer $N\ge 1$ we call the rank, the collection of which we denote $h$ and call a connection. There is then a space of vector-valued $1$-forms to be considered, $\Omega^1(X,\R^N)$, a twisting of the map $d$, denoted $d_h$, and corresponding subspaces $\bigstar_h^1$ and $\lozenge_h^1$, which give the decomposition $\Omega^1(X,\R^N)=\bigstar_h^1\oplus \lozenge_h^1$. The lines $\mathsf{L}_\tau$ are moreover replaced by $N$-dimensional vector spaces $\mathsf{F}_\tau$ and they induce a splitting $\Omega^1(X,\R^N)=\bigoplus_{\tau\in C^k(X)} \mathsf{F}_\tau$.

We call the DLP associated to this splitting and the orthogonal projection on $\bigstar_h^1$, a \emph{quantum spanning forest}. The case $N=1$ and $h$ non-trivial, corresponds to the cycle-rooted spanning forest model \cite{Kenyon}, and the above statement about the average projection was shown in \cite[Theorem A]{CCK-line}.

\begin{figure}[ht!]
\begin{center}
\includegraphics[width=12cm]{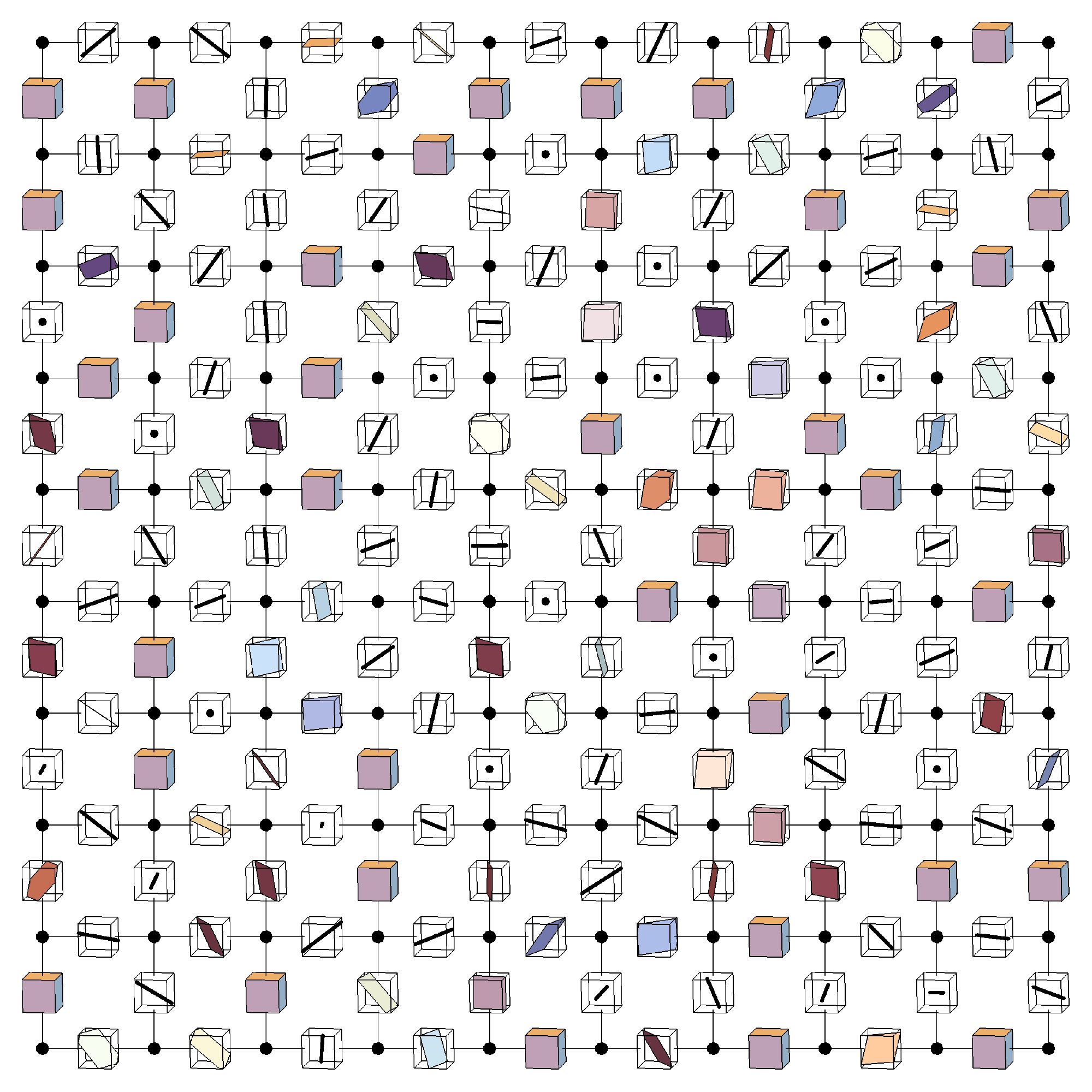}
\caption{\label{fig:exQSF}Simulation of a quantum spanning forest in the case of rank $N=3$. As explained in Section~\ref{example:UST-QSF}, this is a DLP associated to a graph and a collection of orthogonal matrices on its edges. This figure was sampled using an algorithm described in Section~\ref{sec:algosample}.}
\end{center}
\end{figure}

Similarly, for $m\ge 2$ and any $0\le k\le m$, one may consider the DLP on $\Omega^k(X,\R^N)$ induced by a splitting indexed by $C^k(X)$ and the orthogonal projection on $\bigstar_h^k$. This yields a general notion of quantum spanning forest in dimension $k$ and rank $N$.

A sample of quantum spanning forests for $N=3$ on a square grid graph is given in Figure~\ref{fig:exQSF} (in this simulation, the connection $h$ was chosen randomly under a $2$-dimensional Yang--Mills measure; see e.g.~\cite{Levy-survey} for background). A more detailed presentation of this model and its basic properties will appear in~\cite{KL4}.

%%%%%%%%%%%%%%%%%%%%%%%%%%%%%%%%%%%%
%%%%%%%%%%%%%%%%%%%%%%%%%%%%%%%%%%%%
%%%%%%%%%%%%%%%%%%%%%%%%%%%%%%%%%%%%
 
\section{Invariant measures on Grassmannians}\label{sec:invariant-measures}

In this section, we introduce split vector spaces, their Grassmannians, invariant measures on these, and their incidence measures, and prove useful formulas about the integral of determinants of operators with respect to these. This is the formal setup which will be used in Section \ref{sec:DLP} to define DLP.

\subsection{Split inner product spaces} \label{sec:splittings} In this paper, until Section \ref{sec:quat}, vector spaces are over real or complex numbers. Moreover, until further notice, they have finite dimension. Inner products are understood to be Euclidean in the real case and Hermitian in the complex case. Hermitian inner products are taken to be linear with respect to the second variable, according to the physicists' convention. We use, in the real and complex case, the notation $\U(E)$ for the group of isometries of an inner product space $E$. 

\begin{definition}\label{def:sips} An {\em orthogonal splitting}, or simply {\em splitting}, of an inner product space $E$ is a finite ordered sequence $\sigma=(E_{1},\ldots,E_{s})$ of pairwise orthogonal linear subspaces of $E$ of positive dimension such that $E=E_{1}\oplus \ldots \oplus E_{s}$. A {\em split inner product space} is a pair $(E,\sigma)$ where $E$ is an inner product space and $\sigma$ a splitting of $E$.
\end{definition}

We like to think of a splitting of an inner product space as a linearised version of a labelled partition of a finite set. 

\begin{example}\rm 
Extreme examples of splittings of a $d$-dimensional space $E$ are the {\em splittings in lines}, which are the splittings consisting of $d$ pairwise orthogonal lines and, if $d$ is positive, the {\em coarse splitting} $\sigma=(E)$. The unique splitting of the null vector space is the empty splitting $\sigma=()$. In general, the set of all possible splittings of an inner product space identifies with the set of flags, complete or partial, of this vector space.
\end{example}

Let $\Sigma(E)$ denote the set of all splittings of an inner product space $E$. The group $\U(E)$ acts on $\Sigma(E)$: for all $u\in \U(E)$ and $\sigma=(E_{1},\ldots,E_{s})\in \Sigma(E)$, we set
\[u(\sigma)=(u(E_{1}),\ldots,u(E_{s})).\]
The stabiliser of a splitting $\sigma$ is denoted by $\U(E,\sigma)$. If $\sigma=(E_{1},\ldots,E_{s})$, we have, up to an obvious identification, $\U(E,\sigma)=\U(E_{1})\times \ldots \times \U(E_{s})$. Note that an element of $\U(E,\sigma)$ cannot exchange elements of $\sigma$ of equal dimension, if any.

Let us also mention that the set $\Sigma(E)$ carries a natural partial order. Given two splittings $\tau$ and $\sigma$, we say that $\tau$ is \emph{finer} than $\sigma$, and we write $\tau\leq \sigma$, if every element of $\tau$ is contained in an element of $\sigma$.

Our main goal in this paper is to associate to some linear data on $E$, namely a linear subspace or an operator of a certain kind, a probability measure on the Grassmannian of $E$, in a way that is \emph{equivariant} under the action of a group $\U(E,\sigma)$.

\subsection{Grassmannians}\label{sec:grassmannians}

Let $E$ be a $d$-dimensional vector space. For each $n\in \{0,\ldots,d\}$, we denote by $\Gr_{n}(E)$ the variety of all $n$-dimensional linear subspaces of $E$. It is a smooth compact connected real manifold on which the group $\U(E)$ acts smoothly and transitively. We denote by $\nu_{n}^{E}$ the unique Borel measure on $\Gr_{n}(E)$ that is invariant under the action of $\U(E)$ and has total mass $\binom{d}{n}$. This measure can be realised as the average under the action of $\U(E)$ of any Borel measure on $\Gr_{n}(E)$ with total mass $\binom{n}{d}$. In particular, it can be described concretely as follows. Let $\dd u$ denote the normalised Haar measure on $\U(E)$. Let us choose an orthonormal basis $(e_{1},\ldots,e_{d})$ of $E$. For every subset $J$ of $\{1,\ldots,d\}$, let us write $E_{J}=\Vect(e_{i} : i \in J)$. Let us also denote by $|J|$ the cardinality of $J$. Then for all scalar continuous functions $f$ on $\Gr_{n}(E)$, we have
\begin{equation}\label{eq:intnun}
\int_{\Gr_{n}(E)}f(Q)\d\nu_{n}^{E}(Q)=\int_{\U(E)} \sum_{J\subset \{1,\ldots,d\}, |J|=n} f(u(E_{J}))\d u.
\end{equation}

We denote by $\Gr(E)=\bigsqcup_{n=0}^{d} \Gr_{n}(E)$ the full Grassmannian of $E$ and endow it with the measure $\nu^{E}=\sum_{n=0}^{d}\nu^{E}_{n}$, which has total mass $2^{d}$.

Let us now assume that $E$ is an inner product space and let $\sigma=(E_{1}, \ldots, E_{s})$ be a splitting of~$E$. Let us say that a linear subspace $Q$ of $E$ is {\em adapted} to $\sigma$ if one, and hence all of the following equivalent conditions are satisfied: 
\begin{align*}
&\exists (Q_{1},\ldots,Q_{s})\in \Gr(E_{1})\times \ldots \times \Gr(E_{s}), \ Q=Q_{1}\oplus \ldots \oplus Q_{s}, \\
&Q=(Q\cap E_{1})\oplus \ldots \oplus (Q\cap E_{s}),\\
&\dim Q=\dim (Q\cap E_{1})+ \ldots + \dim(Q\cap E_{s}).
\end{align*}
An adapted subspace $Q$ of $(E,\sigma)$ comes with an induced splitting $\sigma_{Q}=(Q\cap E_{1}, \ldots, Q\cap E_{s})$.
We call the $s$-tuple of integers $\sdim Q=(\dim(Q\cap E_{1}),\ldots,\dim(Q\cap E_{s}))$ the {\em split dimension} of $Q$. It will be convenient to denote by $\ul{d}$ the split dimension of $E$ itself, that is, $\ul{d}=(\dim E_{1},\ldots,\dim E_{s})$. We will compare tuples of integers componentwise, so that $\ul{m}\leq \ul{n}$ is equivalent, by definition, to $m_{1}\leq n_{1},\ldots,m_{s}\leq n_{s}$. We will also write $|\ul{n}|=n_{1}+\ldots+n_{s}$, so that an adapted subspace of $E$ with split dimension $\ul{n}$ has ordinary dimension $|\ul{n}|$.

We define the following subsets of $\Gr(E)$, which depend on an integer $n\leq d$ and an $s$-tuple of integers $\ul{n}\leq \ul{d}$:
\begin{itemize} 
\item $\Gr(E,\sigma)$, the set of all linear subspaces of $E$ adapted to $\sigma$,
\item $\Gr_{n}(E,\sigma)=\{Q\in \Gr(E,\sigma) : \dim Q=n\}$,
\item $\Gr_{\ul{n}}(E,\sigma)=\{Q\in \Gr(E,\sigma) : \sdim Q=\ul{n}\}$.
\end{itemize}
For all $\ul{n}=(n_{1},\ldots,n_{s})$, we make without further comment the identification 
\[\Gr_{\ul{n}}(E,\sigma)\simeq \Gr_{n_{1}}(E_{1})\times \ldots \times \Gr_{n_{s}}(E_{s}).\]

The space $\Gr_{\underline{n}}(E,\sigma)$ is acted on smoothly and transitively by $\U(E,\sigma)$ and the unique Borel  measure that is invariant under this action and has total mass $\binom{d_{1}}{n_{1}}\ldots \binom{d_{s}}{n_{s}}$ is the measure
\[\nu_{\underline{n}}^{E,\sigma}=\nu_{n_{1}}^{E_{1}}\otimes \ldots \otimes \nu_{n_{s}}^{E_{s}}.\]
We will use the notation $\binom{\ul{d}}{\ul{n}}=\binom{d_{1}}{n_{1}}\ldots \binom{d_{s}}{n_{s}}$. We define also the measures
\[\nu_{n}^{E,\sigma}=\sum_{\underline{n}\leq \ul{d}, \, |\underline{n}|=n} \nu_{\underline{n}}^{E,\sigma} \ \text{ and } \ \nu^{E,\sigma}=\sum_{n=0}^{d} \nu^{E,\sigma}_{n}\]
on $\Gr_{n}(E,\sigma)$ and $\Gr(E,\sigma)$ respectively. Note that these measures have total masses
\[\nu^{E,\sigma}_{n}(\Gr_{n}(E,\sigma))=\binom{d}{n} \ \text{ and } \ \nu^{E,\sigma}(\Gr(E,\sigma))=2^{d},\]
regardless of the splitting of $E$. 

The measures $\nu^{E,\sigma}_{n}$ can be described by a formula analogous to \eqref{eq:intnun}. Let us say that an orthonormal basis of $E$ is {\em adapted} to $\sigma$ if the linear subspace generated by any subset of this basis is adapted to $\sigma$. Equivalently, a basis is adapted if every vector of this basis belongs to one of the subspaces of the splitting. Let us choose such an orthonormal basis $(e_{1},\ldots,e_{d})$ of~$E$ adapted to $\sigma$. Let us denote by $\dd u$ the normalised Haar measure on $\U(E,\sigma)$. Then, for all continuous functions $f$ on $\Gr_{\ul{n}}(E,\sigma)$, we have, with the notation used in \eqref{eq:intnun},
\begin{equation}\label{eq:intnunsigma}
\int_{\Gr_{n}(E,\sigma)} f(Q)\d \nu^{E,\sigma}_{n}(Q)=\int_{\U(E,\sigma)} \sum_{\substack{J\subseteq \{1,\ldots,d\}\\ |J|=n}} f(u(E_{J}))\d u.
\end{equation}
This formula can easily be adapted to describe, instead of the measure $\nu_{n}^{E,\sigma}$, the measure $\nu_{\ul{n}}^{E,\sigma}$ for some $\ul{n}=(n_{1},\ldots,n_{s})$: it suffices to restrict the sum in the right-hand side to subsets $J$ such that $\sdim E_{J}=\ul{n}$, that is, to subsets that pick $n_{1}$ vectors in the subspace $E_{1}$, $n_{2}$ vectors in $E_{2}$, and so on.

\begin{example}\label{ex:grlines}\rm
In the case where $\sigma=(E_{1},\ldots,E_{d})$ is a splitting in lines of $E$, the map from the power set $2^{\{1,\ldots,d\}}$ to $\Gr(E,\sigma)$ which to a set $I\subset \{1,\ldots,d\}$ associates the subspace $E_{I}=\bigoplus_{i\in I} E_{i}$ is a bijection.\footnote{These subspaces are sometimes called \emph{coordinate subspaces}.} Moreover, the measure $\nu^{E,\sigma}$ is simply the counting measure on $\Gr(E,\sigma)$.
\end{example}

It is useful to realise that $\Gr(E,\sigma)$ is stable under the involution $Q\mapsto Q^{\perp}$ which sends a subspace of $E$ to its orthogonal. Moreover, this involution exchanges the subsets $\Gr_{\ul{n}}(E,\sigma)$ and $\Gr_{\ul{d}-\ul{n}}(E,\sigma)$ in a way that is equivariant under the action of $\U(E,\sigma)$, and hence is measure-preserving. Thus, if $f$ is a continuous test function on $\Gr(E,\sigma)$, then
\begin{equation}\label{eq:orthnu}
\int_{\Gr_{\ul{n}}(E,\sigma)} f(Q) \d\nu^{E,\sigma}_{\ul{n}}(Q)=\int_{\Gr_{\ul{d}-\ul{n}}(E,\sigma)} f(Q^{\perp}) \d\nu^{E,\sigma}_{\ul{d}-\ul{n}}(Q).
\end{equation}
We will use this equality, as well as the following slightly elaborated version of it. 

\begin{lemma}\label{lem:orthpi} Let $f$ be a continuous test function on $\Gr(E,\sigma)\times \Gr(E,\sigma)$. We have the equality
\begin{align*}
&\int_{\Gr(E,\sigma)} \bigg(\int_{\Gr(Q,\sigma_{Q})} f(R,Q) \d\nu^{Q,\sigma_{Q}}(R) \bigg) \d\nu^{E,\sigma}(Q)\\
&\hspace{6cm} =\int_{\Gr(E,\sigma)} \bigg(\int_{\Gr(Q,\sigma_{Q})} f(Q^{\perp},R^{\perp}) \d\nu^{Q,\sigma_{Q}}(R) \bigg) \d\nu^{E,\sigma}(Q).
\end{align*}
\end{lemma}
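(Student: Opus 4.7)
The plan is to recognise both sides of the identity as the integral of a single function against one and the same measure on the ``nested pair'' variety
\[\mathcal{F}=\{(R,Q)\in\Gr(E,\sigma)\times\Gr(E,\sigma) : R\subseteq Q\},\]
and then to reduce the claim to the symmetry of that measure under the involution $\phi:(R,Q)\mapsto(Q^\perp,R^\perp)$, which is well-defined on $\mathcal{F}$ because $R\subseteq Q$ implies $Q^\perp\subseteq R^\perp$ and both operations preserve adaptedness.

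First I would define the finite Borel measure $m$ on $\mathcal{F}$ by
\[\int_{\mathcal{F}} g \d m = \int_{\Gr(E,\sigma)} \int_{\Gr(Q,\sigma_Q)} g(R,Q) \d\nu^{Q,\sigma_Q}(R) \d\nu^{E,\sigma}(Q),\]
so that the left-hand side of the lemma is $\int f \d m$ while the right-hand side is $\int (f\circ\phi) \d m$. The lemma is then equivalent to the identity $\phi_*m=m$. Both $m$ and $\phi_*m$ are $\U(E,\sigma)$-invariant: for $m$ this follows from the $\U(E,\sigma)$-invariance of $\nu^{E,\sigma}$ together with the equivariance of the fibered assignment $Q\mapsto\nu^{Q,\sigma_Q}$, while for $\phi_*m$ it follows in addition from the fact that orthogonal complementation commutes with isometries, so that $\phi$ itself is $\U(E,\sigma)$-equivariant.

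Next I would decompose $\mathcal{F}$ into its $\U(E,\sigma)$-orbits
\[\mathcal{F}_{\ul{m},\ul{n}}=\{(R,Q)\in\mathcal{F} : \sdim R=\ul{m},\ \sdim Q=\ul{n}\},\qquad \ul{m}\leq\ul{n}\leq\ul{d},\]
and check that the action is transitive on each: two nested adapted pairs with the same pair of split dimensions can be conjugated by an isometry preserving $\sigma$, because within each summand $E_i$ any nested flag of prescribed dimensions can be mapped to any other by an isometry of $E_i$. Uniqueness of the invariant measure on each orbit (up to a scalar) therefore reduces $\phi_*m=m$ to an equality of total masses. A direct computation using the normalisations recalled in Section~\ref{sec:grassmannians} gives
\[m(\mathcal{F}_{\ul{m},\ul{n}})=\binom{\ul{d}}{\ul{n}}\binom{\ul{n}}{\ul{m}}, \qquad (\phi_*m)(\mathcal{F}_{\ul{m},\ul{n}})=m(\mathcal{F}_{\ul{d}-\ul{n},\ul{d}-\ul{m}})=\binom{\ul{d}}{\ul{d}-\ul{m}}\binom{\ul{d}-\ul{m}}{\ul{d}-\ul{n}},\]
and these agree componentwise thanks to the elementary binomial identity $\binom{d}{n}\binom{n}{m}=\binom{d}{m}\binom{d-m}{n-m}$.

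The only genuinely substantive step is the verification of orbit transitivity; the rest is accounting with the invariance already built into the measures of Section~\ref{sec:grassmannians}. A purely computational alternative would be to apply~\eqref{eq:intnunsigma} to both sides after choosing an adapted orthonormal basis of $E$ and to match the resulting Haar integrals over $\U(E,\sigma)$, but the invariance-plus-mass-count approach above seems cleaner and more conceptual.
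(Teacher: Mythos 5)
Your proof is correct and takes essentially the same route as the paper's: both reduce to the components $\Gr_{\ul{m},\ul{n}}$, identify the nested-pair space as a homogeneous space for $\U(E,\sigma)$, and use uniqueness of the invariant measure of a given mass together with the $\U(E,\sigma)$-equivariance of $(R,Q)\mapsto(Q^{\perp},R^{\perp})$. The only cosmetic difference is that you make the mass-matching binomial identity $\binom{\ul{d}}{\ul{n}}\binom{\ul{n}}{\ul{m}}=\binom{\ul{d}}{\ul{m}}\binom{\ul{d}-\ul{m}}{\ul{n}-\ul{m}}$ explicit, whereas the paper leaves it implicit in the assertion that the pushforward lands on the measure described by the right-hand side.
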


\begin{proof} It is sufficient to prove that for all vectors of integers $0\leq \ul{m} \leq \ul{n} \leq \ul{d}$, the following equality holds:
\begin{align}\nonumber
&\int_{\Gr_{\ul{n}}(E,\sigma)} \bigg(\int_{\Gr_{\ul{m}}(Q,\sigma_{Q})} f(R,Q) \d\nu_{\ul{m}}^{Q,\sigma_{Q}}(R) \bigg) \d\nu_{\ul{n}}^{E,\sigma}(Q)\\  \label{eq:toprovemn}
&\hspace{5cm} =\int_{\Gr_{\ul{d}-\ul{m}}(E,\sigma)} \bigg(\int_{\Gr_{\ul{d}-\ul{n}}(Q,\sigma_{Q})} f(Q^{\perp},R^{\perp}) \d\nu_{\ul{d}-\ul{n}}^{Q,\sigma_{Q}}(R) \bigg) \d\nu_{\ul{d}-\ul{m}}^{E,\sigma}(Q)
\end{align}
since the result follows by summing over $\ul{m}$ and $\ul{n}$.

The set of pairs $(R,Q)$ of subspaces of $E$ such that $Q\in \Gr_{\ul{n}}(E,\sigma)$ and $R\in \Gr_{\ul{m}}(Q,\sigma_{Q})$ is a connected component of a partial flag manifold of $(E,\sigma)$ that could, following tradition, be denoted by $\Gr_{(\ul{m},\ul{n}-\ul{m},\ul{d}-\ul{n})}(E,\sigma)$, the indices being the split dimensions of the successive quotients of the ascending chain $\{0\} \subseteq R \subseteq Q \subseteq E$. This set is acted on transitively by $\U(E,\sigma)$ and the measure described by the left-hand side of \eqref{eq:toprovemn} is its invariant measure of total mass $\binom{\ul{d}}{\ul{n}}\binom{\ul{n}}{\ul{m}}$. 

The map $(R,Q)\mapsto (Q^{\perp},R^{\perp})$ commutes to the action of $\U(E,\sigma)$ and sends $\Gr_{(\ul{m},\ul{n}-\ul{m},\ul{d}-\ul{n})}(E,\sigma)$ to $\Gr_{(\ul{d}-\ul{n},\ul{n}-\ul{m},\ul{m})}(E,\sigma)$. It pushes the invariant measure on the first space to an invariant measure of the same mass on the second space, which is thus the unique invariant measure with this mass, and is precisely the measure described by the right-hand side of \eqref{eq:toprovemn}.
\end{proof}

\subsection{Incidence measure of a random linear subspace}\label{sec:incidence} With the notation introduced so far, our aim is to construct and study a family of probability measures on $\Gr(E,\sigma)$. Such measures can be described by their incidence measures, as we explain now. We start by reviewing the more classical situation of point processes on a finite set which, according to Example \ref{ex:grlines}, is in our language the special case where $\sigma$ is a splitting in lines.

Let $\X$ be a random subset of the set $S=\{1,\ldots,d\}$. By this we mean that we are given a probability measure $\mu$ on the measurable space $(2^{S}, 2^{2^{S}})$ and that $\X$ is the identity map of the probability space $(2^{S},2^{2^{S}},\mu)$. The incidence measure of the distribution of $\X$ is the finite measure $\Inc{\mu}$ on $2^{S}$ defined by
\begin{equation}\label{eq:incidencefini}
\Inc{\mu}=\E\Big[\sum_{J\subseteq \X} \delta_{J}\Big]=\sum_{J\subseteq I\subseteq S} \mu(\{I\}) \delta_{J},
\end{equation}
so that for all $J\subseteq S$, we have
\[\P(J\subseteq \X)=\Inc{\mu}(\{J\}).\]
The letter $\Inc{}$ stands here for {\em zeta}, in reference to the zeta function of the lattice $(2^{S},\subseteq)$, of which the M\"{o}bius function is the inverse (see for example \cite{Rota}).

Point processes are often described by their incidence measure rather than by their distribution.\footnote{This incidence measure, in turn, is usually described by first pushing it forward onto the space $\bigsqcup_{n\geq 0} S^{n}$ by the map 
%\thierry{Si on ne s'en sert pas, c'est peut-être pas la peine de passer dix lignes dessus}
\begin{align*}
2^{S} & \longrightarrow {\sf Meas}\bigg(\bigsqcup_{n\geq 0} S^{n}\bigg)\\
I=\{i_{1},\ldots,i_{n}\} & \longmapsto \sum_{\sigma\in \mathfrak S_{n}} \delta_{(i_{\sigma(1)},\ldots,i_{\sigma(n)})},
\end{align*}
and expressing the density of the resulting measure with respect to the measure $\sum_{n\geq 0} \rho^{\otimes n}$, where $\rho$ is some reference measure on $S$. For each $n\geq 0$, the restriction to $S^{n}$ of the density is called the $n$-th correlation function of the process. We will not need the language of correlation functions, and will instead work with the incidence measure itself.}
It is thus a crucial fact that a measure on $2^{S}$ is uniquely characterised by its incidence measure, thanks to the following inclusion-exclusion principle, or M\"obius inversion formula: for all $I\subseteq S$, we have
\begin{equation}\label{eq:mobiusfini}
\P(\X=I)=\sum_{I\subseteq J \subseteq S} (-1)^{|J\setminus I|} \P(J\subseteq \X).
\end{equation}
Recall that here and thereafter, we denote by $|J|$ the cardinality of a set $J$.

This formula can be checked by a simple computation using the elementary fact that for all $I\subseteq \{1,\ldots,d\}$, the alternating sum $\sum_{J\subseteq I} (-1)^{J}$ is equal to $(1-1)^{|I|}$, that is to $0$, unless $I$ is empty, in which case it is equal to $1$. Let us note that the equality \eqref{eq:mobiusfini} can be written, in terms of measures, under the form
\begin{equation}\label{eq:mobiusfini2}
\mu=\sum_{J\subseteq S} \bigg(\sum_{I\subseteq J} (-1)^{|J\setminus I|} \delta_{I}\bigg) \Inc{\mu}(\{J\}).
\end{equation}
Let us now state and prove the results that extend these relations to our setting of measures on Grassmannians.

\begin{definition}\label{def:defZ} Let $(E,\sigma)$ be a split inner product space. Let $\mu$ be a Borel  measure on $\Gr(E,\sigma)$. The {\em incidence measure} of $\mu$ is the measure 
\begin{equation}\label{eq:definc}
\Inc{\mu}=\int_{\Gr(E,\sigma)} \nu^{Q,\sigma_{Q}} \d\mu(Q),
\end{equation}
where for every $Q\in \Gr(E,\sigma)$, the set $\Gr(Q,\sigma_{Q})$ is seen as a subset of $\Gr(E,\sigma)$, and the measure $\nu^{Q,\sigma_{Q}}$ as a measure on $\Gr(E,\sigma)$.
\end{definition}

This definition is manifestly analogous to \eqref{eq:incidencefini}, once the lattice $(2^{S},\subseteq)$ has been replaced by the lattice $(\Gr(E,\sigma),\subseteq)$.\footnote{Recall that a lattice is a partially ordered set with a notion of meet and join for any pair of elements. As pointed out in~\cite{Klain-Rota}, the two above-mentioned lattices share analogous properties but the main (obvious) difference is that the Grassmannian is not a distributive lattice: in general, for vector spaces $E,F,H$, we do not have $(E\oplus F)\cap H=(E\cap H)\oplus (F\cap H)$. This non-distributivity property is of crucial importance for the fact that DLP are non trivial and this argues in favour of the point of view of considering random subspaces instead of point processes, even in the case of DPP.} Let us stress that the definition of $\Inc{\mu}$ depends on the splitting. 

Let us observe that the total mass of $\Inc{\mu}$, which is equal to the integral of the function $Q\mapsto 2^{\dim Q}$ with respect to $\mu$, is equal at least to the total mass of $\mu$ and at most to $2^{d}$ times this mass. In particular, $\mu$ is a finite measure if and only if $\Inc{\mu}$ is a finite measure.

The main result at this point is the following.

\begin{proposition}\label{prop:incmeaschar} Let $(E,\sigma)$ be a split inner product space. Let $\mu$ be a finite measure on $\Gr(E,\sigma)$. Let $\Inc{\mu}$ be the incidence measure of $\mu$. Then
\begin{equation}\label{eq:mobius}
\mu=\int_{\Gr(E,\sigma)}\bigg( \int_{\Gr(Q,\sigma_{Q})} (-1)^{\dim Q-\dim R} \delta_{R} \d\nu^{Q,\sigma_{Q}}(R)\bigg) \d\Inc{\mu}(Q).
\end{equation}
\end{proposition}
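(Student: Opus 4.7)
The plan is to unfold the definition of $\Inc{\mu}$ inside the right-hand side of \eqref{eq:mobius}, swap the order of integration, and reduce the statement to a pointwise identity in $P$. Testing against an arbitrary continuous function $f$ on $\Gr(E,\sigma)$, the right-hand side of \eqref{eq:mobius} integrated against $f$ would become
\[ \int_{\Gr(E,\sigma)} \Phi(P) \, \dd\mu(P), \]
where
\[ \Phi(P) = \int_{\Gr(P,\sigma_P)} \int_{\Gr(Q,\sigma_Q)} (-1)^{\dim Q - \dim R} f(R) \, \dd\nu^{Q,\sigma_Q}(R) \, \dd\nu^{P,\sigma_P}(Q). \]
The finiteness of $\mu$ together with the boundedness of the integrand on the compact flag varieties involved justifies Fubini, so it will suffice to prove the pointwise identity $\Phi(P) = f(P)$ for every adapted $P$.

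For this identity, I would invoke Lemma~\ref{lem:orthpi} with $(E,\sigma)$ replaced by $(P,\sigma_P)$, applied to the test function $(R,Q) \mapsto (-1)^{\dim Q - \dim R} f(R)$. The key observation is that under the flip $(R,Q) \mapsto (Q^{\perp_P}, R^{\perp_P})$ the sign is preserved, because $\dim R^{\perp_P} - \dim Q^{\perp_P} = \dim Q - \dim R$, while the dependence on the innermost variable is transferred: after the flip, $f$ is evaluated at $Q^{\perp_P}$, which no longer depends on $R$. This rewrites $\Phi(P)$ as
\[ \Phi(P) = \int_{\Gr(P,\sigma_P)} f(Q^{\perp_P}) \left[ \int_{\Gr(Q,\sigma_Q)} (-1)^{\dim Q - \dim R} \, \dd\nu^{Q,\sigma_Q}(R) \right] \dd\nu^{P,\sigma_P}(Q). \]

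The inner bracket is then computed explicitly using the decomposition $\nu^{Q,\sigma_Q} = \sum_{\ul{r} \leq \sdim Q} \nu^{Q,\sigma_Q}_{\ul{r}}$ and the fact that each piece has total mass $\binom{\sdim Q}{\ul{r}}$. Writing $\sdim Q = (q_{1},\ldots,q_{s})$, the alternating sum factorises componentwise as a multinomial expansion $\prod_{i=1}^{s} (1-1)^{q_{i}}$, which vanishes unless $Q = \{0\}$ and equals $1$ in that case. Since $\{0\}^{\perp_{P}} = P$ and $\nu^{P,\sigma_P}$ assigns mass $1$ to the singleton $\{\{0\}\}$ (the piece $\nu^{P,\sigma_P}_{\ul{0}}$), only the term $Q = \{0\}$ survives and we obtain $\Phi(P) = f(P)$, establishing \eqref{eq:mobius}.

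The only subtlety I foresee is bookkeeping: keeping track that orthogonal complements are taken inside $P$ rather than inside $E$, and that Lemma~\ref{lem:orthpi}, stated for the ambient space, applies verbatim to the split inner product space $(P,\sigma_P)$ (which inherits all the needed structure). Beyond that, the argument is a continuous transplant of the classical Möbius cancellation $\sum_{K \subseteq J}(-1)^{|K|} = \mathbbm{1}_{J = \emptyset}$ used in \eqref{eq:mobiusfini2}, with Lemma~\ref{lem:orthpi} playing the role of the bijection $K \leftrightarrow J \setminus K$ in the finite case.
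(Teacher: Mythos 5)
Your argument is correct and follows essentially the same route as the paper's own proof: unfold the definition of $\Inc{\mu}$, reduce to a pointwise identity via the triple integral, apply Lemma~\ref{lem:orthpi} inside the fibre $(Q,\sigma_Q)$ (with orthogonal complements taken in $Q$, not $E$) to decouple $f$ from the innermost variable, and then observe that the remaining alternating integral reproduces the classical M\"obius cancellation $(1-1)^{\dim}$. The only differences are cosmetic — variable names, an explicit Fubini remark, and spelling out the multinomial factorisation of $\int (-1)^{\dim Q - \dim R}\,\dd\nu^{Q,\sigma_Q}(R)$ which the paper states more tersely.
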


This formula implies in particular that the measure $\Inc{\mu}$ characterises $\mu$ uniquely. In the case of a splitting in lines, it reduces to \eqref{eq:mobiusfini}.

\begin{proof} Since $\mu$ is finite, the measure $\Inc{\mu}$ is also finite and the integral on the right-hand side is well defined, a priori as a signed measure on $\Gr(E,\sigma)$. Let $f$ be a continuous test function on $\Gr(E,\sigma)$. Let us compute the integral of $f$ with respect to the measure defined by the right-hand side of \eqref{eq:mobius}. Using the definition of $\Inc{\mu}$, we find that this integral is equal to
\begin{equation}\label{eq:fmu}
\int_{\Gr(E,\sigma)} \bigg(\int_{\Gr(Q,\sigma_{Q})}\bigg(\int_{\Gr(R,\sigma_{R})}
(-1)^{\dim R-\dim S} f(S) \d\nu^{R,\sigma_{R}}(S)\bigg) \d\nu^{Q,\sigma_{Q}}(R) \bigg)\d\mu(Q).
\end{equation}
Let us fix $Q$ and consider the expression between the outermost pair of brackets. Let us compute this expression by applying Lemma \ref{lem:orthpi} in the split inner product space $(Q,\sigma_{Q})$, paying attention to the fact that orthogonal complements must be taken with respect to $Q$, so that, for instance, wherever $R^{\perp}$ stands in the statement of Lemma \ref{lem:orthpi}, we must write $Q\cap R^{\perp}$. We find, thanks to the equality $\dim (Q\cap S^{\perp})-\dim (Q\cap R^{\perp})=\dim R-\dim S$, that this expression is equal to 
\begin{equation}\label{eq:QQ}
\int_{\Gr(Q,\sigma_{Q})}\bigg(\int_{\Gr(R,\sigma_{R})}
(-1)^{\dim S}  \d\nu^{R,\sigma_{R}}(S)\bigg) (-1)^{\dim R} f(R^{\perp}) \d\nu^{Q,\sigma_{Q}}(R)\,.
\end{equation}
The innermost integral of \eqref{eq:QQ} is equal to $(1-1)^{\dim R}$, that is, to $1$ if $R=\{0\}$ and to $0$ otherwise. Thus, \eqref{eq:QQ} is equal to $f(Q)$, and \eqref{eq:fmu} is the integral of $f$ with respect to $\mu$.
\end{proof}

\subsection{Sub-matrices and compressions}\label{sec:minors} 
In this brief section, we introduce some notation for sub-matrices of matrices and compressions of linear maps. This notation will allow us to write the incidence measures of the determinantal subspace processes that we want to define.

Let us consider first sub-matrices. Consider a $p\times q$ matrix $M$. For all $I\subset \{1,\ldots,p\}$ and all $J\subset \{1,\ldots,q\}$, we denote by $M^{I}_{J}$ the matrix obtained from $M$ by keeping only the rows indexed by the elements of $I$ and the columns indexed by the elements of $J$. For example, $M^{\scriptscriptstyle\{i\}}_{\scriptscriptstyle\{j\}}$ is the $1\times 1$ matrix containing the entry $M_{ij}$. We also use the notation $M^{I}$ instead of $M^{I}_{\scriptscriptstyle\{1,\ldots,q\}}$ and $M_{J}$ instead of $M^{\scriptscriptstyle\{1,\ldots,p\}}_{J}$.

We find it convenient to introduce the analogue of this notation for linear maps. Let $\op{a}:F\to E$ be a linear map between two inner product spaces. For all linear subspaces $Q\subset E$ and $R \subset F$, we denote by $\op{1}_{R}:R\to F$ the inclusion map, by $\op{1}^{Q}:E\to Q$ the orthogonal projection, and we set
\[\op{a}_{R}^{Q}=\op{1}^{Q} \circ a \circ \op{1}_{R} : R\to Q.\]

We also introduce the notation $\op{a}^{Q}=\op{a}^{Q}_{F}$ and $\op{a}_{R}=\op{a}^{E}_{R}$. For example, $(\id_{F})_{R}=\op{1}_{R}$ and $(\id_{E})^{Q}=\op{1}^{Q}$. Note also that $\op{a}_{R}^{Q}=(\op{a}_{R})^{Q}=(\op{a}^{Q})_{R}$.

Let us give a few more examples of uses of this notation which will occur later on in the text. Given two linear subspaces $Q$ and $R$ of an inner product space $E$, the endomorphism 
\[\op{1}_{Q}^{R}=(\id_{E})_{Q}^{R}:Q\to R\]
is the orthogonal projection of $Q$ on $R$. In particular, 
\[\id_{Q}=\op{1}^{Q}\op{1}_{Q}:Q\to Q\] 
is the identity of $Q$. On the other hand, 
\[\proj{Q}=\op{1}_{Q} \op{1}^{Q}:E\to E\] 
is the orthogonal projection on $Q$ seen as an endomorphism of $E$. Although it is not entirely consistent with the definitions of this section, we will keep using the notation $\proj{Q}$ for this projection.

\subsection{An invariant Cauchy--Binet formula} \label{sec:CB}
Let us discuss an extension of the classical Cauchy--Binet formula which will be an important tool in our computations.

Let $E$ and $F$ be two finite-dimensional vector spaces, of respective dimensions $d$ and $n$. Let $(e_{1},\ldots,e_{d})$ and $(f_{1},\ldots,f_{n})$ be two bases of $E$ and $F$ respectively. Let $\op{a}:F\to E$ and $\op{b}:E\to F$ be two linear maps. Let $A$ and $B$ be the matrices of $\op{a}$ and $\op{b}$ respectively with respect to the chosen bases of $E$ and $F$. The classical Cauchy--Binet formula computes the determinant of the endomorphism $\op{b}\circ \op{a}$ of $F$:
\[\det(\op{b}\op{a})=\sum_{\substack{J\subset \{1,\ldots,d\} \\ |J|=n}} \det(B_{J})\det (A^{J}).\]
The individual summands in the right-hand side of the Cauchy--Binet formula depend on the choice of the basis of $E$, but of course, their sum does not. From our point of view, this classical Cauchy--Binet formula is well adapted to the splitting in lines determined by the basis that we chose on $E$. We will now write a Cauchy--Binet formula that is adapted to an arbitrary splitting of $E$. 

In the next statement, we use the notation introduced in Section \ref{sec:minors}.

\begin{proposition}[Split Cauchy--Binet formula]\label{prop:CB+} Let $E$ and $F$ be two inner product spaces. Assume that $E$ is endowed with  an orthogonal splitting $\sigma=(E_{1}, \ldots, E_{s})$. Set $n=\dim F$. Let $\op{a}:F\to E$ and $\op{b}:E\to F$ be two linear maps. Then
\begin{equation}\label{CB}
\det(\op{b}\op{a})=\int_{\Gr_{n}(E,\sigma)} \det(\op{b}_{Q} \op{a}^{Q}) \d\nu_{n}^{E,\sigma}(Q)=\int_{\Gr_{n}(E,\sigma)} \det(\op{a}^{Q} \op{b}_{Q}) \d\nu_{n}^{E,\sigma}(Q).
\end{equation}
\end{proposition}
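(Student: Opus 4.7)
The plan is to reduce the split formula to the classical Cauchy--Binet formula applied in an orthonormal basis of $E$ adapted to $\sigma$, and then average over the unitary group $\U(E,\sigma)$ in order to recognise the measure $\nu_{n}^{E,\sigma}$ via the description \eqref{eq:intnunsigma}.

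First, I would fix an orthonormal basis $(e_{1},\ldots,e_{d})$ of $E$ adapted to $\sigma$ and a basis of $F$, and let $A$ and $B$ denote the matrices of $\op{a}$ and $\op{b}$ in these bases. For any subset $J\subseteq\{1,\ldots,d\}$ of cardinality $n$, I would unpack the definitions of Section \ref{sec:minors} to check that, in the induced bases, the matrix of $\op{a}^{E_{J}}:F\to E_{J}$ is exactly $A^{J}$ and the matrix of $\op{b}_{E_{J}}:E_{J}\to F$ is exactly $B_{J}$. This identifies
\[\det(\op{b}_{E_{J}}\op{a}^{E_{J}})=\det(B_{J})\det(A^{J}),\]
so the classical Cauchy--Binet formula reads
\[\det(\op{b}\op{a})=\sum_{|J|=n}\det(\op{b}_{E_{J}}\op{a}^{E_{J}}).\]

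Next, I would observe that the same identity holds verbatim in any orthonormal basis of $E$. In particular, for every $u\in\U(E,\sigma)$, the basis $(u(e_{1}),\ldots,u(e_{d}))$ is still orthonormal and adapted to $\sigma$, so
\[\det(\op{b}\op{a})=\sum_{|J|=n}\det(\op{b}_{u(E_{J})}\op{a}^{u(E_{J})}).\]
Since the left-hand side does not depend on $u$, I integrate both sides against the normalised Haar measure $\dd u$ on $\U(E,\sigma)$ and invoke formula \eqref{eq:intnunsigma} on the right, applied to the continuous function $Q\mapsto\det(\op{b}_{Q}\op{a}^{Q})$ on $\Gr_{n}(E,\sigma)$. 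This yields the first equality of \eqref{CB}.

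Finally, for the second equality I would use that for $Q\in\Gr_{n}(E,\sigma)$ the two compositions $\op{b}_{Q}\op{a}^{Q}\in\End(F)$ and $\op{a}^{Q}\op{b}_{Q}\in\End(Q)$ act on spaces of the same dimension $n$, so they have equal determinants by the elementary identity $\det(XY)=\det(YX)$ for square matrices of the same size. Integrating this pointwise equality against $\nu_{n}^{E,\sigma}$ concludes the proof.

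There is no genuine obstacle here; the only point demanding a little care is the matching of conventions in step one, namely checking that the block notation $\op{a}^{Q}$, $\op{b}_{Q}$ from Section \ref{sec:minors} really reproduces the ``rows of $J$'' and ``columns of $J$'' submatrices when $Q=u(E_{J})$ and the basis $(u(e_{i}))_{i\in J}$ is used to represent $Q$.
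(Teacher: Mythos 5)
Your proof is correct and takes essentially the same approach as the paper: apply the classical Cauchy--Binet identity in an orthonormal basis adapted to $\sigma$, average over $\U(E,\sigma)$ via \eqref{eq:intnunsigma}, and deduce the equality of the two integrals from the elementary identity $\det(\op{g}\op{h})=\det(\op{h}\op{g})$ for linear maps between spaces of equal dimension.
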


Let us emphasise that in the case of a splitting in lines, \eqref{CB} reduces to the classical Cauchy--Binet formula. 

Note also that in general, and by contrast with the classical case, neither $\det(\op{a}^{Q})$ nor $\det(\op{b}_{Q})$ is defined in isolation. This is why we give two versions of the Cauchy--Binet formula, corresponding to the two possible orders of multiplication of the compressed linear maps. In the first integral, the determinant is that of an endomorphism of $F$, whereas in the second integral, it is the determinant of an endomorphism of $Q$.

\begin{proof} The equality between the two integrals of \eqref{CB} is a consequence of the following elementary fact: given any two vector spaces $G$ and $H$ {\em with the same dimension} and any two linear maps $\op{g}:H\to G$ and $\op{h}:G\to H$, we have $\det(\op{g}\op{h})=\det(\op{h}\op{g})$.

In order to prove the first equality, let us start by rewriting the classical Cauchy--Binet formula in a slightly different way. Set $d=\dim E$ and choose an orthonormal basis $(e_{1},\ldots,e_{d})$ of $E$ that is adapted to the splitting of $E$ (recall that this means that it is obtained by concatenating orthonormal bases of $E_{1},\ldots,E_{s}$). For all $J\subset \{1,\ldots,d\}$, let us denote by $E_{J}$ the subspace $\Vect(e_{j} : j\in J)$ of~$E$. Then the classical Cauchy--Binet formula reads
\[\det(\op{b}\op{a})=\sum_{\substack{J\subset \{1,\ldots,d\} \\ |J|=n}} \det\left(\op{b}_{E_{J}}  \op{a}^{E_{J}}\right).\]
Since this formula holds for every basis of $E$ adapted to $\sigma$, we have, for all $u\in \U(E,\sigma)$,
\[\det(\op{b}\op{a})=\sum_{\substack{J\subset \{1,\ldots,d\} \\ |J|=n}} \det\left(\op{b}_{u(E_{J})}  \op{a}^{u(E_{J})}\right).\]
Integrating with respect to the normalised Haar measure on $\U(E,\sigma)$ and using \eqref{eq:intnunsigma}, we find the announced result.
\end{proof}

\subsection{A simple formula about determinants} We will also make repeated use of an equality that, in the case of a splitting in lines, reduces to the multilinearity of the determinant. 

\begin{proposition} \label{prop:multilin}
Let $(E,\sigma)$ be a split inner product space. Let $\op{a}$ and $\op{b}$ be linear endomorphisms of $E$. Let $Q\in \Gr(E,\sigma)$ be a linear subspace of $E$ adapted to $\sigma$. The following equalities hold:
\[\int_{\Gr(Q,\sigma_{Q})} \det(\op{a}\proj{R}+\op{b}\proj{R^{\perp}}) \d\nu^{Q,\sigma_{Q}}(R)=\det(\op{a}\proj{Q}+\op{b})\]
and
\[\int_{\Gr(Q,\sigma_{Q})} \det(\proj{R}\op{a}+\proj{R^{\perp}}\op{b}) \d\nu^{Q,\sigma_{Q}}(R)=\det(\proj{Q}\op{a}+\op{b}).\]
\end{proposition}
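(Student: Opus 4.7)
The plan is to verify the first identity by reducing the integral to a finite sum via the formula \eqref{eq:intnunsigma} and then invoking classical column‑multilinearity of the determinant; the second identity will follow by the symmetric argument applied to rows. Set $n=\dim Q$ and $d=\dim E$. Choose an orthonormal basis $(q_{1},\ldots,q_{n})$ of $Q$ adapted to $\sigma_{Q}$, and extend it to an orthonormal basis $(q_{1},\ldots,q_{d})$ of $E$ by any orthonormal basis of $Q^{\perp}$. For each $J\subseteq\{1,\ldots,n\}$, write $Q_{J}=\Vect(q_{j}:j\in J)$.

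By \eqref{eq:intnunsigma} applied to $(Q,\sigma_{Q})$, the integral on the left-hand side equals
\[
\int_{\U(Q,\sigma_{Q})}\,\sum_{J\subseteq\{1,\ldots,n\}}\det\bigl(\op{a}\proj{u(Q_{J})}+\op{b}\proj{u(Q_{J})^{\perp}}\bigr)\,\d u,
\]
where $u\in\U(Q,\sigma_{Q})$ is extended to $E$ by the identity on $Q^{\perp}$. It thus suffices to prove, for one fixed orthonormal basis of $Q$ adapted to $\sigma_{Q}$ (which we may take to be $(u(q_{1}),\ldots,u(q_{n}))$), the pointwise identity
\[
\sum_{J\subseteq\{1,\ldots,n\}}\det\bigl(\op{a}\proj{Q_{J}}+\op{b}\proj{Q_{J}^{\perp}}\bigr)=\det(\op{a}\proj{Q}+\op{b}),
\]
the orthogonal complements being taken inside $E$.

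To see this, write the matrices in the orthonormal basis $(q_{1},\ldots,q_{d})$ of $E$. Since $\proj{Q_{J}}+\proj{Q_{J}^{\perp}}=\id_{E}$, the $j$-th column of $\op{a}\proj{Q_{J}}+\op{b}\proj{Q_{J}^{\perp}}$ is the $j$-th column of $\op{a}$ when $j\in J$ and the $j$-th column of $\op{b}$ otherwise; note that for $j>n$ the column is always the $j$-th column of $\op{b}$, independently of $J$. Summing over $J\subseteq\{1,\ldots,n\}$ and applying the multilinearity of the determinant in the first $n$ columns, one obtains the determinant of the endomorphism whose $j$-th column is the $j$-th column of $\op{a}+\op{b}$ for $j\le n$ and the $j$-th column of $\op{b}$ for $j>n$; this endomorphism is $(\op{a}+\op{b})\proj{Q}+\op{b}\proj{Q^{\perp}}=\op{a}\proj{Q}+\op{b}$, as required. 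The finite sum is independent of $u$, so integrating against Haar measure yields the identity.

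The second identity is proved by the identical argument carried out on rows rather than columns; equivalently, one can take adjoints and apply the first identity to $\op{a}^{*}$ and $\op{b}^{*}$, using that the projections $\proj{R},\proj{R^{\perp}},\proj{Q}$ are self-adjoint and that $\det(\op{x}^{*})=\overline{\det\op{x}}$. No step here is genuinely delicate; the only point that requires care is the bookkeeping for columns indexed by $j>n$, where the splitting $\sigma$ of $E$ outside $Q$ plays no role because $\op{a}$ and $\op{b}$ are arbitrary endomorphisms and the measure $\nu^{Q,\sigma_{Q}}$ only probes $Q$.
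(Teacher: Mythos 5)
Your proof is correct and takes essentially the same route as the paper's: choose an orthonormal basis of $Q$ adapted to $\sigma_{Q}$, complete it to a basis of $E$, use column‑multilinearity of the determinant to establish the pointwise identity $\sum_{J\subseteq\{1,\ldots,n\}}\det(\op{a}\proj{Q_{J}}+\op{b}\proj{Q_{J}^{\perp}})=\det(\op{a}\proj{Q}+\op{b})$, then average over $\U(Q,\sigma_{Q})$ via \eqref{eq:intnunsigma}, and deduce the second identity by adjunction. The only difference is cosmetic: the paper leaves the right-hand side in the form $\det((\op{a}+\op{b})\proj{Q}+\op{b}\proj{Q^{\perp}})$, whereas you explicitly simplify it to $\det(\op{a}\proj{Q}+\op{b})$.
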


Let us emphasise that $R^{\perp}$ denotes the orthogonal of $R$ in $E$, not in $Q$. Also, in the special case where $Q=E$, the right-hand side reduces to $\det(\op{a}+\op{b})$.

\begin{proof} Let us choose an orthonormal basis $(e_{1},\ldots,e_{n})$ of $Q$ adapted to $\sigma_{Q}$ and let us complete it into an orthonormal basis $(e_{1},\ldots,e_{d})$ of $E$. For every subset $J$ of $\{1,\ldots,n\}$, the matrix of $\op{a}\proj{E_{J}}+\op{b}\proj{E_{J}^{\perp}}$ in this basis is obtained by replacing, in the matrix of $\op{b}$, the columns indexed by an element of $J$ by the corresponding columns of $\op{a}$. Summing over all subsets of $\{1,\ldots,n\}$ and using the multilinearity of the determinant with respect to the columns, we find
\[\sum_{J\subseteq\{1,\ldots,n\}} \det(\op{a}\proj{E_{J}}+\op{b}\proj{E_{J}^{\perp}})=\det((\op{a}+\op{b})\proj{Q}+\op{b}\proj{Q^{\perp}}).\]
This equality is still true if we replace the basis of $Q$ by its image by an arbitrary element of $\U(Q,\sigma_{Q})$. Therefore, seeing, for all $J\subseteq \{1,\ldots,n\}$ and all $u\in \Gr(Q,\sigma_{Q})$, the subspace $u(E_{J})$ of~$Q$ as a subspace of $E$, we have
\[\sum_{J\subseteq\{1,\ldots,n\}} \det(\op{a}\proj{u(E_{J})}+\op{b}\proj{u(E_{J})^{\perp}})=\det((\op{a}+\op{b})\proj{Q}+\op{b}\proj{Q^{\perp}}).\]
Integrating with respect to the normalised Haar measure on $\U(Q,\sigma_{Q})$ and using \eqref{eq:intnunsigma}, we find the first equality. The second is deduced from the first by adjunction. 
\end{proof}

%%%%%%%%%%%%%%%%%%%%%%%%%%%%%%%%%%%%
%%%%%%%%%%%%%%%%%%%%%%%%%%%%%%%%%%%%
%%%%%%%%%%%%%%%%%%%%%%%%%%%%%%%%%%%%

\section{Determinantal linear processes (DLP)}\label{sec:DLP}

This section defines and shows the existence of DLP, the properties of which will be analysed in Section \ref{sec:geometry-DLP}.

\subsection{Definition, existence and uniqueness of DLP}\label{sec:defDSP}
We are now able to give the main definition of this paper.

\begin{definition}[Determinantal linear process] \label{def:defDSP} Let $(E,\sigma)$ be a split inner product space. Let~$\op{k}$ be a linear endomorphism of $E$. Let $\mu$ be a Borel probability measure on $\Gr(E,\sigma)$. We say that $\mu$ is a {\em determinantal linear process on $(E,\sigma)$ with kernel $\op{k}$} if the incidence measure $\Inc{\mu}$ is absolutely continuous with respect to $\nu^{E,\sigma}$, with density given by
\begin{equation}\label{eq:incidenceDSP}
\forall Q \in \Gr(E,\sigma), \ \frac{\d\Inc{\mu}}{\d \nu^{E,\sigma}}(Q)=\det \op{k}^{Q}_{Q}. 
\end{equation}
\end{definition}

Let us emphasise that, by contrast with the situation of determinantal point processes, the kernel $\op{k}$ is not a matrix, but a linear operator on the space $E$, on which no preferred basis is chosen. Our first main result is the following.

\begin{theorem}\label{thm:existuniqueDSP} Let $(E,\sigma)$ be a split inner product space. Let $\op{k}$ be a self-adjoint linear endomorphism of $E$ such that $0\leq \op{k} \leq 1$. There exists a unique determinantal linear process on $(E,\sigma)$ with kernel $\op{k}$.
\end{theorem}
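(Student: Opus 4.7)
The plan is to dispatch uniqueness via M\"{o}bius inversion and to prove existence by exhibiting an explicit density on $\Gr(E,\sigma)$. Uniqueness is essentially free: Proposition~\ref{prop:incmeaschar} shows that any finite measure on $\Gr(E,\sigma)$ is uniquely reconstructed from its incidence measure, while Definition~\ref{def:defDSP} pins down $\d\Inc{\mu}/\d\nu^{E,\sigma}(Q)=\det \op{k}^{Q}_{Q}$, so there can be at most one DLP with kernel $\op{k}$.

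For existence, I would propose the candidate (motivated by the classical formula~\eqref{eq:DPPCIX})
\[
\frac{\d\mu}{\d\nu^{E,\sigma}}(Q):=\det\bigl(\op{k}\proj{Q}+(\id_E-\op{k})\proj{Q^\perp}\bigr),
\]
and check three properties: non-negativity, total mass~$1$, and that $\Inc{\mu}$ has density $\det\op{k}^Q_Q$. Total mass~$1$ is immediate from Proposition~\ref{prop:multilin} applied with $Q=E$, $\op{a}=\op{k}$, $\op{b}=\id_E-\op{k}$, the right-hand side being $\det(\op{k}+\id_E-\op{k})=1$. To verify the incidence density, I would compute $\int f\,\d\Inc{\mu}$ and apply Lemma~\ref{lem:orthpi} to the integrand $(R,Q)\mapsto f(R)\det(\op{k}\proj{Q}+(\id-\op{k})\proj{Q^\perp})$. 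After the orthogonal swap $(R,Q)\mapsto(Q^\perp,R^\perp)$, the new inner integrand over $R\in\Gr(Q,\sigma_Q)$ is $\det\bigl(\op{k}\proj{R^\perp}+(\id-\op{k})\proj{R}\bigr)$, exactly in the form handled by Proposition~\ref{prop:multilin} (with $\op{a}=\id-\op{k}$, $\op{b}=\op{k}$); the latter collapses the inner integral to $\det\bigl((\id-\op{k})\proj{Q}+\op{k}\bigr)=\det\bigl(\proj{Q}+\op{k}\proj{Q^\perp}\bigr)$, which equals $\det\op{k}^{Q^\perp}_{Q^\perp}$ by an upper-triangular block computation in $E=Q\oplus Q^\perp$. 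The measure-preserving change $Q\mapsto Q^\perp$ from~\eqref{eq:orthnu} then produces $\int f(Q)\det\op{k}^Q_Q\,\d\nu^{E,\sigma}(Q)$, as desired.

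The hard part, and the main obstacle I anticipate, is non-negativity of the candidate density. My plan is to diagonalise $\op{k}=\sum_{i=1}^d\lambda_i\proj{\C f_i}$, set $H_J=\Vect(f_i:i\in J)$ and $p_J=\prod_{i\in J}\lambda_i\prod_{i\notin J}(1-\lambda_i)\geq 0$ for each $J\subseteq\{1,\ldots,d\}$, and to observe that, in the eigenbasis $(f_i)$, the $i$-th row of the matrix of $\op{k}\proj{Q}+(\id-\op{k})\proj{Q^\perp}$ is $\lambda_i$ times row $i$ of $\proj{Q}$ plus $(1-\lambda_i)$ times row $i$ of $\proj{Q^\perp}$. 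Multilinearity of the determinant in rows then yields
\[
\det\bigl(\op{k}\proj{Q}+(\id-\op{k})\proj{Q^\perp}\bigr)=\sum_{J}p_J\,\det\bigl(\proj{H_J}\proj{Q}+\proj{H_J^\perp}\proj{Q^\perp}\bigr),
\]
reducing non-negativity to the case of projection kernels. For any subspace $H\subseteq E$, I would exploit the identity $\proj{H}\proj{Q}+\proj{H^\perp}\proj{Q^\perp}=\tfrac12(\id_E+\op{s}_H\op{s}_Q)$, where $\op{s}_H=\proj{H}-\proj{H^\perp}$ and $\op{s}_Q=\proj{Q}-\proj{Q^\perp}$ are the self-adjoint involutions through $H$ and $Q$. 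The product of two reflections admits a CS-style spectral decomposition into $\pm 1$-eigenspaces on $H\cap Q$, $H\cap Q^\perp$, $H^\perp\cap Q$, $H^\perp\cap Q^\perp$, plus two-dimensional principal-angle rotation blocks; a direct factor count shows that each summand vanishes unless $\dim Q=\dim H$, in which case it equals $\det(\proj{H})^Q_Q=\prod_k\cos^2\theta_k\geq 0$. Since the $p_J$ are non-negative, so is the density, completing the proof.
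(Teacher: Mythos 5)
Your proposal follows the paper's overall strategy---uniqueness by M\"obius inversion (Proposition~\ref{prop:incmeaschar}), existence by exhibiting the explicit density $\det(\op{k}\proj{Q}+(\id_E-\op{k})\proj{Q^\perp})$ and verifying non-negativity, total mass, and the incidence density. Your verification of total mass (via Proposition~\ref{prop:multilin}) and of the incidence density (via Lemma~\ref{lem:orthpi} followed by Proposition~\ref{prop:multilin}, the block upper-triangular identity $\det(\proj{Q}+\op{k}\proj{Q^\perp})=\det\op{k}^{Q^\perp}_{Q^\perp}$, and the orthocomplement change of variables~\eqref{eq:orthnu}) is essentially word-for-word the paper's argument. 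All of this is correct.

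Where you genuinely diverge from the paper is in the non-negativity step, and the divergence is a nice one. The paper's proof is analytic in flavour: it writes $\op{k}$ in block form adapted to $(Q,Q^\perp)$, first assumes $0<\op{k}<1$, applies the Schur complement identity to obtain $\det(A)\det(I-D+B^*A^{-1}B)>0$, and then removes the strictness assumption by perturbing $\op{k}\mapsto(1-2\epsilon)\op{k}+\epsilon$ and letting $\epsilon\to 0$. Your argument instead diagonalises $\op{k}$, uses multilinearity of the determinant in rows to decompose the density as the convex combination $\sum_J p_J\det(\proj{H_J}\proj{Q}+\proj{H_J^\perp}\proj{Q^\perp})$ over coordinate projections $H_J$, and then handles the projection case via the two-subspace (Halmos/CS) decomposition of the unitary $\op{s}_H\op{s}_Q$, whose $\pm1$-eigenspaces and $2\times 2$ rotation blocks show the determinant is $\prod_k\cos^2\theta_k\geq 0$ when $\dim H=\dim Q$, and $0$ otherwise. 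This is precisely the content of Proposition~\ref{prop:cosdet} in the paper, but you prove it by a principal-angle argument rather than by the paper's Schur complement computation~\eqref{eq:PPPP}. Your route has the advantage of requiring no $\epsilon$-regularisation and of making two of the paper's later structural results transparent at the outset: the mixture-of-projection-kernels decomposition of Section~\ref{sec:mixture} (your $p_J$-weighted sum is exactly that decomposition, in the case of a splitting in lines adapted to $\op{k}$) and the geometric interpretation of the density as $\cos^2(Q,H)$ from Section~3.2. The paper's route is more self-contained at this point in the exposition, since it does not invoke the two-subspace theorem, but the two arguments establish the same facts.
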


We will denote the unique determinantal linear process on a split inner product space $(E,\sigma)$ with kernel $\op{k}$ by $\mu_{\sigma,\op{k}}$ and will use the shorthand notation DLP for {\em determinantal linear process}.

The fact that there is at most one measure $\mu$ satisfying \eqref{eq:incidenceDSP} is a direct consequence of Proposition \ref{prop:incmeaschar}. Indeed, \eqref{eq:incidenceDSP} describes the incidence measure $\Inc{\mu}$ and Proposition \ref{prop:incmeaschar} allows us to recover $\mu$ from $\Inc{\mu}$. The reason why the existence of $\mu$ is not obvious is that it is not clear that the signed measure on $\Gr(E,\sigma)$ defined by the inversion formula \eqref{eq:mobius} is a probability measure.

We will prove the existence of a determinantal linear process in quite a direct way, by exhibiting its density with respect to the uniform measure on $\Gr(E,\sigma)$. 

\begin{proposition}\label{prop:exDLP}
 Let $(E,\sigma)$ be a split inner product space. Let $\op{k}$ be a kernel on $E$. The formula
\begin{equation}\label{eq:densitymuK}
\d\mu_{\sigma,\op{k}}(Q)=\det\big(\op{k}\proj{Q}+(1-\op{k})\proj{Q^{\perp}}\big)\d\nu^{E,\sigma}(Q)
\end{equation}
defines a probability measure on $\Gr(E,\sigma)$, the incidence measure of which is given by \eqref{eq:incidenceDSP}.
\end{proposition}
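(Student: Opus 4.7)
The plan is to check three things about formula~\eqref{eq:densitymuK}: that the total mass of the candidate measure equals $1$, that its density is non-negative, and that its incidence measure is given by~\eqref{eq:incidenceDSP}. The first two will be immediate consequences of Proposition~\ref{prop:multilin} and of Schur's complement; the third will require a Fubini-type exchange on a flag manifold followed by another application of Proposition~\ref{prop:multilin}.

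Applying Proposition~\ref{prop:multilin} with $Q=E$, $\op{a}=\op{k}$, $\op{b}=1-\op{k}$ immediately gives
\[
\int_{\Gr(E,\sigma)}\det\bigl(\op{k}\proj{R}+(1-\op{k})\proj{R^{\perp}}\bigr)\,\d\nu^{E,\sigma}(R)=\det(\op{k}+(1-\op{k}))=1,
\]
which is the total mass. For non-negativity of the density, fix $Q\in\Gr(E,\sigma)$ and decompose $\op{k}$ into $2\times 2$ blocks in $E=Q\oplus Q^{\perp}$, with diagonal blocks $A=\op{k}^{Q}_{Q}$, $C=\op{k}^{Q^{\perp}}_{Q^{\perp}}$ and off-diagonal block $B=\op{k}^{Q}_{Q^{\perp}}$. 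A direct computation shows that $\op{k}\proj{Q}+(1-\op{k})\proj{Q^{\perp}}$ has blocks $A$, $-B$, $B^{*}$, $1-C$; when $A$ is invertible, Schur's complement formula yields
\[
\det\bigl(\op{k}\proj{Q}+(1-\op{k})\proj{Q^{\perp}}\bigr)=\det(A)\det\bigl((1-C)+B^{*}A^{-1}B\bigr)\geq 0,
\]
using $A>0$, $1-C\geq 0$ (the compression of the positive operator $1-\op{k}$), and $B^{*}A^{-1}B\geq 0$. The general case follows by a continuity argument, replacing $\op{k}$ by $(1-2\varepsilon)\op{k}+\varepsilon\,\id_{E}$ and letting $\varepsilon\to 0$.

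For the incidence formula, let $f$ be a continuous test function on $\Gr(E,\sigma)$. By Definition~\ref{def:defZ} and~\eqref{eq:densitymuK},
\[
\int f\,\d\Inc{\mu_{\sigma,\op{k}}}=\int_{\Gr(E,\sigma)}\int_{\Gr(Q,\sigma_{Q})} f(R)\det\bigl(\op{k}\proj{Q}+(1-\op{k})\proj{Q^{\perp}}\bigr)\,\d\nu^{Q,\sigma_{Q}}(R)\,\d\nu^{E,\sigma}(Q).
\]
Exactly as in the proof of Lemma~\ref{lem:orthpi}, each component of the partial flag manifold $\{R\subseteq Q\subseteq E:\text{both adapted}\}$ is a single $\U(E,\sigma)$-orbit, and by uniqueness of the invariant measure (the total masses $\binom{\ul d}{\ul n}\binom{\ul n}{\ul m}$ and $\binom{\ul d}{\ul m}\binom{\ul d-\ul m}{\ul n-\ul m}$ agree) the iterated integral may be rewritten by parameterizing first $R\in\Gr(E,\sigma)$ and then $T=Q\cap R^{\perp}\in\Gr(R^{\perp},\sigma_{R^{\perp}})$, with $Q=R\oplus T$. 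The statement thus reduces to proving, for every adapted $R$,
\[
\int_{\Gr(R^{\perp},\sigma_{R^{\perp}})}\det\bigl(\op{k}\proj{R\oplus T}+(1-\op{k})\proj{(R\oplus T)^{\perp}}\bigr)\,\d\nu^{R^{\perp},\sigma_{R^{\perp}}}(T)=\det \op{k}^{R}_{R}.
\]

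I expect the main obstacle to be this inner computation, whose success depends on rewriting the integrand in the form required by Proposition~\ref{prop:multilin}. Using $\proj{R\oplus T}=\proj{R}+\proj{T}$, $\proj{(R\oplus T)^{\perp}}=\proj{R^{\perp}}-\proj{T}$, together with $\proj{R}\proj{T}=0$ and $\proj{R^{\perp}}\proj{T}=\proj{T}$ (valid since $T\subseteq R^{\perp}$), the integrand rearranges as $\op{k}\proj{T}+[\op{k}\proj{R}+(1-\op{k})\proj{R^{\perp}}]\proj{T^{\perp}}$, with $T^{\perp}$ the orthogonal in $E$. Applying Proposition~\ref{prop:multilin} in $(E,\sigma)$ with $Q=R^{\perp}$, $\op{a}=\op{k}$ and $\op{b}=\op{k}\proj{R}+(1-\op{k})\proj{R^{\perp}}$ then collapses the integral to $\det(\op{k}\proj{R^{\perp}}+\op{k}\proj{R}+(1-\op{k})\proj{R^{\perp}})=\det(\op{k}\proj{R}+\proj{R^{\perp}})$; the latter equals $\det \op{k}^{R}_{R}$ by the block-lower-triangular form in $E=R\oplus R^{\perp}$, whose diagonal blocks are $\op{k}^{R}_{R}$ and $\id_{R^{\perp}}$.
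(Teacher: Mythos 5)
Your proof is correct. Parts 1 (total mass) and 2 (non-negativity) follow the paper's argument essentially verbatim: the same application of Proposition~\ref{prop:multilin}, the same Schur-complement computation in $E=Q\oplus Q^{\perp}$, and the same $\epsilon$-perturbation to handle the boundary case.

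Your treatment of the incidence measure, however, takes a genuinely different route. The paper applies Lemma~\ref{lem:orthpi} (the orthocomplement involution $(R,Q)\mapsto(Q^{\perp},R^{\perp})$ on the flag manifold), then Proposition~\ref{prop:multilin}, and finishes with a $Q\mapsto Q^{\perp}$ substitution using~\eqref{eq:orthnu}. You instead re-order the flag integral directly: parametrising the flag $\{R\subseteq Q\}$ by $(R,T)$ with $T=Q\cap R^{\perp}$, justifying the exchange via the uniqueness of the $\U(E,\sigma)$-invariant measure on the orbit $\Gr_{(\ul m,\ul n-\ul m,\ul d-\ul n)}(E,\sigma)$ together with the mass identity $\binom{\ul d}{\ul n}\binom{\ul n}{\ul m}=\binom{\ul d}{\ul m}\binom{\ul d-\ul m}{\ul n-\ul m}$. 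You then rewrite the integrand cleverly, using $\proj{R}\proj{T}=0$ and $\proj{R^{\perp}}\proj{T}=\proj{T}$, into the form $\op{k}\proj{T}+\bigl[\op{k}\proj{R}+(1-\op{k})\proj{R^{\perp}}\bigr]\proj{T^{\perp}}$ to which Proposition~\ref{prop:multilin} applies with $Q=R^{\perp}$, yielding $\det(\op{k}\proj{R}+\proj{R^{\perp}})=\det\op{k}^R_R$ directly, with no orthocomplement substitution needed. Both approaches rest on the same pillars (invariant measures on flag manifolds and Proposition~\ref{prop:multilin}); the paper's factors the flag-manifold bookkeeping into a reusable lemma involving orthocomplements, while yours is a straight Fubini re-ordering that avoids the final change of variables at the cost of a slightly more delicate algebraic rearrangement of the determinant inside the integral. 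Both are valid.
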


Note that the density of $\mu_{\sigma,\op{k}}$, being a real number and the determinant of an operator, is also the determinant of the adjoint operator, and thus equal to $\det\big(\proj{Q}\op{k}+\proj{Q^{\perp}}(1-\op{k})\big)$.

Note also that a less symmetric, but Hermitian version of the density of $\mu_{\sigma,\op{k}}$ is given by 
\begin{equation}\label{eq:densitymuKHerm}
\d\mu_{\sigma,\op{k}}(Q)=(-1)^{\dim Q^{\perp}}\det(-\proj{Q^{\perp}}+\op{k})\d\nu^{E,\sigma}(Q).
\end{equation}
Indeed, with the notation of the proof below, the determinant in this expression is that of the matrix
\[\det\big(-\proj{Q^{\perp}}+\op{k}\big)=\det \begin{pmatrix}A & B \\ B^{*} & D-I\end{pmatrix}.\]

Let us finally emphasise that the assumption that $\op{k}$ is self-adjoint is only used in the proof of the positivity of $\mu_{\sigma,\op{k}}$. This could be useful for the theory of determinantal measures with non-symmetric kernels, which form an interesting class which we otherwise leave aside in this paper. 

\begin{proof} We need to prove three things: that the density appearing in \eqref{eq:densitymuK} is non-negative, that the integral of this density is $1$, and that the incidence measure of $\mu_{\sigma,\op{k}}$ is indeed given by \eqref{eq:incidenceDSP}.

1. Let $Q$ be a linear subspace of $E$. Let us choose an orthonormal basis  of $E$ adapted to the splitting $(Q,Q^{\perp})$ and write the matrix of $\op{k}$ in this basis as
\begin{equation}\label{eq:unitmat}
\begin{blockarray}{ccc}
  &{\scriptstyle Q} & {\scriptstyle Q^{\perp}} \\
\begin{block}{c(cc)}
    {\scriptstyle Q}  & A & B  \\
  {\scriptstyle Q^{\perp}} & B^{*} & D  \\
\end{block}
\end{blockarray}
\end{equation}
so that 
\[\det\big(\op{k}\proj{Q}+(1-\op{k})\proj{Q^{\perp}}\big)=\det \begin{pmatrix}A & -B \\ B^{*} & I-D\end{pmatrix}.\]
Let us assume for a moment that $\op{k}$ satisfies not only $0\leq \op{k} \leq 1$, but the stronger assumption $0<\op{k}<1$. Then $\op{k}$ is positive definite, and so is the principal sub-matrix $A$. In particular, $A$ is invertible. The classical trick of blockwise elimination, also called the Schur complement formula\footnote{\label{foot:Schur}
For the convenience of the reader, let us recall the one sentence proof of this formula, which we will use again later: multiplying on the left any $2\times 2$ block matrix $M=\begin{pmatrix} A & B \\ C & D \end{pmatrix}$ in which $A$ is invertible  by the matrix $\begin{pmatrix} I & 0 \\ -CA^{-1} & I \end{pmatrix}$ and taking determinants yields $\det (M)=\det (A) \det (D-CA^{-1}B)$.
}, yields
\[\det\big(\op{k}\proj{Q}+(1-\op{k})\proj{Q^{\perp}}\big)=\det(A) \det(I-D+B^{*}A^{-1}B).\]
We have already noted that $A$ is positive-definite, so that $\det(A)>0$. The matrix $I-D$, which is a principal sub-matrix of the matrix of $1-\op{k}$ in our basis of $E$, is also positive definite. Finally, $B^{*}A^{-1}B$ is self-adjoint and non-negative. The sum $(I-D)+B^{*}A^{-1}B$ is thus positive definite, and has positive determinant. Thus, 
\[\det\big(\op{k}\proj{Q}+(1-\op{k})\proj{Q^{\perp}}\big)> 0.\]
To relax the extra assumption that we made on $\op{k}$, it suffices to observe that for every kernel $\op{k}$ and all $\epsilon\in (0,\frac{1}{2})$, the operator $\op{k}_{\epsilon}=(1-2\epsilon)\op{k}+\epsilon$ is still a kernel and satisfies $\epsilon \leq \op{k}\leq 1-\epsilon$. Our argument can thus be applied to $\op{k}_{\epsilon}$ and we conclude by letting $\epsilon$ tend to $0$.

2. The total mass of the measure defined by the right-hand side of \eqref{eq:densitymuK} is easily computed thanks to Proposition \ref{prop:multilin}. We find
\[\int_{\Gr(E,\sigma)} \det\big(\op{k}\proj{Q}+(1-\op{k})\proj{Q^{\perp}}\big)\d\nu^{E,\sigma}(Q)
=\det(\op{k}+1-\op{k})=1.\]

3. The computation of the incidence measure of the probability measure $\mu_{\sigma,\op{k}}$ also relies on Proposition \ref{prop:multilin}. Let us choose a continuous test function $f$ on $\Gr(E,\sigma)$. According to the definition \eqref{eq:definc} of the incidence measure, we need to compute
\begin{equation}\label{eq:tpinc}
\int_{\Gr(E,\sigma)}\bigg(\int_{\Gr(Q,\sigma_{Q})} f(R) \d\nu^{Q,\sigma_{Q}}(R)\bigg)\det\big(\op{k}\proj{Q}+(1-\op{k})\proj{Q^{\perp}}\big)\d\nu^{E,\sigma}(Q).
\end{equation}
Using Lemma \ref{lem:orthpi}, this is equal to
\[\int_{\Gr(E,\sigma)}\bigg(\int_{\Gr(Q,\sigma_{Q})} \det\big(\op{k}\proj{R^{\perp}}+(1-\op{k})\proj{R}\big) \d\nu^{Q,\sigma_{Q}}(R)\bigg) f(Q^{\perp})\d\nu^{E,\sigma}(Q)\]
and by Proposition \ref{prop:multilin}, the integral between brackets is equal to 
\[\det((1-\op{k})\proj{Q}+\op{k})=\det(\proj{Q}+\op{k}\proj{Q^{\perp}})\]
which, writing the matrix of $\op{k}$ in a basis of $E$ adapted to the splitting $(Q,Q^{\perp})$, is easily seen to be equal to $\det \op{k}^{Q^{\perp}}_{Q^{\perp}}$. Thanks to \eqref{eq:orthnu}, we find that \eqref{eq:tpinc} is equal to
\[\int_{\Gr(E,\sigma)} f(Q) \det \op{k}_{Q}^{Q} \d\nu^{E,\sigma}(Q),\]
which concludes the proof.
\end{proof}

As everything we do in this paper, this proof applies to the case of a determinantal point process on a finite space as well. Our proof amounts to showing that the DPP with kernel $K$ has, with self-explanatory notation, the distribution
\[\forall I\subseteq \{1,\ldots,d\}, \ \ \P(\X=I)=\det(K\proj{I}+(I-K)\proj{I^{c}}).\]
This is however not the usual way in which the construction of determinantal processes is done. The classical approach consists in studying first the case where $K$ is a projector, and then showing that the general case can be realised as a mixture of projector cases. This approach is also possible, and instructive, in the more general setting that we explore in this paper. Although this is logically not necessary, we devote the next two sections to a discussion of projection DLP and of the way in which an arbitrary DLP can be obtained as a mixture of projection DLP.

\subsection{Projection DLP and angle between linear subspaces}

We want to give special attention to the case where $\op{k}$ is the orthogonal projection on a linear subspace $H$ of $E$. In this case, the density of the distribution of a DLP with kernel $\op{k}$ can be described in a nice geometric way in terms of a notion of angle between two linear subspaces, or more precisely of the square cosine of this angle. 
 
\begin{definition} Let $E$ be an inner product space. Let $F$ and $G$ be two linear subspaces of $E$. We define the {\em square of the cosine of the angle} between $F$ and $G$ as
\begin{equation}\label{eq:defcos}
\cos^{2}(F,G)=\det(\op{1}_{G}^{F}\op{1}_{F}^{G}).
\end{equation}
\end{definition}

This definition is illustrated by Figure \ref{fig:angle} below. Let us emphasise that this definition is not symmetric in $F$ and $G$: for example, if $F$ is a proper subspace of $E$, then $\cos^{2}(F,E)=1$ but $\cos^{2}(E,F)=0$.

\begin{figure}[h!]
\begin{center}
\includegraphics{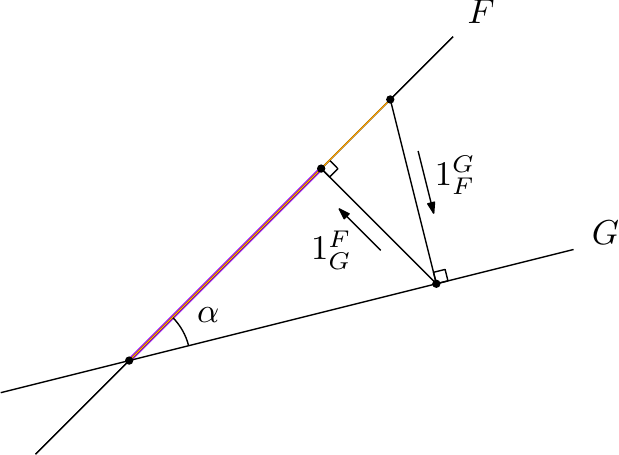}
\caption{\label{fig:angle} The square of the cosine of the angle between two lines, here $\cos^{2}\alpha=\cos^{2}(F,G)$, can be computed by two successive orthogonal projections.}
\end{center}
\end{figure}

A geometric understanding of $\cos^{2}(F,G)$ can be based on the observation that it is the product of the squares of the singular values of the orthogonal projection $\op{1}_{F}^{G}:F\to G$. Writing $m=\dim F$, these singular values are the half-lengths of the $m$ longest principal axes of the ellipsoid in $G$ that one obtains by projecting the unit ball of $F$. Denoting by $B_{F}$ this unit ball, and by ${\rm vol}_{m}$ the $m$-dimensional volume (that is, to be precise, the $m$-dimensional Hausdorff measure), we have
\[\cos^{2}(F,G)=\frac{{\rm vol}_{m}(\op{1}_{F}^{G}(B_{F}))^{2}}{{\rm vol}_{m}(B_{F})^{2}}.\]

The number $\cos^{2}(F,G)$ is also the product of the squares of the cosines of the principal angles between $F$ and $G$ as defined by Jordan \cite{Jordan}.

Let us prove some basic properties of this function.

\begin{proposition}\label{prop:cosbasic} Let $F$ and $G$ be two linear subspaces of $E$.

1. $\cos^{2}(F,G) \in [0,1]$. 

2. $\cos^{2}(F,G)=0 \Leftrightarrow F \cap G^{\perp} \neq \{0\}$. In particular, if $\dim F>\dim G$, then $\cos^{2}(F,G)=0$.

3. $\cos^{2}(F,G)=1 \Leftrightarrow F\subset G$.

4.  $\cos^{2}(F,G)=\cos^{2}(F,\op{1}_{F}^{G}(F))$.

5. If $\dim F=\dim G$, then $\cos^{2}(F,G)=\cos^{2}(G,F)$.

6. Let $u\in \U(E)$ be an isometry of $E$. Then $\cos^{2}(u(F),u(G))=\cos^{2}(F,G)$.

\end{proposition}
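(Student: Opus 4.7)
The plan is to set $T = \op{1}_F^G : F \to G$, whose adjoint is $T^{*} = \op{1}_G^F$, so that $\cos^{2}(F,G) = \det(T^{*}T)$ is the product of the squared singular values of $T$. Since $T$ arises by restriction from the orthogonal projection $\op{1}^{G}:E\to G$, one has $\|Tv\| \leq \|v\|$ for all $v \in F$, so every singular value of $T$ lies in $[0,1]$ and $\det(T^{*}T) \in [0,1]$, giving Property 1. For Property 2, positivity of $T^{*}T$ reduces $\det(T^{*}T)=0$ to $\ker T \neq \{0\}$, and $Tv = \op{1}^{G}(v) = 0$ precisely when $v \in G^{\perp}$, so $\ker T = F \cap G^{\perp}$; when $\dim F > \dim G$, $T$ cannot be injective, hence $F \cap G^{\perp}\neq\{0\}$.

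For Property 3, $\det(T^{*}T) = 1$ forces each singular value to equal $1$, that is, $\|\op{1}^{G}(v)\| = \|v\|$ for all $v \in F$; applying Pythagoras to the orthogonal decomposition $v = \op{1}^{G}(v) + (v - \op{1}^{G}(v)) \in G \oplus G^{\perp}$ gives $v \in G$, so $F \subseteq G$, and the converse is immediate. For Property 4, I would set $G' = \op{1}_F^G(F) \subseteq G$ and observe that since $\op{1}^{G}(v) \in G'$ for every $v \in F$, the equality $\op{1}^{G'} = \op{1}^{G'} \circ \op{1}^{G}$ restricts to $\op{1}^{G'}(v) = \op{1}^{G}(v)$ on $F$; the analogous statement for the adjoints then shows that $\op{1}_{G'}^{F} \op{1}_F^{G'}$ and $\op{1}_G^{F} \op{1}_F^{G}$ are literally the same endomorphism of $F$, so their determinants coincide.

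Property 5 is the standard identity $\det(AB) = \det(BA)$ applied to $A = T^{*}$ and $B = T$, legitimate because $F$ and $G$ have the same dimension. Property 6 reduces to conjugation: an isometry $u \in \U(E)$ intertwines orthogonal projections, $\op{1}^{u(G)} \circ u = u \circ \op{1}^{G}$, so the operator defining $\cos^{2}(u(F), u(G))$ is the conjugate of $T^{*}T$ by the isometry $u|_F : F \to u(F)$, and conjugation preserves determinants. No serious obstacle arises; all six statements are standard consequences of the singular value decomposition of $T$. The one point that deserves care is the codomain bookkeeping in Property 4, where $\op{1}_F^G$ and $\op{1}_F^{G'}$ formally have different targets $G$ and $G'$ but agree as $E$-valued maps, so one must verify that the relevant endomorphism of $F$ depends only on these common vector values, which is the case because the subsequent projection $\op{1}^{F}$ does not see the distinction between codomains.
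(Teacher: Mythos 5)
Your proof is correct and follows essentially the same approach as the paper: properties 1, 2, 5, and 6 are argued identically, and for 3 and 4 you use minor reformulations (Pythagoras in place of the paper's chain of norm inequalities for 3; identifying the two compressions as literally equal endomorphisms of $F$ rather than the paper's factorization $\op{1}_F^G = \op{1}_{G'}^G \op{1}_F^{G'}$ for 4) that amount to the same computations. Nothing to add.
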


\begin{proof} 1. The projections $\op{1}_{F}^{G}$ and $\op{1}_{G}^{F}$ are each other's adjoint, so that $\op{1}_{G}^{F}\op{1}_{F}^{G}$ is a non-negative self-adjoint operator. Moreover, since any orthogonal projection is $1$-Lipschitz continuous, the linear map $\op{1}_{G}^{F}\op{1}_{F}^{G}$ is $1$-Lipschitz continuous on $F$, and its eigenvalues are bounded by $1$. Thus, $\cos^{2}(F,G)\in [0,1]$.

2. The kernel of $\op{1}_{G}^{F}\op{1}_{F}^{G}=(\op{1}_{F}^{G})^{*}\op{1}_{F}^{G}$ is equal to the kernel of $\op{1}_{F}^{G}$, that is, to $F\cap G^{\perp}$. The second assertion follows from the inequality $\dim(F\cap G^{\perp})\geq \dim F-\dim G$.

3. If $F\subset G$, then $\op{1}_{G}^{F}\op{1}_{F}^{G}=\id_{F}$ and $\cos^{2}(F,G)=1$. Assume conversely that $\cos^{2}(F,G)=1$. Since $\op{1}_{G}^{F}\op{1}_{F}^{G}$ is a self-adjoint operator on $F$ with eigenvalues between $0$ and $1$, the only way it can have determinant $1$ is by being the identity of $F$. Now  for all $x\in F$, the inequalities $\|\op{1}_{G}^{F}\op{1}_{F}^{G}x\|\leq \|\op{1}_{F}^{G}x\|\leq \|x\|$ combined with the equality $\op{1}_{G}^{F}\op{1}_{F}^{G}x=x$ imply $\|\op{1}_{F}^{G}x\|=\|x\|$ and finally $\op{1}_{F}^{G}x=x$, so that $F\subset G$. 

4. Let $G'=\op{1}_{F}^{G}(F)$ be the image of the orthogonal projection of $F$ on $G$. We have $\op{1}_{F}^{G}=\op{1}_{G'}^{G}\op{1}_{F}^{G'}$, so that 
$\cos^{2}(F,G)=\det(\op{1}_{G}^{F}\op{1}_{G'}^{G}\op{1}_{F}^{G'})=\det(\op{1}_{G'}^{F}\op{1}_{F}^{G'})=\cos^{2}(F,G')$.

5. If $\dim F=\dim G$, then $\cos^{2}(F,G)=\det((\op{1}_{F}^{G})^{*}\op{1}_{F}^{G})=\det(\op{1}_{F}^{G}(\op{1}_{F}^{G})^{*})=\cos^{2}(G,F)$. 

6. This equality is obvious from a structural point of view, since $\cos^{2}(F,G)$ is defined using only the inner product structure of $E$. It is nevertheless possible to compute 
\[\op{1}_{u(G)}^{u(F)}\op{1}_{u(F)}^{u(G)}=u^{u(F)}_{F}\op{1}_{G}^{F}(u^{-1})^{G}_{u(G)} u_{G}^{u(G)} \op{1}_{F}^{G}(u^{-1})_{u(F)}^{F}\]
and to draw the desired conclusion.
\end{proof}

It follows in particular from the previous proposition that the computation of $\cos^{2}(F,G)$ can always be reduced to the case where $\dim F=\dim G$. Indeed, $\cos^{2}(F,G)\neq 0$ if and only if $\op{1}_{F}^{G}$ is injective on $F$ and in this case, $\cos^{2}(F,G)=\cos^{2}(F,\op{1}_{F}^{G}(F))$.

The next proposition provides us with a matricial understanding of the number $\cos^{2}(F,G)$. 

\begin{proposition}\label{prop:matrixcos} Let $F$ and $G$ be two linear subspaces of $E$.

1. Let $(f_{1},\ldots,f_{m})$ and $(g_{1},\ldots,g_{n})$ be orthonormal bases of $F$ and $G$. Set $A=(\langle g_{i},f_{j}\rangle)_{\substack{ i=1,\ldots, n \\  j=1,\ldots, m}}$. Then 
\[\cos^{2}(F,G)=\det(A^{*}A).\]

2. Let $(f_{1},\ldots,f_{m},e_{m+1},\ldots,e_{d})$ be an orthonormal basis of $E$ obtained by completing an orthonormal basis of $F$. Let $\Pi^{G}$ be the matrix in this basis of the orthogonal projection on $G$. Let ${\Pi^{G}}^{1 \cdots m}_{1\cdots m}$ be the top left $m\times m$ sub-matrix of $\Pi^{G}$. Then
\[\cos^{2}(F,G)=\det {\Pi^{G}}^{1 \cdots m}_{1\cdots m}.\]

3. Let $\proj{G}:E\to E$ be the orthogonal projection on $G$. Then 
\[\cos^{2}(F,G)=\det ({\proj{G}})^{F}_{F}.\]
\end{proposition}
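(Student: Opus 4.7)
The plan is to treat the three statements as essentially equivalent reformulations of the definition \eqref{eq:defcos}, written in three different languages: Part~1 in coordinates with respect to orthonormal bases of $F$ and $G$, Part~3 in the intrinsic compression notation of Section~\ref{sec:minors}, and Part~2 as a hybrid of the two. In each case the point is to recognise that the same self-adjoint operator $\op{1}_{G}^{F}\op{1}_{F}^{G}$ on $F$ is being described.

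I would start with Part~3, which is almost tautological. Unpacking the compression notation, $(\proj{G})_{F}^{F}=\op{1}^{F}\circ\proj{G}\circ\op{1}_{F}$, and using the factorisation $\proj{G}=\op{1}_{G}\circ\op{1}^{G}$ of the projection on $G$ viewed as an endomorphism of $E$, this becomes $(\op{1}^{F}\op{1}_{G})(\op{1}^{G}\op{1}_{F})=\op{1}_{G}^{F}\op{1}_{F}^{G}$, so the claimed identity is just the definition. For Part~1, I would compute the matrix of $\op{1}_{F}^{G}:F\to G$ in the orthonormal bases $(f_{j})$ of $F$ and $(g_{i})$ of $G$: since the bases are orthonormal, the $(i,j)$-entry is $\langle g_{i},\op{1}_{F}^{G}f_{j}\rangle=\langle g_{i},f_{j}\rangle$, which is exactly $A$. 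The adjoint $\op{1}_{G}^{F}$ then has matrix $A^{*}$ in the same pair of bases, so the composition $\op{1}_{G}^{F}\op{1}_{F}^{G}$ has matrix $A^{*}A$ in the basis $(f_{j})$ of $F$, and taking determinants gives the result.

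For Part~2, extend $(f_{1},\ldots,f_{m})$ to an orthonormal basis of $E$ and pick any orthonormal basis $(g_{k})$ of $G$. For $i,j\leq m$, the $(i,j)$-entry of $\Pi^{G}$ equals $\langle f_{i},\proj{G}f_{j}\rangle$; expanding $\proj{G}f_{j}=\sum_{k}\langle g_{k},f_{j}\rangle g_{k}$ and using the physicists' convention $\langle f_{i},g_{k}\rangle=\overline{\langle g_{k},f_{i}\rangle}$, this is $\sum_{k}\overline{\langle g_{k},f_{i}\rangle}\langle g_{k},f_{j}\rangle=(A^{*}A)_{ij}$, where $A$ is the matrix from Part~1. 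Hence the top-left $m\times m$ block of $\Pi^{G}$ is exactly $A^{*}A$, and Part~2 follows from Part~1.

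There is no real obstacle here; the only point requiring a moment of care is the physicists' convention that makes the inner product antilinear in its first argument, which matters when identifying the entries of $A^{*}A$ with $\langle f_{i},\proj{G}f_{j}\rangle$. Everything else is a direct unwinding of definitions.
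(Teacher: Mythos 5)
Your proof is correct and follows essentially the same route as the paper's: identify $A$ as the matrix of $\op{1}_{F}^{G}$ in the given orthonormal bases, note that the top-left $m\times m$ block of $\Pi^{G}$ is $A^{*}A$, and observe that Part~3 is a basis-free rephrasing. The only cosmetic difference is that you prove Part~3 directly from the compression notation and the factorisation $\proj{G}=\op{1}_{G}\op{1}^{G}$, whereas the paper deduces it as a reformulation of Part~2; both are equally immediate.
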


\begin{proof} The matrix $A$ is the matrix, with respect to the chosen bases of $F$ and $G$, of the map~$\op{1}_{F}^{G}$. The first assertion is thus the direct translation in matricial language of the definition of $\cos^{2}(F,G)$. The second assertion follows from the first and the observation that the top left $m\times m$ sub-matrix of $\Pi^{G}$ is equal to $A^{*}A$. The third assertion is a reformulation, without any explicit reference to bases, of the second.
\end{proof}

The link between the cosine of the angle of two linear subpaces of $E$ and the density of the distribution of a projection DLP is given by the following proposition.

\begin{proposition}\label{prop:cosdet} Let $F$ and $G$ be two linear subspaces of $E$. Then
\[\det(\proj{F}\proj{G}+\proj{F^{\perp}}\proj{G^{\perp}})=\left\{\!\begin{array}{ll} \cos^{2}(F,G) & \text{if } \dim F = \dim G,\\ 0 & \text{otherwise.}\end{array}\right.\]
\end{proposition}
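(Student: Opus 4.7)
The first step is to split by comparing $\dim F$ and $\dim G$. For $\dim F \neq \dim G$, I would argue by rank: the image of $\proj{F}\proj{G}$ is contained in $F$, so has rank at most $\min(\dim F, \dim G)$, while the image of $\proj{F^{\perp}}\proj{G^{\perp}}$ is contained in $F^{\perp}$, so has rank at most $\min(\dim F^{\perp}, \dim G^{\perp})$. Since these two images sit in complementary subspaces, the rank of the sum is at most
\[
\min(\dim F, \dim G) + \min(\dim F^{\perp}, \dim G^{\perp}) = \dim E - |\dim F - \dim G|,
\]
which is strictly less than $\dim E$, so the determinant vanishes, as required.

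The heart of the proof is the case $m := \dim F = \dim G$. I would pick an orthonormal basis of $G$ and form the $d \times m$ matrix $V$ of its coordinates in an orthonormal basis of $E$ adapted to the splitting $(F, F^{\perp})$, so that the matrix of $\proj{G}$ in this basis is $VV^{*}$. Writing $V = \begin{pmatrix} V_{1} \\ V_{2} \end{pmatrix}$ with $V_{1}$ of size $m \times m$ and $V_{2}$ of size $(d-m) \times m$, a direct block computation gives
\[
\op{T} := \proj{F}\proj{G} + \proj{F^{\perp}}\proj{G^{\perp}} = \begin{pmatrix} V_{1} V_{1}^{*} & V_{1} V_{2}^{*} \\ -V_{2} V_{1}^{*} & I - V_{2} V_{2}^{*} \end{pmatrix}.
\]

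The key observation is the block-triangular factorisation
\[
\op{T} = \begin{pmatrix} V_{1} & 0 \\ -V_{2} & I_{d-m} \end{pmatrix} \begin{pmatrix} V_{1}^{*} & V_{2}^{*} \\ 0 & I_{d-m} \end{pmatrix},
\]
from which $\det \op{T} = \det(V_{1}) \det(V_{1}^{*}) = \det(V_{1} V_{1}^{*})$. Since $V_{1} V_{1}^{*}$ is exactly the compression $(\proj{G})^{F}_{F}$, Proposition \ref{prop:matrixcos}(3) identifies it with $\cos^{2}(F, G)$, concluding the proof.

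The main obstacle is spotting the factorisation above; a more pedestrian route would use a Schur complement to get $\det \op{T} = \det(V_1V_1^*) \cdot \det(I - V_2V_2^* + V_2V_1^{*}(V_1V_1^*)^{-1}V_1V_2^{*})$ when $V_1V_1^*$ is invertible, then show that the second factor equals $1$ by exploiting $\proj{G}^{2} = \proj{G}$, and finally handle the singular case $\cos^{2}(F, G) = 0$ separately by producing a nonzero kernel vector of $\op{T}$ inside $F \cap G^{\perp}$ (which is nonzero by Proposition \ref{prop:cosbasic}(2)). The triangular factorisation handles all cases in one line.
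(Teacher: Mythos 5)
Your proof is correct, and it genuinely streamlines the paper's argument at one step. Both proofs dispatch the unequal-dimension case by a counting argument and then, for $\dim F = \dim G$, work in an orthonormal basis of $E$ adapted to $(F, F^{\perp})$. The paper writes the matrix of $\proj{G}$ as $\begin{pmatrix} A & B \\ B^{*} & D \end{pmatrix}$ and splits into cases: when $\det A = 0$ it invokes Proposition~\ref{prop:cosbasic} to find a nonzero vector in $F\cap G^{\perp}$ killing the operator, and when $A$ is invertible it uses the rank-$n$ projection structure to write $B=AV$, $C=V^{*}A$, $D=V^{*}AV$ and finishes by Schur complement --- exactly the ``more pedestrian route'' you sketch at the end. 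Your block-triangular factorisation $\op{T} = \begin{pmatrix} V_{1} & 0 \\ -V_{2} & I \end{pmatrix}\begin{pmatrix} V_{1}^{*} & V_{2}^{*} \\ 0 & I \end{pmatrix}$ avoids the case split entirely, since $\det \op{T} = \det(V_{1})\det(V_{1}^{*}) = \det(V_{1}V_{1}^{*})$ holds whether or not $V_{1}V_{1}^{*}$ is invertible; this is a real simplification. Your handling of $\dim F \neq \dim G$ via the bound $\mathrm{rank}(A+B)\leq\mathrm{rank}(A)+\mathrm{rank}(B)$ and the observation that the two images land in the complementary subspaces $F$ and $F^{\perp}$ is also marginally more direct than the paper's surjectivity argument, though the two are close in spirit. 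Both proofs converge on Proposition~\ref{prop:matrixcos}(3) to identify the result with $\cos^{2}(F,G)$.
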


\begin{proof} Let us start by proving that $\dim F$ and $\dim G$ must be equal for the determinant not to be zero. For this, let us assume that the determinant is not zero. Then $\proj{F}\proj{G}+\proj{F^{\perp}}\proj{G^{\perp}}$ is onto, so that the range of $\proj{F}\proj{G}$ is $F$ and the range of $\proj{F^{\perp}}\proj{G^{\perp}}$ is $F^{\perp}$. This implies on one hand that $\dim G\geq \dim F$ and on the other hand that $\dim G^{\perp}\geq \dim F^{\perp}$.

Let us now prove the equality when $\dim F=\dim G=n$. Let us choose an orthonormal basis $(e_{1},\ldots,e_{d})$ of $E$ such that $(e_{1},\ldots,e_{n})$ is a basis of $F$. Let us write the matrix of $\proj{G}$ in this basis and decompose it in blocks according to the splitting $E=F\oplus F^{\perp}$:
\[\proj{G}=\begin{blockarray}{ccc}
  &{\scriptstyle F} & {\scriptstyle F^{\perp}} \\
\begin{block}{c(cc)}
    {\scriptstyle F}  & A & B  \\
  {\scriptstyle F^{\perp}} & C & D  \\
\end{block}
\end{blockarray}\,\; .
\]
Then we know from Proposition \ref{prop:matrixcos} that $\cos^{2}(F,G)=\det A$. 

Let us assume first that $\det A=0$. In this case, $\cos^{2}(F,G)=0$, so that, by Proposition \ref{prop:cosbasic}, $F$ and $G^{\perp}$ have a non-trivial intersection. This forbids the range of $\proj{F^{\perp}}\proj{G^{\perp}}$ from being equal to $F^{\perp}$, and forces the determinant to be zero: the equality also holds in this case.

There remains to treat the case where $A$ is invertible. Our assumption that $\dim F=\dim G$ implies that $A$ has full rank in the matrix written above, so that there exists a $n\times (d-n)$ matrix~$V$ such that $B=AV$, $C=V^{*}A$ and $D=CV=V^{*}AV$, so that
\begin{equation}\label{eq:PPPP}
\proj{F}\proj{G}+\proj{F^{\perp}}\proj{G^{\perp}}=\begin{pmatrix} A & AV \\ -V^{*}A & 1-V^{*}AV \end{pmatrix}.
\end{equation}
An application of the Schur complement formula (see Footnote \ref{foot:Schur}) shows that the determinant of this matrix is equal to $\det(A)$, as expected.
\end{proof}

An immediate consequence of this proposition is the following expression of the distribution of a projection DLP. 

\begin{proposition} Let $(E,\sigma)$ be a split inner product space. Let $H$ be a linear subspace of $E$ of dimension $n$. Then the following equality of probability measures holds on $\Gr(E,\sigma)$:
\begin{equation}\label{eq:densityDLPproj}
\d\mu_{\sigma,\proj{H}}(Q)=\cos^{2}(Q,H) \d\nu^{E,\sigma}_{n}(Q).
\end{equation}
\end{proposition}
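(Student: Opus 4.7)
The plan is a direct combination of Proposition~\ref{prop:exDLP} and Proposition~\ref{prop:cosdet}: substitute the projection kernel $\op{k}=\proj{H}$ into the general density formula and recognise the resulting determinant as the square cosine of an angle.

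More precisely, first I would apply Proposition~\ref{prop:exDLP} with $\op{k}=\proj{H}$. Since $\proj{H}$ is self-adjoint and satisfies $0\leq \proj{H}\leq 1$, and since $1-\proj{H}=\proj{H^{\perp}}$, the formula \eqref{eq:densitymuK} yields
\[\d\mu_{\sigma,\proj{H}}(Q)=\det\bigl(\proj{H}\proj{Q}+\proj{H^{\perp}}\proj{Q^{\perp}}\bigr)\d\nu^{E,\sigma}(Q).\]

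Next I would invoke Proposition~\ref{prop:cosdet} with $F=H$ and $G=Q$. It tells us that the determinant appearing above is equal to $\cos^{2}(H,Q)$ when $\dim Q=\dim H=n$, and vanishes otherwise. In particular, the measure $\mu_{\sigma,\proj{H}}$ is supported on $\Gr_{n}(E,\sigma)$, so that on its support the reference measure $\nu^{E,\sigma}$ coincides with $\nu^{E,\sigma}_{n}$. Finally, since $\dim Q=\dim H$ on the support, part~5 of Proposition~\ref{prop:cosbasic} allows us to replace $\cos^{2}(H,Q)$ by $\cos^{2}(Q,H)$, producing the announced density \eqref{eq:densityDLPproj}.

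There is essentially no obstacle here: all the geometric content has been packaged into the preceding propositions, and the proof of the statement reduces to a one-line substitution followed by a symmetry observation. The only small point to verify is that restricting the reference measure from $\nu^{E,\sigma}$ to $\nu^{E,\sigma}_{n}$ is justified by the vanishing of the density off $\Gr_{n}(E,\sigma)$, which follows from the dimension dichotomy in Proposition~\ref{prop:cosdet}.
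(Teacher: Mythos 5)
Your proof is correct and is exactly the argument the paper intends — the paper presents this proposition as ``an immediate consequence'' of Proposition~\ref{prop:cosdet} without further detail, and your write-up simply spells out the substitution $\op{k}=\proj{H}$ into~\eqref{eq:densitymuK}, the identification via Proposition~\ref{prop:cosdet}, the restriction of $\nu^{E,\sigma}$ to $\nu^{E,\sigma}_n$ using the dimension dichotomy, and the symmetry from part~5 of Proposition~\ref{prop:cosbasic}. (Alternatively you could avoid the last step by applying Proposition~\ref{prop:cosdet} directly with $F=Q$, $G=H$ after noting that the density equals the determinant of the adjoint operator $\proj{Q}\proj{H}+\proj{Q^{\perp}}\proj{H^{\perp}}$, as the paper remarks right after Proposition~\ref{prop:exDLP}.)
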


\subsection{General DLP as mixtures of projection DLP}\label{sec:mixture}
Just as in the classical theory of determinantal point processes, a DLP with a general kernel can be constructed as a mixture of projection DLP, and our next task is to understand which particular mixture.
For this, we will associate to each kernel $\op{k}$ on $E$ a probability measure (indeed several probability measures in general) on the full Grassmannian of $E$, such that the projection DLP associated to a random subspace of $E$ chosen according to this probability measure is a DLP with kernel $\op{k}$.

Let, as always, $(E,\sigma)$ be a split inner product space. Let $\op{k}$ be a kernel on $E$. Let us say that a splitting $\tau=(E_{1},\ldots,E_{r})$ of $E$ is {\em adapted} to $\op{k}$ if $\op{k}$ acts as a scalar on each space $E_{1},\ldots,E_{r}$: 
\[\forall j\in\{1,\ldots,r\},\, \exists \lambda_{j} : \ \op{k}_{|E_{j}}=\lambda_{j}\id_{E_{j}}.\]
For instance, the splitting of $E$ by the eigenspaces of $\op{k}$ is adapted to $\op{k}$, but any finer splitting of~$E$ is also adapted to $\op{k}$. There is more than one splitting adapted to $\op{k}$ (up to reordering) if and only if $\op{k}$ has at least one multiple eigenvalue.

Let $\tau=(E_{1},\ldots,E_{r})$ be a splitting of $E$ adapted to $\op{k}$. It turns out that the DLP with kernel~$\op{k}$ on $(E,\tau)$ is a random subspace of $E$ which will solve our problem. Fortunately, this random subspace of $E$ is easily described. Indeed, for all tuples $\ul{n}=(n_{1},\ldots,n_{r})$, the density of the measure $\mu_{\tau,\op{k}}$ is constant on $\Gr_{\ul{n}}(E,\tau)$: setting $(d_{1},\ldots,d_{r})=(\dim E_{1},\ldots,\dim E_{r})$, we have
\[\forall Q\in \Gr_{\ul{n}}(E,\tau), \ \ \det\big(\op{k}\proj{Q}+(1-\op{k})\proj{Q^{\perp}}\big)=\prod_{i=1}^{r}\lambda_{i}^{n_{i}}(1-\lambda_{i})^{d_{i}-n_{i}}.\]
Accordingly, sampling $\eta_{\op{k}}^{E,\tau}$ amounts to choosing, in each space $E_{i}$ of the splitting $\tau$, on which $\op{k}$ acts as the scalar $\lambda_{i}\in [0,1]$, a uniform linear subspace with dimension distributed according to the binomial distribution of parameters $\lambda_{i}$ and $d_{i}$.

\begin{example}\rm\label{ex:HKPV}
The measure $\mu_{\tau,\op{k}}$ is a variant of a measure that is classical in the theory of determinantal point processes and is for instance implicitly defined in \cite[Theorem 7]{HKPV}. In this paper, the authors choose first an orthogonal basis of eigenvectors and then build a random subspace of $E$ as the linear span of the subset of this basis obtained by keeping each vector with a probability equal to the corresponding eigenvalue, independently of the others. In our language, they consider the case where $\tau$ is a splitting in lines of $E$ adapted to $\op{k}$.

If $\op{k}=\frac{1}{2}\id_{E}$ for example, the authors of \cite{HKPV} consider the uniform measure on the set of the $2^{d}$ linear subspaces generated by all possible subsets of a fixed orthogonal basis of~$E$, whereas we allow for example, by taking $\tau=(E)$, a uniform linear subspace of binomial dimension $\mathcal{B}(n,\frac{1}{2})$, a measure which has full support in $\Gr(E)$. 
\end{example}

The main result of this section is the following.

\begin{proposition} Let $(E,\sigma)$ be a split inner product space. Let $\op{k}$ be a kernel on $E$. Let $\tau$ be a splitting of $E$ adapted to $\op{k}$. Then the following equality of probability measures on $\Gr(E,\sigma)$ holds:
\begin{equation}\label{eq:mixture}
\mu_{\sigma,\op{k}}=\int_{\Gr(E,\tau)} \mu_{\sigma,\proj{H}} \d\mu_{\tau,\op{k}}(H).
\end{equation}
\end{proposition}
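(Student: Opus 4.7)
The approach is to verify, via Proposition~\ref{prop:exDLP}, that the two sides of \eqref{eq:mixture} have the same density with respect to $\nu^{E,\sigma}$. Since $\d\mu_{\sigma,\op{k}}(Q)=\det(\op{k}\proj{Q}+(1-\op{k})\proj{Q^{\perp}})\d\nu^{E,\sigma}(Q)$ and, for each $H$, $\d\mu_{\sigma,\proj{H}}(Q)=\det(\proj{H}\proj{Q}+(1-\proj{H})\proj{Q^{\perp}})\d\nu^{E,\sigma}(Q)$, Fubini reduces the identity \eqref{eq:mixture} to the pointwise claim
\[\det(\op{k}\proj{Q}+(1-\op{k})\proj{Q^{\perp}})=\int_{\Gr(E,\tau)}\det(\proj{H}\proj{Q}+(1-\proj{H})\proj{Q^{\perp}})\,\d\mu_{\tau,\op{k}}(H)\]
for every $Q\in\Gr(E,\sigma)$.

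The core of the argument is multilinearity. Fix any orthonormal basis $(e_{1},\ldots,e_{d})$ of $E$ adapted to $\tau$. Because $\tau$ is adapted to $\op{k}$, this basis is an eigenbasis of $\op{k}$: each $e_{j}$ has eigenvalue $\lambda(j)$ equal to the scalar by which $\op{k}$ acts on the element of $\tau$ that contains $e_{j}$. The $j$-th row of the matrix of $\op{k}\proj{Q}+(1-\op{k})\proj{Q^{\perp}}$ in this basis is then the convex combination of the $j$-th rows of $\proj{Q}$ and $\proj{Q^{\perp}}$, weighted by $\lambda(j)$ and $1-\lambda(j)$. Row-multilinearity of the determinant therefore yields
\[\det(\op{k}\proj{Q}+(1-\op{k})\proj{Q^{\perp}})=\sum_{J\subseteq\{1,\ldots,d\}}\Big(\prod_{j\in J}\lambda(j)\Big)\Big(\prod_{j\notin J}(1-\lambda(j))\Big)\det(\proj{E_{J}}\proj{Q}+\proj{E_{J}^{\perp}}\proj{Q^{\perp}}),\]
where $E_{J}=\Vect(e_{j}:j\in J)$.

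Since the left-hand side is independent of the choice of basis adapted to $\tau$, this identity remains valid when $(e_{j})$ is replaced by $(ue_{j})$ for any $u\in\U(E,\tau)$. The weights in the sum depend only on the split-dimension $\ul{n}=\sdim E_{J}$, which equals $\prod_{i}\lambda_{i}^{n_{i}}(1-\lambda_{i})^{d_{i}-n_{i}}$. Averaging against the normalised Haar measure on $\U(E,\tau)$ and invoking the split-graded version of formula \eqref{eq:intnunsigma} to convert the Haar-averaged sum over each orbit into an integral over the corresponding stratum, we obtain
\[\det(\op{k}\proj{Q}+(1-\op{k})\proj{Q^{\perp}})=\sum_{\ul{n}\leq\ul{d}}\prod_{i=1}^{r}\lambda_{i}^{n_{i}}(1-\lambda_{i})^{d_{i}-n_{i}}\int_{\Gr_{\ul{n}}(E,\tau)}\det(\proj{H}\proj{Q}+\proj{H^{\perp}}\proj{Q^{\perp}})\,\d\nu^{E,\tau}_{\ul{n}}(H).\]
As recalled in the paragraphs preceding the proposition, the density of $\mu_{\tau,\op{k}}$ with respect to $\nu^{E,\tau}$ is precisely $\prod_{i}\lambda_{i}^{n_{i}}(1-\lambda_{i})^{d_{i}-n_{i}}$ on each stratum $\Gr_{\ul{n}}(E,\tau)$, so the previous display collapses to the required integral against $\mu_{\tau,\op{k}}$.

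The main subtlety in this plan is the Haar-averaging step: one must carefully identify the weighted sum over subsets $J$ of prescribed split-dimension $\ul{n}$, after averaging under the $\U(E,\tau)$-action, with the measure $\nu^{E,\tau}_{\ul{n}}$ on the corresponding stratum, and then match multiplicities so that the resulting weighted sum of stratum integrals reassembles as a single integral against $\mu_{\tau,\op{k}}$.
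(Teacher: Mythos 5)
Your proof is correct. The verification strategy is the same as the paper's — compare densities with respect to $\nu^{E,\sigma}$ via Proposition~\ref{prop:exDLP} and Fubini — but the core computation runs in the opposite direction. The paper starts from the right-hand side, multiplies the two determinants into $\det(\op{k}\proj{H}\proj{Q}+(1-\op{k})\proj{H^{\perp}}\proj{Q^{\perp}})$, uses the commutation $\op{k}\proj{H}=\proj{H}\op{k}$ (available because $\tau$ is adapted to $\op{k}$) to rewrite this as $\det(\proj{H}\op{k}\proj{Q}+\proj{H^{\perp}}(1-\op{k})\proj{Q^{\perp}})$, and then collapses the $H$-integral in a single stroke by invoking Proposition~\ref{prop:multilin}. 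You instead start from the left-hand side and expand $\det(\op{k}\proj{Q}+(1-\op{k})\proj{Q^{\perp}})$ by row-multilinearity in an eigenbasis of $\op{k}$, then Haar-average to turn the discrete sum over coordinate subspaces $E_J$ into an integral over the strata $\Gr_{\ul n}(E,\tau)$, and finally recognise the weights as the density of $\mu_{\tau,\op{k}}$. In effect you re-derive from scratch the special case of the refined multilinearity identity~\eqref{eq:refmulti} that you need, rather than quoting Proposition~\ref{prop:multilin} as a black box; the paper's route is a bit more economical for exactly that reason, and it avoids the explicit bookkeeping of eigenvalues and split dimensions. The Haar-averaging step you flag as the main subtlety is indeed where care is needed, but it is exactly the mechanism used to prove~\eqref{eq:intnunsigma} and Proposition~\ref{prop:multilin}, and your use of it is sound. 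One small notational slip: you write that the split dimension $\ul n$ ``equals'' $\prod_i\lambda_i^{n_i}(1-\lambda_i)^{d_i-n_i}$, when you mean that the weight equals that product; the surrounding argument makes the intent clear.
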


In words: sampling a subspace $H$ of $E$ under the measure $\mu_{\tau,\op{k}}$ and then a second subspace under the measure $\mu_{\sigma,\proj{H}}$ yields a random linear subspace of $E$ distributed as a DLP on $(E,\sigma)$ with kernel $\op{k}$. Let us emphasise again that the splitting $\tau$ can be chosen arbitrarily among all splittings adapted to $\op{k}$.

\begin{proof} We prove the equality of the continuous densities of both sides of \eqref{eq:mixture} with respect to $\nu^{E,\sigma}$ at a point $Q$ of $\Gr(E,\sigma)$. On the left-hand side, this density is equal, by \eqref{eq:densitymuK}, to $\det\big(\op{k}\proj{Q}+(1-\op{k})\proj{Q^{\perp}}\big)$. On the right-hand side, it is equal to
\[\int_{\Gr(E,\tau)} \det\big(\op{k}\proj{H}+(1-\op{k})\proj{H^{\perp}}\big) \det\big(\proj{H}\proj{Q}+\proj{H^{\perp}}\proj{Q^{\perp}}\big) \d\nu^{E,\tau}(H).\]
Multiplying the two determinants inside the integral yields $\det(\op{k}\proj{H}\proj{Q}+(1-\op{k})\proj{H^{\perp}}\proj{Q^{\perp}})$. The fact that the splitting $\tau$ is adapted to $\op{k}$ implies that $\op{k}$ commutes to $\proj{H}$ for every $H\in \Gr(E,\tau)$. Thus, we are computing the integral
\[\int_{\Gr(E,\tau)} \det(\proj{H}\op{k}\proj{Q}+\proj{H^{\perp}}(1-\op{k})\proj{Q^{\perp}}) \d\nu^{E,\tau}(H)\]
which, according to Proposition \ref{prop:multilin}, is precisely equal to $\det\big(\op{k}\proj{Q}+(1-\op{k})\proj{Q^{\perp}})$. 
\end{proof} 

\subsection{Sampling of DLP}\label{sec:algosample}

In this section, we give a formal statement and proof of the sampling procedure for DLP explained in Section \ref{sec:detmeasGr}, see \eqref{eq:sampling}. In fact, we prove a slightly more general result. Let us start by establishing a property of uniform measures on Grassmannians. 

Recall the partial order that we introduced, at the very beginning of this study, on the set of all splittings of $E$ (see Section \ref{sec:splittings}): a splitting $\delta$ is finer than the splitting $\sigma$ if each element of~$\delta$ is contained in an element of $\sigma$. Recall also that we denote simply by $\dd u$ the normalised Haar measure on the group $\U(E,\sigma)$.

\begin{proposition} \label{prop:averageunif} Let $E$ be an inner product space. Let $\delta$ and $\sigma$ be two splittings of $E$. Assume that $\delta$ is finer than~$\sigma$. Then for every isometry $u\in \U(E,\sigma)$, the splitting $u(\delta)$ is still finer than~$\sigma$, and we have
\begin{equation}\label{eq:averagesplittings}
\int_{\U(E,\sigma)} \nu^{E,u(\delta)}\d u=\nu^{E,\sigma},
\end{equation}
where for every $u\in \U(E,\sigma)$, the space $\Gr(E,u(\delta))$ is seen as a subspace of $\Gr(E,\sigma)$, and $\nu^{E,u(\delta)}$ as a measure on $\Gr(E,\sigma)$.
\end{proposition}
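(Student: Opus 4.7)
The first assertion is immediate from unpacking definitions: if $\sigma=(E_{1},\ldots,E_{s})$ and each element of $\delta$ is contained in some $E_{i}$, then an isometry $u\in \U(E,\sigma)=\U(E_{1})\times\ldots\times \U(E_{s})$ preserves every $E_{i}$, so each $u(\delta_{j})$ is again contained in the $E_{i}$ that housed $\delta_{j}$; hence $u(\delta)\leq \sigma$.

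For the main equality, the plan is to reduce to the case where $\sigma$ is the coarse splitting. Writing $u=(u_{1},\ldots,u_{s})$ and grouping the parts of $\delta$ according to the block $E_{i}$ they sit in, each $\delta$ decomposes as a concatenation of splittings $\delta^{(i)}$ of $E_{i}$, and $u(\delta)$ is the concatenation of the $u_{i}(\delta^{(i)})$. Using the identification $\Gr(E,u(\delta))\simeq \prod_{i=1}^{s}\Gr(E_{i},u_{i}(\delta^{(i)}))$ and the fact (immediate from the definitions in Section~\ref{sec:grassmannians}) that $\nu^{E,u(\delta)}=\bigotimes_{i=1}^{s}\nu^{E_{i},u_{i}(\delta^{(i)})}$ and $\nu^{E,\sigma}=\bigotimes_{i=1}^{s}\nu^{E_{i}}$, Fubini turns the claimed identity into
\[\bigotimes_{i=1}^{s}\int_{\U(E_{i})} \nu^{E_{i},u_{i}(\delta^{(i)})}\d u_{i} \;=\; \bigotimes_{i=1}^{s}\nu^{E_{i}},\]
so it suffices to treat one factor at a time, i.e.\ the case $\sigma=(E)$, $\U(E,\sigma)=\U(E)$.

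In that case I would argue separately in each dimension: decompose $\nu^{E,u(\delta)}=\sum_{n=0}^{d}\nu^{E,u(\delta)}_{n}$ and show that the finite positive Borel measure $\lambda_{n}:=\int_{\U(E)} \nu^{E,u(\delta)}_{n}\d u$ on $\Gr_{n}(E)$ equals $\nu^{E}_{n}$. Since $\Gr_{n}(E)$ admits, up to a scalar, a unique $\U(E)$-invariant Borel measure, it is enough to check two things. First, $\U(E)$-invariance: for any $v\in \U(E)$, the pushforward $v_{*}\nu^{E,u(\delta)}_{n}$ is the uniform measure of the same total mass on $\Gr_{n}(E,vu(\delta))$, i.e.\ $\nu^{E,vu(\delta)}_{n}$; hence by left-invariance of Haar, $v_{*}\lambda_{n}=\lambda_{n}$. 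Second, the total mass of $\lambda_{n}$ equals $\binom{d}{n}$, since $\nu^{E,u(\delta)}_{n}(\Gr_{n}(E,u(\delta)))=\binom{d}{n}$ uniformly in~$u$. These two conditions force $\lambda_{n}=\nu^{E}_{n}$, and summing over $n$ yields the desired equality.

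The only genuinely substantive point is the tensor-product decomposition of $\nu^{E,u(\delta)}$ along $\sigma$, but this follows directly from the definitions, so I do not expect any real obstacle; the remainder is just invariance plus mass-matching to invoke uniqueness of the invariant measure.
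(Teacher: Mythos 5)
Your argument is correct, but it takes a genuinely different route than the paper's. You reduce to the coarse case $\sigma=(E)$ via the tensor-product decomposition $\nu^{E,u(\delta)}=\bigotimes_{i}\nu^{E_{i},u_{i}(\delta^{(i)})}$ (which does follow directly from $\nu^{E,\tau}=\sum_{\ul{m}}\nu_{\ul{m}}^{E,\tau}$), and then invoke uniqueness of the invariant measure of given mass on the homogeneous space $\Gr_{n}(E)$, checking invariance by pushing forward the Haar-averaged family under $v\in\U(E)$ and mass by counting. The paper instead argues globally, without decomposing along $\sigma$: it writes both sides via the orbit description \eqref{eq:intnunsigma} in terms of an adapted basis, uses the conjugacy $\U(E,u(\delta))=u\U(E,\delta)u^{-1}$ to turn the iterated average $\int_{\U(E,\sigma)}\int_{\U(E,u(\delta))}(\cdots)\,\d v\,\d u$ into $\int_{\U(E,\delta)}\int_{\U(E,\sigma)}(\cdots)\,\d u\,\d w$, and then applies right-invariance of Haar on $\U(E,\sigma)$ (which contains $\U(E,\delta)$ since $\delta\leq\sigma$) to see the inner integral is independent of $w$. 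Your argument is softer --- it uses only the abstract uniqueness theorem for invariant measures on compact homogeneous spaces --- whereas the paper leans on the concrete formula \eqref{eq:intnunsigma}; the paper's route has the modest advantage of producing the exact mechanism behind the identity in one line and not needing the block decomposition. One small imprecision worth tightening: the step ``$v_{*}\nu_{n}^{E,u(\delta)}$ is the uniform measure of the same total mass on $\Gr_{n}(E,vu(\delta))$, i.e.\ $\nu_{n}^{E,vu(\delta)}$'' is not a sufficient characterisation on its own, since $\Gr_{n}(E,vu(\delta))$ is not a single $\U(E,vu(\delta))$-orbit and there are many invariant measures of that total mass. The identity $v_{*}\nu_{n}^{E,u(\delta)}=\nu_{n}^{E,vu(\delta)}$ does hold, but you should justify it stratum by stratum --- $v$ carries $\Gr_{\ul{m}}(E,u(\delta))$ onto $\Gr_{\ul{m}}(E,vu(\delta))$ and conjugates the stabilisers, so uniqueness on each orbit gives $v_{*}\nu_{\ul{m}}^{E,u(\delta)}=\nu_{\ul{m}}^{E,vu(\delta)}$, then sum over $\ul{m}$ --- or simply read it off from formula \eqref{eq:intnunsigma}.
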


\begin{proof} We use the concrete description of the uniform measures on Grassmannians given by \eqref{eq:intnunsigma}. Let $(e_{1},\ldots,e_{d})$ be an orthonormal basis of $E$ adapted to $\delta$. Then for every $u\in \U(E,\sigma)$, the basis $(u(e_{1}),\ldots,u(e_{d}))$ is adapted to $u(\delta)$, and to $\sigma$. Thus, the left-hand side of \eqref{eq:averagesplittings} is the measure
\[\int_{\U(E,\sigma)}\bigg(\int_{\U(E,u(\delta))} \sum_{J\subseteq \{1,\ldots,d\}} \delta_{v(u(E_{J}))} \d v\bigg) \d u.\]
For all $u\in \U(E,\sigma)$, we have the equality $\U(E,u(\delta))=u\U(E,\delta)u^{-1}$ of subgroups of $\U(E,\sigma)$. Thus, the left-hand side of \eqref{eq:averagesplittings} is equal to
\[\int_{\U(E,\delta)}\bigg(\int_{\U(E,\sigma)} \sum_{J\subseteq \{1,\ldots,d\}} \delta_{uw(E_{J})} \d u\bigg) \d w.\]
The invariance of the Haar measure by translation on the right implies that the integral between the brackets does not depend on $w$ and we are left with the expression of the right-hand side 
of~\eqref{eq:averagesplittings} given by~\eqref{eq:intnunsigma}.
\end{proof}

The result that will lead us to the sampling algorithm is the following.

\begin{proposition}\label{prop:averaging} Let $E$ be an inner product space. Let $\delta$ and $\sigma$ be two splittings of $E$. Assume that $\delta$ is finer that $\sigma$. Let $\op{k}$ be a kernel on $E$. Then the following equality of measures holds on $\Gr(E,\sigma)$:
\begin{equation}\label{eq:averagemuK}
\mu_{\sigma,\op{k}}=\int_{\U(E,\sigma)} \mu_{u(\delta),\op{k}} \d u.
\end{equation}
\end{proposition}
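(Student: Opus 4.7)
The plan is to compare both sides of \eqref{eq:averagemuK} by computing their densities against a common reference measure, namely $\nu^{E,\sigma}$. The key observation is that the density of $\mu_{\sigma,\op{k}}$ given by Proposition \ref{prop:exDLP}, namely
\[F(Q):=\det\bigl(\op{k}\proj{Q}+(1-\op{k})\proj{Q^{\perp}}\bigr),\]
is a function of the subspace $Q\in \Gr(E)$ and of $\op{k}$ alone: it does not depend on the particular splitting to which $Q$ is adapted. Consequently, for every splitting $\tau$ of $E$, the DLP measure $\mu_{\tau,\op{k}}$ on $\Gr(E,\tau)$ is simply the restriction of $F\cdot\nu^{E,\tau}$ to $\Gr(E,\tau)$, and, when $\tau$ is finer than $\sigma$, it can be seen as a measure on $\Gr(E,\sigma)$ with the same density $F$ against $\nu^{E,\tau}$ (the latter being viewed as a measure on the larger space $\Gr(E,\sigma)$ supported on $\Gr(E,\tau)$).

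First, I would verify that $u(\delta)$ is finer than $\sigma$ for every $u\in \U(E,\sigma)$. This is immediate: if $D$ is an element of $\delta$ contained in some $E_i$, then $u(D)\subseteq u(E_i)=E_i$, so $u(\delta)\leq \sigma$. In particular, $\Gr(E,u(\delta))\subseteq \Gr(E,\sigma)$ and $\mu_{u(\delta),\op{k}}$ may be integrated to a measure on $\Gr(E,\sigma)$.

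Next, I would apply Fubini's theorem to a continuous test function $f$ on $\Gr(E,\sigma)$:
\begin{align*}
\int_{\U(E,\sigma)}\!\!\bigg(\int_{\Gr(E,\sigma)}\! f(Q)\, \d\mu_{u(\delta),\op{k}}(Q)\bigg)\d u
&=\int_{\U(E,\sigma)}\!\!\bigg(\int_{\Gr(E,u(\delta))}\! f(Q)F(Q)\,\d\nu^{E,u(\delta)}(Q)\bigg)\d u\\
&=\int_{\Gr(E,\sigma)}\! f(Q)F(Q)\,\d\bigg(\int_{\U(E,\sigma)}\!\!\nu^{E,u(\delta)}\d u\bigg)(Q).
\end{align*}
By Proposition \ref{prop:averageunif}, the inner integral of measures equals $\nu^{E,\sigma}$, so the right-hand side is $\int_{\Gr(E,\sigma)} f(Q) F(Q)\d\nu^{E,\sigma}(Q)=\int f\,\d\mu_{\sigma,\op{k}}$, which is exactly the integral of $f$ against $\mu_{\sigma,\op{k}}$. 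Since $f$ was arbitrary, this establishes \eqref{eq:averagemuK}.

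There is no real obstacle here; the whole content of the statement sits in the splitting-independence of the density $F$ (which is what makes Proposition \ref{prop:exDLP} such a convenient definition) combined with the averaging identity of Proposition \ref{prop:averageunif}. The only mild care needed is the bookkeeping with how measures on the smaller Grassmannians $\Gr(E,u(\delta))$ are regarded as measures on $\Gr(E,\sigma)$, and the routine use of Fubini, which is justified by the continuity and boundedness of $F$ and by the compactness of $\U(E,\sigma)$ and of all the Grassmannians involved.
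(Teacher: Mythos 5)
Your proposal is correct and follows essentially the same route as the paper: test against a continuous function, swap the order of integration, invoke Proposition~\ref{prop:averageunif} to collapse the averaged reference measures to $\nu^{E,\sigma}$, and observe that the density from Proposition~\ref{prop:exDLP} is the same function of $Q$ regardless of which splitting one works with. The paper even records your key remark explicitly right after the proof, noting that the particular form of the density plays no role, only its splitting-independence does.
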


In this statement, as in the statement of Proposition \ref{prop:averageunif}, we see, for all $u\in \U(E,\sigma)$, the measure $\mu_{u(\delta),\op{k}}$ as a measure on $\Gr(E,\sigma)$, through the natural inclusion $\Gr(E,u(\delta))\subseteq \Gr(E,\sigma)$.

\begin{proof} Let $f$ be a continuous test function on $\Gr(E,\sigma)$. We compute the integral of $f$ with respect to the right-hand side of \eqref{eq:averagemuK}, using the definition \eqref{eq:densitymuK} of the DLP. This integral is equal to
\begin{align*}
&\int_{\U(E,\sigma)} \int_{\Gr(E,u(\delta))} f(Q) \det(\op{k}\proj{Q}+(1-\op{k})\proj{Q^{\perp}}) \d\nu^{E,u(\delta)}(Q)\\
&\hspace{3cm} = \int_{\Gr(E,\sigma)}\Big(Q\mapsto  f(Q) \det(\op{k}\proj{Q}+(1-\op{k})\proj{Q^{\perp}})\Big)  \d \int_{\U(E,\sigma)} \nu^{E,u(\delta)}\d u.
\end{align*}
According to Proposition \ref{prop:averageunif}, the measure against which the last integral is computed is nothing but $\nu^{E,\sigma}$. Hence, the integral is equal to
\[\int_{\Gr(E,\sigma)} f(Q) \det(\op{k}\proj{Q}+(1-\op{k})\proj{Q^{\perp}})\d\nu^{E,\sigma}(Q)=\int_{\Gr(E,\sigma)} f \d\mu_{\sigma,\op{k}},\]
and the result is proved.
\end{proof}

Let us note that in this proof, the particular form of the density plays no role whatsoever. This relates to the fact that the distributions of DLP with the same kernel but relative to different splittings have the same density with respect to different reference measures (recall Proposition~\ref{prop:exDLP}).

In the case where $\delta$ is a splitting in lines, we arrive at the sampling procedure \eqref{eq:sampling} explained in Section~\ref{sec:dpp-geom}.

\begin{proposition}[Sampling of DLP]\label{prop:sampling} Let $(E,\sigma)$ be a split inner product space. Let $\op{k}$ be a kernel on $E$. In each element of $\sigma$, choose uniformly at random an orthonormal basis. Agregate these bases to produce a (random) orthonormal basis $(e_{1},\ldots,e_{d})$ of $E$. Let $K$ be the matrix of $\op{k}$ in this basis. Sample the determinantal point process with kernel $K$ on $\{1,\ldots,d\}$. This produces a (random) subset $X$ of $\{1,\ldots,d\}$. Then the random linear subspace $\Vect(e_{i}:i\in X)$ of $E$ is distributed according to $\mu_{\sigma,\op{k}}$. 
\end{proposition}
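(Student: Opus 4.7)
The plan is to recognise the sampling procedure as the special case of Proposition~\ref{prop:averaging} in which $\delta$ is a splitting in lines, and then to identify the density of $\mu_{\delta,\op{k}}$ for such a $\delta$ with the law of the classical DPP.

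First, I would identify the random basis produced by the algorithm with the action of a Haar-distributed isometry. Fix once and for all an orthonormal basis $(e_{1}^{0},\ldots,e_{d}^{0})$ of $E$ adapted to $\sigma$, and let $\delta_{0}$ be the splitting in lines that it generates; since every $e_{i}^{0}$ lies in some $E_{j}$, the splitting $\delta_{0}$ is finer than $\sigma$. Because $\U(E,\sigma)=\U(E_{1})\times\ldots\times\U(E_{s})$ and the Haar measure on this product factorises, choosing uniformly at random an orthonormal basis in each $E_{i}$ is equivalent to sampling $u\in\U(E,\sigma)$ under its Haar measure and setting $e_{i}=u(e_{i}^{0})$. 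The random splitting in lines carried by $(e_{1},\ldots,e_{d})$ is then $u(\delta_{0})$.

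Second, I would verify that for a fixed splitting in lines $\delta$ the DLP $\mu_{\delta,\op{k}}$ coincides with the law of $\Vect(e_{i}:i\in X)$ when $X$ is the determinantal point process on $\{1,\ldots,d\}$ with matrix kernel $K=[\op{k}]_{(e_{1},\ldots,e_{d})}$. By Example~\ref{ex:grlines}, the measure $\nu^{E,\delta}$ is the counting measure on the coordinate subspaces $\Vect(e_{i}:i\in I)$ indexed by $I\subseteq\{1,\ldots,d\}$, so evaluating the density~\eqref{eq:densitymuK} in the basis $(e_{1},\ldots,e_{d})$ at $Q=\Vect(e_{i}:i\in I)$ gives
\[\mu_{\delta,\op{k}}(\{Q\})=\det\bigl(\op{k}\proj{Q}+(1-\op{k})\proj{Q^{\perp}}\bigr)=\det\bigl(KP^{I}+(I_{d}-K)P^{I^{c}}\bigr),\]
which is exactly $\P(X=I)$ as recalled in~\eqref{eq:DPPCIX}. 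Applied to the random splitting $u(\delta_{0})$, this shows that the last step of the algorithm (sampling the DPP with kernel $K$ and forming the corresponding coordinate subspace) produces, conditionally on $u$, a random subspace distributed according to $\mu_{u(\delta_{0}),\op{k}}$.

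Finally, the conclusion follows by invoking Proposition~\ref{prop:averaging} with $\delta=\delta_{0}$, which yields
\[\mu_{\sigma,\op{k}}=\int_{\U(E,\sigma)}\mu_{u(\delta_{0}),\op{k}}\,\d u,\]
and this is precisely the law of the two-stage experiment carried out by the algorithm (first draw the random adapted basis, encoded by Haar-distributed $u$; then, conditionally, draw a subspace from $\mu_{u(\delta_{0}),\op{k}}$). I do not anticipate a genuine obstacle: everything non-trivial is already packaged in Proposition~\ref{prop:averaging}, and the only point requiring a brief check is the matricial identification of~\eqref{eq:densitymuK} with the classical DPP distribution~\eqref{eq:DPPCIX}, which is essentially the computation implicit in the discussion following Proposition~\ref{prop:exDLP}.
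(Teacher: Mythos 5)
Your proof is correct and follows exactly the route the paper intends: the paper states Proposition~\ref{prop:sampling} immediately after Proposition~\ref{prop:averaging}, with only the remark that taking $\delta$ a splitting in lines yields the sampling procedure, and you have filled in precisely those details (the Haar-measure encoding of the random adapted basis, the matricial identification of $\mu_{\delta,\op{k}}$ with the classical DPP distribution via Example~\ref{ex:grlines} and \eqref{eq:DPPCIX}, and the final disintegration). Nothing to add.
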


For the sake of completeness, let us briefly give an algorithm for sampling a DPP, denoted by~$\X$, on $\{1,\ldots,d\}$ with kernel $K$. This recursive algorithm extends \cite[Algorithm 18]{HKPV} to the case where the kernel is not necessarily a projection:\footnote{In the literature, the preferred sampling method usually seems to be to use the fact that a DPP for a general kernel is a mixture of projection DPP, see Example \ref{ex:HKPV}, but we prefer this more direct way to proceed.}
\begin{itemize}
\item write $K=\begin{pmatrix} K_{11} & R \\ C & K'\end{pmatrix}$ as a $(1,(d-1))\times (1,(d-1))$ block matrix,
\item add $1$ to $\X$ with probability $K_{11}$,
\item if $1$ has been picked (so that $K_{11}>0$), sample a DPP $\X'$ on $\{2,\ldots,d\}$ with kernel $K'-K_{11}^{-1}CR$ and set $\X=\{1\}\cup \X'$,
\item if $1$ has not been picked (so that $K_{11}<1$), sample a DPP $\X'$ on $\{2,\ldots,d\}$ with kernel 
\[I_{d-1}-((I_{d-1}-K')-(1-K_{11})^{-1}CR)=K'+(1-K_{11})^{-1} CR\]
and set $\X=\X'$.
\end{itemize}

This algorithm therefore only requires sampling Bernoulli random variables and performing linear algebra operations. The fact that the above operations yield the right probabilities is a consequence of the Schur complement formula again (recall Footnote~\ref{foot:Schur}).

The task of picking a uniform random orthonormal basis can be done easily provided one knows how to sample a Gaussian distribution and perform a Gram--Schmidt orthonormalisation procedure, see e.g. \cite{Diaconis}.

This provides a concrete way to ask a computer to sample a DLP, and this is how Figure \ref{fig:exQSF} was sampled for instance.

%%%%%%%%%%%%%%%%%%%%%%%%%%%%%%%%%%%%
%%%%%%%%%%%%%%%%%%%%%%%%%%%%%%%%%%%%
%%%%%%%%%%%%%%%%%%%%%%%%%%%%%%%%%%%%

\section{Geometry of DLP}\label{sec:geometry-DLP}

We will now describe some geometric properties of DLP. Most of these properties will parallel classical properties of DPP. Nevertheless, the content of Section \ref{sec:Support} is more specific to DLP, and the main result of Section \ref{sec:meanproj} (Theorem~\ref{prop:meanprojection}), although true and meaningful for DPP, does not seem to be as well known in this case as it would deserve to be, and it was new to us.

 In all this section, we fix a split inner product space $(E,\sigma)$, with $\sigma=(E_{1},\ldots,E_{s})$, and a kernel $\op{k}$ on $E$. We let $\Q$ be a random linear subspace of $E$ distributed according to $\mu_{\sigma,\op{k}}$.

\subsection{Basic properties}

Let us start with a property of equivariance of DLP. 

\begin{proposition}[Equivariance] For all $u\in \U(E)$, the random linear subspace $u(\Q)$ of $E$ is a DLP on $(E,u(\sigma))$ with kernel $u\op{k}u^{-1}$.
\end{proposition}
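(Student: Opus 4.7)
The plan is to verify the defining density formula of Proposition \ref{prop:exDLP} after pushing forward by $u$. Concretely, I will show that $u_{*}\mu_{\sigma,\op{k}}$ has density $Q'\mapsto \det\!\big((u\op{k}u^{-1})\proj{Q'}+(1-u\op{k}u^{-1})\proj{Q'^{\perp}}\big)$ with respect to $\nu^{E,u(\sigma)}$ on $\Gr(E,u(\sigma))$, which by Theorem \ref{thm:existuniqueDSP} identifies it as $\mu_{u(\sigma),u\op{k}u^{-1}}$.

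First I will note the three elementary facts that make the computation work. Since $u$ is an isometry, for any $Q\in\Gr(E,\sigma)$ one has $u(Q^{\perp})=u(Q)^{\perp}$ and $\proj{u(Q)}=u\proj{Q}u^{-1}$; moreover $u$ restricts to a bijection $\Gr(E,\sigma)\to\Gr(E,u(\sigma))$ intertwining the actions of $\U(E,\sigma)$ and $\U(E,u(\sigma))=u\U(E,\sigma)u^{-1}$, and preserving split dimensions. The second fact I need is that the uniform measure is equivariant in the sense that $u_{*}\nu^{E,\sigma}=\nu^{E,u(\sigma)}$: indeed $u_{*}\nu^{E,\sigma}$ is a Borel measure on $\Gr(E,u(\sigma))$ that is invariant under $\U(E,u(\sigma))$ and assigns mass $\binom{\ul{d}}{\ul{n}}$ to each stratum $\Gr_{\ul{n}}(E,u(\sigma))$, so it coincides with $\nu^{E,u(\sigma)}$ by the uniqueness of such invariant measures recalled in Section \ref{sec:grassmannians}.

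With these in hand, I will apply the change of variables $Q'=u(Q)$. For any continuous test function $f$ on $\Gr(E,u(\sigma))$,
\begin{align*}
\int f\,d(u_{*}\mu_{\sigma,\op{k}})
&=\int_{\Gr(E,\sigma)} f(u(Q))\,\det\!\big(\op{k}\proj{Q}+(1-\op{k})\proj{Q^{\perp}}\big)\,d\nu^{E,\sigma}(Q).
\end{align*}
The determinant is invariant under conjugation by $u$, so
\[
\det\!\big(\op{k}\proj{Q}+(1-\op{k})\proj{Q^{\perp}}\big)=\det\!\big((u\op{k}u^{-1})\proj{u(Q)}+(1-u\op{k}u^{-1})\proj{u(Q)^{\perp}}\big),
\]
using $u\proj{Q}u^{-1}=\proj{u(Q)}$ and $u\proj{Q^{\perp}}u^{-1}=\proj{u(Q)^{\perp}}$. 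Applying the pushforward $u_{*}\nu^{E,\sigma}=\nu^{E,u(\sigma)}$ established above yields
\[
\int f\,d(u_{*}\mu_{\sigma,\op{k}})=\int_{\Gr(E,u(\sigma))} f(Q')\,\det\!\big((u\op{k}u^{-1})\proj{Q'}+(1-u\op{k}u^{-1})\proj{Q'^{\perp}}\big)\,d\nu^{E,u(\sigma)}(Q'),
\]
which is precisely $\int f\,d\mu_{u(\sigma),u\op{k}u^{-1}}$ by \eqref{eq:densitymuK}.

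Two remarks on potential pitfalls. One should check that $u\op{k}u^{-1}$ is still a kernel on $E$: self-adjointness and the bound $0\leq u\op{k}u^{-1}\leq 1$ follow from $u\in\U(E)$, so Theorem \ref{thm:existuniqueDSP} does apply on $(E,u(\sigma))$. The only nontrivial step is really the invariance $u_{*}\nu^{E,\sigma}=\nu^{E,u(\sigma)}$; this is the main (and in fact only) obstacle, and it is handled by the uniqueness argument sketched above. The special case stated first in the proposition, where $u$ preserves $\sigma$, is recovered simply by noting that then $u(\sigma)=\sigma$.
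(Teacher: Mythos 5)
Your proof is correct and takes essentially the same approach as the paper: both reduce the claim to the identity $u_{*}\nu^{E,\sigma}=\nu^{E,u(\sigma)}$ combined with conjugation-invariance of the determinant applied to the density in \eqref{eq:densitymuK}. A minor point in your favour: you correctly invoke invariance of $\det$ under conjugation by $u$, whereas the paper's one-line justification appeals to $\det(u)=1$, which is not strictly guaranteed in the real orthogonal case even though the conclusion still holds by conjugation-invariance.
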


In terms of measures, and considering the induced map $u:\Gr(E,\sigma)\to \Gr(E,u(\sigma))$, this property reads
\begin{equation}\label{eq:pushQu}
u_{*}\mu_{\sigma,\op{k}}=\mu_{u(\sigma),u\op{k}u^{-1}}.
\end{equation}

\begin{proof} The result follows at once from the fact that the image of the measure $\nu^{E,\sigma}$ by $u$ is the measure $\nu^{E,u(\sigma)}$ and the equality
\[\det\big(\op{k}\proj{u^{-1}(Q)}+(1-\op{k})\proj{u^{-1}(Q)^{\perp}}\big)=\det\big(u\op{k}u^{-1}\proj{Q}+(1-u\op{k}u^{-1})\proj{Q^{\perp}}\big),\]
which itself follows from $\proj{u^{-1}(Q)}=u^{-1}\proj{Q}u$ and $\det(u)=1$.
\end{proof}

Let us stress that this property loses its substance in the case of DPP. Indeed, one works in this case with a fixed splitting in lines $\sigma$ of $E$, and the group $\U(E,\sigma)$ reduces to the product of the isometry groups of the lines, which acts trivially on the finite set $\Gr(E,\sigma)$.

The next property has to do with the orthocomplement of $\Q$, and extends the fact that the complement of a finite DPP is still a DPP.

\begin{proposition}[Orthocomplement]The random linear subspace $\Q^{\perp}$ of $E$ is a DLP on $(E,\sigma)$ with kernel $1-\op{k}$.
\end{proposition}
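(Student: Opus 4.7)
The most direct route is to use the explicit density formula from Proposition~\ref{prop:exDLP} together with the fact that the orthocomplement involution $\iota \colon Q \mapsto Q^{\perp}$ preserves $\nu^{E,\sigma}$. The plan is therefore in three short steps.

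First, I would observe that $\iota$ maps $\Gr(E,\sigma)$ to itself (since the orthogonal of a splitting-adapted subspace is again splitting-adapted) and that $\iota_{*}\nu^{E,\sigma}=\nu^{E,\sigma}$: this is immediate from \eqref{eq:orthnu} by summing over $\ul{n}$, since $\iota$ exchanges $\Gr_{\ul{n}}(E,\sigma)$ and $\Gr_{\ul{d}-\ul{n}}(E,\sigma)$ in a measure-preserving way. Consequently, for any Borel function $f$ on $\Gr(E,\sigma)$, and for the density $F_{\op{k}}(Q)=\det\bigl(\op{k}\proj{Q}+(1-\op{k})\proj{Q^{\perp}}\bigr)$ of $\mu_{\sigma,\op{k}}$ provided by Proposition~\ref{prop:exDLP}, one has
\[
\E\bigl[f(\Q^{\perp})\bigr]=\int_{\Gr(E,\sigma)} f(Q^{\perp})\,F_{\op{k}}(Q)\d\nu^{E,\sigma}(Q)=\int_{\Gr(E,\sigma)} f(Q)\,F_{\op{k}}(Q^{\perp})\d\nu^{E,\sigma}(Q).
\]

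Second, I would compute $F_{\op{k}}(Q^{\perp})$. Substituting $Q^{\perp}$ for $Q$ and using $(Q^{\perp})^{\perp}=Q$, we get
\[
F_{\op{k}}(Q^{\perp})=\det\bigl(\op{k}\proj{Q^{\perp}}+(1-\op{k})\proj{Q}\bigr)=\det\bigl((1-\op{k})\proj{Q}+\op{k}\proj{Q^{\perp}}\bigr)=F_{1-\op{k}}(Q).
\]
Here the second equality is just a rearrangement of summands; no nontrivial identity is needed.

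Third, combining these two observations shows that the law of $\Q^{\perp}$ admits the density $F_{1-\op{k}}$ with respect to $\nu^{E,\sigma}$, which by Proposition~\ref{prop:exDLP} is precisely $\mu_{\sigma,1-\op{k}}$. Since $1-\op{k}$ is a self-adjoint contraction whenever $\op{k}$ is, the uniqueness part of Theorem~\ref{thm:existuniqueDSP} then identifies this measure as the DLP on $(E,\sigma)$ with kernel $1-\op{k}$.

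There is really no obstacle here: the symmetric form of the density in \eqref{eq:densitymuK}, which treats $\op{k}$ and $1-\op{k}$ on equal footing relative to $Q$ and $Q^{\perp}$, was essentially designed to make this orthocomplement symmetry transparent. If one preferred to avoid the explicit density, an alternative would be to verify the incidence-measure characterisation \eqref{eq:incidenceDSP}: one would compute $\Inc{(\iota_{*}\mu_{\sigma,\op{k}})}$, massage it via Lemma~\ref{lem:orthpi} and Proposition~\ref{prop:multilin} exactly as in the proof of Proposition~\ref{prop:exDLP}, and recognise $\det(1-\op{k})^{Q}_{Q}$ as the resulting density. This is strictly more laborious than the density argument above, so I would stick with the density route.
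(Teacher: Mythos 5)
Your proof is correct and follows exactly the same approach as the paper: invariance of $\nu^{E,\sigma}$ under $Q\mapsto Q^{\perp}$ combined with the symmetric density formula \eqref{eq:densitymuK}. Your write-up is slightly more explicit than the paper's one-line proof, but the underlying argument is identical.
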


\begin{proof} This follows immediately from the invariance of the measure $\nu^{E,\sigma}$under the map $Q\mapsto Q^{\perp}$ (see \eqref{eq:orthnu}) and from the expression  \eqref{eq:densitymuK} of the density of $\mu_{\sigma,\op{k}}$ with respect to $\nu^{E,\sigma}$.
\end{proof}

Let us now turn to a scaling property, which in the case of DPP is the following: sampling a DPP with kernel $K$ and then erasing each point of the resulting set independently of the others with probability $1-p$ results in a DPP with kernel $pK$. Note that erasing each point of a subset~$I$ of $\{1,\ldots,d\}$ with probability $1-p$ amounts to sampling a DPP with kernel $pP^{I}$, where $P^{I}$ is the diagonal matrix with $1$'s in the columns labelled by $I$ and $0$'s elsewhere.

The similar statement for DLP is this: if after sampling our DLP $\Q$ with kernel $\op{k}$ we keep, for each $i\in\{1,\ldots,s\}$, from $\Q\cap E_{i}$ only a uniform random subspace with binomial dimension of parameters $\dim(\Q\cap E_{i})$ and $p$, we obtain a DLP with kernel $p\op{k}$. 

In order to articulate and prove this statement, we need to understand the operation on $\Q$ that we just described in terms of DLP. Let us approach this question in the form of several simple examples. Each of the following statements can be checked using the algorithmic description of DLP given by Proposition \ref{prop:sampling}. 

\begin{example}\rm Let $p\in [0,1]$ be a real. Let us call {\em $p$-binomial subspace} of an inner product space $F$ for a uniformly distributed subspace of $F$ with random dimension distributed according to a binomial random variable with parameters $\dim F$ and~$p$.

1. A DLP in the coarse split inner space $(E,(E))$ with kernel $\op{k}=p\, \id_{E}$ is a $p$-binomial subspace of $E$. 

2. A DLP in the split inner product space $(E,\sigma)$ with kernel $p\, \id_{E}$ is the direct sum over $i\in \{1,\ldots,s\}$ of a $p$-binomial subspace of $E_{i}$.

3.  Let $Q$ be a subspace of $E$ adapted to $\sigma$, that is, an element of $\Gr(E,\sigma)$. A tempting but wrong guess would be that a DLP in $(E,\sigma)$ with kernel $p\proj{Q}$ is the sum over $i\in\{1,\ldots,s\}$ of a $p$-binomial subspace of $Q\cap E_{i}$.\footnote{Even more tempting but equally wrong would be the guess that this DLP is a $p$-binomial subspace of $Q$.} For instance, consider the case where $E$ is a Euclidean plane endowed with the coarse splitting $(E)$, and $Q$ is a line of $E$. Then a DLP with kernel $\proj{Q}$ is supported by the whole set of lines in $E$, the density of probability of a line forming an angle $\alpha$ with $Q$ being $\cos^{2}(\alpha)$.\footnote{The reader wondering how the function $\cos^{2}$, which notoriously has mean $\frac{1}{2}$, can be a density of probability, should remember that our reference measure on the set of lines of $E$ has total mass $\binom{2}{1}=2$.}

In fact, we need to use $Q$ to form a new splitting of $E$ that we denote by $\sigma\vee Q$, and which consists in the non-zero subspaces in the list
\[Q\cap E_{1},Q^{\perp}\cap E_{1}, \ldots, Q\cap E_{s},Q^{\perp}\cap E_{s}.\]
The DLP on $(E,\sigma\vee Q)$ with kernel $p\proj{Q}$ is the random space we were looking for: the direct sum over $i\in \{1,\ldots,s\}$ of a $p$-binomial subspace of $Q\cap E_{i}$.
\end{example}

Here is the formal statement.

\begin{proposition}[Scaling] \label{prop:scaling} Let $p\in[0,1]$ be a real number. Then
\[\E[\mu_{\sigma\vee \Q,p\proj{\Q}}]=\int_{\Gr(E,\sigma)} \mu_{\sigma\vee Q,p\proj{Q}} \d\mu_{\sigma,\op{k}}(Q)=\mu_{\sigma,p\op{k}}.\]
\end{proposition}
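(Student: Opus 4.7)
The plan is to identify both sides as probability measures on $\Gr(E,\sigma)$ and invoke Proposition \ref{prop:incmeaschar}, which says a measure on $\Gr(E,\sigma)$ is characterised by its incidence measure (with respect to $\sigma$). Note that $\sigma\vee Q$ is finer than $\sigma$ for every $Q\in \Gr(E,\sigma)$, so each $\mu_{\sigma\vee Q,p\proj{Q}}$ is supported on $\Gr(E,\sigma\vee Q)\subseteq \Gr(E,\sigma)$ and the mixture on the left-hand side is a well-defined Borel probability measure on $\Gr(E,\sigma)$. By Definition \ref{def:defDSP}, the incidence measure of the candidate right-hand side satisfies
\[\d\Inc{\mu}_{\sigma,p\op{k}}(R)=\det(p\op{k})^{R}_{R}\,\d\nu^{E,\sigma}(R)=p^{\dim R}\det\op{k}^{R}_{R}\,\d\nu^{E,\sigma}(R).\]

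Next, for a fixed $Q\in \Gr(E,\sigma)$, I will analyse $\mu_{\sigma\vee Q,p\proj{Q}}$. Its incidence density with respect to $\nu^{E,\sigma\vee Q}$ at $R\in \Gr(E,\sigma\vee Q)$ is $\det(p\proj{Q})^{R}_{R}=p^{\dim R}\cos^{2}(R,Q)$ by Proposition \ref{prop:matrixcos}(3). For $R$ adapted to $\sigma\vee Q$, the condition $R\not\subseteq Q$ forces a nonzero component of $R$ inside some $Q^{\perp}\cap E_{i}$, hence $R\cap Q^{\perp}\ne\{0\}$, hence $\cos^{2}(R,Q)=0$ by Proposition \ref{prop:cosbasic}(2). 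Thus by Proposition \ref{prop:incmeaschar} the measure $\mu_{\sigma\vee Q,p\proj{Q}}$ is supported on $\Gr(Q,\sigma_{Q})$; on this support, $\proj{Q}$ acts as the identity so $(p\proj{Q})^{R}_{R}=p\,\id_{R}$ and the density reduces to $p^{\dim R}$. Three bookkeeping checks then let us transport this into the splitting $\sigma$: (i) for $R\in \Gr(Q,\sigma_{Q})$ we have $R\cap E_{i}=R\cap(Q\cap E_{i})$, so $\sigma_{R}$ is unambiguously defined whether taken relative to $\sigma$, $\sigma\vee Q$, or $\sigma_{Q}$; (ii) the restriction of $\nu^{E,\sigma\vee Q}$ to $\Gr(Q,\sigma_{Q})$ coincides with $\nu^{Q,\sigma_{Q}}$, since the zero subspace of each $Q^{\perp}\cap E_{i}$ carries unit $\nu$-mass; (iii) viewed as a measure on $\Gr(E,\sigma)$,
\[\d\Inc{\mu}_{\sigma\vee Q,p\proj{Q}}(R)=p^{\dim R}\,\d\nu^{Q,\sigma_{Q}}(R).\]

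To conclude, let $\mu=\int\mu_{\sigma\vee Q,p\proj{Q}}\,\d\mu_{\sigma,\op{k}}(Q)$ and test against a continuous function $f$ on $\Gr(E,\sigma)$. Fubini and the very definition of the incidence measure of $\mu_{\sigma,\op{k}}$ (Definition \ref{def:defZ}) give
\begin{align*}
\int f\,\d\Inc{\mu}
&=\int_{\Gr(E,\sigma)}\!\!\int_{\Gr(Q,\sigma_{Q})} f(R)\,p^{\dim R}\,\d\nu^{Q,\sigma_{Q}}(R)\,\d\mu_{\sigma,\op{k}}(Q)\\
&=\int_{\Gr(E,\sigma)} f(R)\,p^{\dim R}\,\d\Inc{\mu}_{\sigma,\op{k}}(R)\\
&=\int_{\Gr(E,\sigma)} f(R)\,p^{\dim R}\,\det\op{k}^{R}_{R}\,\d\nu^{E,\sigma}(R),
\end{align*}
which matches the incidence measure of $\mu_{\sigma,p\op{k}}$ computed above. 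Proposition \ref{prop:incmeaschar} then closes the argument.

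The main obstacle will be the bookkeeping across splittings recorded in (i)--(iii) above, together with the support statement for $\mu_{\sigma\vee Q,p\proj{Q}}$ that collapses $\nu^{E,\sigma\vee Q}$ onto $\nu^{Q,\sigma_{Q}}$. Once these compatibilities are secured, the identification of incidence measures is a single application of Fubini.
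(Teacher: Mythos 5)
Your proof is correct, and it takes a genuinely different route from the paper's. The paper's argument stays at the level of densities of the measures themselves: it writes the density of $\E[\mu_{\sigma\vee \Q,p\proj{\Q}}]$ with respect to $\nu^{E,\sigma}$ at a point $R$ as an integral over $Q$, multiplies the two determinants, exploits the fact that the integrand vanishes unless $R\subseteq Q$ (so $\proj{R}$ and $\proj{Q}$ may be commuted), and then collapses the $Q$-integral with the multilinearity formula of Proposition \ref{prop:multilin}. Your argument instead works entirely at the level of incidence measures: you observe that $\mu_{\sigma\vee Q,p\proj{Q}}$ is supported on $\Gr(Q,\sigma_Q)$ with incidence density $p^{\dim R}$ relative to $\nu^{Q,\sigma_Q}$, push the mixture through the (linear) incidence map, recognise the result as $p^{\dim R}\,\d\Inc\mu_{\sigma,\op{k}}(R)=p^{\dim R}\det\op{k}^R_R\,\d\nu^{E,\sigma}(R)$, and conclude by the uniqueness statement of Proposition \ref{prop:incmeaschar}. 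What your route buys is that you never need to interchange the order of integration over a partial flag manifold to produce a density at $R$ with respect to $\nu^{E,\sigma}$ — the incidence-measure map is linear, so the mixture passes through it for free, and the determinant algebra is reduced to the single scaling identity $\det(p\op{k})^R_R=p^{\dim R}\det\op{k}^R_R$. The price is the bookkeeping in your points (i)--(iii), which you handle carefully and which is exactly what is needed to compare incidence measures taken relative to $\sigma$, $\sigma\vee Q$, and $\sigma_Q$. One small remark: the support statement for $\mu_{\sigma\vee Q,p\proj{Q}}$ can also be obtained directly from the density formula \eqref{eq:densitymuK} (the operator $p\proj{Q}\proj{R}+(1-p\proj{Q})\proj{R^{\perp}}$ annihilates any nonzero vector of $R\cap Q^{\perp}$), which bypasses the detour through Proposition \ref{prop:incmeaschar} at that step; but your argument via the inversion formula \eqref{eq:mobius} is also valid, since that formula reconstructs $\mu$ as an integral over subspaces of the support of $\Inc\mu$.
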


\begin{proof} Let us consider a point $R\in \Gr(E,\sigma)$. The density at $R$ of the measure $\E[\mu_{\sigma\vee Q,p\proj{\Q}}]$ with respect to $\nu^{E,\sigma}$ is equal to
\[\int_{\Gr(E,\sigma)}  \det\big(\op{k}\proj{Q}+(1-\op{k})\proj{Q^{\perp}}\big) \det\big(p\proj{Q}\proj{R}+(1-p\proj{Q})\proj{R^{\perp}}\big)\d\nu^{E,\sigma}(Q).\]
Multiplying the two determinants and reordering the terms, we find the expression
\[\int_{\Gr(E,\sigma)}  \det\big(\op{k}\proj{Q}(p\proj{R}+(1-p)\proj{R^{\perp}})+(1-\op{k})\proj{Q^{\perp}}\proj{R^{\perp}}\big)\d\nu^{E,\sigma}(Q).\]
Now we claim that $\proj{R}$ and $\proj{Q}$ commute in this expression. Indeed, the subspace $Q$ is adapted to $\sigma$, and $R$ is sampled under the measure $\mu_{\sigma\vee Q,p\proj{Q}}$. According to the discussion preceding the statement of Proposition \ref{prop:scaling}, $R$ is almost surely a subspace of $Q$. More precisely, for $R\in \Gr(E,\sigma\vee Q)$, the determinant $\det\big(p\proj{Q}\proj{R}+(1-p\proj{Q})\proj{R^{\perp}}\big)$ vanishes whenever $R\not\subseteq Q$. Thus, the value at $R$ of the density of the measure that we are computing is equal to
\[\int_{\Gr(E,\sigma)}  \det\big(\op{k}(p\proj{R}+(1-p)\proj{R^{\perp}})\proj{Q}+(1-\op{k})\proj{R^{\perp}}\proj{Q^{\perp}}\big)\d\nu^{E,\sigma}(Q).\]
Applying Proposition \ref{prop:multilin}, we find that it is equal to
\[\det\big(\op{k}(p\proj{R}+(1-p)\proj{R^{\perp}})+(1-\op{k})\proj{R^{\perp}}\big)=\det\big(p\op{k}\proj{R}+(1-p\op{k})\proj{R^{\perp}}\big),\]
and the result is proved.
\end{proof}

\subsection{Dimension and split dimension} 

\begin{proposition}[Dimension I] \label{prop:dimH} Assume that $\op{k}$ is an orthogonal projection of rank $n$. Then $\dim\Q=n$ almost surely.
\end{proposition}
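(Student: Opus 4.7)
The plan is to read off this result directly from the density formula for projection DLP already established in the excerpt. Recall from Proposition \ref{prop:exDLP} that $\mu_{\sigma,\op{k}}$ has density
\[Q\mapsto \det\big(\op{k}\proj{Q}+(1-\op{k})\proj{Q^{\perp}}\big)\]
with respect to $\nu^{E,\sigma}$. When $\op{k}=\proj{H}$ is the orthogonal projection on an $n$-dimensional subspace $H$, we have $1-\op{k}=\proj{H^{\perp}}$, so the density becomes $\det(\proj{H}\proj{Q}+\proj{H^{\perp}}\proj{Q^{\perp}})$.

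The key observation is then that Proposition \ref{prop:cosdet} applies verbatim: it identifies this determinant with $\cos^{2}(Q,H)$ when $\dim Q=\dim H=n$, and shows that it vanishes otherwise. Hence the density of $\mu_{\sigma,\proj{H}}$ vanishes off $\Gr_{n}(E,\sigma)$, and one in fact recovers the cleaner expression \eqref{eq:densityDLPproj}, namely $\d\mu_{\sigma,\proj{H}}(Q)=\cos^{2}(Q,H)\,\d\nu^{E,\sigma}_{n}(Q)$.

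Since the measure $\nu^{E,\sigma}_{n}$ is supported on subspaces of dimension $n$, and since $\mu_{\sigma,\proj{H}}$ is already known to be a probability measure (by Proposition \ref{prop:exDLP}), we conclude that $\Q\in\Gr_{n}(E,\sigma)$ almost surely, i.e.\ $\dim\Q=n$ almost surely. There is no real obstacle here: all the geometric and combinatorial work has already been done in the treatment of projection DLP via the $\cos^{2}$ formula, and the present statement is its immediate probabilistic reformulation.
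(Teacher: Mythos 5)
Your proof is correct and follows exactly the paper's argument: read off the density from Proposition \ref{prop:exDLP}, then invoke Proposition \ref{prop:cosdet} to see that it vanishes off $\Gr_{n}(E,\sigma)$. No difference worth noting.
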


\begin{proof} Indeed, the density of $\mu_{\sigma,\op{k}}$, which is given by \eqref{eq:densitymuK}, is, according to Proposition \ref{prop:cosdet}, supported by $\Gr_{n}(E,\sigma)$.
\end{proof}

We can in fact describe, in general, the Laplace transform of the split dimension of $\Q$. For that purpose, it will be useful to consider a class of very simple linear operators on $E$, namely those which act by a scalar on each of the subspaces $E_{1},\ldots,E_{s}$. We will call such operators {\em split scalar} operators. The set of split scalar operators is a subalgebra of $\End(E)$, indeed the commutant of the subalgebra $\End(E_{1})\oplus \ldots \oplus \End(E_{s})$. In particular, an operator is split scalar if and only if it commutes to $\proj{Q}$ for all $Q\in \Gr(E,\sigma)$.

We introduce $s$ indeterminates $t_{1},\ldots,t_{s}$ and consider the generic split scalar operator
\[T=t_{1}\proj{E_{1}}+\ldots+t_{s}\proj{E_{s}} \in \C[t_{1},\ldots,t_{s}]\otimes \End(E).\]

\begin{proposition}[Split dimension]\label{prop:dimQ}
Let $\op{k}$ be a kernel on $(E,\sigma)$. In the ring of formal series $\C[[t_{1},\ldots,t_{s}]]$, the following equalities hold:
\begin{equation}\label{eq:Laplacescal}
\E\big[e^{\Tr(T\proj{\Q})}\big]=\E\bigg[\exp \sum_{i=1}^{s} t_{i}\dim(\Q\cap E_{i})\bigg]=\det(\id_{E}+\op{k}(e^{T}-1)).
\end{equation}
\end{proposition}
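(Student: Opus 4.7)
The plan is to compute the left-hand side directly from the density of $\mu_{\sigma,\op{k}}$ given by Proposition~\ref{prop:exDLP} and then reduce the resulting integral to Proposition~\ref{prop:multilin}. First I would establish the middle equality: because $\Q$ is almost surely adapted to $\sigma$, one has $\proj{\Q}=\sum_{i=1}^{s}\proj{\Q\cap E_{i}}$, and $\proj{E_{i}}\proj{\Q\cap E_{j}}=\delta_{ij}\proj{\Q\cap E_{i}}$ since $\Q\cap E_{j}\subseteq E_{j}$. Consequently $T\proj{\Q}=\sum_{i}t_{i}\proj{\Q\cap E_{i}}$ and $\Tr(T\proj{\Q})=\sum_{i}t_{i}\dim(\Q\cap E_{i})$. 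In particular $T$ commutes with $\proj{\Q}$, so setting $S=e^{T}$ one has $T|_{\Q}$ well defined with $\det(e^{T|_{\Q}})=e^{\Tr(T|_{\Q})}=e^{\Tr(T\proj{\Q})}$, and this equals $\det(S\proj{\Q}+\proj{\Q^{\perp}})$ because the latter operator acts as $S|_{\Q}$ on $\Q$ and as the identity on $\Q^{\perp}$.

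Using the density formula from Proposition~\ref{prop:exDLP}, the expectation to compute becomes
\[\int_{\Gr(E,\sigma)}\det(S\proj{Q}+\proj{Q^{\perp}})\det(\op{k}\proj{Q}+(1-\op{k})\proj{Q^{\perp}})\,\d\nu^{E,\sigma}(Q).\]
The key algebraic observation is that $T$, being a split scalar operator, commutes with $\proj{Q}$ and hence with $\proj{Q^{\perp}}$ for every $Q\in\Gr(E,\sigma)$, and so does $S$. Expanding the product of the two matrices whose determinants appear in the integrand and using $\proj{Q}\proj{Q^{\perp}}=0$ together with $\proj{Q^{\perp}}S\proj{Q}=S\proj{Q^{\perp}}\proj{Q}=0$, one finds
\[(\op{k}\proj{Q}+(1-\op{k})\proj{Q^{\perp}})(S\proj{Q}+\proj{Q^{\perp}})=\op{k}S\proj{Q}+(1-\op{k})\proj{Q^{\perp}},\]
so that the integrand collapses to the single determinant $\det(\op{k}S\proj{Q}+(1-\op{k})\proj{Q^{\perp}})$.

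Finally, Proposition~\ref{prop:multilin} applied with $\op{a}=\op{k}S$, $\op{b}=1-\op{k}$, and the outer subspace taken to be $E$ itself (so that $\proj{E}=\id_{E}$) yields
\[\int_{\Gr(E,\sigma)}\det(\op{k}S\proj{Q}+(1-\op{k})\proj{Q^{\perp}})\,\d\nu^{E,\sigma}(Q)=\det(\op{k}S+1-\op{k})=\det(\id_{E}+\op{k}(e^{T}-1)),\]
which is the third expression in the statement. Since both sides of the claimed identity are polynomials in the variables $e^{t_{1}},\ldots,e^{t_{s}}$, the equality on real values automatically holds as formal series in $t_{1},\ldots,t_{s}$. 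The whole argument rests on the single observation that split scalar operators commute with the projection onto every adapted subspace, which is what makes the key algebraic identity above fall out cleanly; beyond this there is no real obstacle, as Proposition~\ref{prop:multilin} takes care of the integration.
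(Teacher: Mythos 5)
Your proof is correct and follows essentially the same route as the paper: express the expectation as an integral against the density of Proposition~\ref{prop:exDLP}, exploit that the split scalar operator $T$ (hence $e^{T}$) commutes with $\proj{Q}$ for every $Q$ adapted to $\sigma$ to collapse the product of the two determinants into a single one, and then invoke Proposition~\ref{prop:multilin}. The only differences are cosmetic: you spell out the (admittedly straightforward) middle equality in more detail and multiply the two factors in the opposite order, which is immaterial since $\det(AB)=\det(BA)$.
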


\begin{proof}
The first equality is straightforward. The left-hand side is equal to
\begin{align*}
\E\big[\det\big(\proj{\Q}e^{T}+\proj{\Q^{\perp}}\big)\big]&=\int_{\Gr(E,\sigma)} \det\big(\op{k}\proj{Q}+(1-\op{k})\proj{Q^{\perp}}\big) \det\big(\proj{\Q}e^{T}+\proj{Q^{\perp}}\big) \d\nu^{E,\sigma}(Q)\\
&=\int_{\Gr(E,\sigma)} \det\big(\op{k}\proj{Q}e^{T}+(1-\op{k})\proj{Q^{\perp}}\big) \d\nu^{E,\sigma}(Q)
 \end{align*}
 and since $T$ commutes with $\proj{Q}$ for every $Q$ adapted to $\sigma$, this is equal to
\[\int_{\Gr(E,\sigma)} \det(\op{k}e^{T}\proj{Q}+(1-\op{k})\proj{Q^{\perp}})\; \d\nu^{E,\sigma}(Q)\]
which, by Proposition \ref{prop:multilin}, is in turn equal to $\det(1+\op{k}(e^{T}-1))$. 
\end{proof}

It is tempting to replace, in $\E\big[e^{\Tr(T\proj{\Q})}\big]$, the operator $T$ by something more general than a split scalar operator. Unfortunately, looking at the proof, we see that a crucial step is the commutation of $T$ with $\proj{Q}$ for every $Q$ adapted to $\sigma$, and this is equivalent to $T$ being scalar on each block of the splitting. This does not of course rule out the existence of a different argument that would allow one to treat a more general situation, but assuming that there is one, we were not able find it. 

\begin{corollary}[Refinement of splittings] Let $\delta$ be a splitting of $E$ finer than $\sigma$. Then the distribution of the split dimension of $\Q$ with respect to $\sigma$ is the same under $\mu_{\sigma,\op{k}}$ and $\mu_{\delta,\op{k}}$.
\end{corollary}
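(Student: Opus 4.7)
The plan is to deduce the corollary directly from the Laplace transform formula in Proposition \ref{prop:dimQ}, by exploiting the fact that split scalar operators for $\sigma$ are, in particular, split scalar operators for any refinement $\delta$ of $\sigma$.

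First I would observe that if $\delta=(F_{1},\ldots,F_{r})$ is finer than $\sigma=(E_{1},\ldots,E_{s})$, then every subspace adapted to $\delta$ is also adapted to $\sigma$: writing, for each $j$, $i(j)$ for the unique index such that $F_{j}\subseteq E_{i(j)}$, a subspace $Q=\bigoplus_{j}Q_{j}$ with $Q_{j}\subseteq F_{j}$ satisfies $Q\cap E_{i}=\bigoplus_{j:i(j)=i}Q_{j}$, and in particular
\[\dim(Q\cap E_{i})=\sum_{j:i(j)=i}\dim(Q\cap F_{j}).\]
Hence $\mu_{\delta,\op{k}}$ can be seen as a probability measure on $\Gr(E,\sigma)$, and the random vector $(\dim(\Q\cap E_{1}),\ldots,\dim(\Q\cap E_{s}))$ makes sense under both measures.

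Next I would fix real numbers $t_{1},\ldots,t_{s}$ and consider the operator $T=\sum_{i=1}^{s}t_{i}\proj{E_{i}}$. This $T$ is split scalar with respect to $\sigma$ by construction, and also with respect to $\delta$ since $T$ acts by the scalar $t_{i(j)}$ on each $F_{j}$. Applying Proposition \ref{prop:dimQ} to the split inner product space $(E,\sigma)$ with this $T$ gives
\[\E_{\mu_{\sigma,\op{k}}}\!\Big[\exp\sum_{i=1}^{s}t_{i}\dim(\Q\cap E_{i})\Big]=\det(\id_{E}+\op{k}(e^{T}-1)).\]
Applying the same proposition to $(E,\delta)$ with $T'=\sum_{j}t_{i(j)}\proj{F_{j}}=T$ gives, using the decomposition above,
\[\E_{\mu_{\delta,\op{k}}}\!\Big[\exp\sum_{i=1}^{s}t_{i}\dim(\Q\cap E_{i})\Big]=\E_{\mu_{\delta,\op{k}}}\!\Big[\exp\sum_{j=1}^{r}t_{i(j)}\dim(\Q\cap F_{j})\Big]=\det(\id_{E}+\op{k}(e^{T}-1)).\]
The two Laplace transforms therefore coincide for every choice of $(t_{1},\ldots,t_{s})\in\R^{s}$, and since these vectors of dimensions take values in the bounded set $\{0,\ldots,\dim E_{1}\}\times\cdots\times\{0,\ldots,\dim E_{s}\}$, this forces the two distributions to be equal.

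The only slightly subtle point is checking that the identification of the two expressions is legitimate, i.e.\ that the formula of Proposition \ref{prop:dimQ}, proved for $T$ split scalar with respect to the splitting at hand, may be applied with the same $T$ to both $\sigma$ and $\delta$; but this is precisely the observation that refinements enlarge the algebra of split scalar operators. No obstacle of substance arises, and the argument is essentially a one-line consequence of Proposition \ref{prop:dimQ} once the two splittings are placed on a common footing.
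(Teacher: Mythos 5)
Your proof is correct and takes essentially the same approach as the paper: the paper's own proof is exactly the one-line observation that a split scalar operator for $\sigma$ is also split scalar for $\delta$, combined with Proposition~\ref{prop:dimQ}. Your write-up simply spells out the intermediate bookkeeping (the identity $\dim(Q\cap E_i)=\sum_{j:i(j)=i}\dim(Q\cap F_j)$ and the injectivity of the Laplace transform on finitely supported distributions), which the paper leaves implicit.
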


\begin{proof} This follows from \ref{prop:dimQ} and the fact that if $T$ is split scalar with respect to $\sigma$, it is also split scalar with respect to $\delta$.
\end{proof}

By taking for $\delta$ a splitting in lines of $E$ adapted to $\sigma$, this corollary tells us that the split dimension of $\Q$ under $\mu_{\sigma,\op{k}}$ has the same distribution as the vector of the number of points of the $\DLP(E,\delta,\op{k})$ (which is a DPP) that fall within the blocks of the partition of $\delta$ determined by $\sigma$.

\begin{corollary}[Dimension II] Let $\lambda_{1},\ldots,\lambda_{d}$ be the eigenvalues of the kernel $\op{k}$. Then the dimension of $\Q$ is distributed as the sum of $d$ independent Bernoulli random variables with parameter $\lambda_{1},\ldots,\lambda_{d}$.
\end{corollary}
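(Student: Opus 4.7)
The plan is to deduce this corollary directly from the split-dimension identity of Proposition \ref{prop:dimQ} by choosing the vector $(t_{1},\ldots,t_{s})$ to be constant. Concretely, I would set $t_{1}=\ldots=t_{s}=t$ for a single indeterminate (or real parameter) $t$, so that the operator $T=t_{1}\proj{E_{1}}+\ldots+t_{s}\proj{E_{s}}$ becomes the scalar multiple $t\id_{E}$, and consequently $e^{T}=e^{t}\id_{E}$. Since $\Q$ is almost surely adapted to $\sigma$, we have $\dim\Q=\sum_{i=1}^{s}\dim(\Q\cap E_{i})$, and the exponential sum in \eqref{eq:Laplacescal} collapses to $e^{t\dim\Q}$.

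Plugging these substitutions into Proposition \ref{prop:dimQ} yields
\[
\E\big[e^{t\dim\Q}\big]=\det\big(\id_{E}+(e^{t}-1)\op{k}\big).
\]
Since $\op{k}$ is self-adjoint, it is diagonalisable in some orthonormal basis of $E$, with eigenvalues $\lambda_{1},\ldots,\lambda_{d}$. Hence, so is $\id_{E}+(e^{t}-1)\op{k}$, with eigenvalues $1+(e^{t}-1)\lambda_{i}=\lambda_{i}e^{t}+(1-\lambda_{i})$, giving
\[
\E\big[e^{t\dim\Q}\big]=\prod_{i=1}^{d}\big(\lambda_{i}e^{t}+(1-\lambda_{i})\big).
\]
The right-hand side is precisely the moment generating function of a sum of $d$ independent Bernoulli random variables of parameters $\lambda_{1},\ldots,\lambda_{d}$, and since $\dim\Q$ takes values in the finite set $\{0,1,\ldots,d\}$, its distribution is determined by this generating function.

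There is no real obstacle here: the content of the statement is contained in Proposition \ref{prop:dimQ}, and the only genuine ingredient is the spectral decomposition of $\op{k}$, which is available thanks to self-adjointness. One could alternatively invoke the previous corollary (Refinement of splittings) to reduce to the case of a splitting in lines of $E$ adapted to $\sigma$ and a genuine DPP, recovering the classical statement of Hough--Krishnapur--Peres--Virág; but the above direct route via the determinant identity seems cleaner.
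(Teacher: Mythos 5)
Your proof is correct and takes exactly the route the paper takes: specialise Proposition~\ref{prop:dimQ} to $t_{1}=\ldots=t_{s}=t$ so that $T=t\,\id_{E}$, then diagonalise $\op{k}$ to turn $\det(\id_E+(e^t-1)\op{k})$ into the Bernoulli-product Laplace transform. Nothing to add.
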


\begin{proof} Specialising Proposition \ref{prop:dimQ} to $t_{1}=\ldots=t_{s}=t$, we find
\[\E\big[e^{t\dim\Q}\big]=\prod_{i=1}^{d} (\lambda_{i}e^{t} +(1-\lambda_{i})),\]
which is the expected Laplace transform.
\end{proof}

These results could have been deduced from their version for DPP, which is known to be true, using the sampling algorithm described in Proposition \ref{prop:sampling}. However, we believe that the more canonical proofs that we provide reveal more of the structure of DLP.

\subsection{Support}\label{sec:Support}

 The question of the support of the distribution of a DLP, which is a probability measure on a continuous space, is more subtle and interesting than the corresponding question for a DPP. 

We start by proving that the support of a DLP is a union of connected components of $\Gr(E,\sigma)$. Recall that these connected components are exactly the subsets $\Gr_{\ul{n}}(E,\sigma)$, for $\ul{0}\leq \ul{n}\leq \ul{d}$, where~$\ul{d}$ is the split dimension of $E$. Let us denote by $[\![\ul{0},\ul{d}]\!]$ the set of possible split dimensions of an element of $\Gr(E,\sigma)$.

In the following proposition, by the {\em support} of $\mu_{\sigma,\op{k}}$, we mean the smallest closed subset of $\Gr(E,\sigma)$ with measure $1$. We denote it by $\mathsf{Supp}(\mu_{\sigma,\op{k}})$.

\begin{proposition}\label{prop:supportstrate} Let $(E,\sigma)$ be a split inner product space. Let $\op{k}$ be a kernel on $E$. There exists a subset $D_{\sigma,\op{k}}\subseteq [\![\ul{0},\ul{d}]\!]$ such that 
\[\mathsf{Supp}(\mu_{\sigma,\op{k}})=\bigcup_{\ul{n}\in D_{\sigma,\op{k}}} \Gr_{\ul{n}}(E,\sigma).\]
\end{proposition}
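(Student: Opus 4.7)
The plan is to identify which components $\Gr_{\ul{n}}(E,\sigma)$ contribute to the support, by showing that on each component $\mu_{\sigma,\op{k}}$ is either the zero measure or has full topological support. Setting
\[D_{\sigma,\op{k}}=\{\ul{n}\in [\![\ul{0},\ul{d}]\!] : \mu_{\sigma,\op{k}}(\Gr_{\ul{n}}(E,\sigma))>0\}\]
will then yield the desired decomposition.

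The key input is that the density $f(Q):=\det(\op{k}\proj{Q}+(1-\op{k})\proj{Q^{\perp}})$ of $\mu_{\sigma,\op{k}}$ with respect to $\nu^{E,\sigma}$ given by Proposition~\ref{prop:exDLP} is a nonnegative \emph{real-analytic} function on each component $\Gr_{\ul{n}}(E,\sigma)$. Indeed, in any affine chart, $Q$ can be presented as the column span of a full-rank matrix $B$ depending polynomially on local coordinates, so $\proj{Q}=B(B^{*}B)^{-1}B^{*}$ is real-analytic; hence so is $f$, which is polynomial in the entries of $\proj{Q}$. Moreover $\Gr_{\ul{n}}(E,\sigma)\simeq \Gr_{n_{1}}(E_{1})\times \ldots \times \Gr_{n_{s}}(E_{s})$ is a connected real-analytic manifold, each factor being a homogeneous space of a connected Lie group.

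By the identity principle for real-analytic functions on a connected manifold, $f$ restricted to $\Gr_{\ul{n}}(E,\sigma)$ is either identically zero or strictly positive on a dense open subset $U$. In the first case $\ul{n}\notin D_{\sigma,\op{k}}$ and the component contributes nothing. In the second case, since $\nu^{E,\sigma}_{\ul{n}}$ is $\U(E,\sigma)$-invariant with positive total mass and $\U(E,\sigma)$ acts transitively on $\Gr_{\ul{n}}(E,\sigma)$, the measure $\nu^{E,\sigma}_{\ul{n}}$ assigns positive mass to every nonempty open subset of $\Gr_{\ul{n}}(E,\sigma)$; consequently $\mu_{\sigma,\op{k}}$ is positive on every nonempty open subset of $U$, so its support contains $\overline{U}=\Gr_{\ul{n}}(E,\sigma)$.

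The only delicate point is the passage from positivity of $f$ at one point to positivity on a dense open set: without the real-analytic structure this dichotomy would be false, but one can either invoke the identity principle as above, or argue directly that $f$ pulls back to a polynomial in homogeneous Plücker coordinates, whose zero locus, unless it is everything, is a proper real algebraic subvariety of the irreducible variety $\Gr_{\ul{n}}(E,\sigma)$ and hence has empty interior.
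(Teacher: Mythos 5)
Your proof is correct, and it achieves the same dichotomy as the paper (the density of $\mu_{\sigma,\op{k}}$ is, on each connected component $\Gr_{\ul{n}}(E,\sigma)$, either identically zero or positive on a dense open set), but it reaches that dichotomy by a slightly different technical route. The paper pulls the density back along $\pi:\U(E,\sigma)\to\Gr_{\ul{n}}(E,\sigma)$, $u\mapsto u(Q_0)$: under this pullback the density becomes the restriction to a compact Lie group of a genuine \emph{polynomial} function on $\End(E_1)\times\cdots\times\End(E_s)$, and the paper then proves a self-contained Lie-theoretic lemma (Lemma~\ref{lem:vanish}: Taylor expansion along one-parameter subgroups plus surjectivity of the exponential map on a compact group) to get the empty-interior dichotomy on $\U(E,\sigma)$, which it then pushes back down via the continuous surjection $\pi$. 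You instead stay on the Grassmannian, where the density is no longer polynomial (because of the $(B^{*}B)^{-1}$ factor in $\proj{Q}=B(B^{*}B)^{-1}B^{*}$) but is still real-analytic, and you invoke the identity principle for real-analytic functions on a connected real-analytic manifold. The trade-off is roughly this: the paper's route is longer but elementary and self-contained (it does not rely on the identity principle for real-analytic manifolds as a black box), whereas your route is shorter and more direct provided one is comfortable citing that principle. Your algebraic-geometry alternative via Plücker coordinates (the density's vanishing locus is cut out on the irreducible variety $\Gr_{\ul{n}}(E,\sigma)$ by the homogeneous quadratic form $\omega\mapsto\langle\omega,\rho_{\op{k}}\omega\rangle$, cf.\ Proposition~\ref{prop:lienDLPQM}) is also sound and gives a third valid phrasing of the same core fact.
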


We will prove this proposition by showing that for all $\ul{n}$, the density of $\mu_{\sigma,\op{k}}$ given by \eqref{eq:densitymuK} is either identically zero on $\Gr_{\ul{n}}(E,\sigma)$ or positive on a dense subset. For this, we will use the following lemma about the vanishing set of a representative function on a compact matrix group. By a compact matrix group, we mean a compact subgroup of $\GL(E)$ for some finite-dimensional vector space $E$. 

\begin{lemma}\label{lem:vanish} Let $E$ be a real or complex vector space. Let $G\subset \GL(E)$ be a compact group. Let $f:\End(E)\to \R$ be a polynomial function on the real vector space $\End(E)$. Then $f$ vanishes either identically on $G$, or on a closed subset of $G$ with empty interior.
\end{lemma}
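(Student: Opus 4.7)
The plan is to realize $G$ as a real analytic submanifold of $\End(E)$ and to invoke the identity theorem for real analytic functions on a connected real analytic manifold.

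First, by Cartan's closed subgroup theorem, any closed subgroup of a real Lie group is an embedded real Lie subgroup. Since $G$ is compact, hence closed, in $\GL(E)$, it inherits the structure of a real Lie group for which the inclusion $G \hookrightarrow \End(E)$ is a real analytic embedding. Being a compact Lie group, $G$ has only finitely many connected components, each one a connected real analytic manifold.

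Second, since $f$ is polynomial it is globally real analytic on $\End(E)$, so $f|_{G}$ is real analytic on $G$ as the composition with the real analytic inclusion. The zero set $Z = f^{-1}(0) \cap G$ is closed in $G$ by continuity. On each connected component $C$ of $G$, the identity theorem for real analytic functions yields the expected dichotomy: either $f|_{C} \equiv 0$, or $Z \cap C$ has empty interior in $C$.

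The only delicate point is to combine these per-component conclusions into the global statement. In the applications of the lemma (notably the proof of Proposition~\ref{prop:supportstrate}), the group $G = \U(E,\sigma)$ acts transitively on each connected component $\Gr_{\ul n}(E,\sigma)$ of $\Gr(E,\sigma)$ through its identity component, so one may reduce to the case where $G$ is connected, in which case the identity theorem applied to the single component $G$ gives the global dichotomy directly. More generally, if $f$ vanishes on a non-empty open set $U \subseteq G$, then for any $g_{0} \in U$ the polynomial $h \mapsto f(g_{0} h)$ vanishes on an open neighborhood of the identity and hence, by the identity theorem, on the entire identity component $G^{0}$; this shows $f$ vanishes on all of the component $g_{0} G^{0}$ containing $g_{0}$, which is the key propagation step from a local open vanishing to a full connected component.
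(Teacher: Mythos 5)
Your proof takes a genuinely different technical route from the paper's. The paper works directly with the Lie algebra $\g$ of $G$: it expands $t\mapsto f(ge^{tX})$ in a convergent Taylor series via iterated Lie derivatives (the convergence on all of $\R$ coming from $f$ being a polynomial and $\exp$ being entire) and then uses the surjectivity of $\exp:\g\to G$ for compact groups. You instead realize $G$ as an embedded real analytic submanifold of $\End(E)$ via Cartan's closed subgroup theorem and invoke the identity theorem for real analytic functions, using the left-translation trick $h\mapsto f(g_0 h)$ to move a vanishing open set to a neighbourhood of the identity. Your argument is shorter by treating the identity theorem as a black box, while the paper's is more self-contained and elementary; both are, at heart, ``analyticity plus a local-to-global propagation.''

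There is one subtlety that you correctly flag and the paper glosses over: as written, both arguments only show that vanishing on an open set forces vanishing on the \emph{component} $g_0 G^0$ containing it, not on all of $G$. The paper's appeal to ``$\exp$ is onto for compact groups'' is only true for $G$ connected (the image of $\exp$ is contained in $G^0$), so the paper's proof has the same reach as yours. In fact the lemma is false as stated for disconnected $G$: take $E=\R$, $G=\mathrm{O}(1)=\{\pm 1\}\subset\GL(E)$ and $f(x)=x-1$; then the zero set $\{1\}$ is a whole (open) component, yet $f(-1)\neq 0$. Your remark that in the application one may replace $\U(E,\sigma)$ by its identity component $\U(E,\sigma)^0$, which still acts transitively on each $\Gr_{\ul{n}}(E,\sigma)$, is exactly the right fix and makes the lemma (for connected $G$) suffice for Proposition~\ref{prop:supportstrate}. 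So your proof is correct where it matters, and your discussion of the disconnected case is a genuine improvement in precision over the paper's.
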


\begin{proof} The proof relies on two facts. The first is a Taylor formula for $f$, and the second is the fact that the exponential map of $G$ is surjective.

Let $\g$ be the Lie algebra of $G$. For every $X\in \g$, the Lie derivative in the direction $X$ is the differential operator $\mathcal L_{X}$ on $G$ which acts on a smooth function, for instance $f$, by
\[(\mathcal L_{X}f)(g)=\frac{d}{dt}_{|t=0}f(ge^{tX}).\]
For every $g\in G$ and $X\in \g$, it follows from the fact that $f$ is a polynomial function that the function of one real variable $t\mapsto f(ge^{tX})$ is real analytic. Thus, we have the Taylor formula
\[f(ge^{X})=\sum_{n=0}^{\infty} \frac{1}{n!} \big((\mathcal L_{X})^{n}f\big)(g).\]

Let us now assume that the interior of the vanishing set of $f$ on $G$ is not empty, and contains an element $g\in G$. Then for all $X\in \g$, all iterated Lie derivatives of $f$ at $g$ in the direction $X$ vanish, and $f(ge^{X})=0$.

Since $G$ is compact, the exponential map $\exp : \g \to G$ is onto, and the last equality implies that $f$ vanishes identically on $G$.
\end{proof}

\begin{proof}[Proof of Proposition \ref{prop:supportstrate}] Let us choose $\ul{n}\in [\![\ul{0},\ul{d}]\!]$ and assume that  the density of $\mu_{\sigma,\op{k}}$ with respect to $\nu^{E,\sigma}$ is positive at a point $Q_{0}$ of $\Gr_{\ul{n}}(E,\sigma)$. 

Let us consider the maps $\U(E,\sigma)\stackrel{\pi}{\longrightarrow} \Gr_{\ul{n}}(E,\sigma) \stackrel{g}{\longrightarrow} \R$ defined by 
\[\pi(u)=u(Q_{0}) \ \ \text{ and } \ \ g(Q)=\det(\op{k}\proj{Q}+(1-\op{k})\proj{Q^{\perp}}).\]
Our assumption is that $g(Q_{0})>0$. On $\U(E,\sigma)$, the function $f=g\circ \pi$ can be written
\[f(u)=\det(\op{k}u\proj{Q_{0}}u^{*}+(1-\op{k})u\proj{Q_{0}^{\perp}}u^{*}).\]
In particular, it is the restriction to $\U(E,\sigma)$ of the polynomial function defined by the same formula on $\End(E_{1})\times \ldots \times \End(E_{s})$. 

Let $V$ be the vanishing set of $g$ on $\Gr_{\ul{n}}(E,\sigma)$. The vanishing set of $f$ on $\U(E,\sigma)$ is $\pi^{-1}(V)$. By our assumption and Lemma \ref{lem:vanish}, the interior of $\pi^{-1}(V)$ is empty. Since $\pi$ is continuous and onto, this implies that the interior of $V$ is empty. In particular, any non-empty open subset of $\Gr_{\ul{n}}(E,\sigma)$ contains a point, hence an open subset, where $g$ is positive, and has positive measure for $\mu_{\sigma,\op{k}}$.
\end{proof}

Now, we would like to understand better the set $D_{\op{k}}$, that is, identify those $\ul{n}$ for which $\Gr_{\ul{n}}(E,\sigma)$ is in the support of $\mu_{\sigma,\op{k}}$. The following corollary of Proposition \ref{prop:dimQ} constitutes a step in this direction.

\begin{corollary}\label{prop:massestrates} Let $(E,\sigma)$ be a split inner product space. Let $\op{k}$ be a kernel on $E$. For all $\ul{n}=(n_{1},\ldots,n_{s})\in [\![\ul{0},\ul{d}]\!]$, we have
\[\mu_{\sigma,\op{k}}(\Gr_{\ul{n}}(E,\sigma))=\text{the coefficient of } t_{1}^{n_{1}}\ldots t_{s}^{n_{s}} \text{ in } \det(T\op{k}+(1-\op{k})).\]
\end{corollary}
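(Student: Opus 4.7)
The plan is to read the result off Proposition \ref{prop:dimQ} by reinterpreting its Laplace transform identity as a probability generating function and then matching coefficients.

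First I would apply Proposition \ref{prop:dimQ}. With $T=t_1\proj{E_1}+\ldots+t_s\proj{E_s}$, the operator $e^T$ acts as $e^{t_i}\id_{E_i}$ on $E_i$, so it equals $\sum_i e^{t_i}\proj{E_i}$. Expanding $\id+\op{k}(e^T-1)=\op{k} e^T+(1-\op{k})$, Proposition \ref{prop:dimQ} reads
\[\E\Big[\prod_{i=1}^s e^{t_i\dim(\Q\cap E_i)}\Big]=\det\!\Big(\op{k}\sum_{i=1}^s e^{t_i}\proj{E_i}+(1-\op{k})\Big).\]
Now change variables by setting $u_i=e^{t_i}$, or equivalently interpret both sides as formal power series and substitute $e^{t_i}\leadsto u_i$. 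Writing $U=u_1\proj{E_1}+\ldots+u_s\proj{E_s}$, we obtain the identity of polynomials in $u_1,\ldots,u_s$
\[\E\Big[\prod_{i=1}^s u_i^{\dim(\Q\cap E_i)}\Big]=\det\!\big(\op{k}\, U+(1-\op{k})\big).\]

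Next I would expand the left-hand side. By definition of $\mu_{\sigma,\op{k}}$ and of $\Gr_{\ul{n}}(E,\sigma)$, the random vector $(\dim(\Q\cap E_1),\ldots,\dim(\Q\cap E_s))$ takes the value $\ul{n}$ with probability $\mu_{\sigma,\op{k}}(\Gr_{\ul{n}}(E,\sigma))$. Thus
\[\E\Big[\prod_{i=1}^s u_i^{\dim(\Q\cap E_i)}\Big]=\sum_{\ul{n}\in[\![\ul{0},\ul{d}]\!]}\mu_{\sigma,\op{k}}(\Gr_{\ul{n}}(E,\sigma))\,u_1^{n_1}\cdots u_s^{n_s},\]
so that $\mu_{\sigma,\op{k}}(\Gr_{\ul{n}}(E,\sigma))$ is the coefficient of $u_1^{n_1}\cdots u_s^{n_s}$ in $\det(\op{k}\,U+(1-\op{k}))$.

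Finally I would match this with the stated formula, which has $T\op{k}$ rather than $\op{k}\,T$. Since $\op{k}$ and each $\proj{E_i}$ are self-adjoint, so are $U$ and $\op{k}\,U+(1-\op{k})$'s adjoint is $U\op{k}+(1-\op{k})$; taking determinants of an operator and of its adjoint gives the same value, so
\[\det\!\big(\op{k}\,U+(1-\op{k})\big)=\det\!\big(U\op{k}+(1-\op{k})\big).\]
Renaming the dummy variables $u_i$ back to $t_i$ turns $U$ into $T$ and yields the claim. No step here is a real obstacle — the only subtlety worth being careful about is the passage from the exponential Laplace transform of Proposition \ref{prop:dimQ} to the polynomial probability generating function, which amounts to a monomial change of variables legitimate because both sides are determinants of linear operators depending polynomially on $e^{t_i}$.
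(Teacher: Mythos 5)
Your proof is correct and is exactly the intended derivation: the paper states this as a corollary of Proposition \ref{prop:dimQ} with no further argument, and you supply the substitution $u_i=e^{t_i}$ and coefficient-matching that the paper has in mind. One small imprecision: ``taking determinants of an operator and of its adjoint gives the same value'' is, in the complex case, only true because the determinant here is real (being the Laplace transform $\E[\prod_i u_i^{\dim(\Q\cap E_i)}]$); a cleaner and field-agnostic way to pass from $\det(\op{k}U+(1-\op{k}))$ to $\det(U\op{k}+(1-\op{k}))$ is to write both as $\det\bigl(1+(U-1)\op{k}\bigr)=\det\bigl(1+\op{k}(U-1)\bigr)$ via the identity $\det(1+AB)=\det(1+BA)$.
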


In a later section, we will provide an alternative description of these coefficients in terms of the Euclidean geometry of the exterior algebra of $E$, see Section \ref{sec:extalg} and Proposition \ref{prop:supportextalg}.

In the case where $\op{k}$ is an orthogonal projection, it is possible to characterise more explicitly those $\ul{n}$ which have positive probability of occurring as split dimensions of a DLP with kernel~$\op{k}$. We borrow this result from~\cite[Proposition~1.1]{Lafforgue} and rephrase it in our context to obtain the following statement.

\begin{proposition}\label{prop:lafforgue} Let $(E,\sigma)$ be a split inner product space. Let $H$ be a linear subspace of $E$. Set $n=\dim H$. Let $\ul{n}=(n_{1},\ldots,n_{s})\leq \ul{d}$  be such that $n_{1}+\ldots +n_{s}=n$. Then the following two assertions are equivalent.\\
\indent 1. $\mu_{\sigma,\proj{H}}(\Gr_{\ul{n}}(E,\sigma))>0$.\\
\indent 2. For all $T\subset \{1,\ldots,s\}$, the following inequality holds:
\[\sum_{t\in T} n_{t}\geq \dim(H\cap \bigoplus_{t\in T}E_{t}).\]
\end{proposition}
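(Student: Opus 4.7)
The plan is to first reduce the positivity statement to a geometric transversality question, then to dispatch necessity via the modular dimension inequality, and finally to handle sufficiency by appealing to a Hall--Rado transversal theorem for linear matroids. For the reduction, I would combine Proposition~\ref{prop:supportstrate} with the density formula \eqref{eq:densityDLPproj}: the mass $\mu_{\sigma,\proj{H}}(\Gr_{\ul n}(E,\sigma))$ is positive if and only if $\cos^2(Q,H)>0$ for some $Q\in \Gr_{\ul n}(E,\sigma)$, which by Proposition~\ref{prop:cosbasic} forces $|\ul n|=n=\dim H$ and is then equivalent to $Q\cap H^\perp=\{0\}$, i.e.\ to $Q\oplus H^\perp=E$. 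The proposition therefore amounts to proving that an adapted $Q=\bigoplus_i Q_i$ with $\dim Q_i=n_i$ satisfying $Q\oplus H^\perp=E$ exists if and only if the Hall-type inequalities hold.

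For necessity, given such a $Q$ and $T\subseteq\{1,\ldots,s\}$, I would apply the modular dimension inequality to the three subspaces $F=\bigoplus_{t\in T}E_t$, $Q$ and $H^\perp$. Using $Q+H^\perp=E$, $Q\cap H^\perp=\{0\}$, together with the fact that $F\cap Q=\bigoplus_{t\in T}Q_t$ has dimension $\sum_{t\in T}n_t$ because $Q$ is adapted, this inequality collapses to $\dim(F\cap H^\perp)\leq \sum_{t\in T}(d_t-n_t)$. Rewriting the left-hand side via the orthogonal identity $(F+H^\perp)^\perp=F^\perp\cap H$ and using $|\ul n|=n$ yields $\dim(H\cap F^\perp)\leq \sum_{t\notin T}n_t$, which is the stated condition after swapping $T\leftrightarrow T^c$.

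For sufficiency, it is cleanest to dualise and search for $\tilde Q:=Q^\perp=\bigoplus_i\tilde Q_i$ with $\tilde Q_i\subseteq E_i$ of dimension $d_i-n_i$ and $\tilde Q\cap H=\{0\}$. Fixing an orthonormal basis $\Omega_i$ of each $E_i$ and setting $\Omega=\bigsqcup_i\Omega_i$, I would endow $\Omega$ with the linear matroid $M$ in which $S\subseteq\Omega$ is independent iff $\Vect(S)\cap H=\{0\}$. Its rank satisfies $r_M\big(\bigcup_{t\in T}\Omega_t\big)=\sum_{t\in T}d_t-\dim(F_T\cap H)$, so the hypothesis rewrites as the Rado-type condition $r_M\big(\bigcup_{t\in T}\Omega_t\big)\geq \sum_{t\in T}(d_t-n_t)$ for every $T$. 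The matroid transversal theorem with prescribed block cardinalities (a standard consequence of Edmonds' matroid union theorem, strengthening Hall's marriage theorem) then produces disjoint $S_i\subseteq\Omega_i$ with $|S_i|=d_i-n_i$ and $\bigcup_i S_i$ independent in $M$; taking $\tilde Q_i:=\Vect(S_i)$ and $Q:=\tilde Q^\perp$ finishes the argument. The main subtlety is precisely this last invocation: the plain Hall marriage theorem is insufficient because we need several prescribed representatives from each disjoint block, not a single one; alternatively, one can simply defer to \cite[Proposition~1.1]{Lafforgue} once the reduction in the first paragraph is in place.
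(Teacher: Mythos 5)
Your proof is correct and is, in fact, more than the paper offers: the paper does not prove this proposition but borrows it from \cite[Proposition~1.1]{Lafforgue}, after an implicit rephrasing that is precisely the reduction in your first paragraph. That reduction is sound: by the density~\eqref{eq:densityDLPproj} together with Propositions~\ref{prop:supportstrate} and~\ref{prop:cosbasic}, positivity of the mass on $\Gr_{\ul n}(E,\sigma)$ is equivalent to the existence of an adapted $Q$ with $\sdim Q=\ul n$ and $\cos^2(Q,H)>0$, i.e.\ $Q\oplus H^\perp=E$. Your necessity argument then works as stated: since $Q\cap H^\perp=\{0\}$, the subspaces $F\cap Q$ (of dimension $\sum_{t\in T}n_t$, because $Q$ is adapted) and $F\cap H^\perp$ are in direct sum inside $F=\bigoplus_{t\in T}E_t$, and passing to orthogonal complements and swapping $T\leftrightarrow T^c$ yields the stated inequalities. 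The sufficiency argument is also correct: on an adapted orthonormal basis $\Omega=\bigsqcup_i\Omega_i$ of $E$, the condition ``$\Vect(S)\cap H=\{0\}$'' is exactly the linear matroid represented by the image of $\Omega$ in $E/H$; your rank computation $r_M\bigl(\bigcup_{t\in T}\Omega_t\bigr)=\sum_{t\in T}d_t-\dim\bigl(H\cap\bigoplus_{t\in T}E_t\bigr)$ is right, and the Rado-type statement you invoke — existence of an independent set meeting each $\Omega_i$ in exactly $d_i-n_i$ elements under the conditions $r_M\bigl(\bigcup_{t\in T}\Omega_t\bigr)\ge\sum_{t\in T}(d_t-n_t)$ — is a standard corollary of Edmonds' matroid intersection theorem, applied to $M$ and the partition matroid with capacities $d_i-n_i$ (the general min–max bound reduces to these inequalities by submodularity of $r_M$). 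In short, where the paper cites, you supply a complete proof; the one extra ingredient beyond the paper's framework is Rado's independent-transversal theorem for matroids.
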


Let us indicate that the questions treated in this section can usefully be formulated in the language of matroids. For instance, the last inequality defines the rank function of a matroid on $\{1,\ldots,s\}$ and the support of the split dimension is an example of a matroid polytope. A beautiful short account of matroid theory can be found in \cite{Ardila}.

\subsection{Uniqueness} In this section and the next, we are concerned with the case where the kernel of our DLP is a projection. 

\begin{proposition}[Uniqueness] \label{prop:uniqueness} Assume that $\op{k}$ is the orthogonal projection on a linear subspace $H$ of $E$. Then almost surely, one has the equality
\[\Q\oplus H^{\perp}=E.\]
\end{proposition}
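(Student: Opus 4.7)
The plan is to combine the explicit density formula for projection DLP with a dimension count. By Proposition~\ref{prop:dimH}, the random subspace $\Q$ has dimension $n=\dim H$ almost surely. Since $\dim \Q + \dim H^\perp = n + (d-n) = d = \dim E$, in order to conclude that $\Q \oplus H^\perp = E$ almost surely, it is enough to prove that $\Q \cap H^\perp = \{0\}$ almost surely.

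To establish this, I would invoke the explicit description of $\mu_{\sigma,\proj{H}}$ given by~\eqref{eq:densityDLPproj}, which states that $\mu_{\sigma,\proj{H}}$ is absolutely continuous with respect to $\nu^{E,\sigma}_n$, with continuous density $Q \mapsto \cos^2(Q,H)$. By Proposition~\ref{prop:cosbasic}, item~2, we have the equivalence
\[
\cos^2(Q,H) = 0 \ \Longleftrightarrow \ Q \cap H^\perp \neq \{0\}.
\]
In particular, the measurable set $\mathcal{B} = \{Q \in \Gr_n(E,\sigma) : Q \cap H^\perp \neq \{0\}\}$ is precisely the zero set of the density, so
\[
\mu_{\sigma,\proj{H}}(\mathcal{B}) = \int_{\mathcal{B}} \cos^2(Q,H) \, \d\nu^{E,\sigma}_n(Q) = 0,
\]
which is exactly what we need.

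There is no real obstacle here: everything has been set up by the earlier propositions, and the key observation is simply that the density of $\mu_{\sigma,\proj{H}}$ vanishes exactly on the bad set. The only tiny point worth double-checking is that the set $\mathcal{B}$ is measurable, but this is immediate since $\cos^2(\cdot, H)$ is continuous (indeed polynomial in the Plücker-type coordinates of $Q$), so $\mathcal{B}$ is closed in $\Gr_n(E,\sigma)$.
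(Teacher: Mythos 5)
Your proof is correct and follows the paper's argument essentially verbatim: reduce to showing $\Q\cap H^\perp=\{0\}$ a.s.\ via the dimension count from Proposition~\ref{prop:dimH}, then observe that the density $\cos^2(\cdot,H)$ from~\eqref{eq:densityDLPproj} vanishes on the bad set by Proposition~\ref{prop:cosbasic}(2). The remark on measurability of $\mathcal{B}$ is a fine extra, though the paper leaves it implicit.
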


\begin{proof} Since, by Proposition \ref{prop:dimH}, $\dim \Q+\dim H^{\perp}=\dim E$ almost surely, it suffices to prove that $\Q\cap H^{\perp}=0$ almost surely. But the second assertion of Proposition \ref{prop:cosbasic} ensures that the density of $\mu_{\sigma,\op{k}}$, given by \eqref{eq:densityDLPproj}, vanishes at every $Q\in \Gr(E,\sigma)$ such that $\Q\cap H^\perp\ne0$.
\end{proof}

This property can be interpreted as a statement of uniqueness in a sense that was already explored by Lyons and Bufetov--Qiu--Shamov for DPP in \cite{Lyons-DPP,BufetovQiuShamov}: it says that almost surely, the map
\begin{align*}
H & \longrightarrow \Q^{*}\\
h & \longmapsto \langle h, \cdot \rangle
\end{align*}
is a bijection. If we think of $H$ as a space of linear functions on $E$, then $\Q$ is almost surely a uniqueness set for this space of functions: any function in this space that vanishes on $\Q$ is identically zero. 

We will come back to this property a bit later and pursue the idea that $\Q$ is a (random) substitute to $H$ that is adapted to the splitting $\sigma$. In particular, one can consider the projection on $\Q$ parallel to $H^{\perp}$, that is random and not orthogonal, as a substitute for the orthogonal projection on $H$.

\subsection{Mean projection} \label{sec:meanproj}

Let us consider again the case where the kernel $\op{k}$ is the orthogonal projection on some linear subspace $H$ of $E$, of dimension $n$. In this case, the associated DLP has almost surely dimension $n$ and is, according to Proposition \ref{prop:uniqueness}, a linear complement of $H^{\perp}$. We will study the almost surely defined projection on $\Q$ parallel to $H^{\perp}$. Let us emphasise that this is in general not an orthogonal projection. 

We will prove that, in a strong sense, the average of this projector is the orthogonal projection onto $H$. More precisely, we will prove that in any basis of $E$, the average of every minor, principal or not, of the matrix of this random non-orthogonal projection is equal to the corresponding minor of the orthogonal projection onto $H$. The fact that Theorem \ref{prop:meanprojection} is equivalent to this strong property is more apparent from its reformulation in Theorem \ref{thm:wedgeA}. 

To the best of our knowledge, Theorem \ref{prop:meanprojection} was not known in this generality even in the case of DPP, although it is non-trivial in that case too. We are grateful to  a referee for pointing out to us that Lyons, in \cite[Proposition 6.8]{Lyons-DPP},  already proved, revisiting \cite{Maurer}, that the average of the projection on $\Q$ parallel to $H^{\perp}$ is the orthogonal projection onto $H$. This result was also proved in the special case of cycle-rooted spanning forests of a graph in~\cite[Theorem A]{CCK-line} (the weaker form of that result in the case of spanning trees is presented in \cite[Proposition 7.3]{Biggs-graph} and arguably dates back to Kirchhoff, although not stated in this form; an earlier version is in \cite{Nerode-Shank}). 

\begin{theorem}[Mean projection] \label{prop:meanprojection} Assume that $\op{k}$ is the orthogonal projection on a linear subspace $H$ of $E$. Let $\op{P}^{\Q}$ denote the almost surely defined projector onto $\Q$ parallel to $H^{\perp}$. Let $\op{a}$ be a linear endomorphism of $E$. Then
\begin{equation}\label{eq:wedgeA}
\E\big[\det\big(1+\op{a}\op{P}^{\Q}\big)\big]=\det\big(1+\op{a}\proj{H}\big).
\end{equation}
\end{theorem}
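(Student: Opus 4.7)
The plan is to reduce the expectation on the left-hand side to a single application of the split Cauchy--Binet formula (Proposition \ref{prop:CB+}), using two instances of the Sylvester identity $\det(1+XY)=\det(1+YX)$ to switch between determinants on $E$, on $H$, and on $Q$. The starting point is the explicit density of a projection DLP given by \eqref{eq:densityDLPproj}, namely $\d\mu_{\sigma,\proj{H}}(Q)=\cos^{2}(Q,H)\,\d\nu^{E,\sigma}_{n}(Q)$.

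First I would use Proposition~\ref{prop:uniqueness} to observe that almost surely the compression $\op{1}^{H}_{\Q}:\Q\to H$ is a linear isomorphism (trivial kernel $\Q\cap H^{\perp}=0$, and $\dim \Q=\dim H=n$), and that the oblique projection admits the formula
\[\op{P}^{\Q}=\op{1}_{\Q}\,(\op{1}^{H}_{\Q})^{-1}\,\op{1}^{H}.\]
Setting $\op{b}=\id_{E}+\op{a}$ and applying Sylvester's identity to $X=\op{a}\op{1}_{\Q}(\op{1}^{H}_{\Q})^{-1}:H\to E$ and $Y=\op{1}^{H}:E\to H$, I get
\[\det_{E}(\id_{E}+\op{a}\op{P}^{\Q})=\det_{H}\!\bigl(\id_{H}+\op{a}^{H}_{\Q}(\op{1}^{H}_{\Q})^{-1}\bigr)=\frac{\det(\op{b}^{H}_{\Q})}{\det(\op{1}^{H}_{\Q})},\]
after factoring $\op{1}^{H}_{\Q}+\op{a}^{H}_{\Q}=\op{b}^{H}_{\Q}$ on the right.

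Combining this with the factorisation $\cos^{2}(Q,H)=\det(\op{1}^{Q}_{H})\det(\op{1}^{H}_{Q})$ coming from Proposition~\ref{prop:matrixcos} (both $\op{1}^{Q}_{H}$ and $\op{1}^{H}_{Q}$ are $n\times n$ square, and $\op{1}^{Q}_{H}=(\op{1}^{H}_{Q})^{*}$), the factor $\det(\op{1}^{H}_{Q})$ in the denominator cancels, and the vanishing locus of the density is contained in the singular locus of $\op{1}^{H}_{Q}$, which justifies the formal manipulation under the integral. I am left with
\[\E\bigl[\det(\id_{E}+\op{a}\op{P}^{\Q})\bigr]=\int_{\Gr_{n}(E,\sigma)}\det(\op{1}^{Q}_{H}\,\op{b}^{H}_{Q})\,\d\nu^{E,\sigma}_{n}(Q)=\int_{\Gr_{n}(E,\sigma)}\det\bigl((\proj{H}\op{b})^{Q}_{Q}\bigr)\,\d\nu^{E,\sigma}_{n}(Q),\]
since $\op{1}^{Q}_{H}\op{b}^{H}_{Q}=\op{1}^{Q}\proj{H}\op{b}\,\op{1}_{Q}$.

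The final step is to apply the split Cauchy--Binet formula (Proposition~\ref{prop:CB+}) to the pair $\op{a}'=\op{1}_{H}:H\to E$ and $\op{b}'=\op{1}^{H}\op{b}:E\to H$, noting that $\op{a}'\op{b}'=\proj{H}\op{b}$ and $\op{a}'^{Q}\op{b}'_{Q}=(\proj{H}\op{b})^{Q}_{Q}$. This yields
\[\int_{\Gr_{n}(E,\sigma)}\det\bigl((\proj{H}\op{b})^{Q}_{Q}\bigr)\,\d\nu^{E,\sigma}_{n}(Q)=\det_{H}(\op{b}^{H}_{H})=\det_{H}(\id_{H}+\op{a}^{H}_{H}),\]
and one last Sylvester identity (with $X=\op{1}^{H}$, $Y=\op{a}\op{1}_{H}$) rewrites the right-hand side as $\det_{E}(\id_{E}+\op{a}\proj{H})$, which is exactly what was to be shown. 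The main obstacle is the bookkeeping of the various compressions and adjoints and, more delicately, ensuring that the identity for $\det(\id+\op{a}\op{P}^{\Q})$ on the locus where $\op{P}^{\Q}$ is well defined integrates correctly against the density; this works precisely because the bad locus $\{Q:\op{1}^{H}_{Q}\text{ is singular}\}$ is the same as $\{Q:\cos^{2}(Q,H)=0\}$ and has $\mu_{\sigma,\proj{H}}$-measure zero.
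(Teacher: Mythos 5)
Your proof is correct, but it takes a genuinely different route from the paper's proof of Theorem \ref{prop:meanprojection}. The paper works with the density in the form $\det(\proj{H}\proj{Q}+\proj{H^{\perp}}\proj{Q^{\perp}})$, multiplies it against $\det(1+\op{a}\op{P}^Q)$ using the algebraic identities $\op{P}^Q\proj{H^{\perp}}=0$ and $\op{P}^Q\proj{H}\proj{Q}=\proj{Q}$, invokes a generating-function refinement of Proposition~\ref{prop:multilin} (equation~\eqref{eq:refmulti}) to reduce the integral to the coefficient of $t^n$ in $\det(t(\op{a}+\proj{H})+\proj{H^{\perp}})$, and then extracts this coefficient with a Schur complement. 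You instead write $\op{P}^{\Q}$ explicitly as $\op{1}_{\Q}(\op{1}^H_{\Q})^{-1}\op{1}^H$, reduce $\det(1+\op{a}\op{P}^{\Q})$ to a determinant over $H$ via Sylvester's identity, cancel against the factorised density $\cos^2(Q,H)=\det(\op{1}^Q_H)\det(\op{1}^H_Q)$, and finish with one clean application of the split Cauchy--Binet formula (Proposition~\ref{prop:CB+}) rather than the generating-function multilinearity lemma. Interestingly, your route is essentially the one the paper itself adopts for the quaternionic analogue (Theorem~\ref{thm:qA}), where the formula $\op{P}^Q=\op{1}_Q\op{1}_H^Q(\op{1}_Q^H\op{1}_H^Q)^{-1}\op{1}^H$ and a quaternion Cauchy--Binet identity play exactly the roles you assign to $\op{1}_{\Q}(\op{1}^H_{\Q})^{-1}\op{1}^H$ and Proposition~\ref{prop:CB+} here. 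What your approach buys is that it avoids the generating-function and Schur-complement gymnastics in favour of a single Cauchy--Binet step; what the paper's approach buys is that the refined multilinearity identity~\eqref{eq:refmulti} is reusable machinery, and does not require the kernel to be a projection at the stage where it is introduced.

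One small point of rigour you gloss over: the statement that the bad locus $\{Q:\cos^2(Q,H)=0\}$ has $\mu_{\sigma,\proj{H}}$-measure zero is automatic from the form of the density and is not quite the thing you need. After cancellation you are asserting the identity $\int_{\Gr_n(E,\sigma)}\det(1+\op{a}\op{P}^Q)\cos^2(Q,H)\,\d\nu^{E,\sigma}_n(Q)=\int_{\Gr_n(E,\sigma)}\det(\op{1}^Q_H\op{b}^H_Q)\,\d\nu^{E,\sigma}_n(Q)$, and the two integrands agree only off the bad locus; what makes the equality hold is that $\det(\op{1}^Q_H\op{b}^H_Q)$ itself vanishes there (when $\dim Q=\dim H$ and $\cos^2(Q,H)=0$, the map $\op{1}^Q_H:H\to Q$ has nontrivial kernel because $H\cap Q^{\perp}\neq 0$, so $\op{1}^Q_H\op{b}^H_Q$ is singular), not merely that the bad locus is $\mu$-null. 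With that observation made explicit, the argument is watertight.
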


Thanks to the self-adjointness of $\proj{H}$, one can replace, in \eqref{eq:wedgeA}, the operator $\op{P}^{\Q}$ by its adjoint, which is the projection on $H$ parallel to $\Q^{\perp}$.

\begin{proof} Let us write the left-hand side of \eqref{eq:wedgeA} using \eqref{eq:densitymuK}. We find
\[\E\big[\det\big(1+\op{a}\op{P}^{\Q}\big)\big]=\int_{\Gr_{n}(E,\sigma)}\det\big(1+\op{a}\op{P}^{Q}\big)\det\big(\proj{H}\proj{Q}+\proj{H^{\perp}}\proj{Q^{\perp}}\big)\d\nu^{E,\sigma}_n(Q).\]
We will now multiply the determinants. Before that, let us observe that, $H^{\perp}$ being by definition the kernel of $\op{P}^{Q}$, we have $\op{P}^{Q}\proj{H^{\perp}}=0$. 
Moreover, we claim that $\op{P}^{Q}\proj{H}\proj{Q}=\proj{Q}$. Indeed, if $v$ is a vector of $Q$, then $\proj{H}v-v$ belongs to $H^{\perp}$, so that $\op{P}^{Q}\proj{H}v=\op{P}^{Q}v$ and $\op{P}^{Q}v=v$. Thus, we have
\[\E\big[\det\big(1+\op{a}\op{P}^{\Q}\big)\big]=\int_{\Gr_{n}(E,\sigma)}\det\big((\proj{H}+\op{a})\proj{Q}+\proj{H^{\perp}}\proj{Q^{\perp}}\big)\d\nu^{E,\sigma}_n(Q).\]
We will now use a refinement of Proposition \ref{prop:multilin}: we claim that for all endomorphisms $\op{a}$ and $\op{b}$ of $E$, we have
\begin{equation}\label{eq:refmulti}
\int_{\Gr_{\ul{n}}(E,\sigma)} \det(\op{a}\proj{Q}+\op{b}\proj{Q^{\perp}}) \d\nu^{E,\sigma}_{\ul{n}}(Q)
=\text{the coefficient of } t_{1}^{n_{1}}\ldots t_{s}^{n_{s}} \text{ in } \det(T\op{a}+\op{b}).
\end{equation}
Indeed, let us choose an orthonormal basis $(e_{1},\ldots,e_{d})$ of $E$ adapted to $\sigma$. Writing matrices in this basis, we can compute the coefficient of $t_{1}^{n_{1}}\ldots t_{s}^{n_{s}}$ in $\det(T\op{a}+\op{b})$ and find that it is equal to
\[\sum_{\substack{J\subseteq \{1,\ldots,d\}\\ \sdim{E_{J}}=\ul{n}}} \det(\op{a}\proj{E_{J}}+\op{b}\proj{E_{J}^{\perp}}).\]
Averaging over all orthonormal bases of $E$ adapted to $\sigma$ as we did in the proof of Proposition~\ref{prop:multilin} yields the left-hand side of~\eqref{eq:refmulti}.

Summing \eqref{eq:refmulti} over all $\ul{n}$'s with weight $n$, we find that 
\[\E\big[\det\big(1+\op{a}\op{P}^{\Q}\big)\big]=\text{the coefficient of } t^{n} \text{ in } \det\big(t(\op{a}+\proj{H})+\proj{H^{\perp}}\big).\]
To check that this coefficient is the right-hand side of \eqref{eq:wedgeA}, let us write the matrix of $\op{a}$ in a basis adapted to the splitting $(H,H^{\perp})$ of $E$:
\[\op{a}=\begin{blockarray}{ccc}
  &{\scriptstyle H} & {\scriptstyle H^{\perp}} \\
\begin{block}{c(cc)}
    {\scriptstyle H}  & A & B  \\
  {\scriptstyle H^{\perp}} & C & D  \\
\end{block}
\end{blockarray} \,\, .
\]
Then, assuming that $A+1$ is invertible, we find by Schur's formula
\[\det \begin{pmatrix} tA + t & t B \\ t C & tD+1 \end{pmatrix}=\det(tA+t) \det(tD+1-tC(A+1)^{-1}B).
\]
The first factor in the product on the right-hand side is $t^{n} \det(A+1)$ and the second is $1+O(t)$. The coefficient of $t^{n}$ in this determinant is thus $\det(A+1)$, which is equal to $\det(1+\op{a}\proj{H})$.
\end{proof}

\subsection{Stochastic domination} Let us consider two kernels $\op{k}_{1}$ and $\op{k_{2}}$ on $(E,\sigma)$ such that $0\leq \op{k}_{1}\leq \op{k}_{2}\leq 1$. We want to establish a property of stochastic domination of the measure $\mu_{\sigma,\op{k}_{1}}$ by the measure $\mu_{\sigma,\op{k}_{2}}$. In fact, we prove the existence of a monotone coupling of these two DLP. 

\begin{proposition}[Monotone coupling]\label{prop:monotone-coupling} Let $\op{k}_{1}$ and $\op{k_{2}}$ be two kernels on $(E,\sigma)$ such that $0\leq \op{k}_{1}\leq \op{k}_{2}\leq 1$. There exists a coupling of a DLP $\Q_{1}$ with kernel $\op{k}_{1}$ and a DLP $\Q_{2}$ with kernel $\op{k}_{2}$ such that $\Q_{1}\subseteq \Q_{2}$ almost surely.
\end{proposition}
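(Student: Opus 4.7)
The plan is to reduce the statement to the classical stochastic domination property for discrete DPP (\cite[Theorem 6.5]{Lyons-DPP}) by coupling both DLPs on a common random orthonormal basis, in the spirit of the sampling algorithm of Proposition~\ref{prop:sampling}.

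First, I would fix any orthonormal basis $(e_{1},\ldots,e_{d})$ of $E$ adapted to $\sigma$, and let $\delta=(\C e_{1},\ldots,\C e_{d})$ be the associated splitting in lines, which is finer than $\sigma$. By Proposition~\ref{prop:averaging}, for $j\in\{1,2\}$,
\[
\mu_{\sigma,\op{k}_{j}}=\int_{\U(E,\sigma)}\mu_{u(\delta),\op{k}_{j}}\,\d u.
\]
This suggests the following construction of the coupling: draw $u$ once from the normalised Haar measure on $\U(E,\sigma)$, and then, conditionally on $u$, couple the two DLPs associated to the splitting in lines $u(\delta)$.

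For fixed $u$, Example~\ref{ex:grlines} identifies $\mu_{u(\delta),\op{k}_{j}}$ with the image, under the bijection $I\mapsto\bigoplus_{i\in I}\C u(e_{i})$, of a discrete DPP $\X_{j}^{u}$ on $\{1,\ldots,d\}$ whose kernel is the matrix $K_{j}(u)=\bigl(\langle u(e_{i}),\op{k}_{j}u(e_{i'})\rangle\bigr)_{i,i'}$ of $\op{k}_{j}$ in the orthonormal basis $(u(e_{i}))_{i}$. Because matrix representation in an orthonormal basis preserves the operator order, the hypothesis $0\leq\op{k}_{1}\leq\op{k}_{2}\leq 1$ yields $0\leq K_{1}(u)\leq K_{2}(u)\leq 1$. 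I would then invoke the classical monotone coupling for DPP, which, for any such pair of Hermitian contractions, produces a joint distribution of $\X_{1}^{u}$ and $\X_{2}^{u}$ with the right marginals and $\X_{1}^{u}\subseteq\X_{2}^{u}$ almost surely. A standard measurable selection yields a jointly measurable version of this coupling as a function of $u$, and setting $\Q_{j}=\mathrm{Vect}(u(e_{i}):i\in \X_{j}^{u})$ gives $\Q_{1}\subseteq\Q_{2}$ almost surely, with each $\Q_{j}$ distributed according to $\mu_{\sigma,\op{k}_{j}}$ by Proposition~\ref{prop:sampling}.

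The main obstacle is the appeal to the classical DPP result, which is itself non-trivial. A more self-contained alternative would be to interpolate $\op{k}_{t}=(1-t)\op{k}_{1}+t\op{k}_{2}$ for $t\in[0,1]$ and show, using Jacobi's formula applied to the density~\eqref{eq:densitymuK} together with an averaging identity of the type of Proposition~\ref{prop:multilin}, that $t\mapsto\int f\,\d\mu_{\sigma,\op{k}_{t}}$ is non-decreasing for every bounded measurable $f$ on $\Gr(E,\sigma)$ that is non-decreasing under inclusion; the result would then follow from Strassen's theorem. I expect the delicate point in this alternative to be the sign analysis of the derivative, which mixes the random subspace $Q$ with the perturbation direction $\op{k}_{2}-\op{k}_{1}\geq 0$ in a non-commutative way.
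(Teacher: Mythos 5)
Your proof is correct and follows essentially the same strategy as the paper's: choose a Haar-random orthonormal basis of $E$ adapted to $\sigma$, read off matrix kernels $K_1\le K_2$ in that basis, invoke the classical monotone coupling for discrete DPP, and push forward via Proposition~\ref{prop:sampling}. The only differences are cosmetic (you spell out the measurable-selection step and cite a slightly different theorem number in Lyons's paper), and your sketched interpolation/Strassen alternative is not developed, so the substance matches the paper's argument.
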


\begin{proof} We use the corresponding result for DPP, proved for instance by Lyons in \cite[Theorems 6.2 and 8.1]{Lyons-DPP} (based on a result of stochastic domination and a theorem of Strassen), and infer it for DLP using Proposition \ref{prop:sampling}. We choose a uniform orthonormal basis $(e_{1},\ldots,e_{d})$ of $E$ adapted to $\sigma$. Then, we write the matrices $K_{1}$ and $K_{2}$ of our kernels in this basis, and sample the corresponding DPP $\X_{1}$ and $\X_{2}$ in a way that $\X_{1}\subseteq \X_{2}$. Finally, we set $\Q_{1}=\Vect(e_{i}: i\in \X_{1})$ and $\Q_{2}=\Vect(e_{i}: i\in \X_{2})$.
\end{proof}

It would be more satisfactory to have a proof of the existence of this coupling, or equivalently, of the stochastic domination of $\mu_{\sigma,\op{k}_{1}}$ by $\mu_{\sigma,\op{k}_{2}}$, that does not depend on previous results of DPP, but we were not able to find such a proof. 

\subsection{Negative association}

It is known that DPP satisfy a property called negative association (see e.g. \cite[Theorems 6.2 and 8.1]{Lyons-DPP} or \cite[Theorem 3.4]{Borcea-Branden-Liggett}). DLP satisfy an analogous property.

\begin{proposition}
Let $R$ be a subspace of $E$ that is the direct sum of some of the spaces $E_{1},\ldots,E_{s}$. Let $f,g$ be continuous non-decreasing functions on $\Gr(E,\sigma)$. Then $\E[f(\Q\cap R)g(\Q\cap R^{\perp})]\leq \E[f(\Q\cap R)] \E[g(\Q\cap R^{\perp})]$.
\end{proposition}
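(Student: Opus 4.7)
\emph{Plan.} My plan is to bootstrap from the negative association of finite DPP (Lyons, \cite[Theorems 6.2 and 8.1]{Lyons-DPP}) using the sampling procedure of Proposition~\ref{prop:sampling}, following the same pattern as the proof of Proposition~\ref{prop:monotone-coupling}. The hypothesis that $R$ is a direct sum of some of the blocks $E_{1},\ldots,E_{s}$ is what makes this reduction possible, since it causes the random orthonormal basis produced by the sampling to split into two \emph{independent} pieces, one in $R$ and one in $R^{\perp}$.

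Concretely, let $T\subseteq\{1,\ldots,s\}$ with $R=\bigoplus_{i\in T}E_{i}$. To sample $\Q\sim\mu_{\sigma,\op{k}}$, pick independently, for each $i\in\{1,\ldots,s\}$, a uniform orthonormal basis of $E_{i}$ and aggregate them into an orthonormal basis $(e_{1},\ldots,e_{d})$ of $E$; then sample a DPP $\X$ on $\{1,\ldots,d\}$ with kernel matrix $K$, the matrix of $\op{k}$ in this basis, and set $\Q=\Vect(e_{j}:j\in\X)$. Writing $I_{R}=\{j:e_{j}\in R\}$ and $I_{R^{\perp}}=\{j:e_{j}\in R^{\perp}\}$, the two random bases $\mathbf{B}_{R}=(e_{j})_{j\in I_{R}}$ and $\mathbf{B}_{R^{\perp}}=(e_{j})_{j\in I_{R^{\perp}}}$ are independent, and
\[
\Q\cap R=\Vect(e_{j}:j\in\X\cap I_{R}),\qquad \Q\cap R^{\perp}=\Vect(e_{j}:j\in\X\cap I_{R^{\perp}}).
\]

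Next I introduce the two random functions on $2^{\{1,\ldots,d\}}$
\[
F(J)=f\bigl(\Vect(e_{j}:j\in J\cap I_{R})\bigr),\qquad G(J)=g\bigl(\Vect(e_{j}:j\in J\cap I_{R^{\perp}})\bigr),
\]
noting that $F$ is measurable with respect to $\mathbf{B}_{R}$ and depends on $J$ only through $J\cap I_{R}$, and symmetrically for $G$. Since $f$ and $g$ are non-decreasing on $(\Gr(E,\sigma),\subseteq)$, the functions $F$ and $G$ are non-decreasing on $(2^{\{1,\ldots,d\}},\subseteq)$. Conditionally on the full basis $\mathbf{B}=(\mathbf{B}_{R},\mathbf{B}_{R^{\perp}})$, Lyons' negative association for the DPP $\X$, applied to $F$ and $G$ (whose dependences on the coordinates of $\X$ are supported on the disjoint sets $I_{R}$ and $I_{R^{\perp}}$), yields
\[
\E\bigl[F(\X)G(\X)\bigm|\mathbf{B}\bigr]\leq \E\bigl[F(\X)\bigm|\mathbf{B}\bigr]\cdot\E\bigl[G(\X)\bigm|\mathbf{B}\bigr].
\]

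To conclude, I use the standard fact that the restriction $\X\cap I_{R}$ of a DPP is itself a DPP, with kernel matrix $K^{I_{R}}_{I_{R}}$, which is the matrix of the compression $\op{k}^{R}_{R}$ written in $\mathbf{B}_{R}$, and therefore is a function of $\mathbf{B}_{R}$ alone. Consequently $\E[F(\X)\mid\mathbf{B}]=\phi(\mathbf{B}_{R})$ and $\E[G(\X)\mid\mathbf{B}]=\psi(\mathbf{B}_{R^{\perp}})$ for some measurable $\phi,\psi$. Taking expectations and using the independence of $\mathbf{B}_{R}$ and $\mathbf{B}_{R^{\perp}}$ gives
\[
\E\bigl[f(\Q\cap R)g(\Q\cap R^{\perp})\bigr]\leq \E\bigl[\phi(\mathbf{B}_{R})\psi(\mathbf{B}_{R^{\perp}})\bigr]=\E[\phi(\mathbf{B}_{R})]\,\E[\psi(\mathbf{B}_{R^{\perp}})]=\E[f(\Q\cap R)]\,\E[g(\Q\cap R^{\perp})],
\]
which is the desired inequality. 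The only delicate step is this last factorisation: it rests on the independence of $\mathbf{B}_{R}$ and $\mathbf{B}_{R^{\perp}}$ together with the DPP-restriction property, which makes the conditional expectations of $F$ and $G$ functions of disjoint pieces of the random data. Everything else is a direct application of Proposition~\ref{prop:sampling} and negative association for DPP.
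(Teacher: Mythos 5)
Your proof is correct and follows exactly the route the paper indicates (which is a one-line reduction, via Proposition~\ref{prop:sampling}, to negative association of finite DPP as in the proof of Proposition~\ref{prop:monotone-coupling}). You have usefully spelled out the step the paper leaves implicit, namely that the hypothesis $R=\bigoplus_{i\in T}E_{i}$ makes $\mathbf{B}_{R}$ and $\mathbf{B}_{R^{\perp}}$ independent and that the conditional expectations of $F$ and $G$ are functions of $\mathbf{B}_{R}$ and $\mathbf{B}_{R^{\perp}}$ respectively (via DPP restriction), so that the outer expectation factors.
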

\begin{proof}
As in the proof of Proposition \ref{prop:monotone-coupling}, we deduce this from the corresponding result for DPP along with Proposition \ref{prop:sampling}.
\end{proof}

Let us indicate an inequality that relates to this property of negative association:
given any two orthogonal elements $R_{1}$ and $R_{2}$ of $\Gr(E,\sigma)$, the inequality
\begin{equation}
\det \op{k}^{R_{1}\oplus R_{2}}_{R_{1}\oplus R_{2}}\leq \det \op{k}^{R_{1}}_{R_{1}} \det \op{k}^{R_{2}}_{R_{2}}
\end{equation}
holds, and shows informally that
\[\d\mu_{\sigma,\op{k}}(R_{1}\oplus R_{2} \subseteq \Q) \leq \d\mu_{\sigma,\op{k}}(R_{1} \subseteq \Q) \d\mu_{\sigma,\op{k}}(R_{2} \subseteq \Q).\]
This inequality is nothing more than the following property (known as the Fischer, Hadamard or Koteljanskii inequality) of which we give a short proof.

\begin{lemma} Let $M$ be a non-negative Hermitian matrix written in block form
\[M=\begin{pmatrix} A & B \\ B^{*} & D \end{pmatrix}\]
in such a way that $A$ and $D$ are square matrices. Then $\det (M) \leq \det (A) \det (D)$.
\end{lemma}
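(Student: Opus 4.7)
The plan is to prove the inequality by a Schur complement reduction, followed by a standard monotonicity argument for determinants on the cone of non-negative Hermitian matrices.

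First, I would reduce to the case where $A$ is positive definite. Since $M\ge 0$, both principal submatrices $A$ and $D$ are non-negative Hermitian. If $A$ is singular, I would replace $M$ by $M+\epsilon I$, which is positive definite (so in particular has positive definite diagonal blocks $A+\epsilon I$ and $D+\epsilon I$); the inequality for $M+\epsilon I$ will yield the inequality for $M$ by letting $\epsilon\to 0^+$ and using continuity of the determinant.

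Assuming $A>0$, I would apply the Schur complement formula recalled in Footnote~\ref{foot:Schur} to obtain
\[
\det(M)=\det(A)\,\det(S),\qquad S:=D-B^{*}A^{-1}B.
\]
The key step is then to show that $0\le S\le D$. For the lower bound, I would use that $M\ge 0$: for any vector $w$, plugging the test vector $\binom{-A^{-1}Bw}{w}$ into the quadratic form associated with $M$ gives precisely $w^{*}Sw\ge 0$, so $S\ge 0$. For the upper bound, note that $B^{*}A^{-1}B\ge 0$ since $A^{-1}>0$, so $S=D-B^{*}A^{-1}B\le D$.

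Finally, I would conclude by the monotonicity of the determinant on non-negative Hermitian matrices: if $0\le X\le Y$ are Hermitian, then by the Courant--Fischer min-max principle (equivalently Weyl's monotonicity) one has $\lambda_{k}(X)\le \lambda_{k}(Y)$ for the ordered eigenvalues, and since both sequences are non-negative,
\[
\det(S)=\prod_{k}\lambda_{k}(S)\le \prod_{k}\lambda_{k}(D)=\det(D).
\]
Combined with the Schur complement identity this gives $\det(M)\le \det(A)\det(D)$. There is no real obstacle here; the only care needed is the reduction to the non-singular case, which is handled cleanly by the continuity argument above.
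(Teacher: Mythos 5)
Your proof is correct and follows essentially the same route as the paper: reduce by continuity to the positive-definite case, apply the Schur complement identity $\det(M)=\det(A)\det(D-B^{*}A^{-1}B)$, show $0\le D-B^{*}A^{-1}B\le D$, and conclude by determinant monotonicity. The only cosmetic differences are that the paper establishes $D-B^{*}A^{-1}B\ge 0$ via the congruence $L M L^{*}$ (your test vector is exactly $L^{*}\binom{0}{w}$) and deduces $\det(S)\le\det(D)$ from $0\le D^{-1/2}SD^{-1/2}\le I$ rather than by citing Weyl's eigenvalue monotonicity.
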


\begin{proof} Adding if necessary a small multiple of the identity to $M$, which can be removed by continuity of the determinant at the end of the proof, we can and will assume that $M$, $A$ and $D$ are non singular. Then, according to a classical trick, we can eliminate $B$ and $B^{*}$ by multiplying $M$ on the left and on the right by appropriate matrices:
\[\begin{pmatrix} I & 0 \\ -B^{*}A^{-1} & I \end{pmatrix} \begin{pmatrix} A & B \\ B^{*} & D \end{pmatrix}\begin{pmatrix} I & -A^{-1}B \\ 0 & I \end{pmatrix}=\begin{pmatrix} A & 0 \\ 0 & D-B^{*}A^{-1}B \end{pmatrix}.\]
On one hand, taking the determinant on both sides of this equality yields the classical Schur formula
\[\det(M)=\det(A)\det(D-B^{*}A^{-1}B).\]
On the other hand, the left-hand side of the equality is a positive Hermitian matrix. Therefore, 
$D-B^{*}A^{-1}B$, which is a principal sub-matrix of a positive Hermitian matrix, is also positive Hermitian. Since $B^{*}A^{-1}B$ is non-negative, we thus have
\[0\leq D-B^{*}A^{-1}B \leq D.\]
Now, if $S$ and $D$ are positive Hermitian matrices such that $0\leq S\leq D$, then $0\leq D^{-\frac{1}{2}}SD^{-\frac{1}{2}}\leq I$, so that $\det(S)\leq \det(D)$. Thus, our last inequality entails
\[\det(D-B^{*}A^{-1}B)\leq \det (D),\]
and the proof is complete.
\end{proof}

\subsection{Restriction} Let us choose an integer $t\in\{0,\ldots,s\}$ and consider the linear subspace $F=E_{1}\oplus \ldots \oplus E_{t}$ of $E$, endowed with the splitting $\tau=(E_{1},\ldots,E_{t})$. 

\begin{proposition}[Restriction] \label{prop:restriction} The random subspace $\Q\cap F$ of $F$ is a DLP on $(F,\tau)$ with kernel $\op{k}_{F}^{F}$.
\end{proposition}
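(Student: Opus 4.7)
The plan is to verify, using the uniqueness part of Theorem~\ref{thm:existuniqueDSP}, that the law $\mu'$ of $\Q\cap F$ has an incidence measure whose density with respect to $\nu^{F,\tau}$ is $R\mapsto \det(\op{k}_F^F)^R_R$. The proof will essentially consist of recognising $\Inc{\mu'}$ as the restriction to $\Gr(F,\tau)$ of $\Inc{\mu_{\sigma,\op{k}}}$, and using the compatibility of the relevant reference measures and kernel compressions with the inclusion $F\hookrightarrow E$.

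First I would unfold the definition of $\Inc\mu'$ via Definition~\ref{def:defZ}. For any continuous test function $f$ on $\Gr(F,\tau)$,
\[
\int f\, \dd\Inc{\mu'} \;=\; \int_{\Gr(E,\sigma)}\!\bigg(\int_{\Gr(Q\cap F,\,\tau_{Q\cap F})}\! f(R)\,\dd\nu^{Q\cap F,\,\tau_{Q\cap F}}(R)\bigg)\,\dd\mu_{\sigma,\op{k}}(Q).
\]
The key observation is that the measure $\nu^{Q,\sigma_{Q}}$, restricted to those subspaces that are contained in $F$, coincides with $\nu^{Q\cap F,\,\tau_{Q\cap F}}$. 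Indeed, $\sigma_Q=(Q\cap E_1,\ldots,Q\cap E_s)$ and a subspace of $Q$ adapted to $\sigma_Q$ lies in $F$ if and only if its intersections with $Q\cap E_{t+1},\ldots,Q\cap E_s$ are zero, so that the contribution of these blocks to the product measure reduces to a Dirac at $\{0\}$. Therefore the display above equals $\int f\,\dd\Inc{\mu_{\sigma,\op{k}}}$, viewing $f$ as extended by zero off $\Gr(F,\tau)$.

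The same decomposition by split dimension shows that the restriction of $\nu^{E,\sigma}$ to $\Gr(F,\tau)\subseteq \Gr(E,\sigma)$ is precisely $\nu^{F,\tau}$. Combining this with Definition~\ref{def:defDSP} applied to $\mu_{\sigma,\op{k}}$, we obtain, for every $R\in \Gr(F,\tau)$,
\[
\frac{\dd\Inc{\mu'}}{\dd\nu^{F,\tau}}(R)\;=\;\det \op{k}^R_R.
\]
Now for a subspace $R\subseteq F$, the factorisations $\op{1}^E_R=\op{1}_F\circ \op{1}_R$ and $\op{1}^R_E=\op{1}^R\circ \op{1}^F$ (in the notation of Section~\ref{sec:minors}) give
$\op{k}^R_R=\op{1}^R\op{1}^F\,\op{k}\,\op{1}_F\op{1}_R=(\op{k}_F^F)^R_R$, so the density displayed above equals $\det(\op{k}_F^F)^R_R$.

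By Definition~\ref{def:defDSP}, $\mu'$ is thus a DLP on $(F,\tau)$ with kernel $\op{k}_F^F$, and the uniqueness statement in Theorem~\ref{thm:existuniqueDSP} (which applies since $\op{k}_F^F$ is self-adjoint with spectrum in $[0,1]$ as a compression of such an operator) identifies $\mu'$ with $\mu_{\tau,\op{k}_F^F}$. There is no genuine obstacle here; the only point requiring care is the bookkeeping that identifies the various restricted measures and compressed operators, which is what the first two paragraphs above are devoted to.
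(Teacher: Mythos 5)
Your proof is correct and follows essentially the same route as the paper: both compute the incidence measure of $\Q\cap F$, recognise it as the restriction to $\Gr(F,\tau)$ of $\Inc{\mu_{\sigma,\op{k}}}$, note that the restriction of $\nu^{E,\sigma}$ to $\Gr(F,\tau)$ is $\nu^{F,\tau}$, and conclude via the identity $\op{k}^R_R=(\op{k}_F^F)^R_R$. The only difference is presentational --- the paper packages the key step as a commutative square $\Res_F\circ\Inc=\Inc\circ(\cdot\cap F)_*$ verified on Dirac masses and extended by linearity, while you verify the same identity by unfolding the definition directly.
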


\begin{proof} It is possible to prove the result by computing the density of the distribution of $\Q\cap F$ on $\Gr(F,\tau)$, using Proposition \ref{prop:multilin}. 

Another possibility is to compute the incidence measure of the distribution of $\Q\cap F$. This can be done almost without any computation, by observing that the square
\[\xymatrix{
{\mathcal M}_{1}(\Gr(E,\sigma)) \ar[r]^{\Inc{}} \ar[d]_{(\,\cdot\,\, \cap F)_{*}} &  {\mathcal M}(\Gr(E,\sigma)) \ar[d]^{{\Res}_{F}} \\
{\mathcal M}_{1}(\Gr(F,\tau)) \ar[r]^{\Inc{}} &  {\mathcal M}(\Gr(F,\tau)) 
}\]
is commutative. In this diagram, $\mathcal M_{1}$ and $\mathcal M$ denote respectively the spaces of Borel probability measures and Borel measures on the corresponding Grassmannians. The horizontal arrows send a probability measure to its incidence measure. The vertical arrow on the left sends a probability measure to its image measure by the map $Q\mapsto Q\cap F$. The vertical arrow on the right sends a measure to its restriction to the subset $\Gr(F,\tau)$ of $\Gr(E,\sigma)$. To check that this square is commutative, it suffices to observe that for every $Q\in \Gr(E,\sigma)$, one has
\[{\Res}_{F}(\Inc{\delta_{Q}})=\Inc{\delta_{Q\cap F}}=\nu^{Q\cap F, \tau_{Q\cap F}}.\]
The case of a general probability measure on $\Gr(E,\sigma)$ follows by the linearity of all maps of the diagram. 

Applying this observation to the distribution of $\Q$, we find that the incidence measure of the distribution of $\Q\cap F$ is the restriction to $\Gr(F,\tau)$ of the incidence measure of $\Q$. By definition of a DLP, this restriction has the density $R\mapsto \det \op{k}_{R}^{R}$ with respect to $\nu^{F,\tau}$. To complete the proof, we observe that for all $R\in \Gr(F,\tau)$, 
\[\det \op{k}^{R}_{R}=\det (\op{k}^{F}_{F})^{R}_{R}\]
and $\op{k}^{F}_{F}$ is a kernel on $F$.
\end{proof}

\subsection{Determinantal linear processes in infinite-dimensional spaces} Our understanding of restrictions of determinantal linear processes will allow us to extend their definition to a (mildly) infinite-dimensional setting. 

Let us start by extending the definition of a split inner product space (Definition \ref{def:sips}). We call {\em split Hilbert space} a pair $(E,\sigma)$ where $E$ is a  separable Hilbert space and $\sigma=(E_{i})_{i\in I}$ is a countable family of pairwise orthogonal finite-dimensional linear subspaces of $E$ of positive dimension such that 
\[E=\bigoplus_{i\in I} E_{i},\]
the direct sum being in the category of Hilbert spaces. Let us repeat the crucial assumption that each summand $E_i$ is \emph{finite-dimensional}.

Let $(E,\sigma)$ be a split Hilbert space. We define 
\[\Gr(E,\sigma)=\prod_{i\in I}\Gr(E_{i})\]
and endow this space with the product topology. This makes it a compact topological space, of which the connected components are the subspaces
\[\Gr_{\ul{n}}(E,\sigma)=\prod_{i\in I}\Gr_{n_{i}}(E_{i}), \ \ul{n}=(n_{i})_{i\in I}\in \N^{I}.\]

For every $J\subseteq I$, let us define the split Hilbert space $(E_{J},\sigma_{J})$, with
\[E_{J}=\bigoplus_{j\in J} E_{j} \text{ and } \sigma_{J}=(E_{j})_{j\in J}.\]
For all $J,K\subseteq I$ such that $J\subseteq K$, let us define the restriction map
\begin{align*}
\Res_{JK}:\Gr(E_{K},\sigma_{K})&\longrightarrow \Gr(E_{J},\sigma_{J}) \\
\nonumber Q & \longmapsto Q\cap E_{J}. 
\end{align*}
We will also use, for all $J\subseteq I$, the notation $\Res_{J}=\Res_{JI}:\Gr(E,\sigma)\longrightarrow \Gr(E_{J},\sigma_{J})$.

According to Kolmogorov's extension theorem, a Borel probability measure on $\Gr(E,\sigma)$ is the same thing as a consistent collection of Borel probability measures on all spaces $\Gr(E_{J},\sigma_{J})$, where $J$ is a finite subset of $I$.

The notation introduced for compressions of endomorphisms in Section \ref{sec:minors} extends to bounded linear operators and closed subspaces of a Hilbert space. With the tools that we have in hand, the construction of determinantal subspaces of a split Hilbert space is straigthforward.

\begin{theorem}\label{thm:infinite} 
Let $(E,\sigma)$ be a split Hilbert space, with $\sigma=(E_{i})_{i\in I}$. Let $\op{k}$ be a bounded self-adjoint linear operator on $E$ such that $0\leq \op{k}\leq 1$.

There exists a unique probability measure $\mu_{\sigma,\op{k}}$ on $\Gr(E,\sigma)$ such that for every finite subset $J\subset I$, the image of $\mu_{\sigma,\op{k}}$ by the projection $\Res_{J}:\Gr(E,\sigma)\to \Gr(E_{J},\sigma_{J})$ is equal to $\mu_{\sigma_{J},\op{k}^{E_{J}}_{E_{J}}}$.
\end{theorem}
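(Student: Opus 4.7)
The plan is to apply Kolmogorov's extension theorem to the countable product $\Gr(E,\sigma)=\prod_{i\in I}\Gr(E_{i})$, which is a compact metrizable (hence Polish) space since each factor is a finite-dimensional Grassmannian. The theorem will produce a unique Borel probability measure on this product from a consistent family of Borel probability measures on the finite sub-products indexed by the finite subsets of $I$.

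First I would verify that, under the canonical bijection $(Q_{i})_{i\in I}\leftrightarrow \bigoplus_{i\in I}Q_{i}$ between points of $\Gr(E,\sigma)$ and closed subspaces of $E$ adapted to the splitting, the geometric restriction map $Q\mapsto Q\cap E_{J}$ coincides with the coordinate projection onto $\prod_{j\in J}\Gr(E_{j})$. This uses in an essential way that the summands $E_{i}$ are pairwise orthogonal and finite-dimensional, so that $\bigl(\bigoplus_{i\in I}Q_{i}\bigr)\cap E_{J}=\bigoplus_{j\in J}Q_{j}$ for every adapted $Q$. The same identification makes each transition map $\Res_{JK}$ for finite $J\subseteq K$ a coordinate projection as well.

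The core step is the consistency check: for any finite $J\subseteq K\subset I$,
\[(\Res_{JK})_{*}\,\mu_{\sigma_{K},\,\op{k}^{E_{K}}_{E_{K}}}=\mu_{\sigma_{J},\,\op{k}^{E_{J}}_{E_{J}}}.\]
This is exactly Proposition~\ref{prop:restriction} applied in the finite-dimensional split inner product space $(E_{K},\sigma_{K})$ with kernel $\op{k}^{E_{K}}_{E_{K}}$ and the sub-splitting determined by $J$. The resulting kernel on $E_{J}$ is $(\op{k}^{E_{K}}_{E_{K}})^{E_{J}}_{E_{J}}=\op{k}^{E_{J}}_{E_{J}}$ by transitivity of compression under the inclusions $E_{J}\subseteq E_{K}\subseteq E$.

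Kolmogorov's extension theorem then yields a unique Borel probability measure $\mu_{\sigma,\op{k}}$ on $\Gr(E,\sigma)$ whose image under each $\Res_{J}$ (for $J\subset I$ finite) is $\mu_{\sigma_{J},\op{k}^{E_{J}}_{E_{J}}}$; uniqueness is automatic since the cylinder sets generate the Borel $\sigma$-algebra of the product. The argument is essentially routine once the geometric picture is set up. The only subtle point is the initial identification of $Q\mapsto Q\cap E_{J}$ with a coordinate projection, which is what makes the finite-dimensionality of the summands $E_{i}$ indispensable---without it, $\Gr(E,\sigma)$ would lack its natural compact topology and Kolmogorov's theorem could not be directly invoked.
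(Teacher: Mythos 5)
Your proof is correct and follows essentially the same strategy as the paper's: identify $\Gr(E,\sigma)$ with the product $\prod_{i\in I}\Gr(E_i)$ under the product (hence compact) topology, verify via Proposition~\ref{prop:restriction} and transitivity of compression that the finite-dimensional DLP form a consistent projective family, and invoke Kolmogorov's extension theorem (stated by the paper as existence of the inverse limit for a projective system of compact Borel probability spaces) to get existence and uniqueness.
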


\begin{proof} For every finite subset $J$ of $I$, the operator $\op{k}_{E_{J}}^{E_{J}}$ is self-adjoint on $E_{J}$, and satisfies $0\leq \op{k}_{E_{J}}^{E_{J}} \leq 1$ : it is a kernel on $E_{J}$. Thus, the measure $\mu_{\sigma_{J},\op{k}^{E_{J}}_{E_{J}}}$, which we denote by $\mu_{J}$ for simplicity, is well defined. Moreover, it follows immediately from Proposition \ref{prop:restriction} that the family of probability spaces
\[\bigg(\big(\Gr(E_{J},\sigma_{J}),\mu_{J}\big) : J\subset I, J \text{ finite}\bigg)\]
together with the restriction maps $(\Res_{JK} : J\subset K\subset I, J \text{ and } K \text{ finite})$ form a projective system. Each probability space of this system is a compact topological space equipped with a Borel probability measure. Thus, there exists an inverse limit to this system. This inverse limit is a probability measure on the Borel $\sigma$-field of 
\[\lim_{\longleftarrow} \Gr(E_{J},\sigma_{J})=\Gr(E,\sigma)\]
endowed with its product topology. By definition, this probability measure, which we denote by $\mu_{\sigma,\op{k}}$, is the unique probability measure on $\Gr(E,\sigma)$ such that for every finite subset $J$ of $I$, the image of $\mu_{\sigma,\op{k}}$ by $\Res_{J}$ is equal to $\mu_{J}$.
\end{proof}

\begin{example}
The previous theorem, in its version for DPP, is what allows to define objects such as the free and wired uniform spanning forests on infinite graphs; see e.g. \cite[Section 4]{Lyons-Betti}. Similarly, quantum spanning forests (see Section \ref{example:UST-QSF}) on infinite graphs are built in \cite{KL4} using the DLP version of the above theorem.
\end{example}

%%%%%%%%%%%%%%%%%%%%%%%%%%%%%%%%%%%%
%%%%%%%%%%%%%%%%%%%%%%%%%%%%%%%%%%%%
%%%%%%%%%%%%%%%%%%%%%%%%%%%%%%%%%%%%

\section{The point of view of the exterior algebra} \label{sec:extalg}

In this section, we will discuss a slightly more abstract point of view on DLP and see how their properties can be understood in terms of the Euclidean geometry of the exterior algebra of the ambient space. This point of view was already largely adopted by Lyons in his study of DPP  \cite{Lyons-DPP, Lyons-ICM}. Taking a more abstract point of view has the usual advantages and  disadvantages: depending on one's familiarity with the language of exterior algebra, it will obscure things, or make them more transparent. In any case, we will not assume prior knowledge of this piece of linear algebra, and offer an introduction to the notions that we use.

\subsection{The exterior algebra} \label{subsec:extalg}
Let $E$ be a vector space of dimension $d$. Properly speaking, the exterior algebra $\ext E$ of $E$ is  defined only up to isomorphism as a solution of a universal problem. From an only slightly less canonical point of view, it is a quotient of the tensor algebra of $E$. We prefer to take the distinctly less canonical but more concrete point of view that the exterior algebra of $E$ is the subspace of the tensor algebra of $E$ consisting of all fully antisymmetric tensors. 

Let $k\geq 0$ be an integer. We define the linear endomorphism $A_{k}$ of $E^{\otimes k}$ by setting, for all $v_{1},\ldots,v_{k}\in E$,
\[A_{k}(v_{1}\otimes \ldots \otimes v_{k})=\frac{1}{k!}\sum_{\sigma\in S_{k}}\epsilon(\sigma)v_{\sigma(1)}\otimes \ldots \otimes v_{\sigma(k)}.\]
The endomorphism $A_{k}$ is a projection and we define the $k$-th exterior power of $E$ as its range: 
\[\ext^{k} E = A_{k}(E^{\otimes k}).\] 
For $k>d$, the space $\ext^{k}E$ is the null vector space, and we set
\[\ext E=\bigoplus_{k=0}^{d} \ext^{k}E.\]
We endow this vector space with a structure of graded algebra by setting, for all $k,l\in \{0,\ldots,d\}$, and all $x\in \ext^{k}E$ and $y\in \ext^{l}E$,
\[x\wedge y =A_{k+l}(x\otimes y).\]
One checks that this makes $\ext E$ a graded associative algebra with unit $1\in\ext^0 E=\mathbb{K}$, where $\mathbb{K}$ is the field of scalars.

Let us assume that $E$ is endowed with an inner product. We endow the exterior algebra of $E$ with an inner product by setting, for all $k,l\in \{0,\ldots,d\}$, and all $v_{1},\ldots,v_{k}$ and $w_{1},\ldots,w_{l}$ in $E$,
\[\langle v_{1}\wedge \ldots \wedge v_{k},w_{1}\wedge \ldots \wedge w_{l}\rangle=\left\{\begin{array}{ll} \det(\langle v_{i},w_{j}\rangle)_{i,j\in \{1,\ldots,k\}} & \text{if } k=l,\\
0 & \text{if } k\neq l.\end{array}\right.\]
Let us note that this inner product is the restriction to the exterior algebra of the inner product on the full tensor algebra defined by
\[\langle v_{1}\otimes \ldots \otimes v_{k},w_{1}\otimes  \ldots \otimes w_{l}\rangle=\delta_{k,l} k! \langle v_{1},w_{1}\rangle \ldots \langle v_{k},w_{k}\rangle.\]

Let $(e_{1},\ldots,e_{d})$ be an orthonormal basis of $E$. Let us define, for all subset $I=\{i_{1}<\ldots <i_{k}\}$  of $\{1,\ldots,d\}$, the tensor $e_{I}=e_{i_{1}}\wedge \ldots \wedge e_{i_{k}}$. Then $\{e_{I}:I\subset \{1,\ldots,d\}\}$ is an orthonormal basis of $\ext E$. In particular, $\ext E$ has dimension $2^{d}$.

\subsection{Exterior algebra and splittings}\label{sec:extalgsplit} Let $(E,\sigma)$ be a split inner product space. The splitting of $E$ induces an orthogonal decomposition of the exterior algebra of $E$ as follows. 

Let us write $\sigma=(E_{1},\ldots,E_{s})$ as usual. For each $r\in \{1,\ldots,s\}$, the inclusion map $E_{r}\hookrightarrow E$ induces an injective map $\ext E_{r} \hookrightarrow \ext E$, for which we do not use any special notation. From these injective maps, we can form the map
\begin{align*}
\ext E_{1}\otimes \ldots \otimes \ext E_{r} & \longrightarrow \ext E \\
x_{1} \otimes \ldots \otimes x_{r} \hspace{12pt}& \longmapsto x_{1}\wedge \ldots \wedge x_{r}
\end{align*}
which is an isomorphism of vector spaces\footnote{It is also an isomorphism of algebras, provided one understands tensor products in the category of $\Z/2\Z$-graded algebras. We will however only need the linear isomorphism.}.

Let us write, as we did before, $\ul{d}=(d_{1},\ldots,d_{s})=(\dim E_{1},\ldots,\dim E_{s})$. Then for all $n\in \{0,\ldots,d\}$, the isomorphism above restricts to the following isomorphism of vector spaces:
\begin{equation}\label{eq:decextE}
\ext^{n}E\simeq \bigoplus_{\ul{n}\leq \ul{d}, |\ul{n}|=n}\underbrace{\ext^{n_{1}}E_{1}\otimes \ldots \otimes \ext^{n_{s}}E_{s}}_{\ext^{\ul{n}}E}.
\end{equation}
This decomposition of $\ext^{n}E$ can be understood very concretely by building an orthonormal basis $(e_{1},\ldots,e_{d})$ of $E$ as the concatenation of orthonormal bases of $E_{1},\ldots,E_{s}$, and partitioning the basis $\{e_{I}:I\subset \{1,\ldots,d\}, |I|=n\}$ of $\ext^{n}E$ according to the number of factors in each subspace $E_{1},\ldots,E_{s}$.

A simple fact that plays an important role for us is that the decomposition \eqref{eq:decextE} is an orthogonal decomposition of $\ext E$ -- indeed an orthogonal splitting of $\ext E$, although we are not going to apply to this splitting the same treatment that we apply to the original splitting $(E,\sigma)$.

We will make use of the orthogonal projections on $\ext^{\ul{n}}E$, that we denote by 
\begin{equation}\label{eq:defpiuln}
\proj{\ul{n}}=\op{1}_{\ext^{\ul{n}}E}\op{1}^{\ext^{\ul{n}}E}:\ext E\to \ext E,
\end{equation}
instead of $\proj{\ext^{\ul{n}}E}$. Similarly, we will denote by 
\begin{equation}\label{eq:defpin}
\proj{n}=\sum_{\ul{n}\leq \ul{d}, |\ul{n}|=n} \proj{\ul{n}}
\end{equation}
the orthogonal projection on $\ext^{n}E$.

\subsection{The Pl\"ucker embedding}\label{sec:Plucker} Given a linear subspace $F$ of dimension $n$ of $E$, the subset
\[\{f_{1}\wedge \ldots \wedge f_{n} : (f_{1},\ldots,f_{n}) \text{ basis of }F\}\]
of $\ext^{n}E$ is the complement of $\{0\}$ in a line which we denote by $\iota(F)$. The map
\begin{align*}
\iota : \Gr(E) & \longrightarrow \Gr_{1}\big(\ext E\big)\\
F &\longmapsto  \iota(F)
\end{align*}
is a minute variant of a classical map called the Pl\"ucker embedding\footnote{The classical Pl\"ucker embedding takes its values in the projective space of the exterior algebra rather than in $\Gr_{1}(\ext E)$.}. 

The relevance of this construction to the description of DLP is made apparent by the following result.

\begin{proposition} Let $E$ be an inner product space. Let $F$ and $G$ be two linear subspaces of $E$. 

1. If $F$ and $G$ do not have the same dimension, then $\iota(F) \perp \iota(G)$ and $\cos^{2}(\iota(F),\iota(G))=0$.

2. It $F$ and $G$ have the same dimension, then
\[\cos^{2}_{E}(F,G)=\cos^{2}_{\ext E}(\iota(F),\iota(G)),\]
where the subscript indicates the space in which the cosine is computed.

3. In all cases,
\[\cos^{2}_{\ext E}(\iota(F),\iota(G))=\Tr_{\ext E}\big(\proj{\iota(F)}\proj{\iota(G)}\big)={\det}_{E}\big(\proj{F}\proj{G}+\proj{F^{\perp}}\proj{G^{\perp}}).\]
\end{proposition}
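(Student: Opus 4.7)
The plan is to handle the three assertions in order, reducing them to: (i) the grading-orthogonality of $\ext E$, (ii) the defining formula for the inner product on $\ext E$, and (iii) Proposition \ref{prop:cosdet}.

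For the first assertion, I would observe that $\iota(F) \subseteq \ext^{\dim F}E$ and $\iota(G) \subseteq \ext^{\dim G}E$, together with the fact, immediate from the definition of the inner product on $\ext E$ in Section \ref{subsec:extalg}, that $\ext^k E \perp \ext^l E$ whenever $k \ne l$. This gives $\iota(F) \perp \iota(G)$ at once. The vanishing of the cosine then follows from part 2 of Proposition \ref{prop:cosbasic}, since $\iota(F) \cap \iota(G)^\perp = \iota(F) \ne \{0\}$.

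For the second assertion, I would choose orthonormal bases $(f_1,\ldots,f_n)$ and $(g_1,\ldots,g_n)$ of $F$ and $G$, so that $u = f_1 \wedge \ldots \wedge f_n$ and $v = g_1 \wedge \ldots \wedge g_n$ are unit vectors spanning $\iota(F)$ and $\iota(G)$ respectively. The definition of the inner product on $\ext^n E$ gives $\langle u,v\rangle = \det(\langle f_i,g_j\rangle)$, and a direct computation for two lines (writing orthogonal projections on $\iota(F)$ and $\iota(G)$ in the form $x \mapsto \langle u,x\rangle u$ and $x \mapsto \langle v,x\rangle v$) shows that $\cos^2_{\ext E}(\iota(F),\iota(G)) = |\langle u,v\rangle|^2$. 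By part 1 of Proposition \ref{prop:matrixcos}, this is precisely $\cos^2_E(F,G)$.

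For the third assertion, the same one-line computation for two lines used in the previous step yields, in general, $\cos^2(L_1,L_2) = \Tr(\proj{L_1}\proj{L_2})$: indeed, $\proj{L_1}\proj{L_2}$ is a rank-$1$ operator whose trace in any orthonormal basis extending a unit vector of $L_1$ reduces to $|\langle u,v\rangle|^2$. Applied to $L_1 = \iota(F)$, $L_2 = \iota(G)$, this gives the first equality. For the second equality, I would argue by cases: if $\dim F \ne \dim G$ both sides vanish, the left by part 1 of this proposition and the right by Proposition \ref{prop:cosdet}; if $\dim F = \dim G$, the left equals $\cos^2_E(F,G)$ by part 2, and the right equals $\cos^2_E(F,G)$ by Proposition \ref{prop:cosdet}.

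There is no real obstacle here; the only points requiring any care are checking the trace-of-projections formula for lines (a few lines with unit vectors and the physicist convention for the inner product) and organising the case analysis for the last equality so as to invoke Proposition \ref{prop:cosdet} cleanly.
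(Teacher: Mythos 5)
Your proof is correct and follows essentially the same route as the paper's: degree-orthogonality in $\ext E$ for assertion 1, the defining formula for the inner product on $\ext^n E$ combined with Proposition \ref{prop:matrixcos} for assertion 2, and the rank-one trace formula together with Proposition \ref{prop:cosdet} (via a short case split on $\dim F$ vs.\ $\dim G$) for assertion 3. The only difference is that you spell out a couple of small steps the paper leaves implicit (the deduction $\cos^2=0$ from orthogonality via Proposition \ref{prop:cosbasic}, and the explicit case analysis in assertion 3), which is harmless.
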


\begin{proof} 1. If $F$ has dimension $n$ and $G$ dimension $m$, then $\iota(F)$ belongs to $\ext^{n}E$ and $\iota(G)$ to $\ext^{m}E$. If $m\neq n$, these two subspaces of $\ext E$ are orthogonal.

2. Let $(f_{1},\ldots,f_{n})$ and $(g_{1},\ldots,g_{n})$ be orthonormal bases of $F$ and $G$ respectively. Then $f_{1}\wedge \ldots \wedge f_{n}$ and $g_{1}\wedge \ldots \wedge g_{n}$ are unit vectors of $\iota(F)$ and $\iota(G)$. The square of their scalar product is the square of the cosine of the angle between the two lines $\iota(F)$ and $\iota(G)$. On the other hand, using the definition of the scalar product on $\ext E$ and Proposition \ref{prop:matrixcos}, we find
\[\cos^{2}(\iota(F),\iota(G))=\langle f_{1}\wedge \ldots \wedge f_{n},g_{1}\wedge \ldots \wedge g_{n}\rangle^{2} = \det(\langle f_{i},g_{j}\rangle)_{i,j=1,\ldots, n}^{2}=\cos^{2}(F,G).\]

3. The first equality follows from the fact that, in an inner product space, the trace of the product of the orthogonal projection on two lines is equal to the square cosine of the angle between these two lines. The second equality follows from the previous assertions and Proposition \ref{prop:cosdet}.
\end{proof}

The previous proposition implies in particular that the Pl\"ucker embedding is an injective map. Indeed, if $\iota(F)=\iota(G)$, then $F$ and $G$ have the same dimension and $\cos^{2}(F,G)=1$, which by  
the third assertion of Proposition \ref{prop:cosbasic} implies that $F=G$.

\subsection{Linear maps} The construction of the exterior algebra is functorial and every linear endomorphism of $E$ gives rise to an algebra endomorphism of $\ext E$. Concretely, any linear endomorphism $\op{a}$ of $E$ induces the algebra endomorphism $\ext \op{a}$ defined by the formula
\[\big(\ext \op{a}\big)(v_{1}\wedge \ldots \wedge v_{k})=\op{a}(v_{1})\wedge \ldots \wedge \op{a}(v_{k}).\]

We can now explain why the language of the exterior algebra is so convenient in situations where minors of matrices play a prominent role. 

Let $(e_{1},\ldots,e_{d})$ be an orthonormal basis of $E$. Let $\{e_{I}:I\subset\{1,\ldots,d\}\}$ be the orthonormal basis of $\ext E$ that it induces. Let $\op{a}$ be a linear endomorphism of $E$ and let $\ext \op{a}$ be the induced algebra endomorphism of $\ext E$. Let $A$ be the matrix of $\op{a}$ in the basis $(e_{1},\ldots,e_{d})$. Then for all $I$ and $J$ subsets of the same size of $\{1,\ldots,d\}$, one has the equality
\begin{equation}\label{eq:minors}
\det(A^{I}_{J})=\langle e_{I}, \big(\ext \op{a}\big) e_{J}\rangle.
\end{equation}
In words: with respect to an orthonormal basis of $E$ and the induced orthonormal basis of $\ext E$, the matrix coefficients of $\ext \op{a}$ are the minors of the matrix representing $\op{a}$.

Many important relations follow from this observation, for example the relation
\begin{equation}\label{eq:trdet}
\Tr_{\ext E}\big(\ext \op{a}\big)=\det(1+\op{a}),
\end{equation}
both sides being equal to the sum of all principal minors of $\op{a}$. We will state and prove a useful generalisation of this equality once we have introduced the Hodge operator (see Lemma \ref{lem:trdetab}).

\subsection{Quantum measurement interpretation} \label{sec:Qmeasure} Before explaining in more technical detail how computations about DLP can be made in the exterior algebra, we will pause briefly to discuss, rather informally and without claiming to any physical accuracy, how we think of DLP in terms inspired from quantum mechanics. 

This is certainly known for DPP, as Macchi's \cite{Macchi} introduction of DPP was motivated by quantum mechanics; see for instance a recent work of Olshanski on the subject \cite{Olshanski}. However, we did not find a reference for the way we present the processes in terms of quantum measurement.

If a Hermitian space $E$ is the state space of a certain type of quantum particle, then in absence of any information about the nature of this particle, the state space of a system formed by $n$  particles of the same type is $E^{\otimes n}$. If our particle is a \emph{fermion}, then this state space can be reduced to $\ext^{n}E$. Furthermore, the state space of a system formed by an arbitrary number of identical such particles is the full exterior algebra $\ext E$.

Let us assume, as usual, that $E$ has dimension $d$. Let us consider a system formed by $n$  particles. Since the particles cannot be distinguished, it makes no sense to say that the first particle is in the state $e_{1}$, the second in the state $e_{2}$, and so on. What can perhaps be said is that the $n$ particles are collectively in a state described by the $n$ vectors $\{e_{1},\ldots,e_{n}\}$ and it turns out that the meaningful quantity describing the system is the linear span $H=\Vect(e_{1},\ldots,e_{n})$. Equivalently, the system is described by the line $\iota(H)$ in the exterior algebra $\ext E$, or by any unit vector of this line. 

We would like to argue that our construction of DLP is related to a \emph{quantum measurement} of the state of the system as a linear subspace of $E$. Let us explain, in general, the framework of quantum measurement that we have in mind (see \cite{BLM} for details). Given a Hilbert space $\mathscr H$, and a measurable space $(G,\mathcal G)$, we call \emph{positive operator-valued measure} a map $O:\mathcal G \to \End(\mathscr H)$ which to each event $B\in \mathcal G$ associates a non-negative self-adjoint operator $O(B)$ on $H$. This map is moreover required to be $\sigma$-additive, and to satisfy $O(G)=\id_{\mathscr H}$. Let us choose such a  positive operator-valued measure $O$. Let us now choose a \emph{density of states} $\rho$ on $\mathscr H$, that is, a non-negative self-adjoint operator with unit trace. Then the map $\mu:B\mapsto \Tr(O(B)\rho)$ defines a probability measure on $\mathcal G$ and $\mu(B)$ is interpreted as the probability, when the system is in the statistical superposition of states represented by $\rho$, that the observation of $O$ yields a result within $B$.

A positive operator-valued measure $O$ is called a \emph{projector-valued measure} if for every Borel set $G$, the operator $O(G)$ is a projection. When $(G,\mathcal G)$ is the real line with its Borel $\sigma$-field, the spectral theorem sets a one-to-one correspondence between projector-valued measures and self-adjoint operators on $\mathscr H$. In general, however, the framework that we described is slightly more general than the most usual setup of quantum mechanics. 

In our case, the Hilbert space is the exterior algebra $\ext E$ and the measurable space is $\Gr(E,\sigma)$ with its Borel $\sigma$-field. The observable is simply the map which to a Borel set $B$ associates the operator
\begin{equation}\label{eq:defOs}
O_{\sigma}(B)=\int_{B} \proj{\iota(Q)}\d\nu^{E,\sigma}(Q).
\end{equation}
We will explain how to associate to every kernel $\op{k}$ on $E$ a density of states $\rho_{\op{k}}$. In fact, when $\op{k}<1$, this density of states is given by the formula
\begin{equation}\label{eq:defrhok1}
\rho_{\op{k}}=\det(1-\op{k}) \ext(\op{k}(1-\op{k})^{-1}).
\end{equation}
Then the probability measure associated to $O_{\sigma}$ and to $\rho_{\op{k}}$ is exactly the measure $\mu_{\sigma,\op{k}}$.

The non-negative operator $\op{l}=\op{k}(1-\op{k})^{-1}$ gives their name to so-called $\op{l}$-ensembles (see \cite{BorodinRains}). We have, still under the condition $\op{k}<1$, the alternative expression
\begin{equation}\label{eq:defrhok2}
\rho_{\op{k}}=\det(1+\op{l})^{-1} \ext \op{l}.
\end{equation}

Our actual definition of the density of states $\rho_{\op{k}}$, valid even if $1$ is an eigenvalue of $\op{k}$, is unfortunately less straightforward than \eqref{eq:defrhok1} and $\eqref{eq:defrhok2}$ would suggest, and we will devote the next few sections to this definition.

The content of Sections \ref{sec:Hodge} and \ref{sec:adjugate} may seem a bit technical. It is however needed to associate to each kernel a density of states (to be defined in \eqref{eq:defrhok} below) and, in Section \ref{sec:kernel} for the interpretation of DLP in terms of quantum measurements with values in the Grassmannian.

\subsection{The Hodge operator} \label{sec:Hodge} Let us resume our investigation of the exterior algebra of an inner product space $E$ of dimension $d$.

The algebra of endomorphisms of the exterior algebra of an inner product space carries an involution which will play an important role for us, and that is essentially the conjugation by the Hodge operator. Our experience is that this involution needs to be described with some care, especially in the complex case, and that is what we do in this paragraph. 

The Hodge operator is usually defined on the exterior algebra of an oriented Euclidean real vector space. In such a space $E$, the exterior product of the elements of a positively oriented orthonormal basis does not depend on the choice of this basis, and singles out a non-zero element of $\ext^{d}E$ that we denote by $\codet_{E}$. The Hodge star operator, denoted by $\hodge$, is then defined as the unique linear endomorphism of the tensor algebra of $E$ which, for each $k\in \{0,\ldots,d\}$, sends $\ext^{k}E$ into $\ext^{d-k}E$ in such a way that for all $x,y\in \ext^{k}E$, 
\begin{equation}\label{eq:Hodge1}
 x\wedge {\star}y=\langle y,x\rangle \,\codet_{E}.
 \end{equation}
Concretely, given a positively oriented orthonormal basis $(e_{1},\ldots,e_{d})$ of $E$, and with the notation introduced in Section \ref{subsec:extalg}, we have $\text{codet}_{E}=e_{\{1,\ldots,d\}}$ and the Hodge star operator sends, for each subset $I\subset \{1,\ldots,d\}$ containing $k$ elements, the vector $e_{I}$ to the vector 
\[\hodge e_{I}=(-1)^{\frac{k(k+1)}{2}+\sum_{i\in I}i} e_{I^{c}},\] 
where $I^{c}=\{1,\ldots,d\}\setminus I$. We will denote by $(-1)^{I}$ the sign which appears in this equality.

This construction of the Hodge operator depends on the orientation of the vector space; indeed choosing the other orientation of $E$ would result in the Hodge star operator being multiplied by~$-1$. The only role of the orientation is in fact to allow us to decide which of the two elements of norm $1$ in $\ext^{d}E$ we call $\codet_{E}$. In the case where $E$ is a complex inner product space however, the unit sphere of $\ext^{d}E$ is a circle. There does not seem to exist a notion of orientation in this case\footnote{One could define an orientation of a $d$-dimensional complex space as an orbit of $\SL_{d}(\C)$ on the set of bases of this space, but this does not seem to be a classical notion.} and without it we cannot pick a point on this circle in a natural way.

We go around this problem by taking a slightly more abstract point of view. Let $E$ be a real or complex inner product space. Choose $k\in \{0,\ldots,d\}$. The tensor contraction
\begin{align*}
(E^{*})^{\otimes d}\otimes E^{\otimes k} & \longrightarrow (E^{*})^{\otimes (d-k)}\\
(\phi_{1}\otimes \ldots \otimes \phi_{d}) \otimes (v_{1}\otimes \ldots \otimes v_{k}) & \longmapsto \phi_{1}(v_{1})\ldots \phi_{k}(v_{k})\, \phi_{k+1}\otimes \ldots \otimes \phi_{d}
\end{align*}
restricts to a linear mapping $\ext^{d}E^{*}\otimes \ext^{k}E\to \ext^{d-k} E^{*}$ and, by taking the direct sum over $k$, to a linear mapping
\[\kappa:\ext^{d}E^{*}\otimes \ext E\longrightarrow \ext  E^{*}.\]
Given a basis $(e_{1},\ldots,e_{d})$ of $E$ and the dual basis $(\epsilon_{1},\ldots,\epsilon_{d})$ of $E^{*}$, one has for all $I\subset \{1,\ldots,d\}$ the equality
\[\kappa(\epsilon_{\{1,\ldots,d\}} \otimes e_{I})=(-1)^{I} \epsilon_{I^{c}},\]
from which it follows that $\kappa$ is surjective, hence an isomorphism. Let us emphasise that the definition of $\kappa$ does not depend on the inner product on $E$. 

We will now make use of this inner product, in the guise of the antilinear isomorphism
\begin{align*}
c : E &\longrightarrow E^{*}\\
v&\longmapsto c(v)=\langle v,\cdot \rangle.
\end{align*}
In the real case, the map $c$ is a linear isomorphism. In the complex case, because we take the Hermitian inner product to be linear in the second variable,  $c$ is antilinear, in the sense that for all $\lambda\in \C$ and $v\in E$, one has $c(\lambda v)=\bar\lambda c(v)$.

The map $c$ extends to an antilinear isomorphism of algebras $\ext c:\ext E \to \ext E^{*}$ and we define the Hodge operator $\hodge : \ext E \to \ext^{d}E^{*}\otimes \ext E$ as :
\begin{equation}\label{eq:defHodge}
\hodge= \kappa^{-1}\circ \ext c : \ext E \stackrel{\ext c}{\longrightarrow}  \ext E^{*}\stackrel{\kappa^{-1}}{\longrightarrow} \ext^{d}E^{*}\otimes \ext E\simeq \Hom(\ext^{d} E, \ext E).
\end{equation}
Let us emphasise that the map $\hodge$ is antilinear. 

The usual Hodge operator, in the oriented Euclidean case, is obtained by composing the map~$\hodge$ thus defined by the evaluation at the element $\codet_{E}\in \ext^{d}E$. Our construction takes the orientation as a variable instead of a given parameter, and this point of view suits both the real and complex cases. The formula which replaces \eqref{eq:Hodge1} is the following: for all $k\in \{0,\ldots,d\}$, all $x,y\in \ext^{k} E$ and all $z\in \ext^{d}E$,
\begin{equation}\label{eq:Hodge2}
x\wedge (\hodge y)(z) = \langle y, x \rangle z.
\end{equation}

In coordinates, given an orthonormal basis $(e_{1},\ldots,e_{d})$ of $E$ and the dual basis $(\epsilon_{1},\ldots,\epsilon_{d})$ of~$E^{*}$, we have for all $I\subset \{1,\ldots,k\}$ the equality
\[\hodge e_{I}=(-1)^{I}\epsilon_{\{1,\ldots,d\}}\otimes e_{I^{c}}.\]
This formula shows that the Hodge operator is invertible. 

\subsection{The adjugate endomorphism} \label{sec:adjugate} We will use the Hodge operator to make the following important construction: to each linear endomorphism $f$ of $\ext E$, we associate the adjugate endomorphism $\adj(f)$ of $\ext E$ by setting
\begin{equation}\label{eq:defadj}
\adj(f) = \big(\hodge^{-1} \circ (\id_{\ext^{d}E^{*}} \otimes f) \circ \hodge\big)^{*}.
\end{equation}
This definition may look unappealing, but we shall soon see that it corresponds to the operation which to a matrix associates the transpose of its cofactor matrix. At least, we see on this definition that the adjugation map  is linear on $\End(\ext E)$. We see also that it is an antimorphism of algebra, in the sense that for all $f,g\in \End(\ext E)$, we have
\begin{equation}\label{eq:antimorph}
\adj(f\circ g)=\adj(g)\circ  \adj(f).
\end{equation}

Let us compute the adjugate endomorphism in coordinates. Let again $(e_{1},\ldots,e_{d})$ be an orthonormal basis of $E$. For all subsets $I,J\subset \{1,\ldots,d\}$, it follows from unfolding the definitions that
\begin{equation}\label{eq:adjugatecoord}
\langle e_{I},\adj(f) e_{J}\rangle=(-1)^{I}(-1)^{J}\langle e_{J^{c}},fe_{I^{c}}\rangle.
\end{equation}

From this relation, we deduce for instance that 
\[\adj(\adj(f))=\ext (-1)^{d+1}\circ  f \circ \ext (-1)^{d+1}.\]

The following observation will be useful for us.

\begin{lemma}\label{lem:tildeconj} Let $f$ be a linear endomorphism of $\ext E$ and $r\in \U(\ext E)$ an isometry of $\ext E$. Then $r \circ \adj(f) \circ r^{*}=\adj(r\circ f\circ r^{*})$.
\end{lemma}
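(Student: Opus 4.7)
My plan is to derive the equivariance from the basis-independence of the coordinate formula \eqref{eq:adjugatecoord}. Fix an orthonormal basis $(e_1,\ldots,e_d)$ of $E$ and consider the induced orthonormal basis $\{e_I\}$ of $\ext E$; the formula \eqref{eq:adjugatecoord} reads
\[\langle e_I, \adj(f) e_J\rangle = (-1)^I (-1)^J \langle e_{J^c}, f e_{I^c}\rangle.\]
Because $\adj(f)$ is defined intrinsically through $\hodge$, which depends only on the inner product of $E$, the same coordinate formula, with the same sign convention $(-1)^I$, holds in every orthonormal basis of $E$. Applying it in the basis $(re_i)$ gives
\[\langle re_I, \adj(f) re_J\rangle = (-1)^I (-1)^J \langle re_{J^c}, f re_{I^c}\rangle.\]

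Next, I would pass $r$ through the inner product via $\langle rx, y\rangle = \langle x, r^*y\rangle$ on both sides to obtain
\[\langle e_I, r^*\adj(f) r\, e_J\rangle = (-1)^I (-1)^J \langle e_{J^c}, r^*fr\, e_{I^c}\rangle = \langle e_I, \adj(r^*fr) e_J\rangle,\]
the last equality being a second application of \eqref{eq:adjugatecoord}, this time to the endomorphism $r^*fr$. Since $(I,J)$ is arbitrary, this gives $r^*\adj(f)r = \adj(r^*fr)$, and substituting $r \mapsto r^*$ together with the relation $r^{**}=r$ (valid since $r$ is unitary) yields the stated identity $r\adj(f)r^* = \adj(rfr^*)$.

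The hard part is justifying that the coordinate formula transports between orthonormal bases with exactly the same signs $(-1)^I$: this rests on the naturality of the Hodge construction under isometries, since the three ingredients $c$, $\kappa$ and $\ext$ used in defining $\hodge$ all behave functorially. Some care is needed in the complex case to track the antilinearity of $\hodge$, whose conjugations ultimately cancel between the two sides because determinants of unitaries have modulus one.
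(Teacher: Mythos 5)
There is a genuine gap: the lemma hypothesizes that $r$ is an isometry of $\ext E$, but your argument treats $r$ as though it were an isometry of $E$. Your key step invokes \eqref{eq:adjugatecoord} ``in the basis $(re_i)$'' and writes $\langle re_I, \adj(f)\, re_J\rangle = (-1)^I(-1)^J\langle re_{J^c}, f\, re_{I^c}\rangle$; for this to be an instance of \eqref{eq:adjugatecoord} one needs $r$ to map $E=\ext^1 E$ to itself and to satisfy $r(e_{i_1}\wedge\cdots\wedge e_{i_k}) = re_{i_1}\wedge\cdots\wedge re_{i_k}$, that is, $r=\ext u$ for some $u\in\U(E)$. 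A general isometry of the $2^d$-dimensional space $\ext E$ preserves neither the grading nor the wedge product. Concretely, take $E=\R^2$, $r\in\U(\ext E)$ the orthogonal transposition exchanging $1\leftrightarrow e_1$ and fixing $e_2$ and $e_1\wedge e_2$, and $f$ the orthogonal projection onto $\R\cdot 1$: then $\adj(f)$ is the projection onto $\R\,(e_1\wedge e_2)$, which is fixed by conjugation by $r$, whereas $rfr^*$ is the projection onto $\R e_1$ and $\adj(rfr^*)$ is the projection onto $\R e_2$. The claimed identity fails.

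To be fair, the paper's own proof carries the same implicit restriction: the commutation $\hodge\, r^{-1}=\det(r)(\id_{\ext^{d}E^{*}}\otimes r^{-1})\hodge$ that it ``deduces from \eqref{eq:Hodge2}'' already fails for that transposition (evaluate both sides at $e_1\wedge e_2$), and for $r=\ext u$ the correct scalar is $\det u$, the action on $\ext^d E$, not the determinant of $r$ as an endomorphism of the $2^d$-dimensional space $\ext E$. Once you place yourself in the regime $r=\ext u$ with $u\in\U(E)$ — replacing ``$(re_i)$'' by ``$(ue_i)$'' and reading $re_I$ as $(\ext u)e_I$ — your coordinate argument becomes a correct and genuinely different route from the paper's intrinsic manipulation of $\hodge$; both ultimately rest on the same naturality of the Hodge construction under $\U(E)$. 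Two small cleanups in that setting: the closing remark about ``determinants of unitaries cancelling'' is unnecessary, since no determinant appears in the coordinate argument (the only input from unitarity is $r^*=r^{-1}$); and the antilinearity of $\hodge$ is already absorbed in the derivation of \eqref{eq:adjugatecoord}, so it does not resurface when you change orthonormal bases.
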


This property reflects the fact that the construction of the adjugate endomorphism uses only the inner product on $E$.

\begin{proof} Let us compute $r\adj(f) r^{-1}$:
\[r \adj(f) r^{-1}=r(\hodge^{-1} (\id_{\ext^{d}E^{*}}\otimes f)\hodge)^{*}r^{-1}=(r\hodge^{-1} (\id_{\ext^{d}E^{*}}\otimes f)\hodge r^{-1})^{*}.\]
From \eqref{eq:Hodge2}, we deduce that $\hodge r^{-1}=\det(r) (\id_{\ext^{d}E^{*}}\otimes r^{-1})\hodge$. Taking the inverse of both sides, we find $r\hodge^{-1}=\hodge^{-1} (\id_{\ext^{d}E^{*}}\otimes r) \det(r^{-1})$. Thus,
\[r \adj(f) r^{-1}=(\hodge^{-1} (\id_{\ext^{d}E^{*}}\otimes rfr^{-1}) \hodge)^{*}=\adj(rfr^{-1}),\]
the desired equality.
\end{proof}

Let us make a few remarks about the relation between adjugation and inversion. Let us consider an endomorphism $\op{a}$ of $E$. We want to compute the adjugate of $\ext \op{a}$. 

\begin{proposition} \label{prop:adjugate} Let $E$ be an inner product space. Let $\op{a}$ be an endomorphism of $E$. Then
\[\adj\big(\ext \op{a}\big) \circ \ext \op{a}=\det (\op{a}) \id_{\ext E}.\]
In particular, if $\op{a}$ is invertible, then
\[\adj\big(\ext \op{a}\big)=\det (\op{a}) \ext \op{a}^{-1}.\]
\end{proposition}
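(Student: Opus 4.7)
My plan is to prove the first identity $\adj(\ext\op{a})\circ\ext\op{a}=\det(\op{a})\,\id_{\ext E}$ directly from the defining property \eqref{eq:Hodge2} of the Hodge operator, without selecting coordinates. Given the first identity, the second identity is immediate: when $\op{a}$ is invertible, functoriality of $\ext$ gives $(\ext\op{a})^{-1}=\ext\op{a}^{-1}$, and right-composing the first identity with $\ext\op{a}^{-1}$ yields $\adj(\ext\op{a})=\det(\op{a})\,\ext\op{a}^{-1}$.

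For the main identity, I would introduce the operator $L=\hodge^{-1}\circ(\id_{\ext^d E^{*}}\otimes\ext\op{a})\circ\hodge$ on $\ext E$. It is genuinely linear (the two antilinear factors $\hodge$ and $\hodge^{-1}$ compose with a linear map in between), and by the definition \eqref{eq:defadj} we have $\adj(\ext\op{a})=L^{*}$. Unfolding the definition gives $(\hodge Ly)(z)=\ext\op{a}((\hodge y)(z))$ for all $y\in\ext E$ and $z\in\ext^d E$. Moreover, since $\hodge$ sends $\ext^k E$ into $\ext^d E^{*}\otimes\ext^{d-k}E$ and $\ext\op{a}$ preserves $\ext^{d-k}E$, the operator $L$—hence also $\adj(\ext\op{a})=L^{*}$—preserves the grading of $\ext E$.

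Next, fix $x,y\in\ext^k E$ and $z\in\ext^d E$. Applying the algebra morphism $\ext\op{a}$ to the Hodge identity $x\wedge(\hodge y)(z)=\langle y,x\rangle z$, and using that $\ext\op{a}$ acts on the line $\ext^d E$ by multiplication by $\det(\op{a})$, I would obtain
\[\ext\op{a}(x)\wedge\ext\op{a}\bigl((\hodge y)(z)\bigr)=\det(\op{a})\,\langle y,x\rangle\,z.\]
The left-hand side equals $\ext\op{a}(x)\wedge(\hodge Ly)(z)$, and applying \eqref{eq:Hodge2} to the pair $(\ext\op{a}(x),Ly)$ of elements of $\ext^k E$ rewrites it as $\langle Ly,\ext\op{a}(x)\rangle z$. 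Choosing any nonzero $z\in\ext^d E$ and passing to the adjoint of $L$ gives $\langle y,\adj(\ext\op{a})\ext\op{a}(x)\rangle=\det(\op{a})\langle y,x\rangle$ for every $y\in\ext^k E$; the identity persists trivially for $y$ of different degree, since $\adj(\ext\op{a})\ext\op{a}(x)\in\ext^k E$ and $\langle y,x\rangle=0$ in that case. Non-degeneracy of the inner product on $\ext E$ then yields $\adj(\ext\op{a})\ext\op{a}(x)=\det(\op{a})x$, and since $x$ was any homogeneous element, the identity extends by linearity to all of $\ext E$.

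The main obstacle is the bookkeeping around the antilinearity of $\hodge$: one must be careful in justifying that $L$ is genuinely linear, that its adjoint $L^{*}$ coincides with $\adj(\ext\op{a})$ of \eqref{eq:defadj}, and that the passage from $\langle Ly,\ext\op{a}(x)\rangle=\det(\op{a})\langle y,x\rangle$ to $\langle y,L^{*}\ext\op{a}(x)\rangle=\det(\op{a})\langle y,x\rangle$ is correct under the paper's convention of Hermitian products that are linear in the second argument. Should this cause difficulty, a safe fallback is to compute in an orthonormal basis via \eqref{eq:adjugatecoord}: the diagonal entries of $\adj(\ext\op{a})\ext\op{a}$ reduce to the standard Laplace expansion of $\det(A)$ along the columns indexed by $I$, while each off-diagonal entry ($I\ne J$, $|I|=|J|$) is the Laplace expansion along its first $|I|$ columns of the determinant of an auxiliary $d\times d$ matrix obtained by placing the columns of $A$ indexed by $J$ first and those indexed by $I^{c}$ after; this matrix has a repeated column because $I\ne J$ forces $J\cap I^{c}\ne\emptyset$, and the off-diagonal entries therefore vanish.
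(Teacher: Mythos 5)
Your argument is correct and follows the same route as the paper: both proofs reduce the claim to the chain of equalities obtained by combining the defining property \eqref{eq:Hodge2} of the Hodge operator with the facts that $\ext\op{a}$ is an algebra morphism and acts on the top exterior power by multiplication by $\det\op{a}$, and then conclude by passing to the adjoint and using non-degeneracy of the inner product. You are somewhat more explicit than the paper about introducing the auxiliary operator $L$, about why $L$ (and hence $\adj(\ext\op{a})$) is linear and grading-preserving, and about the adjunction step, but these are presentational differences, not a different proof.
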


\begin{proof} The endomorphisms $\ext \op{a}$ and $\adj\big(\ext \op{a}\big)$ of $\ext E$ preserve the degree. Hence, the first equality follows from the fact that for all $k\in \{0,\ldots,d\}$, all $x,y\in \ext^{k} E$ and all $z\in \ext^{d}E$, we have
\begin{align*}
\langle y, \adj\big(\ext \op{a}\big)\circ \ext u (x)\rangle z&= \langle \hodge^{-1} (\id_{\ext^{d}E^{*}} \otimes \ext \op{a})\hodge y, \ext u (x)\rangle z\\
&=\ext \op{a}(x) \wedge \big((\id_{\ext^{d}E^{*}}\otimes \ext \op{a}) (\hodge y)\big)(z)\\
&=\ext \op{a}(x) \wedge \ext \op{a} ((\hodge y) (z))\\
&=\det (\op{a}) x\wedge (\hodge y)(z)\\
&=\langle y, \det (\op{a}) x\rangle z.
\end{align*}
The second equality follows immediately from the first and the fact that, if $\op{a}$ is invertible, $\ext(\op{a}^{-1})=(\ext \op{a})^{-1}$.
\end{proof}

We can now prove the following useful formula.

\begin{lemma}\label{lem:trdetab} Let $\op{a}$ and $\op{b}$ be two endomorphisms of $E$. Then
\[\Tr_{\ext E}\Big(\adj\big(\ext \op{a}\big)\circ  \ext \op{b} \Big)=\det(\op{a}+\op{b}).\]
\end{lemma}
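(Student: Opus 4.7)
The plan is to reduce this to the already-proved identity \eqref{eq:trdet}, namely $\Tr_{\ext E}(\ext \op{c}) = \det(1+\op{c})$, together with the inversion formula of Proposition \ref{prop:adjugate}. The trick is that $\adj(\ext \op{a})$ becomes a multiple of $\ext(\op{a}^{-1})$ as soon as $\op{a}$ is invertible, and then the functoriality $\ext(\op{a}^{-1})\circ \ext \op{b}=\ext(\op{a}^{-1}\op{b})$ collapses everything.

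First I would assume $\op{a}$ invertible. By Proposition \ref{prop:adjugate} we have $\adj(\ext \op{a})=\det(\op{a})\,\ext(\op{a}^{-1})$, so that, since $\ext$ is a functor of algebras,
\[
\adj(\ext \op{a})\circ \ext \op{b}=\det(\op{a})\,\ext(\op{a}^{-1}\op{b}).
\]
Applying \eqref{eq:trdet} yields
\[
\Tr_{\ext E}\bigl(\adj(\ext \op{a})\circ \ext \op{b}\bigr)=\det(\op{a})\det\bigl(1+\op{a}^{-1}\op{b}\bigr)=\det(\op{a}+\op{b}),
\]
which is the claimed identity when $\op{a}$ is invertible.

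Second, I would remove the invertibility assumption by a density/continuity argument. Fix an orthonormal basis of $E$ and view the matrix entries of $\op{a}$ and $\op{b}$ as variables. Both sides of the equation are polynomial functions of these entries: the right-hand side is the expansion of a determinant, and the left-hand side is a trace of a product of linear maps whose matrix entries (with respect to the induced orthonormal basis of $\ext E$) are polynomial in those of $\op{a}$ and $\op{b}$, as one sees from \eqref{eq:minors} for $\ext \op{b}$ and from the coordinate formula \eqref{eq:adjugatecoord} for $\adj(\ext \op{a})$. Since invertible endomorphisms are dense in $\End(E)$ and the identity holds on this dense set, it holds identically.

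I do not anticipate any real obstacle: the only delicate point is to make sure that the ``bare'' identity $\adj(\ext \op{a})=\det(\op{a})\,\ext(\op{a}^{-1})$, which is only valid when $\op{a}$ is invertible, is used solely on a Zariski-dense set, with the general statement then recovered by polynomial continuity. Alternatively, one could avoid the density argument altogether by using the first (unconditional) part of Proposition \ref{prop:adjugate}, writing $\adj(\ext \op{a})\circ \ext(\op{a}+\op{b})=\adj(\ext \op{a})\circ \ext \op{a}+\adj(\ext \op{a})\circ \ext \op{b}=\det(\op{a})\id_{\ext E}+\adj(\ext \op{a})\circ \ext \op{b}$, taking traces and using \eqref{eq:trdet} on the left-hand side after perturbing $\op{a}+\op{b}$ suitably; but the continuity route is shorter and cleaner.
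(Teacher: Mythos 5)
Your argument is correct and matches the paper's proof essentially line for line: reduce to invertible $\op{a}$ via the observation that both sides are polynomial in the entries of $\op{a}$, then combine Proposition \ref{prop:adjugate} with functoriality of $\ext$ and \eqref{eq:trdet} to get $\det(\op{a})\det(1+\op{a}^{-1}\op{b})=\det(\op{a}+\op{b})$. The only difference is ordering (the paper states the density reduction first, then treats the invertible case), which is immaterial.
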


\begin{proof} The endomorphism $\op{b}$ being fixed, both sides of the equality to prove are polynomial functions of $\op{a}\in \End(E)$. It is thus enough to prove that the equality holds when $\op{a}$ is invertible. In this case, using Proposition \ref{prop:adjugate} and \eqref{eq:trdet}, we find
\begin{align*}
\Tr_{\ext E}\Big(\adj\big(\ext \op{a}\big)\circ \ext \op{b} \Big)&=(\det \op{a}) \Tr_{\ext E}\big(\ext \op{a}^{-1} \ext \op{b})\\
&=(\det \op{a})\Tr_{\ext E}\big(\ext (\op{a}^{-1}\op{b}))\\
&=\det (\op{a}) \det(1+\op{a}^{-1}\op{b}),
\end{align*}
and the result follows.
\end{proof}

\subsection{DLP as positive operator-valued measures}\label{sec:kernel} To each kernel on $E$, we associate a density matrix on the exterior algebra of $E$, that is, a non-negative self-adjoint operator of trace~$1$.

\begin{proposition}\label{prop:KrhoK} Let $\op{k}$ be a kernel on an inner product space $E$. The operator
\begin{equation}\label{eq:defrhok}
\rho_{\op{k}}=\adj\big(\ext(1-\op{k})\big)\circ \ext \op{k}=\ext \op{k}\circ \adj\big(\ext(1-\op{k})\big)
\end{equation}
is self-adjoint and non-negative on $\ext E$, and it satisfies $\Tr_{\ext E}(\rho_{\op{k}})=1$.

Moreover, if $\op{k}$ is the orthogonal projection on a linear subspace $H$ of $E$, then $\rho_{\op{k}}$ is the orthogonal projection on the line $\iota(H)$:
\begin{equation}\label{eq:densityproj}
\rho_{\proj{H}}=\proj{\iota(H)}.
\end{equation}
\end{proposition}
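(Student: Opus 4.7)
The plan is to verify the four assertions in order. First I would check that the two expressions in \eqref{eq:defrhok} agree: when $1-\op{k}$ is invertible (equivalently, when $\op{k}<1$), Proposition \ref{prop:adjugate} gives
\[\adj\bigl(\ext(1-\op{k})\bigr)=\det(1-\op{k})\,\ext\bigl((1-\op{k})^{-1}\bigr),\]
and since $(1-\op{k})^{-1}$ is a rational function of $\op{k}$, it commutes with $\op{k}$, hence $\ext((1-\op{k})^{-1})$ commutes with $\ext\op{k}$, so the two orderings give the same operator. For general $\op{k}$ I extend by continuity, replacing $\op{k}$ by $\op{k}_\epsilon=(1-2\epsilon)\op{k}+\epsilon\,\id_E$ with $\epsilon\downarrow0$; all operations involved are continuous in $\op{k}$.

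For self-adjointness, I would first observe that if $\op{a}\in\End(E)$ is self-adjoint then $\adj(\ext\op{a})$ is self-adjoint. This is a direct check using the coordinate formula \eqref{eq:adjugatecoord}: with $(e_1,\ldots,e_d)$ an orthonormal eigenbasis of $\op{a}$ (so $\ext\op{a}$ is real-diagonal in the induced basis $\{e_I\}$), the matrix of $\adj(\ext\op{a})$ is real-diagonal too, hence self-adjoint. For a general self-adjoint $\op{a}$, the same computation gives $\overline{\langle e_I,\adj(\ext\op{a})e_J\rangle}=\langle e_J,\adj(\ext\op{a})e_I\rangle$ thanks to self-adjointness of $\ext\op{a}$. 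Applying this to $\op{a}=1-\op{k}$ and noting that $\ext\op{k}$ commutes with $\adj(\ext(1-\op{k}))$ (from the first step), their product is self-adjoint.

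For non-negativity, I would again invoke the continuity argument and reduce to $0<\op{k}<1$. Then $1-\op{k}$ is positive-definite, $\ext((1-\op{k})^{-1})$ is non-negative, and $\det(1-\op{k})>0$, so $\adj(\ext(1-\op{k}))$ is non-negative. Likewise $\ext\op{k}$ is non-negative. Two non-negative self-adjoint operators that commute have non-negative product (their product coincides with $(\ext\op{k})^{1/2}\adj(\ext(1-\op{k}))(\ext\op{k})^{1/2}$), so $\rho_{\op{k}}\geq0$, and the general case follows by taking $\epsilon\to0$. The trace computation is then immediate from Lemma \ref{lem:trdetab}:
\[\Tr_{\ext E}\bigl(\adj(\ext(1-\op{k}))\circ\ext\op{k}\bigr)=\det\bigl((1-\op{k})+\op{k}\bigr)=\det(\id_E)=1.\]

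For the identity \eqref{eq:densityproj}, I would choose an orthonormal basis $(e_1,\ldots,e_d)$ of $E$ with $(e_1,\ldots,e_n)$ a basis of $H$. Then $\ext\proj{H}$ is the orthogonal projection onto $\Vect(e_I:I\subseteq\{1,\ldots,n\})$, while $\ext\proj{H^\perp}$ is the orthogonal projection onto $\Vect(e_I:I\subseteq\{n+1,\ldots,d\})$. Using \eqref{eq:adjugatecoord}, a short computation shows that $\adj(\ext\proj{H^\perp})$ is diagonal in the basis $\{e_I\}$ with diagonal entry $1$ when $\{1,\ldots,n\}\subseteq I$ and $0$ otherwise. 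Composing with $\ext\proj{H}$, the only $I$ satisfying both $I\subseteq\{1,\ldots,n\}$ and $\{1,\ldots,n\}\subseteq I$ is $I=\{1,\ldots,n\}$, so $\rho_{\proj{H}}$ is the orthogonal projection onto the line spanned by $e_1\wedge\cdots\wedge e_n$, which is precisely $\iota(H)$. The main obstacle is the non-negativity claim, which requires the continuity/perturbation argument to handle the case where $\op{k}$ has eigenvalue $1$ and $\adj(\ext(1-\op{k}))$ cannot be written as $\det(1-\op{k})\ext((1-\op{k})^{-1})$.
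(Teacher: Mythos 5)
Your proof is correct, but it takes a noticeably different route from the paper's. The paper's proof is a single diagonalization: choosing an orthonormal eigenbasis $(e_1,\ldots,e_d)$ of $\op{k}$ with eigenvalues $\lambda_i$, one checks directly from \eqref{eq:adjugatecoord} that $\rho_{\op{k}}\,e_I = \bigl(\prod_{i\in I}\lambda_i\prod_{i\notin I}(1-\lambda_i)\bigr)e_I$ for every $I$; this single formula simultaneously proves commutativity of the two factors, self-adjointness, non-negativity, the trace identity $\sum_I\prod_{i\in I}\lambda_i\prod_{i\notin I}(1-\lambda_i)=\prod_i\bigl(\lambda_i+(1-\lambda_i)\bigr)=1$, and reduces the projection case to the observation that when every $\lambda_i\in\{0,1\}$ the product is nonzero only for $I$ equal to the index set of $H$. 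You instead obtain commutativity and positivity indirectly, via Proposition \ref{prop:adjugate} on the open set $\op{k}<1$ followed by a perturbation $\op{k}\mapsto\op{k}_\epsilon$ and passage to the limit, plus a separate lemma that $\adj(\ext\op{a})$ is self-adjoint whenever $\op{a}$ is; you then reuse Lemma \ref{lem:trdetab} for the trace and an explicit coordinate check for \eqref{eq:densityproj}. Both approaches are sound; what the eigenbasis computation buys is that all four claims drop out of one line, with no need for the approximation argument, and it moreover exhibits $\rho_{\op{k}}$ explicitly (as a product of independent Bernoulli weights on the basis $\{e_I\}$), which the more abstract route does not produce. Your argument is longer but has the merit of making the role of Proposition \ref{prop:adjugate} visible, and of isolating the self-adjointness of $\adj(\ext\op{a})$ as a standalone fact.
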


\begin{proof} Let $(e_{1},\ldots,e_{d})$ be an orthonormal basis of $E$ formed with eigenvectors of $\op{k}$, such that for all $i\in \{1,\ldots,d\}$ we have $\op{k}e_{i}=\lambda_{i}e_{i}$, with $\lambda_{i}\in [0,1]$. Then, for all $I\subset \{1,\ldots,d\}$, we find, using \eqref{eq:adjugatecoord},
\[\adj\big(\ext(1-\op{k})\big) \circ \ext \op{k} (e_{I})=\prod_{i\in I} \lambda_{i} \prod_{i\notin I} (1-\lambda_{i}) e_{I}=\ext \op{k}\circ \adj\big(\ext(1-\op{k})\big) (e_{I}).\]
All the claimed properties of $\rho_{\op{k}}$ follow readily from these equalities. The fact that $\rho_{\op{k}}$ has trace~$1$ can also be deduced in a basis-free way from Lemma \ref{lem:trdetab}.
\end{proof}

We can now prove a result which relates the measure $\mu_{\sigma,\op{k}}$ and the density of states $\rho_{\op{k}}$.

\begin{proposition}\label{prop:lienDLPQM} Let $(E,\sigma)$ be a split inner product space. Let $\op{k}$ be a kernel on $E$. Let $Q$ be an element of $\Gr(E,\sigma)$. Then
\begin{equation}
\det(\op{k}\proj{Q}+(1-\op{k})\proj{Q^{\perp}})=\Tr_{\ext E}(\proj{\iota(Q)}\rho_{\op{k}} )\,.
\end{equation}
More generally, let $\op{l}$ be another kernel on $E$. Then
\begin{equation}
\det(\op{k}\op{l}+(1-\op{k})(1-\op{l}))=\Tr_{\ext E}(\rho_{\op{k}}\rho_{\op{l}})\,.
\end{equation}
\end{proposition}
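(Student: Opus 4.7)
The plan is to first observe that the first equality is a direct consequence of the second, then prove the second using the algebraic structure assembled in the previous sections.

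For the reduction: by \eqref{eq:densityproj}, $\proj{\iota(Q)}=\rho_{\proj{Q}}$. Setting $\op{l}=\proj{Q}$ in the second identity yields
\[\Tr_{\ext E}(\proj{\iota(Q)}\rho_{\op{k}})=\Tr_{\ext E}(\rho_{\op{k}}\rho_{\proj{Q}})=\det(\op{k}\proj{Q}+(1-\op{k})\proj{Q^{\perp}}),\]
where in the last step we used $1-\proj{Q}=\proj{Q^{\perp}}$. So only the second identity needs to be proved.

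For the general identity, I would exploit the two equivalent formulas for $\rho_{\op{k}}$ and $\rho_{\op{l}}$ given in \eqref{eq:defrhok}: write $\rho_{\op{k}}=\adj\big(\ext(1-\op{k})\big)\circ \ext\op{k}$ and $\rho_{\op{l}}=\ext\op{l}\circ\adj\big(\ext(1-\op{l})\big)$, so that the middle factors of the product $\rho_{\op{k}}\rho_{\op{l}}$ are of the form $\ext\op{k}\circ\ext\op{l}$. Functoriality of the exterior algebra construction then collapses this to $\ext(\op{k}\op{l})$, giving
\[\rho_{\op{k}}\rho_{\op{l}}=\adj\big(\ext(1-\op{k})\big)\circ\ext(\op{k}\op{l})\circ\adj\big(\ext(1-\op{l})\big).\]

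Next, I would cyclically permute the first and the last operator under the trace, and then use the antimorphism property \eqref{eq:antimorph} to combine the two adjugate factors:
\[\adj\big(\ext(1-\op{l})\big)\circ\adj\big(\ext(1-\op{k})\big)=\adj\big(\ext(1-\op{k})\circ\ext(1-\op{l})\big)=\adj\big(\ext((1-\op{k})(1-\op{l}))\big),\]
once more by functoriality. Thus
\[\Tr_{\ext E}(\rho_{\op{k}}\rho_{\op{l}})=\Tr_{\ext E}\Big(\adj\big(\ext((1-\op{k})(1-\op{l}))\big)\circ\ext(\op{k}\op{l})\Big),\]
which by Lemma \ref{lem:trdetab} (applied to $\op{a}=(1-\op{k})(1-\op{l})$ and $\op{b}=\op{k}\op{l}$) equals $\det(\op{k}\op{l}+(1-\op{k})(1-\op{l}))$, as desired.

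There is really no hard step here: the machinery built in Sections \ref{sec:Hodge}--\ref{sec:kernel}, namely the antimorphism property of $\adj$, the functoriality of $\ext$, and the master identity of Lemma~\ref{lem:trdetab}, is precisely what is needed to make this two-line computation work. If anything, the only point requiring care is to pick the correct pair of expressions for $\rho_{\op{k}}$ and $\rho_{\op{l}}$ (using both versions in \eqref{eq:defrhok}) so that the $\ext$ factors become adjacent and can be merged before the two $\adj$ factors are combined by cyclicity.
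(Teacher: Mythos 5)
Your proof is correct and follows exactly the same route as the paper's (which is stated there more tersely): reduce the first identity to the second via \eqref{eq:densityproj}, then combine the two forms of \eqref{eq:defrhok}, functoriality of $\ext$, cyclicity of the trace, the antimorphism property \eqref{eq:antimorph}, and Lemma~\ref{lem:trdetab}. The only point to flag is that you spell out the key trick — using the two different orderings in \eqref{eq:defrhok} for $\rho_{\op{k}}$ and $\rho_{\op{l}}$ so that the $\ext$ factors become adjacent — which the paper leaves implicit.
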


\begin{proof} Thanks to the last assertion of Proposition \ref{prop:KrhoK}, the first equality is a consequence of the second, in the special case where $\op{l}=\proj{Q}$. The second equality, in turn, follows from the definition \eqref{eq:defrhok} of $\rho_{\op{k}}$ and $\rho_{\op{l}}$, from \eqref{eq:antimorph} and from Lemma \ref{lem:trdetab}. 
\end{proof}

\begin{corollary}\label{cor:loiDLPext} Let $\Q$ be a determinantal linear process of $(E,\sigma)$ with kernel $\op{k}$. Let $O_{\sigma}$ be the positive operator-valued measure defined by \eqref{eq:defOs}. Then for every Borel subset $B$ of $\Gr(E,\sigma)$, one has the equality
\begin{equation}\label{eq:DLP-POVM}
\P(\Q\in B)=\Tr_{\ext E}(O_{\sigma}(B)\rho_{\op{k}})\,.
\end{equation}
\end{corollary}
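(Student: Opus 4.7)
The proof is a direct combination of the explicit density formula for $\mu_{\sigma,\op{k}}$ given in Proposition~\ref{prop:exDLP} and the trace reformulation from Proposition~\ref{prop:lienDLPQM}. The plan is to start from the left-hand side of~\eqref{eq:DLP-POVM}, unfold the definition of the DLP as a measure with explicit density with respect to $\nu^{E,\sigma}$, translate the integrand into a trace on the exterior algebra, and then bring the operator-valued integral out of the trace.

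Concretely, I would proceed as follows. By Proposition~\ref{prop:exDLP},
\[\P(\Q\in B)=\int_{B}\det\bigl(\op{k}\proj{Q}+(1-\op{k})\proj{Q^{\perp}}\bigr)\,\d\nu^{E,\sigma}(Q).\]
Now apply the first equality of Proposition~\ref{prop:lienDLPQM} pointwise in $Q$: for every $Q\in\Gr(E,\sigma)$ one has
\[\det\bigl(\op{k}\proj{Q}+(1-\op{k})\proj{Q^{\perp}}\bigr)=\Tr_{\ext E}\bigl(\proj{\iota(Q)}\rho_{\op{k}}\bigr).\]
Substituting this expression into the integral yields
\[\P(\Q\in B)=\int_{B}\Tr_{\ext E}\bigl(\proj{\iota(Q)}\rho_{\op{k}}\bigr)\,\d\nu^{E,\sigma}(Q).\]

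The final step is to interchange the trace and the integral. Since $\ext E$ is finite-dimensional, the trace is a continuous linear functional on $\End(\ext E)$, and the map $Q\mapsto\proj{\iota(Q)}\rho_{\op{k}}$ is bounded and measurable on the compact space $\Gr(E,\sigma)$, so pulling $\Tr_{\ext E}(\,\cdot\,\rho_{\op{k}})$ outside of the (Bochner) integral is legitimate. This gives
\[\P(\Q\in B)=\Tr_{\ext E}\!\left(\left(\int_{B}\proj{\iota(Q)}\,\d\nu^{E,\sigma}(Q)\right)\rho_{\op{k}}\right)=\Tr_{\ext E}\bigl(O_{\sigma}(B)\,\rho_{\op{k}}\bigr),\]
which is exactly the claim. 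I do not expect any real obstacle: each step is either a direct citation of an earlier result or the elementary interchange of a finite-dimensional trace with an integral against a bounded operator-valued function; the whole statement is essentially a repackaging of Proposition~\ref{prop:lienDLPQM} in the language of positive operator-valued measures.
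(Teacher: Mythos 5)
Your proof is correct and follows essentially the same route as the paper: both reduce the claim to the density formula of Proposition~\ref{prop:exDLP} combined with the trace identity of Proposition~\ref{prop:lienDLPQM}. The paper simply states that both sides equal the same integral $\int_B\det(\op{k}\proj{Q}+(1-\op{k})\proj{Q^\perp})\,\d\nu^{E,\sigma}(Q)$, whereas you spell out the (elementary, finite-dimensional) interchange of trace and integral; that is a presentational difference, not a different argument.
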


\begin{proof} Indeed, by definition of determinantal linear processes and by Proposition \ref{prop:lienDLPQM}, both sides of the equality to prove are equal to 
\[\int_{B} \det(\op{k}\proj{Q}+(1-\op{k})\proj{Q^{\perp}}) \d\nu^{E,\sigma}(Q)\,.\]
\end{proof}

\subsection{Some results revisited}

Having expressed the distribution of determinantal linear processes as we did in Corollary \ref{cor:loiDLPext}, we can reformulate  in a more concise way some of their properties, in particular those concerning the support of the distribution. Recall that we denote by $\proj{\ul{n}}$ the orthogonal projection on the subspace $\ext^{\ul{n}}E$ of the exterior algebra of $E$, see Section~\ref{sec:extalgsplit}.

\begin{proposition} \label{prop:supportextalg} Let $(E,\sigma)$ be a split inner product space. Let $\op{k}$ be a kernel on $E$. For all $\ul{n}\in [\![\ul{0},\ul{d}]\!]$, we have
\[\mu_{\sigma,\op{k}}(\Gr_{\ul{n}}(E,\sigma))=\Tr_{\ext E}(\proj{\ul{n}}\rho_{\op{k}}).\]
In particular, if $\op{k}$ is the orthogonal projection on a linear subspace $H$ of $E$, and if $\omega_{H}$ is a unit vector of the line $\iota(H)$, then for all $\ul{n}\in [\![\ul{0},\ul{d}]\!]$,
\[\mu_{\sigma,\op{k}}(\Gr_{\ul{n}}(E,\sigma))=\| \proj{\ul{n}} \omega_{H}\|^{2}.\] 
\end{proposition}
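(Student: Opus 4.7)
The plan is to combine the density formula of Proposition \ref{prop:exDLP}, the trace identity of Proposition \ref{prop:lienDLPQM}, and a direct computation of the expected Plücker projection.

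First, I would start from
\[\mu_{\sigma,\op{k}}(\Gr_{\ul{n}}(E,\sigma))=\int_{\Gr_{\ul{n}}(E,\sigma)}\det\big(\op{k}\proj{Q}+(1-\op{k})\proj{Q^{\perp}}\big)\,\d\nu^{E,\sigma}_{\ul{n}}(Q),\]
and rewrite the integrand as $\Tr_{\ext E}(\proj{\iota(Q)}\rho_{\op{k}})$ via Proposition \ref{prop:lienDLPQM}. Pulling the trace out of the integral, it suffices to identify the operator
\[O_{\ul{n}}:=\int_{\Gr_{\ul{n}}(E,\sigma)}\proj{\iota(Q)}\,\d\nu^{E,\sigma}_{\ul{n}}(Q)\in\End(\ext E)\]
with the orthogonal projection $\proj{\ul{n}}$ onto $\ext^{\ul{n}}E$.

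The key step is then to prove $O_{\ul{n}}=\proj{\ul{n}}$. Using the concrete description \eqref{eq:intnunsigma} of $\nu^{E,\sigma}_{\ul{n}}$ with an orthonormal basis $(e_{1},\ldots,e_{d})$ of $E$ adapted to $\sigma$, I would write
\[O_{\ul{n}}=\int_{\U(E,\sigma)}\sum_{\substack{J\subseteq\{1,\ldots,d\}\\ \sdim E_{J}=\ul{n}}}\proj{\iota(u(E_{J}))}\,\d u.\]
For any $u\in\U(E,\sigma)$, the basis $(u(e_{1}),\ldots,u(e_{d}))$ is still an orthonormal basis adapted to $\sigma$, so the family $(u(e_{J}))_{J:\sdim E_{J}=\ul{n}}$ is an orthonormal basis of the summand $\ext^{\ul{n}}E$ appearing in the decomposition \eqref{eq:decextE}. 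Since $\iota(u(E_{J}))$ is the line spanned by $u(e_{J})$, the inner sum equals $\proj{\ul{n}}$ for every $u$, and integrating over $\U(E,\sigma)$ yields $O_{\ul{n}}=\proj{\ul{n}}$. This gives the first formula
\[\mu_{\sigma,\op{k}}(\Gr_{\ul{n}}(E,\sigma))=\Tr_{\ext E}(\proj{\ul{n}}\rho_{\op{k}}).\]

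For the second assertion, when $\op{k}=\proj{H}$, Proposition \ref{prop:KrhoK} gives $\rho_{\proj{H}}=\proj{\iota(H)}$, which is the rank-one projector $\omega_{H}\omega_{H}^{*}$ for any unit vector $\omega_{H}\in\iota(H)$. A one-line computation then gives
\[\Tr_{\ext E}(\proj{\ul{n}}\proj{\iota(H)})=\langle\omega_{H},\proj{\ul{n}}\omega_{H}\rangle=\|\proj{\ul{n}}\omega_{H}\|^{2},\]
using that $\proj{\ul{n}}$ is an orthogonal projection. The only even mildly subtle point is the identification $O_{\ul{n}}=\proj{\ul{n}}$; everything else is routine given the results already proved. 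One could alternatively argue by Schur's lemma that $O_{\ul{n}}$ is a scalar on $\ext^{\ul{n}}E$ (which is an irreducible $\U(E,\sigma)$-module) of trace $\dim\ext^{\ul{n}}E$, but the basis argument above is more elementary and avoids any appeal to representation theory.
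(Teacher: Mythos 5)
Your proposal is correct and follows essentially the same route as the paper's proof: reduce to the operator identity $\int_{\Gr_{\ul{n}}(E,\sigma)}\proj{\iota(Q)}\d\nu^{E,\sigma}_{\ul{n}}(Q)=\proj{\ul{n}}$ via the trace formula (the paper invokes Corollary~\ref{cor:loiDLPext}, which you re-derive from Propositions~\ref{prop:exDLP} and~\ref{prop:lienDLPQM}), and then establish that identity by observing that for each $u\in\U(E,\sigma)$ the lines $\iota(u(E_J))$ with $\sdim E_J=\ul{n}$ form a splitting in lines of $\ext^{\ul{n}}E$, so the inner sum of rank-one projectors is already $\proj{\ul{n}}$ before averaging. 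The second assertion is handled identically via $\rho_{\proj{H}}=\proj{\iota(H)}$.
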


Note that Pythagoras' theorem, stating that $\sum_{\ul{n}\in [\![\ul{0},\ul{d}]\!]} \| \proj{\ul{n}} \omega_{H}\|^{2}=\| \omega_{H}\|^{2}=1$, provides a proof of \eqref{eq:DPPPyth} in the DPP case.

\begin{proof} In view of Corollary \ref{cor:loiDLPext}, the first assertion will follow from the equality
\[O_{\sigma}(\Gr_{\ul{n}}(E,\sigma))=\proj{\ul{n}}.\]
To prove it, we need to compute the integral
\begin{equation} \label{eq:intpiQ}
\int_{\Gr_{\ul{n}}(E,\sigma)} \proj{\iota(Q)} \d\nu^{E,\sigma}(Q).
\end{equation}
For this, we used a refined version of \eqref{eq:intnunsigma}: we choose an orthonormal basis $(e_{1},\ldots,e_{d})$ of $E$ adapted to $\sigma$ and consider, for each subset $J$ of $\{1,\ldots,d\}$ such that $(e_{j} : j\in J)$ contains $n_{1}$ vectors in $E_{1}$, $n_{2}$ vectors in $E_{2}$ and so on, the orthogonal projection on the line $\iota(\Vect(e_{j}:j\in J))$. The sum over all such subsets $J$ of these projections is the sum of the orthogonal projections on a set of lines forming a splitting in lines of the subspace $\ext^{\ul{n}}E$ of $\ext E$. It is thus equal to $\proj{\ul{n}}$. Averaging over the action of the group $\U(E,\sigma)$ on the set of orthonormal bases of $E$ adapted to~$\sigma$, we find that the integral \eqref{eq:intpiQ} is equal to $\Pi^{\ul{n}}$.

The second assertion follows immediately from the first and the fact that, in this case, $\rho_{\op{k}}$ is the orthogonal projection on $\iota(H)$.
\end{proof}

Another result that is more easily expressed in terms of the exterior algebra is Theorem \ref{prop:meanprojection}.

\begin{theorem}\label{thm:wedgeA}
With the notation of Theorem \ref{prop:meanprojection}, we have the equality
\[\E\Big[\ext \op{P}^{\Q}\Big]=\ext\proj{H}.\]
\end{theorem}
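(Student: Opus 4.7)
The plan is to upgrade the scalar identity of Theorem~\ref{prop:meanprojection} to the desired operator identity by reformulating it as a trace identity on $\End(\ext E)$, expanding both sides in matrix coefficients with respect to a basis, and then exploiting the linear independence of minors to extract the individual entries of $\E[\ext\op{P}^{\Q}]$ and $\ext\proj{H}$.

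First I would rewrite Theorem~\ref{prop:meanprojection} as a family of trace identities on $\ext E$. Using \eqref{eq:trdet} together with the multiplicativity of $\ext$, both $\det(1+\op{a}\op{P}^{\Q})$ and $\det(1+\op{a}\proj{H})$ take the form $\Tr_{\ext E}(\ext\op{a}\cdot M)$. Taking expectations and moving the expectation inside the trace -- legitimate because $\Gr(E,\sigma)$ is compact -- one obtains
\[
\Tr_{\ext E}\bigl(\ext\op{a}\cdot\E[\ext\op{P}^{\Q}]\bigr)
= \Tr_{\ext E}\bigl(\ext\op{a}\cdot\ext\proj{H}\bigr)
\qquad \text{for every }\op{a}\in\End(E).
\]
Both operators on the right-hand sides preserve the $\N$-grading of $\ext E$, so it will suffice to compare their matrix entries between $e_{I}$ and $e_{J}$ with $|I|=|J|$ in a chosen orthonormal basis $(e_{1},\ldots,e_{d})$ of $E$.

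Next I would unfold the trace in this basis. Writing $A$ for the matrix of $\op{a}$ and using~\eqref{eq:minors} to identify the entries of the matrix of $\ext\op{a}$ with minors of $A$, the above identity becomes a polynomial equation in the entries of $A$:
\[
\sum_{|I|=|J|}\det(A^{I}_{J})\,
\big\langle e_{J},(\E[\ext\op{P}^{\Q}]-\ext\proj{H})e_{I}\big\rangle
= 0.
\]

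The step I expect to carry the real content of the argument is then the observation that the family $\{\det(A^{I}_{J}) : |I|=|J|\}$ is linearly independent as polynomials in the entries of $A$. This holds because every monomial of $\det(A^{I}_{J})$ is of the form $\pm\prod_{i\in I} A^{i}_{\sigma(i)}$ for a bijection $\sigma:I\to J$, so its multisets of row and column indices recover $I$ and $J$; distinct minors therefore involve disjoint families of monomials. It follows that every coefficient in the display above vanishes, which combined with grading preservation gives the equality of matrix entries and thus $\E[\ext\op{P}^{\Q}]=\ext\proj{H}$. The structural subtlety, which this linear independence circumvents, is that the trace pairing against $\ext\op{a}$ alone does not separate arbitrary operators on $\ext E$; it does so only on the subspace of grading-preserving operators, which is exactly where our two operators live.
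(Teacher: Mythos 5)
Your proposal is correct and follows essentially the same route as the paper: rewrite the determinant identity of Theorem~\ref{prop:meanprojection}, via \eqref{eq:trdet} and functoriality of $\ext$, as the trace identity $\E[\Tr_{\ext E}(\ext\op{a}\cdot\ext\op{P}^{\Q})] = \Tr_{\ext E}(\ext\op{a}\cdot\ext\proj{H})$ for all $\op{a}\in\End(E)$, and then conclude. You additionally spell out, via the linear independence of the minors $\det(A^{I}_{J})$ with $|I|=|J|$, why this pairing separates grading-preserving operators on $\ext E$ -- a justification the paper leaves implicit when it simply asserts that universality in $\op{a}$ yields the equality.
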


The part of this statement concerning the part of degree $1$ of the exterior algebra, namely the fact that $\E\big[\op{P}^{\Q}\big]=\proj{H}$, appeared in the literature before (see the paragraph before Theorem \ref{prop:meanprojection}), but to the best of our knowledge, the full statement is new.

Thanks to the self-adjointness of $\proj{H}$, one can replace, in the equality, the operator $\op{P}^{\Q}$ by its adjoint, which is the projection on $H$ parallel to $\Q^{\perp}$.

\begin{proof} Indeed, the statement of Theorem \ref{prop:meanprojection} can be written, using Proposition \ref{prop:lienDLPQM}, as
\[\E\Big[\Tr_{\ext E}\big(\ext \op{a} \ext \op{P}^{\Q}\big)\Big]=\Tr_{\ext E}\big(\ext \op{a}\ext\proj{H}\big)\]
and the fact that this holds for every endomorphism $\op{a}$ of $E$ implies the announced equality.
\end{proof}

%%%%%%%%%%%%%%%%%%%%%%%%%%%%%%%%%%%%
%%%%%%%%%%%%%%%%%%%%%%%%%%%%%%%%%%%%
%%%%%%%%%%%%%%%%%%%%%%%%%%%%%%%%%%%%

\section{Changing coefficient field and the quaternion case} \label{sec:quat}

In this section, we explain how the previous construction adapts to the case of quaternionic vector spaces (Definition \ref{def:dpmg-quaternion} and Theorem \ref{thm:qDLP}). Some care is needed in defining an appropriate notion of determinant which we review below in Section \ref{sec:nc-det}. Before doing so, we start by studying how changing the coefficient field from complex numbers to real numbers changes the DLP (Proposition \ref{prop:complex-reals}). Then we treat the analogous change from the quaternionic to the complex case (Proposition~\ref{prop:quaternions-complex}). We also detail some properties of quaternionic DLP in Proposition \ref{prop:LaplaceQ} and Theorem \ref{thm:qA}.

\subsection{From complex to real coefficient field}\label{sec:complex-real}

We have so far defined and studied determinantal probability measures on Grassmannians of real and complex vector spaces. Although the specification of the underlying coefficient field seems to make little difference\footnote{However, there are more determinantal probability measures in the complex case than in the real case. This is related to the algebraic question of the equivalence of kernels: {\em When do kernels $\op{k}$ and $\op{k'}$ define the same probability measures $\mu_{\sigma,\op{k}}=\mu_{\sigma,\op{k'}}$?}, which boils down to a question about determinantal varieties.} in the classical case of determinantal point processes, it does here. 

Indeed, if $E$ is a complex vector space split in lines, any kernel induces a discrete determinantal probability measure on the complex Grassmannian of $E$. However, when~$E$ is viewed as a real vector space $\hat{E}$, the induced splitting consists of real $2$-dimensional blocks, and unless the kernel has eigenspaces consisting exactly of sums of these blocks (i.e. unless the eigenspace decomposition of the kernel is coarser than the given splitting of $E$), the induced determinantal probability measure will have a continuous support in the even-dimensional real Grassmannian of $\hat{E}$. 

Let us more generally introduce the following notation. Given a split complex vector space $(E,\sigma)$, let $\hat{E}$ be the real vector space obtained by restriction of scalars, let $\hat{\sigma}$ be the corresponding splitting of $\hat{E}$, and if $\op{k}$ is a kernel on $E$, let $\hat{\op{k}}$ be the corresponding kernel on $\hat{E}$. This correspondence is canonical. 

The point is that the measure $\mu_{\hat{\sigma},\hat{\op{k}}}$ is \emph{not} the pushforward of $\mu_{\sigma,\op{k}}$ under the natural inclusion map $\Gr(E)\subset\Gr(\hat{E})$. 
In general, it seems that these two measures are not easily related. However, one can compare some amount of information contained in both probability measures as shown by the following proposition. 

\begin{proposition}\label{prop:complex-reals}
Let $E$ be a complex vector space given with a splitting $\sigma=(E_1, \ldots, E_s)$ and a kernel $\op{k}$. Let $\hat{E}$, $\hat{\sigma}$, and $\hat{\op{k}}$ be the data induced by restricting the coefficient field to real numbers. Let $\rC^{(1)}$ and $\rC^{(2)}$ be two independent random complex linear subspaces of~$E$ sampled from $\mu_{\sigma,\op{k}}$ and let $\rR$ be a random real linear subspace of $\hat{E}$ sampled from $\mu_{\hat{\sigma},\hat{\op{k}}}$. Then we have the equality in distribution
\[\left(\dim_{\R} \rR_i\right)_{1\leq i \leq s}\stackrel{\text{\rm(law)}}{=}\left(\dim_\C \rC^{(1)}_i+\dim_\C \rC^{(2)}_i\right)_{1\leq i \leq s}\,.\]
\end{proposition}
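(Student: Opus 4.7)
The plan is to compare the joint Laplace transforms of both random vectors of integers, relying on Proposition~\ref{prop:dimQ}, and then to reduce the real computation to the complex one via the classical identity between real and complex determinants of $\C$-linear maps.

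First, for the right-hand side: by independence of $\rC^{(1)}$ and $\rC^{(2)}$ and by Proposition~\ref{prop:dimQ} applied to the complex kernel $\op{k}$ on $(E,\sigma)$, with $T_{\C}=\sum_{i=1}^{s} t_{i} \proj{E_{i}}\in \End_{\C}(E)$, one has
\[
\E\bigg[\exp \sum_{i=1}^{s} t_{i}\big(\dim_{\C}\rC^{(1)}_{i}+\dim_{\C}\rC^{(2)}_{i}\big)\bigg]={\det}_{\C}\!\big(\id_{E}+\op{k}(e^{T_{\C}}-1)\big)^{\!2}.
\]
For the left-hand side: denote by $\hat T = \sum_{i=1}^{s} t_{i}\proj{\hat E_{i}} \in \End_{\R}(\hat E)$ the analogous operator on $\hat E$. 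Since $T_{\C}$ acts as the \emph{real} scalar $t_{i}$ on each $E_{i}$, restriction of scalars gives $\hat T = \widehat{T_{\C}}$; hence $e^{\hat T}=\widehat{e^{T_{\C}}}$ and
\[
\id_{\hat E}+\hat{\op{k}}(e^{\hat T}-1) = \widehat{\id_{E}+\op{k}(e^{T_{\C}}-1)}.
\]
Applying Proposition~\ref{prop:dimQ} to $(\hat E,\hat\sigma,\hat{\op{k}})$ yields
\[
\E\bigg[\exp \sum_{i=1}^{s} t_{i}\dim_{\R}(\rR\cap \hat E_{i})\bigg]={\det}_{\R}\!\big(\id_{\hat E}+\hat{\op{k}}(e^{\hat T}-1)\big).
\]

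Next, I would invoke the classical fact that for any $\C$-linear endomorphism $\op{a}$ of a finite-dimensional complex vector space $E$, seen as an $\R$-linear endomorphism $\hat{\op{a}}$ of $\hat E$, one has ${\det}_{\R}(\hat{\op{a}})=|{\det}_{\C}(\op{a})|^{2}$. Applied to $\op{a}=\id_{E}+\op{k}(e^{T_{\C}}-1)$, this gives
\[
{\det}_{\R}\!\big(\id_{\hat E}+\hat{\op{k}}(e^{\hat T}-1)\big) = \big|{\det}_{\C}\!\big(\id_{E}+\op{k}(e^{T_{\C}}-1)\big)\big|^{2}.
\]
The complex determinant appearing here is in fact a nonnegative real number, because it equals the Laplace transform of a real random variable (the complex split dimension under $\mu_{\sigma,\op{k}}$ evaluated at the real parameters $t_{i}$); therefore its square absolute value coincides with its square, and the two Laplace transforms computed above are equal.

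Finally, since both random vectors $(\dim_{\R}\rR_{i})_{i}$ and $(\dim_{\C}\rC^{(1)}_{i}+\dim_{\C}\rC^{(2)}_{i})_{i}$ take values in the finite set $\N^{s}$ (bounded componentwise by $2\dim_{\C}E_{i}$), the equality of their joint moment generating functions at every real $(t_{1},\ldots,t_{s})$ determines their common distribution, which concludes the proof. The only mildly delicate point is the identification $\hat T=\widehat{T_{\C}}$, which crucially uses that the $t_{i}$ are real scalars so that $T_{\C}$ already lies in the image of the restriction-of-scalars functor; everything else is a direct combination of Proposition~\ref{prop:dimQ} with the standard real/complex determinant formula.
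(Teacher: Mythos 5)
Your proof is correct and takes essentially the same route as the paper's (Laplace transform via Proposition~\ref{prop:dimQ}, then Lemma~\ref{lem:MMhat} to pass from the real to the complex determinant). You have actually been slightly more careful than the paper's two-sentence argument: you explicitly note that $\hat T=\widehat{T_{\C}}$ relies on the $t_i$ being real, and that the complex determinant is a nonnegative real number so that $|\cdot|^2$ may be replaced by $(\cdot)^2$ — both points the paper leaves implicit.
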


The proof of Proposition \ref{prop:complex-reals} is based on Lemma~\ref{lem:MMhat} below which allows to compare determinants of complex matrices seen as complex or real matrices. In order to state the latter proposition, we introduce some notation. We will make use of a choice of basis in order to represent matrices. Let us introduce the real matrix \[\II=\begin{pmatrix}0 &-1 \\1 &0 \end{pmatrix}\,.\] 
The algebra of complex numbers may be realised as the subalgebra of $M_2(\R)$ consisting in matrices of the form $a I_2+ b \II$ with $a,b$ real numbers, where $I_2$ is the identity matrix.

For any $d\ge 1$ and all matrices $M$ in $M_d(\C)$, let $\hat{M}$ denote the matrix in $M_{2d}(\R)$ obtained by replacing each entry by the $2\times 2$ real matrix given by the above identification.

\begin{lemma}\label{lem:MMhat} Let $M$ be a complex matrix. Then $\det \hat M=|\det M|^{2}$.
\end{lemma}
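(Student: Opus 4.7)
The plan is to complexify the problem: view $\hat M \in M_{2d}(\R) \subset M_{2d}(\C)$ and exhibit a complex similarity transformation that block-diagonalises it into $\operatorname{diag}(M, \bar M)$, from which the desired equality follows at once.

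First I would observe that the embedding $\C \hookrightarrow M_2(\R)$, $a+ib \mapsto aI_2+b\II$, becomes diagonal once we pass to $\C$: the matrix $\II$ has eigenvalues $\pm i$, and if $P = \tfrac{1}{\sqrt 2}\begin{pmatrix} 1 & 1 \\ i & -i\end{pmatrix}$, then a direct computation gives
\[
P^{-1}(aI_2 + b\II)P = \begin{pmatrix} a-ib & 0 \\ 0 & a+ib \end{pmatrix} = \begin{pmatrix}\overline{a+ib} & 0 \\ 0 & a+ib\end{pmatrix}.
\]
This is the crucial identity: it says that the whole embedding $z \mapsto aI_2+b\II$ becomes, after conjugation by $P$, the diagonal map $z \mapsto \operatorname{diag}(\bar z, z)$.

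Next I would apply this blockwise. Viewing $\hat M$ as a $d \times d$ matrix with $2\times 2$ block entries corresponding to the complex entries of $M$, conjugation by $\tilde P = I_d \otimes P$ replaces each $2\times 2$ block by $\operatorname{diag}(\overline{M_{jk}}, M_{jk})$. A permutation $\pi$ of the $2d$ basis vectors (grouping all ``first coordinates of the blocks'' together, then all ``second coordinates'') then rearranges $\tilde P^{-1}\hat M \tilde P$ into the $2\times 2$ block-diagonal matrix $\operatorname{diag}(\bar M, M)$. Since the same permutation acts on rows and columns, its sign cancels in the determinant, and of course conjugation by $\tilde P$ does not change the determinant. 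Therefore
\[
\det \hat M = \det \operatorname{diag}(\bar M, M) = \det(\bar M)\det(M) = \overline{\det M}\cdot\det M = |\det M|^2.
\]

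The only point requiring mild care is the bookkeeping of the permutation $\pi$ and the verification that the blockwise conjugation really does produce $\operatorname{diag}(\bar M, M)$ after rearrangement; this is routine. An alternative (and perhaps slicker) route would be to note that both sides are multiplicative in $M$ and continuous, reducing the problem to diagonalisable $M$, where it is immediate from the scalar case $a+ib \leftrightarrow aI_2+b\II$, whose determinant is $a^2+b^2 = |a+ib|^2$. Either approach avoids the need for any heavy computation.
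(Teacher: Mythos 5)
Your main argument is correct and takes a genuinely different route from the paper's. The paper's proof is the density argument you sketch as an afterthought: check the identity for diagonal $M$, use that $M \mapsto \hat M$ is an algebra morphism to extend to diagonalisable $M$, and then invoke density of diagonalisable matrices in $M_d(\C)$. Your primary approach instead exhibits an explicit complex similarity. Conjugating by $I_d \otimes P$ (with $P = \tfrac{1}{\sqrt 2}\begin{pmatrix}1 & 1 \\ i & -i\end{pmatrix}$, which satisfies $P^{-1}\II P = \operatorname{diag}(-i, i)$) turns each $2\times 2$ block $\widehat{M_{jk}}$ into $\operatorname{diag}(\overline{M_{jk}}, M_{jk})$, and a perfect-shuffle permutation (acting identically on rows and columns, so its sign cancels) then collects these into $\operatorname{diag}(\bar M, M)$, giving $\det\hat M = \det\bar M\cdot\det M = |\det M|^2$. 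This is a pointwise algebraic identity with no limiting argument, and it reveals slightly more structure: it shows that $\hat M$, viewed over $\C$, is always similar to $\bar M \oplus M$, which in particular recovers the fact that the spectrum of $\hat M$ is $\operatorname{spec}(M) \cup \overline{\operatorname{spec}(M)}$. The paper's proof buys brevity at the cost of a density appeal; yours is a little longer to state but is constructive and self-contained. Both are correct.
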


\begin{proof} For a diagonal matrix, the result holds by inspection. Since the map $M\mapsto \hat M$ is a morphism of algebras from $M_{d}(\C)$ to $M_{2d}(\R)$, the result extends to diagonalisable matrices. Finally, it extends to $M_{d}(\C)$ by an argument of density.
\end{proof}

In particular, if $\op{k}$ is a Hermitian endomorphism of $E$, then 
$\det(\hat{\op{k}})=\det(\op{k})^2$.
We are now ready to prove Proposition \ref{prop:complex-reals}.

\begin{proof}[Proof of Proposition \ref{prop:complex-reals}]
We compute the Laplace transform of the left-hand side using Proposition~\ref{prop:dimQ} and observe that it can be rewritten, thanks to Lemma~\ref{lem:MMhat}, as the square of a determinant. This second determinant is the Laplace transform of a summand of the right-hand side by Proposition~\ref{prop:dimQ} again, which concludes the proof.
\end{proof}

\subsection{Quaternions}

For self-containedness, we review some basics about quaternions.

In the following, we let $\bH=\{a+b\ii+c\jj+d\kk\,\vert\, a,b,c,d\in\R\}$ be the four-dimensional real division algebra of \emph{real quaternions}, generated by three elements $\ii,\jj,\kk$ subject to the relations 
$$\ii^2=\jj^2=\kk^2=\ii\jj\kk=-1\,.$$

We will abuse terminology slightly and call \emph{quaternionic vector space} any right-module on~$\bH$. Because $\bH$ is a division algebra, there is a well-defined notion of basis and hence of dimension.

The coefficient $a$ of $q=a+b\ii+c\jj+d\kk$ is called the real part of $q$, denoted $\Re(q)$. Furthermore, let us define the anti-involution by conjugation $q=a+b\ii+c\jj+d\kk\mapsto \overline{q}=a-b\ii-c\jj-d\kk$; in particular, we have $q+\overline{q}=2\Re(q)$. 

On the quaternionic vector space $\bH^{d}$, there is the standard inner product
\[\langle (q_{1},\ldots,q_{d}),(r_{1},\ldots,r_{d}) \rangle=\sum_{i=1}^{d} \overline{q_{i}}r_{i}\,.\]
On a quaternionic vector space $E$, an inner product is a map $\langle \cdot, \cdot \rangle : E\times E \to \bH$ which is additive in each variable, quaternion-sesquilinear in the sense that for all $v,w\in E$ and all $q,r\in \bH$,
\[\langle vq,wr\rangle=\overline{q}\langle v,w\rangle r.\]
It is moreover required to be such that $\langle v,v\rangle$ is a non-negative real number for every $v\in E$, and~$0$ only if $v=0$.

The group of isometries of such a quaternionic inner product space is called the group of \emph{symplectic transformations}, and we keep the notation $\U(E)$ for it.

Splittings of quaternionic inner product spaces are defined exactly as in the real and complex case, as well as Grassmannians and split Grassmannians. If $\sigma$ is a splitting of $E$, the group of isometries preserving $\sigma$ is denoted by $\Gr(E,\sigma)$ and each connected component of $\Gr(E,\sigma)$ is acted on transitively by $\U(E,\sigma)$. This gives rise to the invariant measure $\nu^{E,\sigma}$. 

The relation between probability measures on $\Gr(E,\sigma)$ and their incidence measures relies on the lattice structure of $(\Gr(E,\sigma),\subseteq)$ and is thus the same as in the real or complex case. 

We will construct a probability measure on $\Gr(E,\sigma)$ which is equivariant under the action of~$\U(E,\sigma)$, and which is determinantal in the sense of~\eqref{eq:incidenceDSP}. 

This probability measure is determined by a kernel, that is, a self-adjoint endomorphism of our quaternionic inner product space, the (right-)eigenvalues of which are reals between $0$ and $1$. 

The main difficulty in generalising \eqref{eq:incidenceDSP} to the quaternionic case is to clarify which notion of determinant has to be used.

\subsection{$\tau$-determinants and quaternion determinant}\label{sec:nc-det}

\subsubsection{$\tau$-determinants}\label{sec:tau-det}

When dealing with matrices in a non-necessarily commutative ring $R$, a simple combinatorial approach to determinants suggests the following candidate~\cite{KL2}. Consider a commutative ring $A$ and a trace map $\tau:R\to A$, i.e. an additive map such that for all $r,r'\in R$, we have $\tau(rr')=\tau(r'r)$. For all integers $d\geq 1$ and all $d\times d$ matrices $M\in M_{d}(R)$, we define the \emph{$\tau$-determinant} of $M$ as the element of $A$ given by the formula
\begin{equation}\label{eq:deftau}
{\det}_{\tau}(M)=\sum_{\sigma\in \S_{d}} \epsilon(\sigma) \prod_{\substack{c \text{ cycle of } \sigma \\ c=(i_{1}\ldots i_{r})}} \tau(M_{i_{1}i_{2}}M_{i_{2}i_{3}}\ldots M_{i_{r}i_{1}})\,,
\end{equation}
where $\S_{d}$ denotes the symmetric group on $d$ elements.

The $\tau$-determinant is obviously $\Z$-linear in columns of the matrix, and if a column vanishes, it vanishes. But in general, this $\tau$-determinant does not have further especially nice properties. In particular, if a matrix $M\in M_d(R)$ is singular, in the sense that there exists a non-zero vector $X\in R^d$ such that $MX=0$ or $^{t}XM=0$, we need not have ${\det}_{\tau}(M)=0$. In particular, this $\Z$-multilinear operation need not be alternating. Moreover, it need not be multiplicative. 

\subsubsection{Quaternion determinant}

However, in the case $R=\bH$, $A=\R$ and $\tau=\Re$ (the real part map), this is the so-called $Q$-determinant, introduced by Moore~\cite{Moore}\footnote{The work of Moore was published only posthumously, so we invite the reader to consult the very interesting \cite{Dyson}, or see \cite[Section 5.1]{Mehta} for a more detailed textbook treatment}, which, when restricted to the space of quaternion Hermitian matrices, does have very nice properties. For \ae sthetical reasons, rather than writing $\qdet$ as done by some authors, we will denote it $\det_\Re$ in what follows.

It seems that no definition of determinants on a noncommutative ring enables it to keep all properties we are accustomed to from usual determinants. Somewhat magically, on the set of quaternion Hermitian matrices, most definitions of noncommutative determinants do coincide (up to normalisation) and we can use this to transfer properties more apparent from other definitions~\cite{Aslaksen,GRW}. This implies the following important property, which we highlight as we will use it throughout in the rest of this section.

\begin{proposition}\label{prop:qdet-op} Let $E$ be a quaternionic inner product space. Let $\op{k}$ be a self-adjoint endomorphism of $E$. Let $K$ be the matrix of $\op{k}$ in an orthonormal basis of $E$. Then $\det_{\Re}(K)$ does not depend on the chosen basis. 
\end{proposition}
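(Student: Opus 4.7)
The plan is to reduce the invariance statement to the well-known invariance of the ordinary complex determinant under unitary conjugation, via the standard embedding of quaternionic matrices into twice-as-large complex matrices. Concretely, the regular representation $\bH \to M_2(\C)$ extends entrywise to an injective morphism of $\R$-algebras $\phi : M_d(\bH) \hookrightarrow M_{2d}(\C)$. Under $\phi$, self-adjoint quaternion matrices map to complex Hermitian matrices, and symplectic matrices (elements of $\U(E)$ for $E = \bH^d$) map to complex unitary matrices.

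The first ingredient I would invoke is the classical identity
\[
{\det}_\Re(K)^2 = {\det}_\C(\phi(K))
\]
valid for every Hermitian quaternion matrix $K$. This is exactly one of the equivalences between noncommutative determinants alluded to in the paper's preceding discussion, and can be extracted for instance from \cite{Aslaksen, GRW} or \cite[Section 5.1]{Mehta}. The second step is then a routine translation: a change of orthonormal quaternion basis of $E$ conjugates $K$ by a symplectic matrix $U \in \U(E)$, giving $K \mapsto U^* K U$. Since $\phi$ is an algebra morphism and $\phi(U)$ is unitary, ${\det}_\C(\phi(U^* K U)) = {\det}_\C(\phi(U)^* \phi(K) \phi(U)) = {\det}_\C(\phi(K))$. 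Applying the classical identity to both $K$ and $U^* K U$ then yields ${\det}_\Re(U^* K U)^2 = {\det}_\Re(K)^2$.

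To remove the sign ambiguity I would appeal to the path-connectedness of $\U(E) \simeq \Sp(d)$. For $K$ fixed, the map $f : U \mapsto {\det}_\Re(U^* K U)$ is polynomial in the entries of $U$ (by the combinatorial formula \eqref{eq:deftau}), hence continuous on $\U(E)$; it satisfies $f(\id_E) = {\det}_\Re(K)$ and $f(U)^2 = {\det}_\Re(K)^2$ for all $U$. If ${\det}_\Re(K) \neq 0$, the image of $f$ is contained in the discrete two-point set $\{\pm {\det}_\Re(K)\}$, so path-connectedness forces $f$ to be constant equal to ${\det}_\Re(K)$. If ${\det}_\Re(K) = 0$, the squared identity directly gives $f \equiv 0$.

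The main obstacle is not the argument itself but the importation, from the literature on noncommutative determinants, of the nontrivial identity ${\det}_\Re(K)^2 = {\det}_\C(\phi(K))$, whose proof hinges on the specific cyclic combinatorics of the Moore determinant on Hermitian matrices and is not transparent from the defining formula \eqref{eq:deftau} alone. A fully self-contained alternative would be to first establish the spectral theorem for quaternionic Hermitian matrices (any such $K$ is symplectically conjugate to a real diagonal matrix), but that route also requires nontrivial work and the invariance under the symplectic diagonaliser would still need to be justified, which makes the $\phi$-embedding route the more economical one.
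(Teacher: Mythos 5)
Your proof is correct, but it takes a genuinely different route from the paper's. The paper's proof is shorter and relies on a different imported fact: it invokes (citing \cite{Aslaksen,GRW}) that some multiplicative noncommutative determinant on $M_d(\bH)$ coincides with $\det_\Re$ on quaternion Hermitian matrices, and then concludes immediately from multiplicativity that conjugation by a unitary leaves $\det_\Re$ unchanged. You instead import only the weaker identity $\det_\Re(K)^2=\det_\C(\phi(K))$ on Hermitian matrices, which gives invariance of $\det_\Re^2$ via ordinary complex linear algebra, and then you close the sign gap by a connectedness argument on $\Sp(d)$; that extra step is needed precisely because you have not assumed any multiplicativity statement for $\det_\Re$ itself. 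Both approaches therefore lean on nontrivial facts from the literature on quaternion determinants, but what they lean on differs, and your route is arguably a more concrete input at the cost of an extra topological step. One thing worth being careful about, which you implicitly acknowledge: in this paper the identity $\det_\Re(K)^2=\det(\hat K)$ appears as Proposition~\ref{prop:Qdet-square}, whose proof \emph{uses} the present proposition (via the spectral theorem and basis-independence of $\det_\Re$); so you must indeed obtain that identity from the external references rather than from the paper's own Proposition~\ref{prop:Qdet-square}, exactly as you propose, to avoid circularity.
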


We denote the common value of the determinants in the above proposition by $\det_{\Re}(\op{k})$. 

\begin{proof}
If $K$ and $K'$ are the matrices of $\op{k}$ in two orthonormal bases, then there is a quaternion unitary matrix $U$ satisfying $K=UK'U^{-1}$. 
As explained before the statement of Proposition~\ref{prop:qdet-op}, one of the many definitions of noncommutative determinants is actually multiplicative, and coincides on quaternion Hermitian matrices with the Q-determinant $\det_\Re$. Therefore, $\det_\Re(K)=\det_\Re(K')$.\end{proof}

Similarly to the complex-to-real mapping discussed in Section \ref{sec:complex-real} above, we introduce both a canonical and a non-canonical correspondence when restricting scalars from quaternions to complex numbers. First of all, if $E$ is a quaternionic vector space, $\sigma$ a splitting, and $\op{f}$ an endomorphism of $E$, we denote by $\hat{E}$, $\hat{\sigma}$, and $\hat{\op{f}}$ the corresponding data obtained by restriction of scalars to complex numbers. If $\op{f}$ is self-adjoint, then so is $\hat{\op{f}}$.

Moreover, we introduce a map $M\mapsto \hat{M}$ from $d\times d$ quaternionic matrices to $2d\times 2d$ complex matrices, given an identification $\bH\simeq\C^2$.
A way of doing this by choosing bases, is to use the following classical representation of $\ii$, $\jj$, and $\kk$ by the complex matrices:
\begin{eqnarray*}
\II=\begin{pmatrix}0 &-1 \\1 &0 \end{pmatrix},\quad \JJ=\begin{pmatrix}0 & i\\i & 0\end{pmatrix},\, \mbox{and}\quad \KK=\begin{pmatrix} -i&0 \\0 & i\end{pmatrix}\,.
\end{eqnarray*}

\begin{proposition}\label{prop:Qdet-square}
Let $K$ be a quaternion Hermitian matrix. We have
\begin{equation}\label{eq:Qdet-pfaff}
\det(\hat{K})={\det}_{\Re}(K)^{2}.
\end{equation}
\end{proposition}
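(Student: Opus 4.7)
The plan is to reduce the identity to the case of a real diagonal matrix, using the spectral theorem for quaternion Hermitian matrices, which guarantees that any such $K$ can be conjugated by a quaternion unitary matrix $U$ to a matrix $D=\mathrm{diag}(\lambda_{1},\ldots,\lambda_{d})$ with real entries $\lambda_i\in \R$.

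The key observation to exploit is that the map $M\mapsto \hat{M}$ is an $\R$-algebra homomorphism from $M_{d}(\bH)$ to $M_{2d}(\C)$. This can be verified directly from the chosen representation of $\ii$, $\jj$, $\kk$ by the matrices $\II$, $\JJ$, $\KK$, which satisfy the same multiplication relations. In particular, $\widehat{UKU^{-1}}=\hat{U}\hat{K}\hat{U}^{-1}$, so that $\det(\hat{K})=\det(\hat{D})$. Combined with Proposition \ref{prop:qdet-op}, which gives $\det_{\Re}(K)=\det_{\Re}(D)$, the identity~\eqref{eq:Qdet-pfaff} for $K$ follows from the same identity applied to $D$.

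It then remains to verify the identity for the real diagonal matrix $D$. On one hand, from the definition \eqref{eq:deftau} of $\det_\tau$ with $\tau=\Re$, only the identity permutation contributes a non-zero term when the matrix is diagonal, so $\det_{\Re}(D)=\lambda_{1}\cdots \lambda_{d}$. On the other hand, since each real scalar $\lambda_i$ corresponds under the embedding $\bH\hookrightarrow M_{2}(\R)\subset M_{2}(\C)$ to the $2\times 2$ scalar matrix $\lambda_{i}I_{2}$, the matrix $\hat{D}$ is the $2d\times 2d$ diagonal matrix with each $\lambda_{i}$ appearing twice on the diagonal. Hence $\det(\hat{D})=\lambda_{1}^{2}\cdots \lambda_{d}^{2}=\det_{\Re}(D)^{2}$, which concludes the proof.

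The main conceptual point, rather than a serious obstacle, is to make sure the two ingredients being used are genuinely at our disposal: the multiplicativity of the hat map (which is a straightforward computation) and the spectral theorem for quaternion Hermitian matrices (a classical result, see for instance~\cite[Section 5.1]{Mehta}). One could alternatively avoid the spectral theorem by mimicking the density argument used in Lemma \ref{lem:MMhat}, checking the equality on the dense subset of matrices with simple spectrum and extending by continuity of both sides as polynomial functions of the entries of $K$; but the reduction via the spectral theorem is cleaner in the Hermitian setting.
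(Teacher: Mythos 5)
Your proof is correct and takes essentially the same route as the paper: reduce to a real diagonal matrix via the spectral theorem for quaternion Hermitian matrices (note the paper's statement "$U$ quaternion Hermitian" is a typo for "quaternion unitary"), use the multiplicativity of the hat map and Proposition \ref{prop:qdet-op} to transfer both sides to $D$, and verify the identity directly for real diagonal $D$. Your writeup supplies a bit more detail in the diagonal case (spelling out that only the identity permutation contributes to $\det_\Re(D)$ and that $\hat D$ repeats each eigenvalue twice), which the paper compresses into one sentence.
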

\begin{proof}
Although the identity is purely algebraic and could in principle be proved combinatorially, we resort to a spectral theorem, also valid in the quaternion case (see~\cite[Theorem~E.11]{Guionnet}, \cite[Theorem 3.3]{Farenick}, or~\cite{Helgason} for a statement in a very general setup). Write $K=UDU^{-1}$ with $D$ real-valued and diagonal and $U$ quaternion Hermitian. Then $\hat{K}=\hat{U}\hat{D}\hat{U}^{-1}$ and clearly $\det(\hat{K})=\det(\hat{D})={\det}_{\Re}(D)^2$ since $D$ is real-valued. Now we conclude by using the fact mentioned earlier in this section (Proposition \ref{prop:qdet-op}), that ${\det}_{\Re}(D)={\det}_{\Re}(K)$ .
\end{proof}

\subsection{Determinantal linear processes (DLP) on quaternionic Grassmannians}

Now that we have a working definition for the determinant of a quaternion Hermitian endomorphism at hand, we are ready to give the definition of determinantal probability measures on quaternionic Grassmannians, and show their existence.\footnote{In view of Dyson's results -- see Proposition \ref{prop:Dyson} below -- it makes little doubt that quaternionic DPP were known to Dyson and Mehta in the context of the Gaussian symplectic ensemble. Let us however mention that they were yet again formally introduced in \cite{Kargin} using the consequence of \eqref{eq:Qdet-pfaff} which gives the known relation between the Q-determinant of a quaternion Hermitian matrix and the Pfaffian of an associated complex antisymmetric matrix. Our existence proof bypasses the use of Pfaffians by working directly with the Q-determinant. Although quaternionic DPP are a special instance of Pfaffian processes, they enjoy strong additional symmetry properties which we think justify that we treat them separately, in as close a determinantal way as possible. Yet Dyson, whose pioneering ideas we rely on, passed the provocative and somewhat discouraging sentence: ``It is unlikely that the applications of quaternion-determinants to physics will ever be important.'' See~\cite{Dyson} for the full commentary.}

\begin{definition}[Quaternionic DLP] \label{def:dpmg-quaternion} Let $E$ be a quaternionic inner product space given with a splitting $\sigma$ and a kernel $\op{k}$. We say that a probability measure $\mu_{\sigma,\op{k}}$ on $\Gr(E,\sigma)$ is determinantal with kernel~$\op{k}$ if 
\begin{equation}
\d (Z\mu_{\sigma,\op{k}})(R)={\det}_{\Re}(\op{k}^{R}_{R}) \d\nu^{E,\sigma}(R).
\end{equation} 
\end{definition}

Our goal is to prove the following theorem.

\begin{theorem}\label{thm:qDLP} Let $E$ be a quaternionic inner product space given with a splitting $\sigma$ and a kernel~$\op{k}$. 
There exists a unique DLP on $\Gr(E,\sigma)$ with kernel $\op{k}$.
\end{theorem}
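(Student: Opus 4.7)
The plan mirrors the proofs of Theorem \ref{thm:existuniqueDSP} and Proposition \ref{prop:exDLP}, with $\det_{\Re}$ playing the role of the complex determinant, and care taken to apply $\det_{\Re}$ only to self-adjoint quaternionic operators where it is well-behaved. Uniqueness is essentially free: the candidate incidence density $R\mapsto{\det}_{\Re}(\op{k}^{R}_{R})$ is unambiguous by Proposition \ref{prop:qdet-op} (since $\op{k}^{R}_{R}$ is self-adjoint), and the Möbius inversion of Proposition \ref{prop:incmeaschar} is a lattice-theoretic statement that transfers verbatim to the quaternionic Grassmannian, recovering $\mu_{\sigma,\op{k}}$ from its incidence measure.

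For existence, I would propose the explicit density
\[
d\mu_{\sigma,\op{k}}(Q):=(-1)^{\dim Q^{\perp}}\,{\det}_{\Re}\!\bigl(\op{k}-\proj{Q^{\perp}}\bigr)\,d\nu^{E,\sigma}(Q),
\]
inspired by the Hermitian reformulation \eqref{eq:densitymuKHerm} of the complex density. Its key virtue is that $\op{k}-\proj{Q^{\perp}}$ is self-adjoint, so $\det_{\Re}$ of it is the product of its real eigenvalues, and is linked to the complex determinant by Proposition \ref{prop:Qdet-square}. Three properties must be verified. For non-negativity, write $\op{k}$ in block form $\begin{pmatrix}A&B\\B^{*}&D\end{pmatrix}$ with respect to $(Q,Q^{\perp})$, so that $\op{k}-\proj{Q^{\perp}}=\begin{pmatrix}A&B\\B^{*}&D-I\end{pmatrix}$. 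Assuming $0<\op{k}<1$, a Schur reduction gives ${\det}_{\Re}(A)\cdot{\det}_{\Re}((D-I)-B^{*}A^{-1}B)$. This Schur identity for $\det_{\Re}$ on Hermitian block matrices can be established by squaring both sides, applying the usual Schur formula to $\det(\widehat{\cdot})$, and using Proposition \ref{prop:Qdet-square} to take square roots, with the sign fixed by continuity at $B=0$. The first factor is positive; the second is the product of $\dim Q^{\perp}$ strictly negative real eigenvalues; combined with the sign $(-1)^{\dim Q^{\perp}}$ one gets a positive real, and the general case $0\leq\op{k}\leq 1$ follows by continuity in $\op{k}$. For total mass one and the identification of the incidence measure, I would prove quaternionic analogues of Proposition \ref{prop:multilin}: the required inputs are the additivity of the $\tau$-determinant in each matrix entry (immediate from \eqref{eq:deftau} since $\tau=\Re$ is additive), the $\U(E,\sigma)$-invariance of $\det_{\Re}$ on self-adjoint orbits (Proposition \ref{prop:qdet-op}), and the field-independent Lemma \ref{lem:orthpi}. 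Step~3 of the proof of Proposition \ref{prop:exDLP} then transfers line-by-line.

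The main obstacle is that $\det_{\Re}$ is well-behaved only on the class of Hermitian quaternion matrices: it is generally not multiplicative, and the identity $\det(\hat M)=\det_{\Re}(M)^{2}$ fails for non-Hermitian $M$ (already visible on a $1\times 1$ purely imaginary quaternion). The Hermitian form of the density chosen above sidesteps this by keeping every intermediate operator self-adjoint, so that every required identity reduces either to an entry-wise additivity statement available from \eqref{eq:deftau}, or to an identity about $\det_{\Re}$ of self-adjoint operators that can be lifted to the familiar complex theory via Proposition \ref{prop:Qdet-square}. Once these algebraic facts are secured, the complex-case argument of Proposition \ref{prop:exDLP} transfers with only cosmetic modifications.
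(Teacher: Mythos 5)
Your uniqueness step and your positivity step are sound: uniqueness by M\"obius inversion is essentially field-independent, and your Schur-complement argument for the non-negativity of $(-1)^{\dim Q^{\perp}}\det_{\Re}(\op{k}-\proj{Q^{\perp}})$ is correct (the Schur identity for $\det_{\Re}$ of Hermitian block matrices can indeed be fixed by squaring, Proposition \ref{prop:Qdet-square}, and a continuity-of-sign argument). But your approach to existence is genuinely different from the paper's, and the hard part is not as automatic as you claim.

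The paper does \emph{not} prove existence by directly verifying the density formula. It first builds quaternionic DPP (splittings in lines) via Dyson's telescoping identity for $\tau$-determinants (Proposition \ref{prop:Dyson}, Corollary \ref{coro:dyson}), extends from projection kernels to general kernels by compression, and then passes to arbitrary splittings by the averaging mechanism of Proposition \ref{prop:averaging}, which is insensitive to the form of the density. Each step only ever applies $\det_{\Re}$ to Hermitian operators intrinsically, or manipulates $\det_{\tau}$ of matrices in a single fixed basis where $\Z$-multilinearity suffices. Your plan, by contrast, asks for a quaternionic Proposition \ref{prop:multilin} so that ``Step~3 of the proof of Proposition \ref{prop:exDLP} transfers line-by-line.'' That cannot happen as stated: the left-hand side of Proposition \ref{prop:multilin} is $\int\det(\op{a}\proj{R}+\op{b}\proj{R^{\perp}})\d\nu^{Q,\sigma_{Q}}(R)$, and the right-hand side is $\det(\op{a}\proj{Q}+\op{b})$; with $\op{a}=1-\op{k}$ and $\op{b}=\op{k}$ (as needed in Step~3), the operators $\op{a}\proj{R}+\op{b}\proj{R^{\perp}}$ and $\op{a}\proj{Q}+\op{b}$ are \emph{not} self-adjoint, so $\det_{\Re}$ of them is not a well-defined function of the operator --- it depends on the chosen basis, and Proposition \ref{prop:qdet-op} does not apply. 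The integrals in a would-be ``quaternionic Proposition \ref{prop:multilin}'' therefore do not even have intrinsic integrands. You acknowledge the obstacle in your last paragraph but do not explain how the Hermitian form of the density actually repairs this particular lemma; it repairs the \emph{endpoints} of the computation, not the intermediate identity you want to invoke.

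To close the gap, you should replace Proposition \ref{prop:multilin} with the $\tau$-characteristic polynomial expansion (Proposition \ref{prop:tau-characteristic}) applied in a fixed orthonormal basis adapted to $\sigma$. Writing $\op{k}-\proj{Q^{\perp}}$ in such a basis with $Q=u(E_{J})$ and applying Proposition \ref{prop:tau-characteristic} with $X=-P^{J^{c}}$ expresses $(-1)^{\dim Q^{\perp}}\det_{\Re}(\op{k}-\proj{Q^{\perp}})$ as an alternating sum of $\det_{\Re}(\op{k}^{u(E_{I})}_{u(E_{I})})$ over $I\supseteq J$; each of these is $\det_{\Re}$ of a Hermitian compression, hence intrinsic, and then the normalisation, the incidence measure and the positivity all follow by elementary inclusion--exclusion, averaged over $u\in\U(E,\sigma)$ via \eqref{eq:intnunsigma}. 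This is workable, and would give a self-contained proof independent of the DPP $\to$ DLP averaging route taken in the paper; but it is essentially a repackaging of what the paper does through Proposition \ref{prop:tau-characteristic} and \eqref{eq:densityQDPP}, and it does not transfer from the complex case ``with only cosmetic modifications.''
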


As in the real and complex case, uniqueness follows immediately from the M\"obius inversion formula (Proposition \ref{prop:incmeaschar}).

To prove existence, we propose an alternative construction to the more intrinsic and geometric approach presented in Section~\ref{sec:DLP}. 
This construction is more concrete in that it builds on the notion of quaternionic DPP. In order to construct these DPP, we follow an approach due to Dyson and based on formulas originally proved for Q-determinants, and which, as we will see, hold for $\tau$-determinants in general.\footnote{
For a quaternion Hermitian matrix $K$, Moore observed that $\det_\Re(K)$ coincides with 
\begin{equation}\label{eq:Moore}
\mathrm{Mdet}(K)=\sum_{\sigma\in \S_{d}} \epsilon(\sigma) \prod_{\substack{c \text{ cycle of } \sigma \\ c=(i_{1}\ldots i_{r})}} K_{i_{1}i_{2}}K_{i_{2}i_{3}}\ldots K_{i_{r}i_{1}}\,,
\end{equation}
where the elements in the products are ordered consistently according to the cycle structure of the permutation~$\sigma$, for example by letting each cycle start with its minimal index, called its root, and ordering cycle roots in decreasing order. The equality $\det_\Re(K)=\mathrm{Mdet}(K)$ follows from the fact that $q+\overline{q}=2\Re(q)$ for any $q\in\bH$, and the fact that permutations in the sum can be grouped according to their \emph{unoriented} cycle structure.

The results of Section \ref{sec:facts-taudet} can more generally be proved for the $R$-valued Mdet, provided the diagonal entries~$M_{ii}$ are in the center of $R$.}

\subsubsection{Some facts about general $\tau$-determinants}\label{sec:facts-taudet}
Let $R$ and $\tau$ be as in Section \ref{sec:tau-det} above.
In the following, for any matrix $M=(M_{ij})_{1\le i,j\le d}\in M_d(R)$, any $k\ge 1$, and any ordered multiset $I=(i_1,\ldots,i_k)\in\{1,\ldots,d\}^k$, let $M_I^I$ denote the matrix $(M_{i_ai_b})_{1\le a,b\le k}$. By convention, we also set $M_I^I=1$ when $I=\varnothing$. Moreover, let 
\[\Tr_\tau(M)=\sum_{i=1}^d\tau(M_{ii})\]
denote the \emph{$\tau$-trace} of a matrix $M\in M_d(R)$.

The following statement and proof is the exact analogue of Dyson's result for Q-determinants (see~\cite[Theorem~5.1.4]{Mehta}).\footnote{Its statement and proof are reminiscent of the work of Diaconis and Evans on immanants \cite[Theorem 2.1]{DiaconisEvans}.}

\begin{proposition}\label{prop:Dyson}
Let $M\in M_d(R)$ be such that $M^2=M$. Let $k\ge 1$ and $I=(i_1,\ldots,i_{k-1})\in \{1,\ldots,d\}^{k-1}$ be an ordered multiset. Then
\begin{equation}\label{eq:dyson}
\sum_{i_k=1}^{d}{\det}_{\tau}\left(M_{I\cup(i_k)}^{I\cup(i_k)}\right)=\left(\Tr_\tau(M)-k+1\right){\det}_{\tau}(M_{I}^I)\,.
\end{equation}
\end{proposition}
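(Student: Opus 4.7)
The plan is to expand $\det_{\tau}(M_{I\cup(i_k)}^{I\cup(i_k)})$ directly from definition \eqref{eq:deftau}, then split the sum over $\sigma\in\S_k$ according to whether $k$ is a fixed point of $\sigma$ or is contained in a cycle of length at least two. The two cases will produce the two terms of the right-hand side of \eqref{eq:dyson}.

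\emph{Easy case.} When $\sigma(k)=k$, the factor of the cycle product corresponding to the singleton cycle $(k)$ is $\tau(M_{i_k i_k})$, while the remaining factors give exactly the $\tau$-determinant of $M_{I}^{I}$ associated with the restriction $\sigma'\in\S_{k-1}$ of $\sigma$ to $\{1,\ldots,k-1\}$. Since $\epsilon(\sigma)=\epsilon(\sigma')$, summing over $i_k\in\{1,\ldots,d\}$ contributes
\[
\sum_{i_k=1}^{d}\tau(M_{i_k i_k})\,{\det}_{\tau}(M_{I}^{I})=\Tr_{\tau}(M)\,{\det}_{\tau}(M_{I}^{I}).
\]

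\emph{Key case.} When the cycle of $\sigma$ containing $k$ has the form $(k,j_1,\ldots,j_r)$ with $r\geq 1$ and $j_1,\ldots,j_r\in\{1,\ldots,k-1\}$, the corresponding factor is $\tau(M_{i_k i_{j_1}}M_{i_{j_1}i_{j_2}}\cdots M_{i_{j_r}i_k})$. Using the trace property, I rewrite this as $\tau(M_{i_{j_1}i_{j_2}}\cdots M_{i_{j_r}i_k}M_{i_k i_{j_1}})$ and observe that summing over $i_k$ only touches the last two factors, giving
\[
\sum_{i_k=1}^{d}M_{i_{j_r}i_k}M_{i_k i_{j_1}}=(M^{2})_{i_{j_r}i_{j_1}}=M_{i_{j_r}i_{j_1}},
\]
by the idempotence assumption. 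The result is precisely the factor that the cycle $(j_1,\ldots,j_r)$ would contribute to $\det_{\tau}(M_{I}^{I})$. The other cycle factors of $\sigma$ (which do not involve $i_k$) are unchanged, so after summation $\sigma$ descends to the permutation $\sigma'\in\S_{k-1}$ obtained by deleting $k$ from its cycle.

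\emph{Combinatorial bookkeeping.} For any fixed $\sigma'\in\S_{k-1}$, the permutations $\sigma\in\S_k$ with $\sigma(k)\neq k$ that restrict to $\sigma'$ in the above sense are exactly the permutations obtained by inserting $k$ at one of the $k-1$ positions inside some cycle of $\sigma'$; each such insertion changes the length of a cycle from $r$ to $r+1$, flipping its sign, so $\epsilon(\sigma)=-\epsilon(\sigma')$. Summing over all $k-1$ insertions at fixed $\sigma'$ and then over $\sigma'\in\S_{k-1}$ yields a contribution of $-(k-1){\det}_{\tau}(M_{I}^{I})$. Adding this to the easy case produces the claimed identity.

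\emph{Anticipated difficulty.} The only genuine subtlety is the sign and counting argument of the key case: making sure the bijection between $\{\sigma\in\S_k:\sigma(k)\neq k\}$ and pairs (permutation $\sigma'\in\S_{k-1}$, insertion point in a cycle of $\sigma'$) is correctly set up so that the $\tau$-cycle of $\sigma$ containing $k$ collapses to the $\tau$-cycle of $\sigma'$ obtained by removing $k$ while the sign changes by exactly $-1$. Everything else is a direct application of the trace property of $\tau$ and the idempotence of $M$.
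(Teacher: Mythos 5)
Your proof is correct and follows essentially the same approach as the paper's: split $\S_k$ according to whether $k$ is a fixed point, use additivity and cyclicity of $\tau$ together with $M^2=M$ to collapse the sum over $i_k$ into the factor for the shorter cycle, and then account for the sign change and the $(k-1)$-to-$1$ correspondence obtained by deleting $k$ from its cycle. Your version merely spells out the insertion bijection and the cyclic reordering a bit more explicitly than the paper does.
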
 
\begin{proof}
Note that for $k=1$, this is simply the definition of the $\tau$-trace. Let us hence assume~$k\ge 2$.

We look at the left-hand side of Equation~\eqref{eq:dyson}. 
We consider the set of permutations in~$\S_k$ and partition it according to whether $k$ is a fixed point or not. 

If $\sigma$ is a permutation which fixes $k$, we can factor $\tau(M_{i_ki_k})$ in each of the terms ${\det}_{\tau}\left(M_{I\cup(i_{k})}^{I\cup(i_{k})}\right)$. Summing over $i_k\in\{1,\ldots,d\}$ yields a total contribution, from all permutations fixing $k$, of~$\Tr_\tau(M)$ times the $\tau$-determinant of $M_{I}^I$.

Now consider those permutations which do not fix $k$. By removing $k$ from its cycle, we define a $(k-1)$-to-$1$ correspondence to all permutations on $(k-1)$ elements.
Consider such a permutation~$\sigma$ and the cycle $c$ containing $k$. It is locally of the form $\cdots\to a\to k\to b\to\cdots$. Let $\tilde{\sigma}$ and $\tilde{c}$ be the permutation and cycles obtained by the above map. The signature of~$\tilde{\sigma}$ is the opposite of that of $\sigma$.

Since $M^2=M$, we have $\sum_{i_k=1}^d M_{i_ai_k}M_{i_ki_b}=M_{i_ai_b}$, so that, by $\Z$-linearity of $\tau$, 
\begin{equation}\label{eq:linearity}
\sum_{i_k=1}^d\varepsilon(\sigma)\tau(\cdots M_{i_ai_k}M_{i_ki_b}\cdots)=-\varepsilon(\tilde{\sigma})\tau(\cdots M_{i_ai_b}\cdots)\,.
\end{equation}

By grouping all permutations not fixing $k$ in the left-hand side of~\eqref{eq:dyson} according to their image~$\tilde{\sigma}$ and using the simplification~\eqref{eq:linearity}, we therefore get an overall contribution of \[(-1)(k-1){\det}_{\tau}(M_{I}^I)\]
from all the permutations not fixing $k$. 

Summing the two contributions yields the equality~\eqref{eq:dyson}.
\end{proof}

As a consequence of Proposition \ref{prop:Dyson}, \old{let us observe that }if $\mathrm{rk}_\tau(M)$ denotes the smallest integer such that all principal minors of $M$ (possibly with multiple indices) of that size are zero (if this integer exists), then necessarily $\Tr_\tau(M)=\mathrm{rk}_\tau(M)$.

Furthermore, by applying Proposition~\ref{prop:Dyson} inductively on $k$ running from $n$ down to $1$ we obtain the following. 

\begin{corollary}Let $M\in M_d(R)$ be such that $M^2=M$. Then for all $n\ge 1$, we have
\label{coro:dyson}
\[\sum_{I\in \{1,\ldots, d\}^n}{\det}_{\tau}(M_I^I)=\Tr_\tau(M)(\Tr_\tau(M)-1)\cdots(\Tr_\tau(M)-n+1)\,.\]
\end{corollary}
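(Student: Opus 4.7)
The plan is to peel the $n$ summations one at a time, each time invoking Proposition~\ref{prop:Dyson} at the appropriate value of $k$. I would write the full sum as an iterated sum:
\[
\sum_{I\in\{1,\ldots,d\}^n}{\det}_\tau(M_I^I)=\sum_{i_1=1}^d\sum_{i_2=1}^d\cdots\sum_{i_n=1}^d{\det}_\tau\!\left(M_{(i_1,\ldots,i_n)}^{(i_1,\ldots,i_n)}\right),
\]
and then treat the innermost sum using Proposition~\ref{prop:Dyson} with $k=n$ and the ordered multiset $I=(i_1,\ldots,i_{n-1})$. This replaces the innermost sum by the factor $(\Tr_\tau(M)-n+1)$ times ${\det}_\tau(M_{(i_1,\ldots,i_{n-1})}^{(i_1,\ldots,i_{n-1})})$.

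Then I would iterate: the new innermost sum $\sum_{i_{n-1}=1}^d{\det}_\tau(M_{(i_1,\ldots,i_{n-1})}^{(i_1,\ldots,i_{n-1})})$ is treated by Proposition~\ref{prop:Dyson} with $k=n-1$, producing a factor $(\Tr_\tau(M)-n+2)$, and so on, the factors $(\Tr_\tau(M)-k+1)$ being constants that can be pulled out of all remaining sums. After $n-1$ such steps we are left with $\sum_{i_1=1}^d{\det}_\tau(M_{(i_1)}^{(i_1)})=\sum_{i_1=1}^d\tau(M_{i_1i_1})=\Tr_\tau(M)$, which is exactly Proposition~\ref{prop:Dyson} with $k=1$ and $I=\varnothing$ applied to the convention ${\det}_\tau(M_\varnothing^\varnothing)=1$. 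Collecting all the factors yields
\[
\Tr_\tau(M)\,(\Tr_\tau(M)-1)\cdots(\Tr_\tau(M)-n+1),
\]
as desired.

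This argument is formally a finite descending induction on the number of remaining free indices, so the only thing to verify carefully is that at each stage the hypothesis of Proposition~\ref{prop:Dyson} is satisfied, namely that the ordered multiset $(i_1,\ldots,i_{k-1})$ used is indeed of length $k-1$ when the sum over $i_k$ is performed. Since each step is purely algebraic and Proposition~\ref{prop:Dyson} is available for arbitrary ordered multisets $I$, there is no real obstacle; the only subtlety worth flagging is the base case $n=1$, handled by taking $I=\varnothing$ and using the convention ${\det}_\tau(M_\varnothing^\varnothing)=1$ stated just before Proposition~\ref{prop:Dyson}.
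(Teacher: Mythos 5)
Your proof is correct and follows exactly the approach the paper uses: iterating Proposition~\ref{prop:Dyson} with $k$ running from $n$ down to $1$ and collecting the factors $(\Tr_\tau(M)-k+1)$ at each step. The paper does not spell out the details, so your explicit iterated-sum bookkeeping and the remark about the base case $k=1$ with the convention ${\det}_\tau(M_\varnothing^\varnothing)=1$ are a faithful elaboration of its one-line justification.
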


Before specialising to $\tau=\Re$, let us give a simple formula for the characteristic polynomial of a matrix with respect to the $\tau$-determinant. This will be handy to compute the Laplace transform when we specialise to $\tau=\Re$.

\begin{proposition}\label{prop:tau-characteristic}
Let $M\in M_d(R)$ be any matrix, and let $X$ denote the diagonal matrix with entries $x_1,\ldots, x_d$ in the ring $R$. Then
\[{\det}_{\tau}(X+M)=\sum_{I\subseteq \{1,\ldots, d\}}\left(\prod_{i\in \{1,\ldots,d\}\setminus I}\tau(x_i)\right){\det}_{\tau}\left(M_{I}^I\right)\,.\]
\end{proposition}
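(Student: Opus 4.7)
The plan is a direct expansion of ${\det}_\tau(X+M)$ from the combinatorial definition~\eqref{eq:deftau}. The starting observation is that since $X$ is diagonal, for every permutation $\sigma\in\S_d$ and every cycle $c=(i_1\,\ldots\,i_r)$ of $\sigma$ of length $r\ge 2$, the product $(X+M)_{i_1 i_2}\cdots(X+M)_{i_r i_1}$ involves only off-diagonal entries and thus reduces to $M_{i_1 i_2}\cdots M_{i_r i_1}$. Only the $1$-cycles of $\sigma$ carry any dependence on $X$, and each fixed point $i$ contributes a factor $\tau(x_i+M_{ii})=\tau(x_i)+\tau(M_{ii})$ by $\Z$-linearity of $\tau$.

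The key step is then to expand the product over the fixed points of $\sigma$ of these two-term sums and swap the order of summation. For each $\sigma$, distributing yields a sum over subsets $J\subseteq\mathrm{Fix}(\sigma)$ of $\prod_{i\in J}\tau(x_i)$ times a contribution in which the fixed points of $\sigma$ not belonging to $J$ now carry a factor $\tau(M_{ii})$. I would reverse the double sum: first choose $J\subseteq\{1,\ldots,d\}$, then sum over permutations $\sigma$ that fix $J$ pointwise. Setting $I=\{1,\ldots,d\}\setminus J$, such permutations are in natural bijection with arbitrary permutations $\sigma'\in\S_I$, they share the same signature (singleton cycles are even), and the cycle-product of $\sigma$ on the non-trivial cycles, together with the retained factors $\tau(M_{ii})$ for $i\in I\cap\mathrm{Fix}(\sigma)$, is exactly the summand associated with $\sigma'$ in the definition of ${\det}_\tau(M_I^I)$. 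Summing over $\sigma'$ produces ${\det}_\tau(M_I^I)$ and pulling $\prod_{i\in J}\tau(x_i)=\prod_{i\notin I}\tau(x_i)$ out of the inner sum yields the announced formula.

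The only real step is the combinatorial bookkeeping: verifying that the bijection between permutations of $\{1,\ldots,d\}$ that fix $J$ pointwise and permutations of $I=\{1,\ldots,d\}\setminus J$ preserves the signature, and that the factors $\tau(M_{ii})$ kept on $I\cap\mathrm{Fix}(\sigma)$ play exactly the role of the singleton-cycle contributions of $\sigma'$ inside ${\det}_\tau(M_I^I)$. Once this identification is made, both sides of the proposition match term by term, so no analytic or algebraic input beyond $\Z$-linearity of $\tau$ is needed; in particular, no properties specific to $\tau=\Re$ or to the ring $R$ are used, which is consistent with the fact that the identity is stated at the level of general $\tau$-determinants.
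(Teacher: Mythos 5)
Your proof is correct and essentially the same as the paper's: the paper first uses $\Z$-multilinearity in columns to write $\det_\tau(X+M)$ as a sum over subsets $I$, with the columns outside $I$ replaced by the corresponding columns of $X$, and then invokes $\tau(0)=0$ to force those indices to be fixed points; you instead expand the cycle-product definition directly, observe that only $1$-cycles see $X$, and distribute $\tau(x_i+M_{ii})=\tau(x_i)+\tau(M_{ii})$ before swapping the sums. Both routes reduce to the identical bookkeeping about restricting permutations to $I=\{1,\ldots,d\}\setminus J$ and its signature-preserving bijection, which you spell out and the paper leaves implicit.
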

\begin{proof}
We use $\Z$-multilinearity to write
\[{\det}_{\tau}(X+M)=\sum_{I\subseteq\{1,\ldots,d\}}{\det}_{\tau}(M\overset{I}{\!\shuffle\!}X)\]
where $M\overset{I}{\!\shuffle\!}X$ denotes the matrix obtained from $M$ by replacing each column not indexed in $I$ by the corresponding column of $X$. We then use the fact that $\tau(0)=0$, so that when one of the columns is zero except for its diagonal coefficient, the only permutations which contribute are the ones for which the index of the column is a fixed point.
\end{proof}

\subsubsection{Construction of quaternionic DPP}

We now specialise the previous statements to $\det_\Re$ for quaternionic Hermitian matrices and show that they imply, in the case of a quaternionic orthogonal projection, a well-defined notion of determinantal point process. 

Let $E$ be a quaternionic vector space split in lines $\sigma=(E_1, \ldots, E_s)$ and let $H$ be a subspace of~$E$. Consider an arbitrary choice of unit vector in each line and let $K=\proj{H}=K^2$ be the quaternion Hermitian matrix of the orthogonal projection on $H$ in the corresponding orthonormal basis. 

Without resorting to quaternion linear algebra, we can compare $K$ to its complex version $\hat{K}$ and deduce elementary properties on the rank and the sign of the principal minors.
We thus find that $\Tr_\Re(K)=n$ where $n=\dim_{\bH}(H)$, and that all principal minors of $K$ are nonnegative.

Furthermore, note that if $I$ is a multiset containing an index twice, which we assume without loss of generality to be $i_1$, then, letting $X=(1,-1,0,\ldots,0)$, we have $K_I^IX=0$. Hence $\hat{K_I^I}\hat{X}=0$ and this implies that $\det(K_I^I)=0$. Hence ${\det}_{\Re}(K_I^I)=0$ by Proposition~\ref{prop:Qdet-square}. 

This means that we can rewrite Corollary~\ref{coro:dyson} as
\begin{equation}\label{eq:QDPP}
\sum_{1\le i_1<\ldots<i_n\le d}{\det}_{\Re}\left(K_{\{i_1,\ldots,i_n\}}^{\{i_1,\ldots,i_n\}}\right)=1\,,
\end{equation}
which we read as the definition of a probability measure on unordered $n$-subsets of $\{1,\ldots, d\}$, just as we did with \eqref{eq:DPPPyth}. 

Now we can check that the incidence probabilities are indeed given by principal minors of~$K$ using Proposition~\ref{prop:Dyson}. Thus, \eqref{eq:QDPP} constructs a determinantal probability measure $\mu_{\sigma, \proj{H}}$ on $\Gr(E,\sigma)$ in the sense of Definition~\ref{def:dpmg-quaternion}.

In order to extend the construction from projection kernels to general ones, but still working with a splitting in lines, we use the well-known fact (see e.g. \cite[Section 8]{Lyons-DPP}) that any kernel is the compression of a projection kernel. This works in the quaternion case just as well. If $\op{k}$ is a kernel on $E$, the operator on $E\oplus E$ defined in block form by 
\[\proj{}=\begin{pmatrix} \op{k} & \sqrt{\op{k}(1-\op{k})} \\\sqrt{\op{k}(1-\op{k})}  & 1-\op{k}  \end{pmatrix}\]
is a projection operator. The fact, proved just above, that $\proj{}$ defines a DPP implies, by compression, that $\op{k}$ defines one as well.

We have now proven the existence of quaternionic DLP for a general kernel $\op{k}$ and splittings in lines $\sigma$: we denote these measures by $\mu_{\sigma,\op{k}}$.

Let us note that, by inclusion-exclusion and Proposition \ref{prop:tau-characteristic}, the density of the measure $\mu_{\sigma,\op{k}}$ with respect to the counting measure $\nu^{E,\sigma}$ is given by
\begin{equation}\label{eq:densityQDPP}
Q\mapsto (-1)^{\dim Q^{\perp}}{\det}_{\Re}(-\proj{Q^{\perp}}+\op{k}).
\end{equation}

\subsubsection{Construction of quaternionic DLP}

We are now ready to give a practical construction of determinantal probability measures on a quaternionic Grassmannian (recall Definition~\ref{def:dpmg-quaternion}) for any splitting. 

Let $E$ be a split quaternionic inner product space with splitting $\sigma=(E_1, \ldots, E_s)$ and let~$\op{k}$ be a kernel. We consider the random subspace obtained by sampling a quaternionic DPP in a random uniform orthonormal basis and with kernel the matrix $K$ of $\op{k}$ in that basis. 

By linearity of the map which to a probability measure on $\Gr(E,\sigma)$ associates its incidence measure, the incidence measure of this random linear subspace of $(E,\sigma)$ is $R\mapsto {\det}_{\Re}(\op{k}^{R}_{R})$ with respect to  $\nu^{E,\sigma}$.

This concludes the proof of Theorem \ref{thm:qDLP}. We denote by $\mu_{\sigma,\op{k}}$ the distribution of the unique DLP on $(E,\sigma)$ with kernel $\op{k}$.

\begin{example}\label{ex:QQSF}
A quantum spanning forest on a graph with connection $h$, whose holonomies take values in the symplectic group $Sp(N)$ for some $N\ge 1$, is a quaternionic DLP associated to a projection kernel on the space of twisted exact forms $\bigstar^1_h$ (in the notation of Section \ref{example:UST-QSF}); see~\cite{KL4}.

In the case $N=1$, where the holonomy group is $Sp(1)=SU(2)$, and studied in \cite{Kenyon, Kassel-Doctorat, KL4}, this DLP is in bijection with the set of edges of a random cycle-rooted spanning forest on a graph sampled with probability proportional to the product of its edge-weights times a certain function of the holonomy of its cycles (the product over the cycles of $2$ minus the trace of their holonomy).
\end{example}

By Proposition~\ref{prop:averaging} and \eqref{eq:densityQDPP}, the measure $\mu_{\sigma,\op{k}}$ has density 
\begin{equation}\label{eq:densityQDLP}
Q\mapsto (-1)^{\dim Q^{\perp}}{\det}_{\Re}(-\proj{Q^{\perp}}+\op{k})
\end{equation}
with respect to $\nu^{E,\sigma}$ (the quaternionic analogue of~\eqref{eq:densitymuKHerm}). Indeed, the content of Proposition~\ref{prop:averaging} does not depend on the precise density of the measures and holds for the quaternion determinant just as well.

A number of properties detailed in Section \ref{sec:geometry-DLP} are true for quaternionic DLP. We leave it to the interested reader to check which proofs do carry through in the quaternionic case.

Nevertheless, let us state one simple result about the Laplace transform of the split dimension of a quaternionic DLP. This is the quaternionic analogue of Proposition \ref{prop:dimQ}.

\begin{proposition}\label{prop:LaplaceQ}
Let $(t_1,\ldots,t_s)\in\R^s$ and write $e^T-1$ for the  scaling operator acting by multiplication by $e^{t_i}-1$ in block~$E_i$. If $\Q$ follows distribution $\mu_{\sigma,\op{k}}$, then
\begin{equation}
\E\left[e^{\sum_{i=1}^s t_i \dim_{\bH}(\Q\cap E_i)}\right]={\det}_{\Re}(1+(e^T-1)\op{k})\,.
\end{equation}
\end{proposition}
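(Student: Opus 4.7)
The plan is to mimic the proof of the complex case (Proposition \ref{prop:dimQ}): first establish the identity for quaternionic DPPs (splittings in lines), then extend to general splittings via the sampling algorithm. The main quaternionic subtlety is that the $Q$-determinant ${\det}_\Re$ is only canonically defined on self-adjoint endomorphisms (Proposition \ref{prop:qdet-op}), whereas $1 + (e^T - 1)\op{k}$ is not self-adjoint in general.

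For the DPP case, I would start from the incidence probabilities $\P(J \subseteq \X) = {\det}_\Re(K^J_J)$ (with $K$ the matrix of $\op{k}$ in the distinguished basis) and recover the full distribution by M\"obius inversion, which goes through verbatim in the quaternionic case since its coefficients are integers: $\P(\X = I) = \sum_{J \supseteq I}(-1)^{|J|-|I|}{\det}_\Re(K^J_J)$. Interchanging summations in the Laplace transform, together with the factoring observation ${\det}_\Re(((e^T-I)K)^J_J) = \prod_{i \in J}(e^{t_i}-1){\det}_\Re(K^J_J)$ (immediate from \eqref{eq:deftau}, since $e^T - I$ is real diagonal), would give $\E\bigl[e^{\sum_j t_j \1_{j \in \X}}\bigr] = \sum_J {\det}_\Re(((e^T-I)K)^J_J)$. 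A direct application of Proposition \ref{prop:tau-characteristic} with $\tau = \Re$, $X = I$, $M = (e^T-I)K$ would then identify this sum with ${\det}_\Re(I + (e^T-I)K)$.

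For a general splitting, I would invoke the sampling algorithm (the quaternionic analogue of Proposition \ref{prop:sampling}, following from the density formula \eqref{eq:densityQDLP} together with the $\U(E,\sigma)$-averaging argument of Proposition \ref{prop:averaging}): $\Q$ has the law of $\Vect(e_j : j \in \X)$ where $(e_j)$ is a uniform random orthonormal basis of $E$ adapted to $\sigma$ and $\X$ is a quaternionic DPP with kernel matrix $K$ equal to the matrix of $\op{k}$ in $(e_j)$. The crucial observation is that $\dim_\bH(\Q \cap E_i) = |\{j \in \X : e_j \in E_i\}|$ depends only on $\Q$ and $\sigma$, not on the refinement of $\sigma$ induced by the basis. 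Conditioning on the basis and applying the DPP case, I would obtain
\[\E\bigl[e^{\sum_i t_i \dim_\bH(\Q \cap E_i)} \mid \text{basis}\bigr] = {\det}_\Re(I + (e^{T'} - I)K),\]
where $T'$ is the (real) diagonal matrix of $T$ in the basis.

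The hard part will be interpreting the right-hand side as ${\det}_\Re(1 + (e^T - 1)\op{k})$ in a basis-invariant way. Under $U \in \U(E, \sigma)$, the matrix of $\op{k}$ transforms as $U^*KU$ while $T'$ is unchanged (as $T$ is split-scalar), so $I + (e^{T'} - I)K$ undergoes conjugation by $U$; but ${\det}_\Re$ of a non-self-adjoint quaternionic matrix is not covered by Proposition \ref{prop:qdet-op}. To resolve this, I would appeal to the Sylvester-type identity ${\det}_\Re(I + AB) = {\det}_\Re(I + BA)$ (provable for ${\det}_\Re$ from the cyclic invariance of $\Re$ in \eqref{eq:deftau}) to rewrite
\[{\det}_\Re(I + (e^{T'} - I)K) = {\det}_\Re(I + K^{1/2}(e^{T'} - I)K^{1/2}),\]
the right-hand side being ${\det}_\Re$ of a self-adjoint matrix and thus basis-invariant by Proposition \ref{prop:qdet-op}. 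Taking this as the meaning of ${\det}_\Re(1 + (e^T-1)\op{k})$, the conditional expectation is then basis-independent, and averaging over the basis yields the claim.
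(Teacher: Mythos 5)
Your overall strategy is the natural one and matches what the paper (which states this proposition without a visible proof) clearly intends: prove the DPP case by M\"obius inversion of the incidence probabilities, factor the real diagonal out of the $\tau$-determinants of principal minors, apply Proposition~\ref{prop:tau-characteristic}, and then pass to a general splitting by the quaternionic version of the averaging argument (the paper explicitly remarks after its construction that Proposition~\ref{prop:averaging} carries over verbatim). You also correctly put your finger on a genuine subtlety that the paper glosses over: $1+(e^{T}-1)\op{k}$ is in general not self-adjoint, and Proposition~\ref{prop:qdet-op} only guarantees basis-invariance of $\det_\Re$ for self-adjoint operators, so the right-hand side needs interpretation.

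The weak point is the justification of the Sylvester-type identity $\det_\Re(I+AB)=\det_\Re(I+BA)$: ``cyclic invariance of $\Re$'' does not give this. Cyclic invariance lets you rotate a single factor in $\Re(x_1\cdots x_r)$, but $\det_\Re$ is a signed sum over all permutations and their cycle decompositions, and there is no bijection between the cycle expansions of $\det_\Re(I+AB)$ and $\det_\Re(I+BA)$ that reduces to such rotations. This identity is not established in the paper and is in fact suspect for arbitrary quaternion matrices, where $\det_\Re$ of a non-Hermitian matrix has none of the usual good properties (it is not even multiplicative or alternating, as Section~\ref{sec:tau-det} notes). Fortunately, for the specific instance you need ($A=K^{1/2}$, $B=(e^{T'}-I)K^{1/2}$, with $K$ quaternion Hermitian and $e^{T'}-I$ real diagonal) there is a much simpler and airtight argument that bypasses any Sylvester identity: apply Proposition~\ref{prop:tau-characteristic} to \emph{both} matrices, and observe that the real scalars $e^{t_j}-1$ pull out of $\det_\Re$ of any principal submatrix (in formula~\eqref{eq:deftau}, each cycle $(i_1\ldots i_r)$ picks up exactly the factor $\prod_{a}(e^{t_{i_a}}-1)$, regardless of whether the scaling is applied on the left, on the right, or split as $D^{1/2}(\cdot)D^{1/2}$, because real scalars commute with everything inside~$\Re$). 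Hence both $\det_\Re(I+(e^{T'}-I)K)$ and $\det_\Re(I+K^{1/2}(e^{T'}-I)K^{1/2})$ equal $\sum_J\prod_{j\in J}(e^{t_j}-1)\det_\Re(K^J_J)$, giving the required equality. From there, basis-invariance of the Hermitian version under $\U(E,\sigma)$-conjugation follows from Proposition~\ref{prop:qdet-op} exactly as you say, and the averaging argument closes the proof.
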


A further, more subtle, result we emphasise is the following.

\begin{theorem}[Uniqueness and mean projection]\label{thm:qA}
Let $(E,\sigma)$ be a quaternionic inner product space and $H$ a subspace of $E$. Let $\Q$ be a random subspace following the distribution $\mu_{\sigma,\proj{H}}$ in $\Gr(E,\sigma)$. Then, almost surely, $E=\Q\oplus H^\perp$, and moreover
\[\E\big[\op{P}^{\Q}+(\op{P}^{\Q})^*\big]=2\proj{H}\,,\]
where $\op{P}^{\Q}$ is the projection on $\Q$ parallel to $H^\perp$.
\end{theorem}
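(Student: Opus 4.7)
The plan is to establish both claims by adapting, to the quaternionic setting, the strategy used in the real and complex case for Proposition \ref{prop:uniqueness} and Theorem \ref{prop:meanprojection}, exploiting the density formula \eqref{eq:densityQDLP} and the quaternionic Laplace transform of Proposition \ref{prop:LaplaceQ}.

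For the decomposition $E = \Q \oplus H^\perp$, I would first apply Proposition \ref{prop:LaplaceQ} with $\op{k} = \proj{H}$ and $t_1 = \cdots = t_s = t$, using the quaternionic spectral theorem (as in the proof of Proposition \ref{prop:Qdet-square}) to diagonalise $\proj{H}$ and obtain ${\det}_\Re(1 + (e^t - 1)\proj{H}) = e^{t\, \dim_\bH H}$, whence $\dim_\bH \Q = \dim_\bH H$ almost surely. Next, I would argue that the density given by \eqref{eq:densityQDLP} vanishes at every $Q$ with $Q \cap H^\perp \neq \{0\}$: for a non-zero $v$ in the intersection, $(-\proj{Q^\perp} + \proj{H})v = 0$, so the complexified Hermitian operator inherits a non-trivial kernel, its complex determinant vanishes, and Proposition \ref{prop:Qdet-square} forces ${\det}_\Re = 0$. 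Combining yields $\Q \cap H^\perp = \{0\}$ almost surely, hence $E = \Q \oplus H^\perp$.

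For the mean projection identity, both sides being self-adjoint, I would first reduce to proving $\E[\Tr_\Re(\op{a}\op{P}^\Q)] = \Tr_\Re(\op{a}\proj{H})$ for every Hermitian endomorphism $\op{a}$ of $E$. The reduction uses cyclicity and reality of $\Tr_\Re$, which give $\Tr_\Re(\op{a}(\op{P}^\Q)^*) = \Tr_\Re(\op{a}\op{P}^\Q)$ when $\op{a}$ is Hermitian, together with the non-degeneracy on Hermitian operators of the real pairing $(\op{a}, B) \mapsto \Tr_\Re(\op{a} B)$. To establish the trace identity, I would imitate the proof of Theorem \ref{prop:meanprojection}, combining the coefficient-field-independent operator identity
\[(1 + \op{a}\op{P}^Q)(\proj{H}\proj{Q} + \proj{H^\perp}\proj{Q^\perp}) = (\proj{H} + \op{a})\proj{Q} + \proj{H^\perp}\proj{Q^\perp}\,,\]
which follows from $\op{P}^Q \proj{H^\perp} = 0$ and $\op{P}^Q \proj{H}\proj{Q} = \proj{Q}$, with a quaternionic analogue of Proposition \ref{prop:multilin}, namely
\[\int_{\Gr(E,\sigma)} {\det}_\Re(\op{a}\proj{Q} + \op{b}\proj{Q^\perp})\, \d\nu^{E,\sigma}(Q) = {\det}_\Re(\op{a} + \op{b})\]
for Hermitian $\op{a}, \op{b}$, provable by the same averaging argument over $\U(E,\sigma)$-orbits used throughout the paper.

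The main obstacle is the absence of a multiplicativity property for ${\det}_\Re$, which in the real and complex case let us multiply the two determinants appearing in the integrand. To bypass it, I would linearise in $\op{a}$: using the expansion ${\det}_\Re(1 + t M) = 1 + t\Tr_\Re(M) + O(t^2)$ valid for any quaternionic matrix $M$ (a direct consequence of the $\tau$-determinant definition \eqref{eq:deftau}), the delicate multiplicativity step reduces to a trace-cyclicity manipulation, which can be handled by combining Proposition \ref{prop:Dyson} applied to the idempotent $\op{P}^Q$ with the algebraic identities above. The symmetrisation $\op{P}^\Q + (\op{P}^\Q)^*$ appearing in the statement precisely reflects the fact that only the Hermitian part of $\E[\op{P}^\Q]$ is directly accessed through real traces paired with Hermitian $\op{a}$, which is the natural input for the quaternionic $\tau$-determinant framework.
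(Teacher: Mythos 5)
Your handling of the first claim (\(E = \Q \oplus H^\perp\)) and your reduction of the mean projection to the real-trace identity \(\E[\Tr_\Re(\op{a}\op{P}^\Q)] = \Tr_\Re(\op{a}\proj{H})\) for Hermitian \(\op{a}\) are both sound and match the spirit of the paper's argument. The gap lies in how you propose to prove that trace identity.

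You correctly identify the central obstacle — the failure of multiplicativity for \(\det_\Re\) — but your proposed workaround does not survive scrutiny. First, the ``quaternionic analogue of Proposition~\ref{prop:multilin}'' you invoke, namely \(\int \det_\Re(\op{a}\proj{Q}+\op{b}\proj{Q^\perp})\,\d\nu = \det_\Re(\op{a}+\op{b})\), is ill-posed at the operator level: \(\op{a}\proj{Q}+\op{b}\proj{Q^\perp}\) is generally \emph{not} quaternion Hermitian even when \(\op{a}\) and \(\op{b}\) are, and \(\det_\Re\) is basis-independent only on Hermitian matrices (Proposition~\ref{prop:qdet-op}). For the same reason \(\det_\Re(1+t\op{a}\op{P}^Q)\) is not a well-defined scalar attached to the operator \(1+t\op{a}\op{P}^Q\) — although its first-order coefficient \(\Tr_\Re(\op{a}\op{P}^Q)\) is, so the linearised statement survives, the linearisation by itself does not produce an argument. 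Second, the phrase ``the delicate multiplicativity step reduces to a trace-cyclicity manipulation, which can be handled by combining Proposition~\ref{prop:Dyson} \dots with the algebraic identities above'' is a placeholder, not a proof: Proposition~\ref{prop:Dyson} is a recursion for principal \(\tau\)-minors of a single idempotent, and it is not apparent how it would let you integrate \(\Tr_\Re(\op{a}\op{P}^Q)\) against the quaternionic density and recover \(\Tr_\Re(\op{a}\proj{H})\). As it stands, this step is missing.

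For contrast, the paper does not try to salvage multiplicativity inside the \(\det_\Re\) world. It converts the density to a \emph{square root of an ordinary complex determinant} via Proposition~\ref{prop:Qdet-square}, expresses \(\op{P}^Q\) explicitly as \(\op{1}_Q\op{1}_H^Q(\op{1}_Q^H\op{1}_H^Q)^{-1}\op{1}^H\), uses \(\det(I+AB)=\det(I+BA)\) to rearrange, then — crucially — factors \(1+\varepsilon\op{a}=\op{b}^*\op{b}\) for self-adjoint \(\op{a}\) and small real \(\varepsilon\) so that the integrand becomes \(\det_\Re\) of a genuinely Hermitian operator \(\op{1}^H\op{b}^*\proj{Q}\op{b}\op{1}_H\), which a quaternionic Cauchy--Binet formula integrates to \(\det_\Re(\op{1}^H(1+\varepsilon\op{a})\op{1}_H)\). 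Differentiating at \(\varepsilon=0\) then gives the trace identity. The key ideas you are missing are the passage to complexified determinants and their square roots, and the factorisation \(1+\varepsilon\op{a}=\op{b}^*\op{b}\) that makes everything Hermitian before invoking any integral formula.
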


%As in the real and complex cases, thanks to the self-adjointness of $\proj{H}$, one can replace, in the above statement, $\op{P}^\Q$ by its adjoint, that is the projection on $H$ parallel to $\Q^\perp$.

%%%%%%
%%%%%%
%%%%%%
%%%%%%
%%%%%%
%%%%%%
\old{
\begin{proof}
The proof of the first statement is not hard to adapt from the real and complex cases and we leave it to the interested reader. 
For the second statement, we follow the strategy of proof of Theorem~\ref{prop:meanprojection}. 

First note that,  by \eqref{eq:densityQDLP} and \eqref{eq:Qdet-pfaff}, the density of the measure $\mu_{\sigma,\proj{H}}$ with respect to $\nu^{E,\sigma}$ is \[Q\mapsto \sqrt{\vert{\det}(-\widehat{\proj{Q^{\perp}}}+\widehat{\proj{H}})\vert}\,,\] which, by the remark containing \eqref{eq:densitymuKHerm}, may be rewritten as \[Q\mapsto\sqrt{\det\big(\widehat{\proj{H}}\widehat{\proj{Q}}+\widehat{\proj{H^{\perp}}}\widehat{\proj{Q^{\perp}}}\big)}\,.\]
Note that the determinant of which we take the square root is indeed positive (cf. the beginning of the proof of Proposition \ref{prop:exDLP}).

Let $\op{a}$ be any endomorphism of $E$. If $t$ is a complex number of small enough modulus, the complex number $\det\big(\hat{1}+t\hat{\op{a}}\widehat{\op{P}^{Q}}\big)$ has positive real part for any $Q\in \Gr(E,\sigma)$, and we will consider its principal square root. 
Letting $n$ be the quaternion dimension of~$H$, we have
\[\E\left[\sqrt{\det\big(\hat{1}+t\hat{\op{a}}\widehat{\op{P}^{\Q}}\big)}\,\right]=\int_{\Gr_{n}(E,\sigma)}\sqrt{\det\big(\hat{1}+t\hat{\op{a}}\widehat{\op{P}^{\Q}}\big)}\sqrt{\det\big(\widehat{\proj{H}}\widehat{\proj{Q}}+\widehat{\proj{H^{\perp}}}\widehat{\proj{Q^{\perp}}}\big)}\d\nu^{E,\sigma}_n(Q).\]
We will now multiply the (usual, not quaternion) determinants. Before that, just as in the proof of Theorem \ref{prop:meanprojection}, let us observe that, $H^{\perp}$ being by definition the kernel of $\op{P}^{Q}$, we have $\op{P}^{Q}\proj{H^{\perp}}=0$. 
Moreover, $\op{P}^{Q}\proj{H}\proj{Q}=\proj{Q}$, since if $v$ is a vector of $Q$, then $\proj{H}v-v$ belongs to~$H^{\perp}$, so that $\op{P}^{Q}\proj{H}v=\op{P}^{Q}v$ and $\op{P}^{Q}v=v$. Thus, we have
\[\E\left[\sqrt{\det\big(\hat{1}+t\hat{\op{a}}\widehat{\op{P}^{\Q}}\big)}\,\right]=\int_{\Gr_{n}(E,\sigma)}\sqrt{\det\big((\widehat{\proj{H}}+t\hat{\op{a}})\widehat{\proj{Q}}+\widehat{\proj{H^{\perp}}}\widehat{\proj{Q^{\perp}}}\big)}\d\nu^{E,\sigma}_n(Q).\]
By differentiating with respect to $t$ at zero, and multiplying by $2$, the left-hand side becomes $\E\big[\Tr(\hat{\op{a}}\widehat{\op{P}^\Q})\big]$, and the right-hand side, after calculation, becomes $\Tr(\hat{\op{a}}\widehat{\proj{H}})$. Hence $\E\big[\Tr(\op{a}\op{P}^\Q)\big]=\Tr(\op{a}\proj{H})$, and since it is true for all $\op{a}$, this yields the announced equality.\end{proof}
}
%%%%%%
%%%%%%
%%%%%%
%%%%%%
%%%%%%
%%%%%%

\begin{proof}
The proof of the first statement is not hard to adapt from the real and complex cases and we leave it to the interested reader. 
For the second statement, we follow a slightly different strategy than the one of the proof of Theorem~\ref{prop:meanprojection}. 

First note that,  by \eqref{eq:densityQDLP} and \eqref{eq:Qdet-pfaff}, the density of the measure $\mu_{\sigma,\proj{H}}$ with respect to $\nu^{E,\sigma}$ is \[Q\mapsto \sqrt{\vert{\det}(-\widehat{\proj{Q^{\perp}}}+\widehat{\proj{H}})\vert}\,,\] which, by the remark containing \eqref{eq:densitymuKHerm}, may be rewritten as \[Q\mapsto\sqrt{\det\big(\widehat{\proj{H}}\widehat{\proj{Q}}+\widehat{\proj{H^{\perp}}}\widehat{\proj{Q^{\perp}}}\big)}\,.\]
Note that the determinant of which we take the square root is indeed positive (cf. the beginning of the proof of Proposition \ref{prop:exDLP}).

Moreover, note that if $\hat{F}$ denotes the complex subspace obtained from the quaternionic subspace $F$ by restriction of scalars, then $\widehat{\proj{F}}=\proj{\hat{F}}$. From this remark and Proposition \ref{prop:cosdet}, it follows that 
\[\det\big(\widehat{\proj{H}}\widehat{\proj{Q}}+\widehat{\proj{H^{\perp}}}\widehat{\proj{Q^{\perp}}}\big)=\cos^2(\hat{Q},\hat{H})=\det(\op{1}_{\hat{H}}^{\hat{Q}}\op{1}_{\hat{Q}}^{\hat{H}})=\det(\widehat{\op{1}_H^Q}\widehat{\op{1}_Q^H})\,,\]
where we used the notation for compressions of operators introduced in Section \ref{sec:minors}. The density of $\mu_{\sigma, \proj{H}}$ may thus be rewritten as 
\[Q\mapsto\sqrt{\det(\widehat{\op{1}_H^Q}\widehat{\op{1}_Q^H})}\,.\]
Still using the notations of Section \ref{sec:minors}, note that for any $Q$ in direct sum with $H$, we have 
\[\op{P}^Q=\op{1}_Q\op{1}_H^Q(\op{1}_Q^H\op{1}_H^Q)^{-1}\op{1}^H\,,\]
as indeed it can be checked that this is a projection whose kernel is $H^\perp$ and whose image is~$Q$.

Now let $\op{a}$ be any endomorphism of $E$. If $\varepsilon$ is a complex number of small enough modulus, the complex number $\det\big(\hat{1}+\varepsilon \hat{\op{a}}\widehat{\op{P}^{Q}}\big)$ has positive real part for any $Q\in \Gr(E,\sigma)$, and we will consider its principal square root. 
Note that, as a consequence of the identity $\det(I+AB)=\det(I+BA)$ for rectangular matrices of compatible dimensions $A,B$, we have
\[\det\big(1+\varepsilon \op{a}\op{P}^{Q}\big)=\det\big(\op{1}_H^H+\op{1}^H \varepsilon \op{a}\op{1}_Q\op{1}_H^Q(\op{1}_Q^H\op{1}_H^Q)^{-1}\big)\,.\]

Letting $n$ be the quaternion dimension of~$H$, we thus obtain
\[\E\left[\sqrt{\det\big(\hat{1}+\varepsilon\hat{\op{a}}\widehat{\op{P}^{\Q}}\big)}\,\right]=\int_{\Gr_{n}(E,\sigma)}\sqrt{\det\big(\widehat{\op{1}_H^H}+\widehat{\op{1}^H} \varepsilon \widehat{\op{a}}\widehat{\op{1}_Q}\widehat{\op{1}_H^Q}(\widehat{\op{1}_Q^H}\widehat{\op{1}_H^Q})^{-1}\big)}\sqrt{\det(\widehat{\op{1}_H^Q}\widehat{\op{1}_Q^H})}\d\nu^{E,\sigma}_n(Q).\]
We now multiply the (usual, not quaternion) determinants, and find
\[\E\left[\sqrt{\det\big(\hat{1}+\varepsilon\hat{\op{a}}\widehat{\op{P}^{\Q}}\big)}\,\right]=\int_{\Gr_{n}(E,\sigma)}\sqrt{\det\big(\widehat{\op{1}^H}(1+\varepsilon \hat{\op{a}})\widehat{\op{1}_Q}\widehat{\op{1}^Q}\widehat{\op{1}_H}\big)}\d\nu^{E,\sigma}_n(Q)\,.\]
Let us now assume that $\op{a}$ is self-adjoint and $\varepsilon$ is real and small enough for the above determinants to be positive. In that case, letting $\op{b}$ be an endomorphism of $E$ such that $1+\varepsilon \op{a}=\op{b}^* \op{b}$, using multiplicativity of the usual determinant, noting that $\op{1}^H\op{b}^* \op{1}_Q \op{1}^Q \op{b}\op{1}_H$ is self-adjoint, and using~\eqref{eq:Qdet-pfaff}, we have shown that 
\begin{equation}\label{eq:bb}
\E\left[\sqrt{\det\big(\hat{1}+\varepsilon\hat{\op{a}}\widehat{\op{P}^{\Q}}\big)}\,\right]=\int_{\Gr_{n}(E,\sigma)}{\det}_{\Re}\big(\op{1}^H\op{b}^* \op{1}_Q \op{1}^Q \op{b}\op{1}_H\big) \d\nu^{E,\sigma}_n(Q)\,.
\end{equation}
By a quaternion analogue of the invariant Cauchy--Binet formula \eqref{CB} (this formula is proved using the quaternion Hermitian matrix analogue of the Cauchy--Binet formula -- which appears in various places in the literature, see e.g. \cite[Proposition A.3 (g)]{Sokal} -- and averaging over the action of $\U(E,\sigma)$ on the choice of orthonormal basis like in the proof of Proposition \ref{prop:CB+}), the right-hand side of \eqref{eq:bb} is now equal to ${\det}_{\Re}(\op{1}^H(1+\varepsilon \op{a}) \op{1}_H)$. 

Now, using \eqref{eq:Qdet-pfaff} again to convert to usual determinants, and using the equality
$\det(\op{1}_H^H + \op{1}^H \varepsilon \op{a}\op{1}_H)=\old{\det(1 +  \varepsilon \op{a}\op{1}^H_H)=}\det(1+ \varepsilon \op{a}\proj{H})$ (which is again an instance of the equality $\det(I+AB)=\det(I+BA)$), we find that for any self-adjoint $\op{a}$, and real small enough $\varepsilon$, we have 
\begin{equation}
\E\left[\sqrt{\det(\hat{1}+\varepsilon \hat{\op{a}}\widehat{\op{P}^{\Q}})}\,\right]=\sqrt{\det\big(\hat{1}+ \varepsilon \hat{\op{a}}\widehat{\proj{H}\big)}}\,.
\end{equation}
By differentiating with respect to $\varepsilon$ at $0$, we finally arrive at the following statement. For any self-adjoint $\op{a}$, we have $\E\big[\Tr(\hat{\op{a}}\widehat{\op{P}^\Q})\big]=\Tr(\hat{\op{a}}\widehat{\proj{H}})$, which implies $\E\big[\Tr(\op{a}\op{P}^\Q)\big]=\Tr(\op{a}\proj{H})$. Since~$\proj{H}$ is self-adjoint itself, this implies the announced equality.
\end{proof}

It would be nice to have a refinement of Theorem~\ref{thm:qA} as in Theorems~\ref{prop:meanprojection} or~\ref{thm:wedgeA}. For the moment we could not find one, by lack of a notion of quaternion determinant for non self-adjoint operators, and by lack of a notion of exterior algebras for quaternionic vector spaces.

\subsection{From quaternion to complex coefficient field}

We conclude this section by coming back to the discussion of Section~\ref{sec:complex-real}. We have the following analogue of Proposition~\ref{prop:complex-reals} in the quaternion-to-complex case.

\begin{proposition}\label{prop:quaternions-complex}
Let $E$ be a quaternionic inner product space, given with a splitting $\sigma=(E_1, \ldots, E_s)$ and a kernel $\op{k}$. Let $\hat{E}$, $\hat{\sigma}$, and $\hat{\op{k}}$ be the data induced by restricting the coefficient field to complex numbers. Let $\Q^{(1)}$ and $\Q^{(2)}$ be two independent random quaternionic linear subspaces of~$E$ sampled from $\mu_{\sigma,\op{k}}$. Let $\rC$ be a random complex linear subspace of $\hat{E}$ sampled from~$\mu_{\hat{\sigma},\hat{\op{k}}}$. Then we have the equality in distribution
\[\left(\dim_{\C} \rC_i\right)_{1\leq i \leq s}\stackrel{\text{\rm(law)}}{=}\left(\dim_\bH \Q^{(1)}_i+\dim_\bH \Q^{(2)}_i\right)_{1\leq i \leq s}\,.\]
\end{proposition}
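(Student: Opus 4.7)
The plan is to mirror the strategy of Proposition \ref{prop:complex-reals}, by comparing the joint Laplace transforms of the split dimensions on both sides. The role that was played there by Lemma \ref{lem:MMhat} will here be played by Proposition \ref{prop:Qdet-square}, which relates the ordinary determinant of the complex matrix $\hat{M}$ to the square of the Q-determinant of the quaternion Hermitian matrix $M$.

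Fix reals $t_{1},\ldots,t_{s}\geq 0$ and let $T$ be the split scalar operator acting on $E_{i}$ by $t_{i}\,\id_{E_{i}}$. Since the restriction-of-scalars construction commutes with polynomial (hence analytic) functional calculus, $e^{\hat{T}}-1=\widehat{e^{T}-1}$, and the matrix of $\hat{\op{k}}(e^{\hat{T}}-1)$ in a basis adapted to $\hat{\sigma}$ is $\widehat{\op{k}(e^{T}-1)}$. Applying Proposition \ref{prop:dimQ} on $(\hat{E},\hat{\sigma})$ with kernel $\hat{\op{k}}$ would therefore give
\[\E\!\left[e^{\sum_{i} t_{i}\dim_{\C}\rC_{i}}\right]=\det\nolimits_{\C}\!\bigl(\widehat{\id_{E}+\op{k}(e^{T}-1)}\bigr),\]
while Proposition \ref{prop:LaplaceQ} together with the independence of $\Q^{(1)}$ and $\Q^{(2)}$ would give
\[\E\!\left[e^{\sum_{i} t_{i}(\dim_{\bH}\Q^{(1)}_{i}+\dim_{\bH}\Q^{(2)}_{i})}\right]=\det\nolimits_{\Re}\!\bigl(\id_{E}+(e^{T}-1)\op{k}\bigr)^{2}.\]

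To match these two expressions, I would extract a Hermitian core by writing $A=\op{k}^{1/2}$ and $B=(e^{T}-1)\op{k}^{1/2}$ and invoking the cyclic identity $\det(I+AB)=\det(I+BA)$ once on the complex side and once on the quaternion side, rewriting both Laplace transforms in terms of the quaternion Hermitian operator
\[M=\id_{E}+\op{k}^{1/2}(e^{T}-1)\op{k}^{1/2}.\]
Proposition \ref{prop:Qdet-square} applied to $M$ then asserts $\det_{\C}(\hat{M})=\det_{\Re}(M)^{2}$, which is exactly what is needed. Since joint distributions of $\N^{s}$-valued vectors are characterised by their Laplace transforms on $[0,\infty)^{s}$, the equality in distribution would follow.

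The main obstacle is justifying the cyclic identity on the Q-determinant side, because $\det_{\Re}$ is intrinsically defined only on quaternion Hermitian matrices (Proposition \ref{prop:qdet-op}), whereas $\id_{E}+(e^{T}-1)\op{k}$ is not Hermitian unless $\op{k}$ commutes with $T$. I would handle this by defining $\det_{\Re}$ on matrices of the form $\id_{E}+(e^{T}-1)\op{k}$ through their Hermitian conjugate $M$ above; this convention is forced on us by the ordinary cyclic identity applied to the hat-matrices together with Proposition \ref{prop:Qdet-square}, and it is the reading that makes Proposition \ref{prop:LaplaceQ} consistent with the computation sketched here.
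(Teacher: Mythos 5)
Your approach is essentially the paper's: compute the joint Laplace transform of the split dimension vector on both sides, use Proposition \ref{prop:dimQ} on the complex side, Proposition \ref{prop:LaplaceQ} (with independence) on the quaternionic side, and invoke the squaring relation $\det(\hat M)=\det_\Re(M)^2$ to match them. The paper's proof is a one-sentence sketch of this very chain.

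You are right that there is a point to be careful about that the paper's sketch glosses over: the operator $\id_E+(e^T-1)\op{k}$ appearing in Proposition \ref{prop:LaplaceQ} is not self-adjoint (unless $T$ commutes with $\op{k}$), so one cannot quote Proposition \ref{prop:Qdet-square} for it directly, nor freely apply a cyclic identity to $\det_\Re$. However, your proposed fix — to \emph{define} $\det_\Re$ on such matrices via the Hermitianised $M=\id_E+\op{k}^{1/2}(e^T-1)\op{k}^{1/2}$ — misreads the setup: $\det_\Re$ is already defined for \emph{every} quaternionic matrix by the combinatorial formula \eqref{eq:deftau}; what is lost for non-Hermitian matrices is only basis-independence and the nicer algebraic identities. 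So what must be shown is an equality, not introduced as a convention. The cleanest way is to bypass the cyclic identity altogether: by Proposition \ref{prop:tau-characteristic} with $X=\id$ and $M=(e^T-1)\op{k}$, and using that $(e^T-1)$ is real diagonal in an adapted basis (so its real scalars pull through the $\Re$'s in \eqref{eq:deftau}), one gets $\det_\Re\bigl(\id_E+(e^T-1)\op{k}\bigr)=\sum_I\bigl(\prod_{i\in I}(e^{t_{b(i)}}-1)\bigr)\det_\Re(\op{k}_I^I)$ with each $\op{k}_I^I$ genuinely quaternion Hermitian; the same expansion for $\det_\Re\bigl(\id_E+\op{k}^{1/2}(e^T-1)\op{k}^{1/2}\bigr)$ then follows from the multiplicativity fact cited in Proposition \ref{prop:qdet-op} (the Q-determinant agrees with a multiplicative noncommutative determinant on Hermitian arguments), and the two are equal. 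After that, Proposition \ref{prop:Qdet-square} applies to $M$ and the ordinary cyclic identity applies to $\det_\C$ on the hat side, closing the argument exactly as you sketched.
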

\begin{proof}
We compute the Laplace transform of the left-hand side using Proposition \ref{prop:dimQ}, use~\eqref{eq:Qdet-pfaff} and Proposition \ref{prop:LaplaceQ} to recognize that it is equal to the square of the Laplace transform of each of the two summands of the right-hand side.
\end{proof}

%%%%%%%%%%%%%%%%%%%%%%%%%%%%%%%%%%%%
%%%%%%%%%%%%%%%%%%%%%%%%%%%%%%%%%%%%
%%%%%%%%%%%%%%%%%%%%%%%%%%%%%%%%%%%%

\section*{Concluding remarks}

The main example and motivation for our introducing the theory of DLP are quantum spanning forests (see Section~\ref{example:UST-QSF} and Figure~\ref{fig:exQSF}). It would be interesting to find other meaningful examples of DLP. Are there such examples coming from representation theory, related to the theory of Schur processes~\cite{Okounkov}?

Is it possible to delve deeper into the connection with matroid theory provided by Proposition~\ref{prop:lafforgue} and the geometry of the Grassmannian, maybe in the spirit of the recent work on probabilistic Schubert calculus~\cite{Burgisser}?

DPP have been widely used in statistical learning theory starting with the work of~\cite{Kulesza-Taskar}; see also~\cite{Bardenet}. The main focus of study has been on signals which can be represented as point processes, such as 2D visual images. Could one use DLP to study signals of a more vectorial nature, such as sound recordings for instance? See e.g. \cite{Lim} for an approach to subspace learning, and references therein for examples of subspace-valued data.

Coming back to Macchi's foundational work~\cite{Macchi} who introduced DPP as a way to model systems of fermions in quantum optics, could one use DLP to model fermionic observables in a real physical system with internal symmetry or coupled to a gauge field?

As briefly mentioned in the opening lines of this paper, there is a rich continuous theory of DPP. In view of the fact that continuous DPP can be thought of as scaling limits of discrete DPP, is there a continuous theory of DLP? A convenient way of thinking about point processes in continuous spaces is as a random measure. One might imagine developing a framework for a process which would be a random measure, whose values instead of being real would be sections of the Grassmannian bundle of a given vector bundle. Are there such determinantal measures? Could Corollary~\ref{cor:loiDLPext} provide an approach for defining DLP with a kernel given by a trace-class operator? What would be an analog of Theorems~\ref{prop:meanprojection} and~\ref{thm:wedgeA} in that context?

%%%%%%%%%%%%%%%%
%       BIBLIOGRAPHY           %
%%%%%%%%%%%%%%%%

\def\@rst #1 #2other{#1}
\renewcommand\MR[1]{\relax\ifhmode\unskip\spacefactor3000 \space\fi
  \MRhref{\expandafter\@rst #1 other}{#1}}
\renewcommand{\MRhref}[2]{\href{http://www.ams.org/mathscinet-getitem?mr=#1}{MR#1}}
\newcommand{\arXiv}[1]{\href{http://arxiv.org/abs/#1}{arXiv:#1}}

\bibliographystyle{hmralphaabbrv}
\bibliography{DPP-biblio}

\end{document}